\DeclareMathOperator*{\esssup}{\mathrm{ess\,sup}}
\makeatletter\@addtoreset{equation}{section}\makeatother
\newtheorem{theorem}{Theorem}[section]
\newtheorem{corollary}[theorem]{Corollary}
\newtheorem{lemma}[theorem]{Lemma}
\newtheorem{proposition}[theorem]{Proposition}
\newtheorem{definition}[theorem]{Definition}
\newtheorem{remark}[theorem]{Remark}
\numberwithin{equation}{section}
\title[A Markov process for a particle system with attraction]{A Markov process for a continuum infinite particle system with attraction}
\author{ Yuri  Kozitsky}
\address{Instytut Matematyki, Uniwersytet Marii Curie-Sk{\l}odowskiej, 20-031 Lublin, Poland}
\email{jkozi@hektor.umcs.lublin.pl}
\author{Michael R\"ockner}
\address{Fakult\"at f\"ur Mathematik, Universit\"at Bielefeld, Bielefeld, Germany}
\email{roeckner@math.uni-bielefeld.de}
\keywords{Measure-valued Markov process; point process; martingale
solution; Fokker-Planck equation; stochastic semigroup}
\begin{document}

\subjclass{60J25; 60J75; 60G55; 35Q84}%

\begin{abstract}

An infinite system of point particles placed in $\mathds{R}^d$ is
studied. The particles are of two types; they perform random walks
in the course of which those of distinct types repel each other. The
interaction of this kind induces an effective multi-body attraction
of the same type particles, which leads to the multiplicity of
states of thermal equilibrium in such systems. The pure states of
the system are locally finite counting measures on $\mathds{R}^d$.
The set of such states $\Gamma^2$ is equipped with the vague
topology and the corresponding Borel $\sigma$-field. For a special
class $\mathcal{P}_{\rm exp}$ of  probability measures defined on
$\Gamma^2$, we prove the existence of a family $\{P_{t,\mu}: t\geq
0, \ \mu \in \mathcal{P}_{\rm exp}\}$  of probability measures
defined on the space of c{\`a}dl{\`a}g paths with values in
$\Gamma^2$, which is a unique solution of the restricted martingale
problem for the mentioned stochastic dynamics. Thereby, the
corresponding Markov process is specified.
\end{abstract}

\maketitle

\tableofcontents

\section{Introduction}

The stochastic dynamics of infinite systems of interacting particles
placed in a continuous habitat is an actual and highly demanding
subject of modern probability theory.  In its comprehensive version,
one deals with stochastic processes. Thus far,  Markov processes
have been constructed only for those particle systems where one
cannot expect phase transitions, i.e., multiplicity of states of
thermal equilibrium existing at the same values of the external
parameters. In the present work, we deal with a system, for which
such phase transitions are possible \cite{WR,GH,KK,Mazel}, that
ought to have an essential impact on its stochastic dynamics, cf.
\cite{Kissel}. Namely, the object we study is an infinite collection
of point particles of two types placed in $X=\mathds{R}^d$, $d\geq
1$. The particles perform random walks (jumps) in the course of
which those belonging to the same type (component) do not interact,
whereas different type particles repel each other. This model can be
viewed as a jump version of the Widom-Rowlinson model or the
continuum two-state Potts model, see \cite{WR,KK} and
\cite{GH,Mazel}, respectively, as well as the literature quoted in
these publications. By integrating out the coordinates of one of the
components, one obtains a single-component particle system with a
multi-body attraction (see subsect. \ref{comSS} below), responsible
for phase transitions -- the multiplicity of states of thermal
equilibrium, see \cite{Lebowitz} for more on this issue.

Similarly as in \cite{KR}, we construct the process in question  by
solving a \emph{restricted martingale problem}, cf. \cite[page
79]{Dawson}. The basic aspects of this construction can be outlined
as follows. The starting point is the evolution of states $\mu_0 \to
\mu_t$ of the considered model obtained in \cite[Theorem
3.5]{asia1}; whereas the final outcome is the family of
c{\`a}dl{\`a}g path measures that solves the mentioned martingale
problem. The one-dimensional marginals of these path measures
coincide with the corresponding $\mu_t$ constructed in \cite{asia1}.

Let us now present the main ingredients of our theory. First, we set
the state space -- the collection $\Gamma^2$ of all possibly
infinite configurations in $X$. Let $\gamma$ be an integer valued
Radon measure on $X$. For each ball $B_r(x) :=\{y\in X: |x-y|\leq
r\}$, $r>0$, one thus has $\gamma (B_r(x))\in \mathds{N}_0$. For
$x\in X$, we set $n_\gamma (x) = \inf_{r>0} \gamma (B_r(x))$, and
also $p(\gamma) =\{x \in X: n_\gamma (x)
>0\}$. Each such $\gamma$ can be associated with a locally finite
system of point `particles' such that each $x\in p(\gamma)$ is
occupied by $n_\gamma(x)$ of them, cf. \cite{Lenard}. Keeping this
in mind, we will call $\gamma$ and $p(\gamma)$ \emph{configuration}
and \emph{ground configuration}, correspondingly. The set of all
such configurations $\gamma$ is denoted by $\Gamma$. Since we are
going to consider a two-component system of particles, its state
space is $\Gamma^2 =\Gamma\times \Gamma$, consisting of the pairs
$\gamma=(\gamma_0, \gamma_1)$, $\gamma_i\in \Gamma$. In the sequel,
$\gamma$ without indices will always denote such a pair, whereas
$\gamma_i$ will stand for the configuration of particles of type
$i=0,1$. Then the ground configuration of
$\gamma=(\gamma_0,\gamma_1)$ is $p(\gamma)=p(\gamma_0) \cup
p(\gamma_1)$. We also set $n_\gamma(x) = n_{\gamma_0}(x) +
n_{\gamma_1}(x)$. If $n_\gamma(x) =1$ for each $x\in p(\gamma)$,
then $\gamma$ is called a \emph{simple} configuration. The set of
all simple configurations is then
\begin{equation}
  \label{no16}
  \breve{\Gamma}^2 = \{ \gamma \in \Gamma^2: \forall x\in p(\gamma) \
  n_\gamma(x)=1\}.
\end{equation}
As mentioned above, the configurations are assumed to be locally
finite, i.e., each $\gamma_i$ takes finite values on every compact
$\Lambda\subset X$. Let $x_1, x_2, \dots, $ be an enumeration of a
given $\gamma_i$. Then by $\sum_{x\in \gamma_i}g(x)$ we mean
$\sum_{j} g(x_j)$, where $g:X\to \mathds{R}$ is a suitable function.
Obviously, this interpretation of $\sum_{x\in \gamma_i}g(x)$ is
independent of the enumeration  used in the second sum. Note also
that $\sum_{x\in \gamma_i}g(x) = \int_X g(x) \gamma_i(dx)$, see
\cite{Lenard} for more on this subject. Then $\Gamma$ is equipped
with the vague topology, which is the weakest topology that makes
continuous the maps $\gamma_i \mapsto \sum_{x\in \gamma_i} g(x)$,
$g\in C_{\rm cs}(X)$, where the latter is the collection of all
continuous compactly supported numerical functions. Correspondingly,
the set $\Gamma^2 = \Gamma\times \Gamma$ is equipped with the
product topology, and thereby with the  Borel $\sigma$-field
$\mathcal{B}({\Gamma}^2)$. This allows us to employ probability
measures defined thereon, the set of which is denoted by
$\mathcal{P}({\Gamma^2})$. Their evolution is described by the
Fokker-Planck equation
\begin{equation}
  \label{FPE}
  \mu_{t_2}(F) = \mu_{t_1}(F) + \int_{t_1}^{t_2}\mu_s ({L}F) d s,
  \qquad t_2 >t_1 \geq 0,
\end{equation}
see \cite{FKP} for a general theory of such and similar objects. In
(\ref{FPE}), we use the notation $\mu(F) = \int F d \mu$ and ${L}$
is the Kolmogorov operator, which in the considered case has the
following form
\begin{eqnarray}
  \label{L}
  ( {L} F) ({\gamma}) & = & \sum_{x\in {\gamma}_0}\int_{X} a_0 (x-y) \exp\left( - \sum_{z\in {\gamma}_1}
  \phi_0
  (z-y)\right) \left[F({\gamma}\setminus x \cup_0 y) - F({\gamma})
  \right] d y \qquad  \\[.2cm] \nonumber & + & \sum_{x\in {\gamma}_1} \int_{X} a_1 (x-y)
  \exp\left( - \sum_{z\in {\gamma}_0} \phi_1
  (z-y)\right) \left[F({\gamma}\setminus x \cup_1 y) - F({\gamma})
  \right] d y.
\end{eqnarray}
Here and in the sequel, by writing ${\gamma}\cup_i y$ we mean the
element of $\Gamma^2$ obtained from $\gamma$ by adding $y\in X$ to
its component $\gamma_i$, $i=0,1$. Likewise, by writing $\gamma
\setminus x$ we mean the configuration obtained from $\gamma$ by
subtracting $x$ from the corresponding $\gamma_i$ if it is clear
which $i$ is meant. Otherwise, we indicate it explicitly, see  the
next section for more detail.

The first summand in (\ref{L}) describes the following elementary
act: a particle located at $x\in \gamma_0$  instantly changes its
position (jumps) to $y\in X$ with rate
\begin{equation}
  \label{rate}
c_0(x,y;\gamma) = a_0(x-y) \exp\left(- \sum_{z\in {\gamma}_1}
\phi_0(z-y)\right).
\end{equation}
It depends on $\gamma_1$ through the multiplier $\exp\left(-
\sum_{z\in {\gamma}_1} \phi_0(z-y)\right)$, $\phi_0\geq 0$, the role
of which is diminishing the free jump rate $a_0(x-y)$ if the target
point is `close' to $\gamma_1$. In view of this, we shall call $a_i$
and $\phi_i$, $i=0,1$, jump and repulsion kernels, respectively.

As is typical for infinite particle systems, among the states
$\mathcal{P}(\Gamma^2)$ one distinguishes a proper subset to which
the evolution described by (\ref{FPE}) is restricted. In
\cite{asia1}, there was introduced a subset $\mathcal{P}_{\rm exp}
\subset\mathcal{P}(\Gamma^2)$, cf. Definition \ref{no1dfg} below,
consisting of \emph{sub-Poissonian measures}, and then constructed a
map $t \mapsto \mu_t\in \mathcal{P}_{\rm exp}$ corresponding to
(\ref{FPE}) in the following sense. For a certain class of
(unbounded) functions $F:\Gamma^2\to \mathds{R}$, it was shown that:
(a) $LF$ belongs to this class; (b) each such $F$ is
$\mu$-integrable for all $\mu\in \mathcal{P}_{\rm exp}$; (c) the
mentioned map satisfies (\ref{FPE}). Our present results are
essentially based on this construction. In a sense, we `superpose'
the mentioned map $t \mapsto \mu_t\in \mathcal{P}_{\rm exp}$ and
obtain a family of c{\`a}dl{\`a}g path measures $\{P_{s,\mu}: s \geq
0, \mu \in \mathcal{P}_{\rm exp}\}$, which is the unique solution of
the \emph{restricted initial value martingale problem} corresponding
to (\ref{L}), see \cite[page 79]{Dawson}, and is such that the
one-dimensional marginal of $P_{s,\mu}$ corresponding to $t>s$ is
$\mu_t$ if $\mu_s=\mu$. This construction consists of the following
steps:
\begin{itemize}
  \item[(a)] We pick a subset $\Gamma_*^2\subset \Gamma^2$ and equip it  with a
topology that makes this set a Polish space, continuously embedded
in $\Gamma^2$, and such that $\mu(\Gamma_*^2)=1$ for all $\mu \in
\mathcal{P}_{\rm exp}$. This extends the set of continuous functions
$F:\Gamma^2 \to \mathds{R}$  and allows us to redefine the members
of $\mathcal{P}_{\rm exp}$ as measures on $\Gamma_*^2$. Then we
construct a sufficiently massive set $\mathcal{D}(L)$ of bounded
continuous functions $F:\Gamma_*^2 \to \mathds{R}$, which will serve
as the domain of the Kolmogorov operator. Its crucial property is
that $LF$ remains bounded for all $F\in \mathcal{D}(L)$.
\item[(b)] We prove that any solution $t\mapsto \mu_t \in \mathcal{P}(\Gamma^2_*)$ of the Fokker-Planck equation
(\ref{FPE}) with $F\in \mathcal{D}(L)$ and $\mu_0\in
\mathcal{P}_{\rm exp}$ is such that $\mu_t \in\mathcal{P}_{\rm exp}$
for all $t>0$. Thereby, we prove that there is only one such
solution  given by the map $t\mapsto \mu_t \in \mathcal{P}_{\rm
exp}$ constructed in \cite{asia1}.
\item[(c)] Then we introduce auxiliary models described by
$L^\sigma$, $\sigma\in [0,1]$ obtained by  replacing $a_i(x-y) \to
a_i^\sigma(x,y)$, in such a way that $L^0=L$, whereas $L^\sigma$
with $\sigma \in (0,1]$  admits constructing transition functions
$p_t^\sigma$, by means of which  we  obtain Markov processes
$\mathcal{X}^\sigma$ with values in $\Gamma_*^2$.
\item[(d)] Thereafter, we prove that the finite-dimensional
distributions of $\mathcal{X}^\sigma$ satisfy Chentsov-like
estimates, uniformly in $\sigma\in (0,1]$. By this we get that: (i)
each $\mathcal{X}^\sigma$ has a c{\`a}dl{\`a}g modification, which
corresponds to the existence of families $\{P^\sigma_{s,\mu}: s \geq
0, \mu \in \mathcal{P}_{\rm exp}\}$, $\sigma \in (0,1]$, of
c{\`a}dl{\`a}g path measures; (ii) as $\sigma \to 0$, the measures
$P^\sigma_{s,\mu}$  have accumulation points which solve the
restricted initial value martingale problem for $(L,\mathcal{D}(L),
\mathcal{P}_{\rm exp})$. Then we prove that all these accumulation
points coincide as their one-dimensional marginals solve
(\ref{FPE}), which has a unique solution, that was proved in (b).
Thereby, we obtain the unique solution of the mentioned martingale
problem $\{P_{s,\mu}: s\geq 0, \ \mu\in \mathcal{P}_{\rm exp}\}$.
\item[(e)] Finally, we prove that the constructed Markov process
with probability one takes values in $\Gamma^2_*
\cap\breve{\Gamma}^2$, see (\ref{no16}).
\end{itemize}
The structure of this paper is as follows. In Sect. 2, we  introduce
the main ingredients of our construction, among which are the spaces
of tempered configurations $\Gamma^2_*$, $\breve{\Gamma}^2_*$,  the
basic classes of bounded continuous functions $F:\Gamma^2_*\to
\mathds{R}$, and the set of \emph{sub-Poissonian measures}
$\mathcal{P}_{\rm exp} \subset\mathcal{P}(\Gamma^2)$, see Definition
\ref{no1dfg}. In Sect. 3, we formulate our assumptions concerning
the properties of the parameters $a_i$ and $\phi_i$ that appear in
(\ref{L}), (\ref{rate}). Next, we introduce the corresponding spaces
of c{\`a}dl{\`a}g paths and the very notion of a solution of the
restricted initial value martingale problem for our model, see
Definition \ref{A1df}. Then the result of this work is formulated in
Theorem \ref{1tm}, followed by a number of comments. The remaining
sections are dedicated to the proof of Theorem \ref{1tm}. In Sect.
4, we reformulate the corresponding results of \cite{asia1} in the
form adapted to the present context, as well as develop a number of
additional technicalities. The basic result of Sect. 5 is Lemma
\ref{W1lm} which states that every solution of the Fokker-Planck
equation (\ref{FPE}) for our model lies in $\mathcal{P}_{\rm exp}$
whenever $\mu_0$ is in $\mathcal{P}_{\rm exp}$. Its proof is mostly
based on combinatorial estimates obtained in \cite{KR} and those
derived here in subsect. 5.2. Then we prove that (\ref{FPE}) has a
unique solution $t\mapsto \mu_t$, constructed in fact in
\cite{asia1}, see Lemma \ref{W2lm}. By means of Lemmas \ref{W1lm}
and \ref{W2lm}  we then prove that the martingale problem can have
at most one solution. In Sect. 6, we introduce $L^\sigma$ and show
that the solution $t \mapsto \mu^\sigma_t$ of the Fokker-Planck
equation for $L^\sigma$, $\sigma\in (0,1]$, has the property
$\mu^\sigma_t \Rightarrow \mu_t$ as $\sigma\to 0$, where $\mu_t$ is
the solution corresponding to the main model and $\Rightarrow$
denotes weak convergence. In Sect. 7, we obtain the evolution of
states $t \mapsto \hat{\mu}^\sigma_t$, $\sigma\in (0,1]$, by
constructing stochastic semigroups $S^\sigma =\{S^\sigma(t)\}_{t\geq
0}$ acting in the Banach space of signed measures on $\Gamma^2_*$,
see Lemma \ref{U1lm}. This construction becomes possible due to the
modification $a_i(x-y) \to a^\sigma_i(x,y)$ and is based on a
perturbation technique developed in \cite{TV}. By construction, $t
\mapsto \hat{\mu}^\sigma_t$ solves the Fokker-Planck equation for
$L^\sigma$, which by Lemma \ref{W2lm} yields $\hat{\mu}^\sigma_t =
\mu^\sigma_t$. At the same time, by means of the semigroups
$S^\sigma$ we get the corresponding transition functions
$p^\sigma_t$, and thus  Markov processes $\mathcal{X}^\sigma$ with
values in $\Gamma^2_*$. Thereafter, in Lemma \ref{U4lm} we show that
these processes satisfy Chentsov-like estimates, uniform in
$\sigma\in (0,1]$. By means of this result, in Sect. 8 we complete
the proof of Theorem \ref{1tm}, including the property mentioned in
item (e) above.

\section*{Notations}

In view of the size of this work, for the reader convenience we
collect here essential notations and notions used throughout the
paper.

\subsection*{Sets and spaces}
\begin{itemize}
  \item The considered particle system dwells in $X=\mathds{R}^d$, $d\geq
1$. By $\Lambda$ we always denote a compact subset of $X$, its
Euclidean volume is denoted $|\Lambda|$;
$\mathds{R}_{+}=[0,+\infty)$; $\mathds{N}$ -- the set of natural
numbers, $\mathds{N}_0 = \mathds{N}\cup \{0\}$; $B_r(y) = \{x\in
\mathds{R}^d: |x-y|\leq r\}$, $B_r = B_r(0)$, $r>0$ and $y\in
\mathds{R}^d$. For a finite set $\Delta$, by $|\Delta|$ we mean its
cardinality.
\item A Polish space is a separable topological space, the topology of which
is consistent with a complete metric, see, e.g., \cite[Chapt.
8]{Cohn}. Subsets of such spaces are usually denoted by $\mathbb{A},
\mathbb{B}$, whereas $A, B$ (with indices) are reserved for denoting
linear operators. For a Polish space $E$, by $C_{\rm b}(E)$ and
$B_{\rm b}(E)$ we denote the sets of bounded continuous and bounded
measurable functions $g : E \to \mathds{R}$, respectively;
$\mathcal{B}(E)$ denotes the Borel $\sigma$-field of subsets of $E$.
By $\mathcal{P}(E)$ we denote the set of all probability measures
defined on  $(E, \mathcal{B}(E))$. For a suitable set $\Delta$, by
$\mathds{1}_\Delta$ we denote the indicator of $\Delta$.
\item $\Gamma$ stands for the set of all locally finite counting
measures on $X$, interpreted also as configurations of point
particles with possible multiple locations, see \cite{Lenard} and
(\ref{100}) below. By $\Gamma_0$ we mean the subset of $\Gamma$
consisting of finite configurations, i.e., such that
$\gamma(X)<\infty$; by $\Gamma^2$ we denote the set of
configurations of the two-component particle system which we
consider. That is, $\Gamma^2=\Gamma\times \Gamma$ consists of
$\gamma =(\gamma_0, \gamma_1)$, $\gamma_i\in \Gamma$ with $i=0,1$
always indicating the particle type. The set of simple
configurations $\breve{\Gamma}^2$ is defied in (\ref{no16}). The set
of tempered configurations $\Gamma_*$ is defined in (\ref{n021z}) by
means of $\psi(x) = (1+|x|^{d+1})^{-1}$; then $\Gamma^2_*=\Gamma_*
\times \Gamma_*$, see also (\ref{no21}). The metric properties of
$\Gamma^2_*$ are described in Lemma \ref{N2pn}. Finally,
$\breve{\Gamma}^2_*$ stands for $\Gamma^2_* \cap\breve{\Gamma}^2$,
see (\ref{no21}). The relationships between these sets (Polish
spaces) are described in Corollary \ref{N1co}.
\item By $\mathcal{P}_{\rm exp}$ we denote the set of sub-Poissonian
measures, see Definition \ref{no1dfg}, which is one of the basic
notions of this research. Their essential properties are given in
Proposition \ref{Se1pn} and (\ref{C4}). By $\mathcal{M}$ (with
indices) we denote the Banach spaces of signed measures on
$\Gamma_*^2$, see also (\ref{U}), (\ref{U1}), (\ref{U3}).
\item By $\mathfrak{D}_{[s,+\infty)} (\Gamma_*^2)$ and $\mathfrak{D}_{[s,+\infty)}
(\breve{\Gamma}_*^2)$, $s\geq 0$, we denote the spaces of
c{\`a}dl{\`a}g maps $\gamma: [s,+\infty) \to \Gamma^2_*$ and
$\gamma: [s,+\infty) \to \breve{\Gamma}^2_*$, respectively. Equipped
with Skorohod's metric they become Polish metric spaces, see
subsect. 3.2.
\end{itemize}
\subsection*{Functions, measures, operators}

\begin{itemize}
  \item By $x,y,z$ we always  denote elements (points) of $X=\mathds{R}^d$;
  for $k\in \mathds{N}$, we write $\mathbf{x}^k =(x_1, \dots ,
  x_k)\in X^k$. By small letters $f, g, u, v, \theta, \psi, \phi$ we denote
  numerical functions defined on $X$ or $X^k$. Important classes of such functions
$\varTheta_\psi$, $\varTheta^{+}_\psi$ are defined in (\ref{T1}). By
means of the function $\psi_\sigma (x) = (1+\sigma |x|^{d+1})^{-1}$
we modify the model (\ref{L}). For $v_{i,1}, \dots
  v_{i,k} \in C_{\rm b}(X)$, $i=0,1$ and $k\in \mathds{N}$, we write
  $\mathbf{v}_i(\mathbf{x}) = v_{i,1}(x_1) \cdots v_{i,k}(x_k)$, see
  (\ref{de1}) and also (\ref{no3a}). For an integrable
  $\theta:X \to \mathds{R}_{+}$, we write $\langle \theta \rangle =
  \int_X \theta(x) d x$. Numerical functions defined of $\Gamma^2$
  or $\Gamma^2_*$ are denoted by capital letters $F, H$, etc. By $F$
  with indices we usually denote functions on $\Gamma^2, \Gamma^2_*$, whereas
  $G$  (with indices) are defined on finite configurations.
  Significant examples of such functions are $F^\theta$, see
  (\ref{no14}), $\varPsi$ (\ref{no20}), $\widetilde{F}^\theta_\tau$,
  see (\ref{T4}) and Proposition \ref{T1pn}, $\widehat{F}_\tau^m$
  (\ref{de2}). The class of functions $\mathcal{D}(L)$ is
  introduced in Definition \ref{THdf}, its properties are given in
  Proposition \ref{T3pn}. By ${\sf F}$, ${\sf G}$, ${\sf K}$ we
  denote numerical functions defined on the path spaces.
\item Measures on $\Gamma^2$ and $\Gamma^2_*$ are usually denoted by
$\mu$ with indices. Their correlation measures $\chi^{(m)}_\mu$ are
defined in (\ref{no5}). By $\pi_\kappa$ we denote the Poisson
measure, see (\ref{no21}).  Probability measures on the path spaces
$\mathfrak{D}_{[s,+\infty)} (\Gamma^2_*)$ are denoted by $P$ with
indices. For suitable measure and function, we write $\mu(f) = \int
f d \mu$.
\item By $L$ and $L^\sigma$  we denote the Kolmogorov operator
(\ref{L}) and its modifications. By $L^{\dagger, \sigma}$ we denote
the operators dual to $L^\sigma$, see (\ref{U12}), (\ref{U13}).
Their domains are set in (\ref{U14}). By $\widehat{L}$ and
$L^\Delta$ with indices we denote the counterparts of $L$ acting on
functions $G$ and correlation functions, respectively, see
(\ref{W13}) and (\ref{L1}).
\end{itemize}

\section{Preliminaries}

\subsection{Configurations spaces and correlation measures}
\label{2.1ss}

By $\Gamma$ we denote the standard set of Radon counting measures on
$X$, i.e., $\gamma (\Lambda) \in \mathds{N}_0$ for each $\gamma\in
\Gamma$ and a compact $\Lambda \subset X$. Then we also define
$n_\gamma(x) = \inf_{r>0} \gamma(B_r(x))$ and $p(\gamma)=\{x\in X:
n_\gamma(x)>0\}$. Thus, $p(\gamma)$  is a locally finite subset of
$X$, see (\ref{no16}). For $x\in p(\gamma)$, by $\gamma\setminus x$
we denote the element of $\Gamma$ such that $n_{\gamma\setminus
x}(x) = n_\gamma(x)-1$ and $n_{\gamma\setminus x}(y) = n_\gamma(y)$
whenever $y\neq x$. Similarly, $\gamma\cup x$, $x\in X$, denotes the
measure such that $n_{\gamma\cup x}(x) = n_\gamma(x) + 1$ and
$n_{\gamma\cup x}(y) = n_\gamma(y)$ for $y\neq x$. For simplicity,
with a certain abuse of notations we write
\begin{equation}
  \label{100}
  \sum_{x\in \gamma} g(x) = \int_X g(x) \gamma(dx) =\sum_{x\in p(\gamma)} n_\gamma(x) g(x),
\end{equation}
where $g$ is a positive numerical function. Note that the left-hand
side of (\ref{100}) can also be interpreted as $\sum_{j} g(x_j)$ for
a certain enumeration $\mathds{N}\ni j \mapsto x_j$ of the elements
of $p(\gamma)$, in which each $x\in p(\gamma)$ is repeated
$n_\gamma(x)$ times, see \cite{Lenard} for more detail. In the same
way, we will understand sums
\begin{equation*}
%  \label{101}
  \sum_{x\in \gamma}\sum_{y\in \gamma\setminus x} g(x,y) = \int_X
  \int_X g(x,y) \gamma(dx) \gamma(dy) - \int_X g(x,x) \gamma(dx),
\end{equation*}
that can also be generalized to all $m\in \mathds{N}$
\begin{eqnarray}
  \label{102}
& & \sum_{x_1\in \gamma}\sum_{x_2\in \gamma\setminus x_1} \cdots
\sum_{x_m \in \gamma\setminus \{x_1, \dots , x_{m-1}\}} g(x_1,\dots
x_m) \\[.2cm] \nonumber & & \quad  = \sum_{G\in \mathbb{K}_m}
(-1)^{l_G} \int_{X^{n_G}} g_G (y_1 , \dots , y_{n_G}) \gamma(d y_1)
\cdots \gamma(d y_{n_G}) \\[.2cm] \nonumber & & \quad  = \sum_{G\in \mathbb{K}_m}
(-1)^{l_G} \sum_{y_1\in \gamma} \cdots \sum_{y_{n_G}\in \gamma} g_G
(y_1 , \dots , y_{n_G}),
\end{eqnarray}
where $\mathbb{K}_m$ is the collection of all graphs with vertices
$\{1,2,\dots , m\}$, $l_G$ and $n_G$ are the number of edges and the
connected components of $G$, respectively, whereas $g_G(y_1 , \dots
, y_{n_G})$ is obtained from $g(x_1 , \dots , x_{m})$ by setting the
arguments $x_{l_1}, \dots x_{l_{s_j}}$ of the latter equal $y_j$
where $l_1, \dots l_{s_j}$ are the vertices of the $j$-th connected
component of $G$.

Since the particles which we consider are of two types, their pure
states are set to be pairs $\gamma=(\gamma_0, \gamma_1)$ such that
$\gamma_i \in \Gamma$, $i=0,1$. Thus, $\Gamma^2 =\Gamma\times \Gamma
$ is the set of all pure states of the system. Correspondingly, we
set $n_\gamma (x) = n_{\gamma_0}(x) + n_{\gamma_1}(x)$ and
$p(\gamma) = p(\gamma_0) \cup p(\gamma_1)$. We will call $p(\gamma)$
the \emph{ground} configuration for $\gamma$.

For $\gamma \in \Gamma^2$ and $m=(m_0,m_1) \in \mathds{N}^2_0$, the
counting measure $Q_{{\gamma}}^{(m)}$ on $X^{m_0}\times X^{m_1}$ is
defined by its values on compact subsets $\Delta \subset X^{m_0}
\times X^{m_1}$ in the following way. For $m_0=m_1=0$, we set
$Q_{{\gamma}}^{(m)} \equiv 1$ for each ${\gamma}$. For $m_0>0$,
$m_1=0$ and $\Delta \subset X^{m_0}$, $Q_{{\gamma}}^{(m)}(\Delta)$
is equal to the number of different ordered strings $(i_1, \dots ,
i_{m_0})$ such that $\mathbf{x}:=(x_{i_1}, \dots , x_{i_{m_0}}) \in
\Delta$. Likewise one defined $Q_{\gamma}^{(m)}$ for $m_0=0$ and
$m_1>0$. For $m\in \mathds{N}^2$, $Q_{\gamma}^{(m)}(\Delta)$ is
equal to the number of different ordered strings $(i_1, \dots ,
i_{m_0})$ and $(j_1 , \dots , j_{m_1})$ such that $(\mathbf{x},
\mathbf{y}) \in \Delta$, where $\mathbf{x}= (x_{i_1}, \dots ,
x_{m_0})$, $x_l \in \gamma_0$, and $\mathbf{y}= (y_{j_1}, \dots ,
y_{m_1})$, $y_l \in \gamma_1$. It is obvious that this definition is
independent of the enumerations of both $\gamma_i$. Then we get, cf.
(\ref{102}),
\begin{eqnarray}
  \label{no3}
 & & Q^{(m)}_{\gamma} (\Delta)\\[.2cm] \nonumber & & \ = \sum_{x_1\in \gamma_0}
  \sum_{x_2 \in \gamma_0 \setminus x_1} \cdots
  \sum_{x_{m_0}\in \gamma_0 \setminus \{x_1 , \dots ,
  x_{m_0-1}\}}\sum_{y_1\in \gamma_1}
  \sum_{y_2 \in \gamma_1 \setminus y_1} \cdots
  \sum_{y_{m_1}\in \gamma_1 \setminus \{y_1 , \dots ,
  y_{m_1-1}\}} \mathds{1}_\Delta (\mathbf{x},\mathbf{y}).
\end{eqnarray}
To simplify notations, for suitable $\varphi_0$, $\varphi$, $k\in
\mathds{N}$ and $m\in \mathds{N}_0^2$, we write
\begin{gather}
  \label{no3a}
  \sum_{\mathbf{x}^k \in \gamma_0} \varphi_0({\bf x}^k) = \sum_{x_1\in \gamma_0}
  \sum_{x_2 \in \gamma_0 \setminus x_1} \cdots
  \sum_{x_{k}\in \gamma_0 \setminus \{x_1 , \dots ,
  x_{k-1}\}} \varphi_0 (x_1 , \dots , x_k), \\[.2cm] \nonumber
  \sum_{(\mathbf{x}^{m_0}, \mathbf{y}^{m_1}) \in \gamma}
  \varphi (\mathbf{x}^{m_0}, \mathbf{y}^{m_1}) =  \sum_{\mathbf{x}^{m_0} \in
  \gamma_0}  \sum_{\mathbf{y}^{m_1} \in \gamma_1}
  \varphi (x_1 , \dots , x_{m_0}, y_1 , \dots , y_{m_1}).
\end{gather}
As above, we will write $\mathbf{x}$ instead of $\mathbf{x}^k$ if
the dimension $k$ is clear from the context. For a compact $\Lambda
\subset X$ and $\gamma_i \in \Gamma$, $i=0,1$, we let $N_\Lambda
(\gamma_i)$ be the number of the elements of $\gamma_i$ contained in
$\Lambda$. Then
\begin{equation}
  \label{W}
N_\Lambda (\gamma_i) = \sum_{x\in \gamma_i} \mathds{1}_\Lambda (x)
=\gamma_i(\Lambda),
\end{equation}
that is, $N_\Lambda (\gamma_i)= Q^{(m)}_\gamma (\Lambda)$ for the
corresponding $m$, see (\ref{no3}). For $p\in \mathds{N}$,  we have,
cf. \cite[page 8]{KR},
\begin{eqnarray}
  \label{W1}
  N^p_\Lambda (\gamma_i) = \left[\sum_{x\in \gamma_i} \mathds{1}_\Lambda (x)
  \right]^p = \sum_{l=1}^p S(p,l)\sum_{\mathbf{x}^l \in \gamma_i}
  \mathds{1}_\Lambda (x_1) \cdots \mathds{1}_\Lambda (x_l),
\end{eqnarray}
where $S(p,l)$ is Stirling's number of second kind $=$ the number of
ways to divide $p$ labeled items into $l$ unlabeled groups. Below,
expressions like that on the right-hand side of (\ref{W1}) with
$p=0$ are set to be identically equal to one.

It can be shown, cf. \cite[Theorem 1]{Lenard}, that the map
$\gamma\mapsto Q_{\gamma}^{(m)} (\Delta)$ is measurable for all
compact $\Delta$ and $m\in \mathds{N}_0^2$. However, it may be
unbounded. Let $\mathcal{P}({\Gamma^2})$ be the set of all
probability measures defined on the Polish space ${\Gamma^2}$. For a
given $\mu\in \mathcal{P}({\Gamma^2})$ and $m\in \mathds{N}_0^2$,
set
\begin{equation}
  \label{no5}
  \chi_\mu^{(m)} = \int_{{\Gamma^2}} Q^{(m)}_{\gamma}
  \mu (d \gamma),
\end{equation}
which exists for at least $m=(0,0)$. If it does for a given positive
$m$, we call it \emph{correlation measure} corresponding to these
$\mu$ and $m$. If $\chi^{(m)}_\mu(\Delta)< \infty$ for all $m\in
\mathds{N}_0^2$ and compact $\Delta$, we say that $\mu$ has
\emph{finite correlations}. In this case, each $\chi^{(m)}_\mu$ is a
Radon measure on $X^{m_0}\times X^{m_1}$.

\subsection{Sub-Poissonian measures}

We begin by recalling that  $C_{\rm cs}(X)$ is dense in $L^1(X) :=
L^1 (X, dx)$, see e.g., \cite[Theorem 4.12, page 97]{Brezis}.

 For $k\in \mathds{N}$ and $\theta \in C_{\rm cs}(X)$, by
$\theta^{\otimes k}$ we denote the function such that
$\theta^{\otimes k}(x_1 , \dots, x_k) = \theta(x_1) \cdots
\theta(x_k)$, which we extend to $k=0$ by setting $\theta^{\otimes
0}\equiv 1$. Likewise, for $\theta_0,\theta_1\in C_{\rm cs}(X)$ and
$m\in \mathds{N}_0^2$, we set
\begin{equation}
  \label{no6}
  \theta^{\otimes m}(\mathbf{x}, \mathbf{y}) = \theta_0(x_1) \cdots \theta_0 (x_{m_0})
\theta_1(y_1) \cdots \theta_1 (y_{m_1}).
\end{equation}
\begin{definition}
  \label{no1dfg}
The set of sub-Poissonian measures $\mathcal{P}_{\rm exp}$ consists
of all those $\mu\in\mathcal{P}({\Gamma^2})$ that have finite
correlations such that, for each $m\in \mathds{N}_0^2$ and
$\theta_0, \theta_1\in C_{\rm cs}(X)$, the following holds
\begin{equation}
  \label{no7}
\left|\chi_\mu^{(m)} (\theta^{\otimes m})\right| \leq
\varkappa^{|m|} \|\theta_0\|^{m_0}_{L^1(X)}
\|\theta_1\|^{m_1}_{L^1(X)}, \qquad |m| := m_0 + m_1,
\end{equation}
with some $\mu$-dependent $\varkappa>0$.
\end{definition}
The aforementioned density and the  estimate in (\ref{no7})  imply
that the map $(\theta_0,\theta_1) \mapsto \chi_\mu^{(m)}
(\theta^{\otimes m})$ can be extended to a continuous homogeneous
polynomial on $L^1(X) \times L^1(X)$. In this case, there exists a
unique positive and symmetric $k^{(m)}_\mu \in L^\infty
(X^{m_0}\times X^{m_1})$ such that, see (\ref{no6}),
\begin{eqnarray}
  \label{no8}
   \chi_\mu^{(m)} (\theta^{\otimes m}) &= & \int_{X^{m_0} \times X^{m_1}}
  k^{(m)}_\mu (x_1 , \dots , x_{m_0}; y_1, \dots , y_{m_1})\\[.2cm]  \nonumber & \times &  \theta_0(x_1) \cdots \theta_0 (x_{m_0})
\theta_1(y_1) \cdots \theta_1 (y_{m_1}) d x_1 \cdots d x_{m_0} d y_1
\cdots dy_{m_1} \\[.2cm] \nonumber & =: & \int_{X^{m_0} \times X^{m_1}}
  k^{(m)}_\mu (\mathbf{x}, \mathbf{y}) \theta^{\otimes m}(\mathbf{x}, \mathbf{y}) d^{m_0} \mathbf {x} d^{m_1}
  \mathbf
  {y} =:
\langle\! \langle k^{(m)}_\mu, \theta^{\otimes m} \rangle \!
\rangle.
\end{eqnarray}
The mentioned symmetricity means that
\begin{equation}
  \label{nosym}
   k^{(m)}_\mu (x_1 , \dots , x_{m_0}; y_1, \dots , y_{m_1}) =  k^{(m)}_\mu (x_{\sigma_0(1)} , \dots , x_{\sigma_0(m_0)}; y_{\sigma_1(1)},
 \dots , y_{\sigma_1(m_1)}),
\end{equation}
holding for all corresponding permutations $\sigma_0, \sigma_1$,
whereas the positivity and the bound in (\ref{no7}) yield
\begin{equation}
  \label{no9}
  0 \leq k^{(m)}_\mu (\mathbf{x},\mathbf{y})
  \leq \varkappa^{|m|},
\end{equation}
holding for Lebesgue-almost all $(\mathbf{x},\mathbf{y}) \in
X^{m_0}\times X^{m_1}$. The upper estimate in (\ref{no9}) is known
as Ruelle's bound \cite{Ruelle}. Noteworthy, for each $\mu$,
\begin{equation}
  \label{JULY}
  k^{(0,0)}_\mu = 1,
\end{equation}
which readily follows by the very definition of the counting measure
$Q_\gamma$ and (\ref{no5}).

For $m=(m_0,m_1)\in \mathds{N}_0^2$, $\theta =(\theta_0, \theta_1)$,
$\theta_i \in C_{\rm cs}(X)$, $i=0,1$, we set, cf. (\ref{no3}),
\begin{eqnarray}
  \label{no10}
H^{(m)}_\theta (\gamma) & = & H^{(m_0)}_{\theta_0}
(\gamma_0) H^{(m_1)}_{\theta_1} (\gamma_1), \\[.2cm]
\nonumber
  H^{(m_i)}_{\theta_i} (\gamma_i) & = &
  \sum_{x_1 \in \gamma_i} \sum_{x_2\in \gamma_i
  \setminus x_1} \cdots \sum_{x_{m_i}\in \gamma_i \setminus
  \{x_1 , \dots , x_{m_i-1}\}} \theta_i (x_1) \cdots \theta_i
  (x_{m_i}), \\[.2cm]
\nonumber & = & \sum_{\mathbf{x}\in \gamma_i} \theta_i^{\otimes m_i}
(\mathbf{x}), \qquad m_i\geq 1,
  \qquad i=0,1,
\end{eqnarray}
and $ H^{(0)}_{\theta_i} (\gamma_i) \equiv 1$. Then by means of
(\ref{no5}) we rewrite (\ref{no8}) as follows
\begin{equation*}
 % \label{no11}
  \chi^{(m)}_\mu (\theta^{\otimes m}) = \mu(H^{(m)}_\theta).
\end{equation*}
Now for $n=(n_0,n_1)\in \mathds{N}_0^2$, let us consider
\begin{equation*}
 % \label{no12}
  \bar{H}_\theta^{(n)} (\gamma) = \sum_{m_0=0}^{n_0}
  \sum_{m_1=0}^{n_1} \frac{1}{m_0 ! m_1 !} H^{(m)}_\theta
  (\gamma),
\end{equation*}
which is obviously finite for all $\gamma\in{\Gamma}$. For every
$\mu\in \mathcal{P}_{\rm exp}$, by (\ref{no7}) we have that
\begin{equation}
  \label{no13}
\mu(\bar{H}_\theta^{(n)}) \leq \exp\left[\varkappa (\|\theta_0
\|_{L^1(X)} + \|\theta_1 \|_{L^1(X)} )\right],
\end{equation}
where $\varkappa$ is as in (\ref{no7}). By (\ref{no13}), for $\theta
=(\theta_0,\theta_1)\in L^1(X)\times L^1(X)$, the sequence
$\{\bar{H}_\theta^{(n)} (\gamma)\}_{n\in\mathds{N}_0^2}$ is
$\mu$-almost everywhere convergent to
\begin{eqnarray}
  \label{no14}
  F^\theta (\gamma) & = & F^{\theta_0} (\gamma_0)F^{\theta_1}
  (\gamma_1),\\[.2cm] F^{\theta_i} (\gamma_i) & := &
  \prod_{x\in \gamma_i} ( 1 + \theta_i(x)) = \exp\left(
  \sum_{x\in \gamma_i} \log( 1 + \theta_i(x))\right).
  \nonumber
\end{eqnarray}
Moreover, by (\ref{no13}) it follows that each $F^\theta$, $\theta
\in L^1(X)\times L^1(X)$ is $\mu$-integrable and
\begin{equation}
  \label{no14a}
  \mu(F^\theta) \leq \exp\left[\varkappa (\|\theta_0
\|_{L^1(X)} + \|\theta_1 \|_{L^1(X)} )\right].
\end{equation}
This means that the map $L^1(X)\times  L^1(X)\ni \theta \mapsto
\mu(F^\theta)\in \mathds{R}$ is an exponential type real entire
function, which is reflected in the notation $\mathcal{P}_{\rm
exp}$. Then borrowing terminology from the theory of entire
functions, we will call the \emph{type of $\mu$} the least
$\varkappa$ that verifies (\ref{no7}), (\ref{no9}). For the
homogeneous Poisson measure $\pi_\kappa$, $\kappa=(\kappa_0,
\kappa_1)$, $\kappa_0, \kappa_1
>0$, we have
\begin{equation}
  \label{no20A}
  k^{(m)}_{\pi_\kappa} (\mathbf{x}, \mathbf{y}) =
  \kappa_0^{m_0}\kappa_1^{m_1}, \qquad (\mathbf{x}, \mathbf{y}) \in
  X^{m_0}\times X^{m_1},
\end{equation}
which yields, see (\ref{no8}) and (\ref{no14a}),
\begin{equation}
\label{no21A}
  \pi_\kappa (F^\theta) = \exp\left(\kappa_0 \int_X \theta_0 (x) d x+ \kappa_1 \int_X \theta_1 (x) d x
  \right).
\end{equation}
Hence, the type of $\pi_\kappa\in \mathcal{P}_{\rm exp}$ is
$\varkappa = \max\{\kappa_0;\kappa_1\}$. In general, a Poisson
measure, $\pi_\chi$, is completely characterized by the pair
$\chi=(\chi_0, \chi_1)$ of its intensity measures in such a way
that, see (\ref{no8}),
\begin{eqnarray*}
 % \label{W1a}
\chi^{(m)}_{\pi_\chi} (\theta^{\otimes m}) = \left(\int_X
\theta_0(x) \chi_0(d x) \right)^{m_0}  \left(\int_X \theta_1(x)
\chi_1(d x) \right)^{m_1}.
\end{eqnarray*}
Note that $\pi_\chi$ is sub-Poissonian in the sense of Definition
\ref{no1dfg} if and only if $\chi_i (d x) = \varrho_i(x) d x$ with
$\varrho_i \in L^\infty (X)$, $i=0,1$.

For a symmetric $G^{(m)}\in C_{\rm cs}(X^{m_0} \times X^{m_1})$, see
(\ref{nosym}), by (\ref{no3}) we have, cf. (\ref{no3a}),
\begin{gather}
  \label{no15a}
Q^{(m)}_{\gamma} (G^{(m)}) = \sum_{(\mathbf{x},\mathbf{y})\in
\gamma} G^{(m)} (\mathbf{x}, \mathbf{y})  =: m_0! m_1! (K
G^{(m)})(\gamma) ,
\end{gather}
which by  (\ref{no5}) yields
\begin{eqnarray}
  \label{no15}
\chi_\mu^{(m)} (G^{(m)}) & = & \int_{X^{m_0} \times X^{m_1}}
  k^{(m)}_\mu (\mathbf{x}, \mathbf{y}) G^{(m)}(\mathbf{x}, \mathbf{y}) d^{m_0} \mathbf {x} d^{m_1}
  \mathbf{y} \\[.2cm] \nonumber & =& m_0! m_1! \mu(KG^{(m)})  =:  m_0! m_1!
\langle\! \langle k^{(m)}_\mu, G^{(m)} \rangle \! \rangle.
\end{eqnarray}
In view of (\ref{no9}), this can be continued to all $G^{(m)}\in
L^1(X^{m_0} \times X^{m_1})$. For positive $G^{(m)}$, by (\ref{no9})
one also gets
\begin{equation}
  \label{15b}
\mu (K G^{(m)})  \leq \pi_\kappa(K G^{(m)}) , \qquad \kappa_0 =
\kappa_1 = \varkappa,
\end{equation}
which, in particular, justifies the name \emph{sub-Poissonian}. Let
us now consider the following important version of (\ref{15b}). For
a compact $\Lambda\subset X$, we let $N_\Lambda (\gamma) = N_\Lambda
(\gamma_0)+ N_\Lambda (\gamma_1)$, see (\ref{W}). Then for $n\in
\mathds{N}$, by (\ref{W1}) we have
\begin{gather*}
  %\label{W2a}
  N_\Lambda^n (\gamma) = \sum_{p=0}^n {n \choose p} \sum_{l_0=1}^p
  \sum_{l_1=1}^{n-p} S(p, l_0) S(n-p, l_1) \sum_{(\mathbf{x}^{l_0},
  \mathbf{y}^{l_1}) \in \gamma} \mathds{1}_\Lambda (\mathbf{x}^{l_0},
  \mathbf{y}^{l_1}),
\end{gather*}
which for $\mu \in \mathcal{P}_{\rm exp}$ yields
\begin{gather}
  \label{W2}
  \mu(N_\Lambda^n) = \sum_{p=0}^n {n \choose p} \sum_{l_0=1}^p
  \sum_{l_1=1}^{n-p} S(p, l_0) S(n-p, l_1) \\[.2cm] \nonumber \times \int_{X^{l_0}\times
  X^{l_1}} k_\mu^{(l_0,l_1)} (\mathbf{x}^{l_0},
  \mathbf{y}^{l_1}) \mathds{1}_\Lambda (\mathbf{x}^{l_0},
  \mathbf{y}^{l_1}) d \mathbf{x}^{l_0} d \mathbf{y}^{l_1} \nonumber \\[.2cm]\leq  \sum_{p=0}^n {n \choose p} \sum_{l_0=1}^p
  \sum_{l_1=1}^{n-p} S(p, l_0) S(n-p, l_1) (\varkappa
  |\Lambda|)^{l_0 + l_1} \nonumber \\[.2cm] \nonumber = \sum_{p=0}^n {n \choose
  p}T_p (\varkappa|\Lambda|) T_{n-p} (\varkappa|\Lambda|) =
  T_n(2\varkappa|\Lambda|),
  \nonumber
\end{gather}
where $|\Lambda|$ is the Euclidean volume (Lebesgue measure) of
$\Lambda$ and $T_n$, $n\in \mathds{N}$, are Touchard's polynomials,
attributed also to J. A. Grunert, S. Ramanujan, and others, see
\cite[page 6]{Boy}. Along with the already mentioned ones,
sub-Poissonian measures have the following significant property.
Recall that the set of simple configurations $\breve{\Gamma}^2$ is
defined in (\ref{no16}).
\begin{proposition}
 \label{Se1pn}
For each $\mu \in \mathcal{P}_{\rm exp}$, it follows that
$\mu(\breve{\Gamma}^2)=1$.
\end{proposition}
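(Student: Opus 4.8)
The plan is to show that the complement $\Gamma^2 \setminus \breve{\Gamma}^2$ — the set of configurations having at least one point $x$ with $n_\gamma(x) \geq 2$ — is a $\mu$-null set for every $\mu \in \mathcal{P}_{\rm exp}$. The obstruction to simplicity is that two particles (of the same or of different types) occupy exactly the same location in $X$. Since $X = \mathds{R}^d$ is covered by countably many compact balls, and since a configuration fails to be simple precisely when some such ball contains a "double point", it suffices to fix a compact $\Lambda \subset X$ and estimate the probability that $\gamma$ restricted to $\Lambda$ has two particles at a common site; then take a countable union over an exhausting sequence $\Lambda_k \uparrow X$.

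\smallskip

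The key quantitative device is the bound on $\mu(N_\Lambda^n)$ in terms of Touchard's polynomials $T_n(2\varkappa|\Lambda|)$, together with the Ruelle bound (\ref{no9}). First I would make precise the event of non-simplicity inside $\Lambda$: partition $\Lambda$ into $N$ measurable cells $\Lambda_1^{(N)}, \dots, \Lambda_N^{(N)}$ of diameter $\to 0$; if $\gamma$ has a double point in $\Lambda$, then for all large $N$ some cell $\Lambda_j^{(N)}$ satisfies $\gamma(\Lambda_j^{(N)}) = N_{\Lambda_j^{(N)}}(\gamma) \geq 2$. Hence
\begin{equation*}
\mu\bigl(\{\gamma: \gamma \text{ has a double point in }\Lambda\}\bigr) \leq \liminf_{N\to\infty} \sum_{j=1}^N \mu\bigl(N_{\Lambda_j^{(N)}} \geq 2\bigr).
\end{equation*}
By the Chebyshev-type inequality $\mu(N_{\Lambda_j^{(N)}} \geq 2) \leq \frac{1}{2}\,\mu\bigl(N_{\Lambda_j^{(N)}}(N_{\Lambda_j^{(N)}}-1)\bigr)$, and the factorial moment $\mu(N_\Delta(N_\Delta-1))$ expands — via (\ref{W1}), (\ref{W2}) and (\ref{no9}) — into a sum of integrals of $k_\mu^{(l_0,l_1)}$ over products of copies of $\Delta$ with total degree $2$; each such term is bounded by $\varkappa^2 |\Delta|^2$ up to combinatorial constants. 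Summing over $j$ gives a bound of the form $C\varkappa^2 \sum_{j=1}^N |\Lambda_j^{(N)}|^2 \leq C\varkappa^2 \bigl(\max_j |\Lambda_j^{(N)}|\bigr)\,|\Lambda| \to 0$ as $N \to \infty$, since the cells can be chosen with volumes tending uniformly to zero. Thus $\mu$ assigns zero mass to configurations with a double point in $\Lambda$, and a countable union over $k$ finishes the proof.

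\smallskip

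The main obstacle I anticipate is purely bookkeeping: writing $N_\Delta(N_\Delta - 1)$ as a genuine second factorial moment that is controlled by the first- and second-order correlation functions $k_\mu^{(2,0)}$, $k_\mu^{(1,1)}$, $k_\mu^{(0,2)}$ alone. One must be careful that $N_\Delta = N_\Delta(\gamma_0) + N_\Delta(\gamma_1)$ counts particles of both types, so the expansion produces "diagonal" and "mixed" pieces; the Ruelle bound (\ref{no9}) applies uniformly to all of them, giving the clean estimate $\mu(N_\Delta(N_\Delta-1)) \leq c\,\varkappa^2 |\Delta|^2$ with an absolute constant $c$ (in fact $c = 4$ suffices, by a direct count). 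Everything else is routine measure theory: approximation of a double point by shrinking cells, Chebyshev, and $\sigma$-additivity. No hard analysis is needed beyond what (\ref{W2}) and (\ref{no9}) already supply.
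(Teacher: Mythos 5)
Your proof is correct and takes a genuinely different route from the paper's. The paper's argument is of the ``singular energy'' type: it introduces the truncated kernel $h_{\Lambda,N}(x,y)=\mathds{1}_\Lambda(x)\mathds{1}_\Lambda(y)\min\{N;\,|x-y|^{-d\epsilon}\}$, shows via the Ruelle bound (\ref{no9}) on $k^{(2,0)}_\mu$, $k^{(1,1)}_\mu$, $k^{(0,2)}_\mu$ that $\mu(H_{\Lambda,N})$ stays bounded uniformly in $N$, lets monotone convergence force the limit $H_\Lambda(\gamma)$ to be finite $\mu$-a.s., and notes that finiteness of $H_\Lambda(\gamma)$ (which contains a term $|x-y|^{-d\epsilon}$ for each ordered pair of indexed points in $\gamma\cap\Lambda$) rules out any coincident pair, so $\gamma$ is simple in $\Lambda$. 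Your argument is the classical covering/Chebyshev criterion for simplicity of a point process with bounded second-order correlation densities: partition $\Lambda$ into cells of shrinking mesh, bound $\mu(N_\Delta\geq 2)\leq \tfrac12\mu(N_\Delta(N_\Delta-1))\leq 2\varkappa^2|\Delta|^2$ cell by cell using only the same three second-order correlation functions, and observe that $\sum_j|\Lambda_j^{(N)}|^2\leq\bigl(\max_j|\Lambda_j^{(N)}|\bigr)|\Lambda|\to 0$. Both arguments consume exactly the same input --- the $|m|=2$ Ruelle bounds and a compact exhaustion of $X$ --- and yield the same conclusion; yours avoids the choice of the exponent $\epsilon\in(0,1)$, the near/far split of $\mathcal{I}_{\Lambda,N}$, and the monotone-convergence step, while the paper's yields a mild quantitative by-product, namely that on $\{H_\Lambda\leq C\}$ the particles in $\Lambda$ are pairwise separated by at least $C^{-1/d\epsilon}$. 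Two minor remarks: the containment $\{\text{double point in }\Lambda\}\subset\bigcup_j\{N_{\Lambda_j^{(N)}}\geq 2\}$ already holds for \emph{every} $N$, since the cell containing a multiple site carries mass $\geq 2$, so the $\liminf$ is superfluous; and the Touchard-polynomial estimate (\ref{W2}) you mention as a ``key device'' is not actually needed --- only (\ref{no9}) at order $|m|=2$ enters.
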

\begin{proof}
For a compact $\Lambda \subset X$, $N\in \mathds{N}$ and $\epsilon
\in (0,1)$, we set
\begin{gather}
  \label{no17}
h_{\Lambda,N}(x,y) =  \mathds{1}_\Lambda (x) \mathds{1}_\Lambda
(y) \min\{N; |x-y|^{-d\epsilon}\}, \quad x,y \in X, \\[.2cm] \nonumber
H_{\Lambda,N} (\gamma) = \sum_{x\in \gamma_0} \sum_{y\in
\gamma_0\setminus x} h_{\Lambda,N}(x,y) + \sum_{x\in \gamma_1}
\sum_{y\in \gamma_1\setminus x} h_{\Lambda,N}(x,y) + \sum_{x\in
\gamma_0} \sum_{y\in \gamma_1} h_{\Lambda,N}(x,y).
\end{gather}
According to (\ref{no15}) and (\ref{no9}) we have
\begin{gather}
  \label{no18}
  \mu(H_{\Lambda,N} ) = \int_{\Lambda^2}\left(k_\mu^{(2,0)}(x,y) + k^{(1,1)} (x,y) + k_\mu^{(0,2)}(x,y)
  \right)h_{\Lambda,N}(x,y)d x dy \leq 3 \varkappa^2
  \mathcal{I}_{\Lambda, N},\\[.2cm] \nonumber
 \mathcal{I}_{\Lambda, N} := \int_{\Lambda^2} h_{\Lambda,N}(x,y)d x
 dy =: \mathcal{I}_{\Lambda, N}^{(1)}(r) + \mathcal{I}_{\Lambda,
 N}^{(2)}(r),
\end{gather}
where, for a certain $r>0$, we set and then get
\begin{gather}
  \label{no19}
\mathcal{I}_{\Lambda, N}^{(1)}(r) = \int_{\Lambda}
\left(\int_{\Lambda \cap B_r(x)} h_{\Lambda,N}(x,y)d y \right)
 dx \leq \int_{\Lambda}
\left(\int_{ B_r} \frac{d z}{|z|^{d\epsilon}} \right) d x = \frac{c_dr^{d(1-\epsilon)}}{d(1-\epsilon)}  |\Lambda|, \\[.2cm] \nonumber
\mathcal{I}_{\Lambda, N}^{(2)}(r) = \int_{\Lambda}
\left(\int_{\Lambda \cap B^c_r(x)} h_{\Lambda,N}(x,y)d y  \right)
 dx \leq \frac{1}{r^{d\epsilon}} |\Lambda|^2.
\end{gather}
Here $B_r^c(x) = X \setminus B_r(x)$, and $|\Lambda|$ and $c_d/d$
denote the Euclidean volume of $\Lambda$ and the unit ball in $X$,
respectively. We apply these estimates in (\ref{no18}) and obtain
that
\[\mu(H_{\Lambda,N} )\leq C_{\mu, \Lambda},\]
for a suitable $C_{\mu,\Lambda}$ that is independent of $N$.
Clearly, $0\leq \mu(H_{\Lambda,N}) \leq \mu(H_{\Lambda,N+1})$, which
by the monotone convergence theorem yields that the pointwise limit
\begin{equation}
  \label{S4a}
\lim_{N\to +\infty} H_{\Lambda,N} (\gamma) =: H_{\Lambda} (\gamma)
\end{equation}
is finite for $\mu$-almost all $\gamma$, i.e., for all $\gamma$
belonging to some $\breve{\Gamma}^2_{\mu,\Lambda}$ such that
$\mu(\breve{\Gamma}^2_{\mu,\Lambda}) =1$. For $C>0$, we set
$\breve{\Gamma}_C^2  =\{\gamma:H_{\Lambda} (\gamma) \leq C\}$. Then
$|x-y|\geq C^{-1/d\epsilon}$ for each $x,y\in (\gamma_0\cap\Lambda)
\cup (\gamma_1\cap\Lambda)$  and each $\gamma\in
\breve{\Gamma}_C^2$. That is, $\gamma_\Lambda:= \gamma\cap \Lambda
=(\gamma_0\cap\Lambda, \gamma_1\cap\Lambda)$ is simple whenever
$\gamma \in \breve{\Gamma}_C^2$. This yields that $\cup_{k\in
\mathds{N}_0} \breve{\Gamma}_{C+k}^2 \subset
\breve{\Gamma}^2_{s,\Lambda}$, , where the latter is the set of all
those $\gamma\in {\Gamma^2}$ for which $\gamma_\Lambda$ is simple.
At the same time, $\cup_{k\in \mathds{N}_0} \breve{\Gamma}_{C+k}^2
\supset \breve{\Gamma}_{\mu,\Lambda}^2$; hence,
$\mu(\breve{\Gamma}^2_{s,\Lambda}) =1$. Note that
$\breve{\Gamma}^2_{s,\Lambda}$ is an open subset of ${\Gamma^2}$,
cf. the proof of Lemma \ref{N2pn} below.  Now we take an ascending
sequence $\{\Lambda_k\}$ that exhausts $X$, and obtain
\begin{equation}
  \label{GLS}
\breve{\Gamma}^2 = \bigcap_{k}\breve{\Gamma}^2_{s,\Lambda_k},
\end{equation}
which completes the proof.
\end{proof}

\subsection{Tempered configurations}
\label{SS.2.3}

 Since we are going to essentially exploit
the sub-Poissonian measures, it might be reasonable to restrict our
consideration to subsets of ${\Gamma^2}$ the complements of which
are null-sets for each $\mu\in \mathcal{P}_{\rm exp}$. To this end,
we introduce the following function of $x \in X:= \mathds{R}^d$
\begin{equation}
  \label{P}
  \psi (x) = \frac{1}{1+|x|^{d+1}},   \qquad \langle \psi \rangle := \int_X \psi(x) d x,
\end{equation}
and also
\begin{equation}
  \label{no20}
  \varPsi(\gamma) = \Psi(\gamma_0) +
  \Psi(\gamma_1) = \sum_{x\in \gamma_0}\psi(x) +
  \sum_{y\in \gamma_1} \psi(y).
\end{equation}
Then we define
\begin{equation}
  \label{no21}
  {\Gamma}^{(n)}_{*} = \{\gamma\in {\Gamma^2}:
  \varPsi(\gamma) \leq n\}, \quad \Gamma^2_* =
  \bigcup_{n\in \mathds{N}}{\Gamma}^{(n)}_{*}, \quad \breve{\Gamma}^2_* =
  \Gamma_*^2 \cap \breve{\Gamma}^2.
\end{equation}
Elements of ${\Gamma}^2_*$ (resp. $\breve{\Gamma}^2_*$) are called
\emph{tempered configurations} (resp. \emph{tempered simple
configurations}). Clearly, $\Gamma^2_*\in \mathcal{B}(\Gamma^2)$ as
${\Gamma}^{(n)}_{*}\in \mathcal{B}(\Gamma^2)$ for all
$n\in\mathds{N}$. According to (\ref{no21}), we can also write
\begin{equation}
  \label{n021z}
  {\Gamma}_*^2 ={\Gamma}_{*} \times
  {\Gamma}_{*}, \qquad {\Gamma}_{*} := \{
  \gamma_i \in {\Gamma}: \Psi(\gamma_i) <
  \infty\}, \ \ i=0,1.
\end{equation}
By (\ref{no15}), for $\mu \in \mathcal{P}_{\rm exp}$, we then have
\begin{equation}
  \label{no22}
  \mu(\varPsi) = \int_X \left( k_\mu^{(1,0)}(x) + k_\mu^{(0,1)}(x)  \right) \psi(x) d
  x \leq 2 \varkappa \langle \psi \rangle,
\end{equation}
which by Proposition \ref{Se1pn} yields
\begin{equation}
  \label{C4}
 \forall \mu \in \mathcal{P}_{\rm exp} \qquad   \mu(\breve{\Gamma}^2_*)= \mu(\Gamma^2_*)=1.
\end{equation}
This crucial property of the elements of $\mathcal{P}_{\rm exp}$
will allow us to consider mostly configurations belonging to
$\Gamma^2_*$. In particular, this means that we will use the
following sub-fields of $\mathcal{B}({\Gamma^2})$:
\begin{equation}
\label{C4z} \breve{\mathcal{A}}_* = \{\mathbb{A} \in
\mathcal{B}({\Gamma^2}): \mathbb{A} \subset \breve{\Gamma}^2_*\},
\qquad \mathcal{A}_* = \{\mathbb{A} \in \mathcal{B}(\Gamma^2):
\mathbb{A} \subset \Gamma_*^2\}.
\end{equation}
Performing the same calculations as in obtaining (\ref{W2}) one
readily gets
\begin{equation}
  \label{W3}
  \mu (\varPsi^n) \leq T_n (\varkappa \langle \psi \rangle), \qquad n \in \mathds{N} ,
\end{equation}
which can be used to get the following estimate
\begin{equation}
  \label{W4}
  \int_{\Gamma^2} \exp\left(\beta \varPsi(\gamma) \right) \mu ( d \gamma)
  \leq \exp\left( 2 \varkappa \langle \psi \rangle (e^\beta - 1)
  \right), \qquad \beta >0,
\end{equation}
holding for $\mu\in \mathcal{P}_{\rm exp}$ of type $\leq \varkappa$.

Now we recall that $C_{\rm b}(X)$ (resp. $B_{\rm b}(X)$) stands for
the set of all bounded continuous (resp. bounded measurable)
functions $g:X \to \mathds{R}$. For $\psi$ defined in (\ref{P}), we
then set
\begin{eqnarray}
  \label{T1}
 \varTheta_\psi & = & \{ \theta (x) = g(x) \psi (x) :  g\in C_{\rm
 b}(X), \ \ g(x)\geq 0\}, \\[.2cm] \nonumber
 \varTheta^{+}_\psi & = & \{ \theta \in \varTheta_\psi: \theta(x) >0\}.
\end{eqnarray}
Clearly, each $\theta\in \varTheta_\psi$ is integrable. For such
$\theta$, we also define
\begin{equation}
  \label{C80}
  c_\theta = \sup_{x\in X} \frac{1}{\psi(x)}\log\left(1+{\theta(x)}
  \right),\qquad  \bar{c}_\theta:= e^{c_\theta} -1.
\end{equation}
Then
\begin{equation}
  \label{C801}
 0\leq  \theta (x) \leq \bar{c}_\theta
  \psi(x), \qquad \theta \in \varTheta_\psi.
\end{equation}
Next we define the following measures on $X$
\begin{equation}
  \label{no23}
  (\psi\gamma_i) (dx) = \psi(x) \gamma_i (dx),
  , \qquad \gamma_i\in \Gamma_*, \ \ i=0,1.
\end{equation}
Then
\begin{equation}
  \label{no24}
\varPsi(\gamma) =   (\psi\gamma_0) (X) + (\psi\gamma_1) (X), \qquad
(\psi\gamma_i) (X) = \sum_{x\in \gamma_i}
  \psi(x) =\Psi(\gamma_i) , \ \ i=0,1.
\end{equation}
Let $\mathcal{N}$ be the set of all positive finite Borel measures
on $X$. In view of (\ref{no24}) and (\ref{n021z}), we have that
$\psi\gamma_i\in \mathcal{N}$ for each $\gamma_i \in {\Gamma}_{*}$,
$i=0,1$. Consider
\begin{equation*}
  %\label{N1z}
C_{\rm b}^L(X) =\{g \in C_{\rm b}(X): \|g\|_L <\infty \}, \quad
\|g\|_L := \sup_{x,y\in X, \ x\neq y} \frac{|g(x) - g(y)|}{|x-y|},
\end{equation*}
and then define
\begin{equation*}
 % \label{N2}
  \|g\|_{BL} = \|g\|_L + \sup_{x\in X}|g(x)|, \qquad g\in
  C_{\rm b}^L(X),
\end{equation*}
and also
\begin{equation}
  \label{N3}
 \upsilon (\nu, \nu') =\max\left\{1; \sup_{g: \|g\|_{BL}\leq 1} \left\vert
 \nu(g) - \nu'(g)\right\vert \right\}, \qquad \nu, \nu' \in {\mathcal{N}}.
\end{equation}
\begin{proposition}{\cite[Theorem 18]{Dudley}}
  \label{N1pn}
The following three types of the convergence of a sequence
$\{\nu_n\}\subset {\mathcal{N}}$ to a certain $\nu\in \mathcal{N}$
are equivalent:
\begin{itemize}
  \item[(i)] $\nu_n(g) \to \nu(g)$ for all $g\in C_{\rm
  b}(X)$;
  \item[(ii)] $\nu_n(g) \to \nu(g)$ for all $g\in C_{\rm
  b}^L(X)$;
  \item[(iii)] $\upsilon (\nu_n, \nu) \to 0$.
\end{itemize}
\end{proposition}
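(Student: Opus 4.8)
This is a classical fact (essentially Dudley's characterization of the bounded-Lipschitz metric), so the plan is to record a self-contained argument. I would prove the two equivalences $(\mathrm{i})\Leftrightarrow(\mathrm{ii})$ and $(\mathrm{ii})\Leftrightarrow(\mathrm{iii})$; two of the four arrows are free. Indeed $(\mathrm{i})\Rightarrow(\mathrm{ii})$ because $C_{\rm b}^L(X)\subset C_{\rm b}(X)$, and $(\mathrm{iii})\Rightarrow(\mathrm{ii})$ by homogeneity, since for $0\neq g\in C_{\rm b}^L(X)$ the function $g/\|g\|_{BL}$ lies in the unit $\|\cdot\|_{BL}$-ball, so $\upsilon(\nu_n,\nu)\to0$ forces $\nu_n(g)\to\nu(g)$. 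Everything thus reduces to $(\mathrm{ii})\Rightarrow(\mathrm{i})$ and $(\mathrm{ii})\Rightarrow(\mathrm{iii})$, and the engine for both is the preliminary fact that $(\mathrm{ii})$ already makes $\{\nu_n\}$ of uniformly bounded total mass and tight. The mass bound is immediate from $(\mathrm{ii})$ applied to $g\equiv1$, giving $\nu_n(X)\to\nu(X)$ and $C:=\sup_n\nu_n(X)<\infty$. For tightness I would use, for $R>0$, the collar function $h_R(x)=\min\{1;\max\{0;|x|-R\}\}\in C_{\rm b}^L(X)$, which takes values in $[0,1]$, vanishes on $B_R$ and equals $1$ off $B_{R+1}$: then $\nu_n(B_{R+1}^c)\le\nu_n(h_R)$ and, by $(\mathrm{ii})$, $\nu_n(h_R)\to\nu(h_R)\le\nu(B_R^c)$, which $\to0$ as $R\to\infty$ because $\nu$ is finite; given $\varepsilon>0$ one picks $R$ with $\nu(B_R^c)<\varepsilon/2$ (so $\nu_n(B_{R+1}^c)<\varepsilon$ for all large $n$) and then enlarges $R$ to absorb the finitely many remaining, individually finite, $\nu_n$, obtaining a single $R'$ with $\sup_n\nu_n(B_{R'}^c)<\varepsilon$ and $\nu(B_{R'}^c)<\varepsilon$.

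For $(\mathrm{ii})\Rightarrow(\mathrm{i})$: given $g\in C_{\rm b}(X)$ with $\|g\|_\infty\le M$ and $\varepsilon>0$, choose $R'$ as above. On the compact $B_{R'+1}$ the function $g$ is uniformly continuous, hence approximable within $\varepsilon$ in sup-norm by some $g'\in C_{\rm b}^L(X)$ with $\|g'\|_\infty\le M$ (for instance an inf-convolution $x\mapsto\inf_y[g(y)+k|x-y|]$ for $k$ large, or a McShane extension of a Lipschitz approximant followed by truncation at $\pm M$). Telescoping $\nu_n(g)-\nu(g)$ through $g'$ and splitting each $g-g'$ integral over $B_{R'}$ and its complement, the two outer terms are bounded by $\varepsilon C+2M\varepsilon$ and $\varepsilon\nu(X)+2M\varepsilon$, while $\nu_n(g')-\nu(g')\to0$ by $(\mathrm{ii})$; hence $\limsup_n|\nu_n(g)-\nu(g)|\le\varepsilon(C+\nu(X)+4M)$, and $\varepsilon\downarrow0$ gives $(\mathrm{i})$.

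For $(\mathrm{ii})\Rightarrow(\mathrm{iii})$, which I expect to be the main obstacle, I would argue by contradiction. If $\upsilon(\nu_n,\nu)\not\to0$ there are $\varepsilon>0$, a subsequence $(n_k)$ and test functions $g_k$ with $\|g_k\|_{BL}\le1$ and $|\nu_{n_k}(g_k)-\nu(g_k)|\ge\varepsilon$. The $g_k$ are uniformly bounded and uniformly Lipschitz, so by Arzel\`a--Ascoli and a diagonal extraction over an exhausting sequence of balls there is a further subsequence $(k_j)$ with $g_{k_j}\to g_\infty$ locally uniformly, where $g_\infty\in C_{\rm b}^L(X)$ with $\|g_\infty\|_{BL}\le1$. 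Choosing $R'$ with $\sup_n\nu_n(B_{R'}^c)<\varepsilon/8$ and $\nu(B_{R'}^c)<\varepsilon/8$ and using
\[
|\nu_{n_{k_j}}(g_{k_j})-\nu(g_{k_j})|\le|\nu_{n_{k_j}}(g_{k_j}-g_\infty)|+|\nu_{n_{k_j}}(g_\infty)-\nu(g_\infty)|+|\nu(g_\infty-g_{k_j})|,
\]
the middle term tends to $0$ by $(\mathrm{ii})$, while the first is at most $\sup_{B_{R'}}|g_{k_j}-g_\infty|\,\nu_{n_{k_j}}(X)+2\nu_{n_{k_j}}(B_{R'}^c)$, whose first part vanishes along $(k_j)$ by local uniform convergence and the mass bound and whose second part is $<\varepsilon/4$; the third term is estimated identically with $\nu$ in place of $\nu_{n_{k_j}}$. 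Thus $\limsup_j|\nu_{n_{k_j}}(g_{k_j})-\nu(g_{k_j})|\le\varepsilon/2<\varepsilon$, a contradiction. The genuinely delicate point is this last step: one has to upgrade convergence that holds pointwise in the test function to convergence uniform over the infinite-dimensional ball $\{\|g\|_{BL}\le1\}$, and the only leverage is equi-Lipschitz relative compactness on compacta, which must be made to cooperate with the tail control supplied by tightness — getting these two ingredients to interact cleanly is where the work lies.
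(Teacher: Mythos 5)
The paper does not prove this Proposition; it is stated with the citation \cite[Theorem 18]{Dudley} and no argument is given in the text, so there is no in-paper proof against which to compare. Your self-contained derivation is correct and follows the standard route to Dudley's theorem: (i)$\Rightarrow$(ii) is set inclusion, (iii)$\Rightarrow$(ii) is homogeneity, and the two substantive implications both rest on first upgrading (ii) to uniform mass control (via $g\equiv1$) and tightness (via the Lipschitz collar $h_R$). With that in hand, (ii)$\Rightarrow$(i) is a telescoping through an inf-convolution approximant of a bounded continuous $g$ by a BL function that is sup-close on a large ball, and (ii)$\Rightarrow$(iii) proceeds by contradiction, extracting via Arzel\`a--Ascoli and diagonalization a locally uniformly convergent subsequence of near-extremal test functions whose limit is again a BL function of norm at most one (here you correctly use superadditivity of $\liminf$ so that $\|g_\infty\|_L+\|g_\infty\|_\infty\le\liminf\|g_{k_j}\|_{BL}\le1$). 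The delicate step you flag --- converting pointwise-in-$g$ convergence into uniformity over the BL unit ball --- is exactly where equi-Lipschitz compactness on compacta and tightness must cooperate, and your split of each error term into a ball and a tail handles it cleanly. One remark about the statement as printed: (\ref{N3}) reads $\upsilon(\nu,\nu')=\max\{1;\sup_{\|g\|_{BL}\le1}|\nu(g)-\nu'(g)|\}$, which forces $\upsilon\ge1$ and would make item (iii) vacuous; this is evidently a misprint for $\min$ (the standard truncated bounded-Lipschitz distance), and your argument, which implicitly reads it that way, proves the intended statement.
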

That is, $\upsilon$ metrizes the weak convergence of the elements of
${\mathcal{N}}$. Our aim now is to metrize ${\Gamma}^2_*$. In view
of (\ref{n021z}), to this end it is enough to metrize
${\Gamma}_{*}$. Set
\begin{equation}
  \label{N4}
  \varTheta^{BL}_\psi = \{ \theta (x) = g(x) \psi(x): \|g\|_{BL}\leq 1\},
\end{equation}
and then define
\begin{equation}
  \label{N4a}
\upsilon^* (\gamma, \gamma') = \upsilon (\psi\gamma_0,
\psi\gamma'_0) + \upsilon (\psi\gamma_1, \psi\gamma'_1).
\end{equation}
Note that
\begin{equation}
  \label{N5}
  \upsilon (\psi\gamma_i, \psi\gamma'_i) =  \max\left\{1; \sup_{\theta\in \varTheta_\psi^{BL}}\left\vert
  \sum_{x\in \gamma_i} \theta(x) - \sum_{x\in
 \gamma'_i}\theta(x)\right\vert \right\}, \qquad \gamma_i , \gamma'_i \in {\Gamma_*}, \ \ i=0,1.
\end{equation}
Before going further, we recall that the set of simple
configurations is defined in (\ref{no16}), see also (\ref{GLS}) and
(\ref{no21}).
\begin{lemma}
  \label{N2pn}
The metric space  $({\Gamma}^2_*, \upsilon^*)$ is complete and
separable. Its metric topology is the weakest topology that makes
continuous all the maps ${\Gamma}^2_* \ni \gamma \mapsto
\sum_{i=0,1}\sum_{x\in \gamma_i}\theta_i(x)$, $\theta_0, \theta_1\in
\varTheta_\psi$.  The set $\breve{\Gamma}^2_*$ defined in
(\ref{no21}) is a $G_\delta$ subset of the Polish space
 $\Gamma^2_*$, and hence is also a Polish spaces. Its
 completion in the metric defined in (\ref{N4a}) is
 ${\Gamma}^2_*$.
\end{lemma}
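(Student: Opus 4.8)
The plan is to realise $\Gamma_*$ as a subset of the space $\mathcal{N}$ of finite positive Borel measures on $X$ through the map $\gamma_i\mapsto\psi\gamma_i$ of (\ref{no23}), which is injective since $\psi>0$ forces $n_{\gamma_i}(x)=(\psi\gamma_i)(\{x\})/\psi(x)$, and then to transport the structure of $(\mathcal{N},\upsilon)$ — a complete separable metric space on which $\upsilon$ metrises weak convergence by Proposition \ref{N1pn} — through this embedding, using that by (\ref{N4a}) the metric $\upsilon^*$ is the pull-back of $\upsilon\oplus\upsilon$ under $\gamma\mapsto(\psi\gamma_0,\psi\gamma_1)$. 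For the topological statement, note that for $\theta_i=g_i\psi\in\varTheta_\psi$ one has $\sum_{x\in\gamma_i}\theta_i(x)=(\psi\gamma_i)(g_i)$, so the maps in the lemma are the maps $\gamma\mapsto(\psi\gamma_0)(g_0)+(\psi\gamma_1)(g_1)$ with $g_0,g_1\in C_{\rm b}(X)$, $g_i\geq0$; taking $\theta_{1-i}\equiv0$ isolates each component, taking $g_i\equiv1$ (so $\theta_i=\psi\in\varTheta_\psi$) recovers $\gamma_i\mapsto\Psi(\gamma_i)=(\psi\gamma_i)(X)$, and writing $h=(h+\|h\|_\infty)-\|h\|_\infty$ shows that convergence of all these functionals is equivalent to convergence of $(\psi\gamma_i)(h)$ for every $h\in C_{\rm b}(X)$, i.e., by Proposition \ref{N1pn}, to convergence in $\upsilon\oplus\upsilon$. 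Hence the asserted weakest topology is the $\upsilon^*$-metric topology.

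For completeness, let $\{\gamma^{(n)}\}$ be $\upsilon^*$-Cauchy. Then each $\{\psi\gamma^{(n)}_i\}$ is $\upsilon$-Cauchy, hence converges weakly — in particular, is tight — to some $\nu_i\in\mathcal{N}$, and $\Psi(\gamma^{(n)}_i)=(\psi\gamma^{(n)}_i)(X)$ is bounded, say by $C$. Therefore $N_\Lambda(\gamma^{(n)}_i)\le\Psi(\gamma^{(n)}_i)/\inf_{x\in\Lambda}\psi(x)$ is bounded for every compact $\Lambda$, and by the standard vague relative compactness of sets of configurations with uniformly bounded local counts, together with a diagonal argument over an exhausting sequence, a subsequence of $\gamma^{(n)}_i$ converges vaguely to some $\gamma_i\in\Gamma$ with $\Psi(\gamma_i)\le C<\infty$, i.e. $\gamma_i\in\Gamma_*$. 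The tightness of $\{\psi\gamma^{(n)}_i\}$ then promotes this to $\psi\gamma^{(n)}_i\to\psi\gamma_i$ weakly (split a bounded continuous test function over a large ball, where $\psi g\chi_\Lambda\in C_{\rm cs}(X)$, and its complement), so $\nu_i=\psi\gamma_i$ and $\upsilon^*(\gamma^{(n)},\gamma)\to0$. Separability follows by the same mechanism: the countable set of finite configurations with atoms in $\mathbb{Q}^d$ is $\upsilon^*$-dense, since $\psi(\gamma_i\cap B_R)$ converges to $\psi\gamma_i$ in total variation as $R\to\infty$ (the tail mass $\int_{X\setminus B_R}\psi\,d\gamma_i$ vanishes), hence in $\upsilon$, and a rational perturbation of the finitely many remaining atoms changes $\psi\gamma_i$ weakly by an arbitrarily small amount.

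For the $G_\delta$ claim, recall from the proof of Proposition \ref{Se1pn} that $\breve\Gamma^2_{s,\Lambda}$ denotes the set of $\gamma\in\Gamma^2$ for which $\gamma\cap\Lambda$ is simple; I claim $\Gamma^2_*\setminus\breve\Gamma^2_{s,\Lambda}$ is closed in $\Gamma^2_*$. Indeed, if $\gamma^{(n)}\to\gamma$ in $\Gamma^2_*$ and each $\gamma^{(n)}$ has some $z_n\in\Lambda$ with $n_{\gamma^{(n)}}(z_n)\ge2$, pass to a subsequence with $z_n\to z\in\Lambda$; writing $\rho_n=\psi\gamma^{(n)}_0+\psi\gamma^{(n)}_1$ and $\rho=\psi\gamma_0+\psi\gamma_1$ one has $2\psi(z_n)\delta_{z_n}\le\rho_n$ as measures, $\rho_n\to\rho$ weakly and $2\psi(z_n)\delta_{z_n}\to2\psi(z)\delta_z$ weakly, so the positive measures $\rho_n-2\psi(z_n)\delta_{z_n}$ converge weakly to the necessarily positive measure $\rho-2\psi(z)\delta_z$; hence $n_\gamma(z)=\rho(\{z\})/\psi(z)\ge2$, i.e. $\gamma\in\Gamma^2_*\setminus\breve\Gamma^2_{s,\Lambda}$. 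Thus $\Gamma^2_*\cap\breve\Gamma^2_{s,\Lambda}$ is open in $\Gamma^2_*$, and intersecting the identity (\ref{GLS}) with $\Gamma^2_*$ yields $\breve\Gamma^2_*=\bigcap_k(\Gamma^2_*\cap\breve\Gamma^2_{s,\Lambda_k})$ for an exhausting sequence of compacts $\{\Lambda_k\}$, a countable intersection of open sets; a $G_\delta$ subset of a Polish space is itself Polish by Alexandrov's theorem.

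Finally, for the completion assertion it is enough to show $\breve\Gamma^2_*$ is $\upsilon^*$-dense in $\Gamma^2_*$, for then its completion equals that of $\Gamma^2_*$, which by the above is $\Gamma^2_*$. Given $\gamma\in\Gamma^2_*$, enumerate the locally finitely many points that are multiple in some $\gamma_i$ or shared between $\gamma_0$ and $\gamma_1$, and move apart their copies by $<\varepsilon/2^j$ for the $j$-th such point, in directions avoiding any new coincidence; by continuity of $\psi$ and local finiteness of $p(\gamma)$ the resulting simple tempered configuration lies within $\upsilon^*$-distance $O(\varepsilon)$ of $\gamma$. The main obstacle is the $G_\delta$ step: one must exclude that a point of multiplicity $\ge2$ is dissolved in a $\upsilon^*$-limit — so that $\breve\Gamma^2_{s,\Lambda}$ is genuinely open — which, like the complementary density statement, hinges on controlling how the $\psi$-mass carried by atoms behaves under weak limits; by comparison the completeness reduction is routine once one observes that weak convergence of $\psi\gamma^{(n)}_i$ is automatically tight while local counts are dominated by $\Psi$.
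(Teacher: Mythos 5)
Your proposal gives a direct, self-contained proof, whereas the paper's own proof is essentially a citation: it reduces everything to \cite[Lemma 2.7]{KR}, where the analogous statements for a single component were established. What you are doing is, in effect, unpacking that cited lemma for the present two-component setting, and your route — embedding $\Gamma_*^2$ into $\mathcal{N}^2$ via $\gamma\mapsto(\psi\gamma_0,\psi\gamma_1)$, pulling back the bounded-Lipschitz metric, and then arguing completeness, separability, the topology identification, openness of $\Gamma_*^2\cap\breve\Gamma^2_{s,\Lambda}$, and density of $\breve\Gamma^2_*$ — is the natural one and is correct. The strongest and most delicate part of your argument, the openness of $\Gamma_*^2\cap\breve\Gamma^2_{s,\Lambda}$, is handled well: comparing $\rho_n\geq 2\psi(z_n)\delta_{z_n}$ and using that weak limits of positive measures are positive (or, equivalently, testing against a bump function concentrating at $z$) correctly rules out a point of multiplicity $\geq 2$ dissolving in the limit. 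Your observations that $\sum_{x\in\gamma_i}\theta_i(x)=(\psi\gamma_i)(g_i)$ and that the decomposition $h=(h+\|h\|_\infty)-\|h\|_\infty$ upgrades the nonnegativity constraint on $g_i$ to all of $C_{\rm b}(X)$ are exactly what identifies the initial topology with the weak topology, and then Proposition \ref{N1pn} finishes the topological statement. The one step you state a bit too casually is the density of $\breve\Gamma^2_*$: when infinitely many (locally finite) multiple points are present, perturbing the $j$-th one by $\varepsilon/2^j$ does not by itself give a summable $\upsilon^*$-change unless you also scale the perturbation by the multiplicity $n_j$; taking $\delta_j\sim\varepsilon/(n_j 2^j)$ (or simply truncating to $B_R$ first, as you already do for separability, and then perturbing the finitely many surviving multiple points) closes this. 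With that small repair, the proof is complete. Compared with the paper, your version is longer but self-contained; the paper trades self-containment for brevity by outsourcing to prior work.
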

\begin{proof}
The completeness of $(\Gamma^2_{*},\upsilon^* )$ follows from the
completeness of  $({\Gamma}_{*},\upsilon_* )$, which was obtained in
\cite[Lemma 2.7]{KR}. The second part of the statement follows by
the corresponding property of $\breve{\Gamma}_*$ obtained
\emph{ibid}.
\end{proof}
The following formula summarizes the relationship between the
configuration sets
\begin{equation}
  \label{Jul1}
  \breve{\Gamma}^2_* \subset \Gamma^2_* \subset \Gamma^2.
\end{equation}
Recall that each of them is a Polish space with the topology as
discussed above. Let $\mathcal{B}(\breve{\Gamma}^2_*)$ and
$\mathcal{B}(\Gamma^2_*)$ be the corresponding Borel
$\sigma$-fields, that can be compared with the $\sigma$-fields
introduced in (\ref{C4z}).
\begin{corollary}
  \label{N1co}
The embeddings in (\ref{Jul1}) are continuous. Therefore,
$\mathcal{B}(\breve{\Gamma}^2_*)=\breve{\mathcal{A}}_* =
\{\mathbb{A}\in \mathcal{B}(\Gamma^2_*): \mathbb{A}\subset
\breve{\Gamma}^2_*\}$ and $\mathcal{B}(\Gamma^2_*)=\mathcal{A}_*$.
\end{corollary}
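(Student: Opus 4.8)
The plan is to establish continuity of the two inclusions separately and then to read off the identities between the $\sigma$-fields from a standard fact about Polish topologies.

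For the inclusion $\breve{\Gamma}^2_*\hookrightarrow\Gamma^2_*$ there is nothing to do: by Lemma \ref{N2pn} the topology of $\breve{\Gamma}^2_*$ is the one induced by the metric $\upsilon^{*}$ restricted to it, i.e. the subspace topology, so the inclusion is (an isometry, hence) continuous. For the inclusion $\Gamma^2_*\hookrightarrow\Gamma^2$ I would compare the families of maps generating the two topologies. The product topology on $\Gamma^2=\Gamma\times\Gamma$ is the weakest one making continuous the maps $\gamma\mapsto\sum_{x\in\gamma_i}g(x)$, $g\in C_{\rm cs}(X)$, $i=0,1$. Fix such a $g$ and write $g=g^{+}-g^{-}$ with $g^{\pm}\in C_{\rm cs}(X)$, $g^{\pm}\geq 0$. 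Since $\psi$ is continuous and strictly positive while $g^{\pm}$ is continuous with compact support, the function $g^{\pm}/\psi$ lies in $C_{\rm b}(X)$ and is nonnegative, so $g^{\pm}=(g^{\pm}/\psi)\,\psi\in\varTheta_\psi$. By Lemma \ref{N2pn} the maps $\gamma\mapsto\sum_{x\in\gamma_i}g^{\pm}(x)$ are then $\upsilon^{*}$-continuous, and for tempered $\gamma$ and compactly supported $g$ the sum $\sum_{x\in\gamma_i}g(x)=\sum_{x\in\gamma_i}g^{+}(x)-\sum_{x\in\gamma_i}g^{-}(x)$ is a finite difference of these; hence $\gamma\mapsto\sum_{x\in\gamma_i}g(x)$ is $\upsilon^{*}$-continuous. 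Consequently the identity map $(\Gamma^2_*,\upsilon^{*})\to(\Gamma^2,\mathrm{vague})$ is continuous, which proves the first assertion of the corollary.

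For the $\sigma$-fields I would use the following standard fact (a combination of the measurability of continuous maps and the Lusin--Souslin theorem, see e.g. \cite[Chapt. 8]{Cohn}): if $E$ is Polish, $E_{0}\subseteq E$, and $E_{0}$ carries a Polish topology refining the subspace topology, then $E_{0}\in\mathcal{B}(E)$ and $\mathcal{B}(E_{0})=\{\mathbb{A}\in\mathcal{B}(E):\mathbb{A}\subseteq E_{0}\}$. Indeed, the continuous inclusion $\iota$ is Borel, so $\iota^{-1}(\mathbb{B})=\mathbb{B}\cap E_{0}\in\mathcal{B}(E_{0})$ for $\mathbb{B}\in\mathcal{B}(E)$, giving $\{\mathbb{A}\in\mathcal{B}(E):\mathbb{A}\subseteq E_{0}\}\subseteq\mathcal{B}(E_{0})$; and by Lusin--Souslin the injective continuous image of a Borel subset of the Polish space $E_{0}$ is Borel in $E$, so $\iota(\mathbb{A})=\mathbb{A}\in\mathcal{B}(E)$ for every $\mathbb{A}\in\mathcal{B}(E_{0})$, whence the reverse inclusion (and, with $\mathbb{A}=E_{0}$, also $E_{0}\in\mathcal{B}(E)$). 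Applying this with $E=\Gamma^2$, $E_{0}=\Gamma^2_*$ (the refinement being the continuity just proved) yields $\mathcal{B}(\Gamma^2_*)=\{\mathbb{A}\in\mathcal{B}(\Gamma^2):\mathbb{A}\subseteq\Gamma^2_*\}=\mathcal{A}_*$, cf. (\ref{C4z}); note $\Gamma^2_*\in\mathcal{B}(\Gamma^2)$ was already recorded in Section 2 (alternatively, $\varPsi$ is lower semicontinuous in the vague topology, so each $\Gamma^{(n)}_*$ is closed and $\Gamma^2_*$ is an $F_\sigma$-set).

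It remains to handle $\breve{\Gamma}^2_*$. By Lemma \ref{N2pn} it is a $G_\delta$-subset of the Polish space $\Gamma^2_*$, hence lies in $\mathcal{B}(\Gamma^2_*)$, and the elementary identity between the Borel $\sigma$-field of a subspace and the trace $\sigma$-field (which becomes $\{\mathbb{A}\in\mathcal{B}(\Gamma^2_*):\mathbb{A}\subseteq\breve{\Gamma}^2_*\}$ because $\breve{\Gamma}^2_*$ is Borel in $\Gamma^2_*$) gives $\mathcal{B}(\breve{\Gamma}^2_*)=\{\mathbb{A}\in\mathcal{B}(\Gamma^2_*):\mathbb{A}\subseteq\breve{\Gamma}^2_*\}$. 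Using the already established $\mathcal{B}(\Gamma^2_*)=\mathcal{A}_*$ together with $\breve{\Gamma}^2_*\subseteq\Gamma^2_*$, this set equals $\{\mathbb{A}\in\mathcal{B}(\Gamma^2):\mathbb{A}\subseteq\breve{\Gamma}^2_*\}=\breve{\mathcal{A}}_*$, cf. (\ref{C4z}). The whole argument is essentially bookkeeping once two points are settled: the continuity of $\Gamma^2_*\hookrightarrow\Gamma^2$ (handled above by passing between the two generating families of maps) and the use of the Lusin--Souslin theorem to upgrade that continuity to an equality of Borel $\sigma$-fields; the latter I regard as the only substantive ingredient, the rest being routine manipulations with trace $\sigma$-fields.
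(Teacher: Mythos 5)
Your proposal is correct and follows essentially the same route as the paper: continuity of $\Gamma^2_*\hookrightarrow\Gamma^2$ by comparing the generating families of maps, continuity of $\breve{\Gamma}^2_*\hookrightarrow\Gamma^2_*$ from Lemma \ref{N2pn}, and then Kuratowski's theorem (which you cite as Lusin--Souslin, an equivalent formulation) to pass from continuous injections between Polish spaces to the identity of the Borel $\sigma$-fields. One small point in your favor: the paper asserts that $C_{\rm cs}(X)$ is a subset of $\varTheta_\psi$, but $\varTheta_\psi$ as defined in (\ref{T1}) contains only nonnegative functions, whereas $C_{\rm cs}(X)$ does not; your decomposition $g=g^{+}-g^{-}$ with $g^{\pm}/\psi\in C_{\rm b}(X)$ nonnegative fills in exactly the detail needed to make the paper's argument precise.
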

\begin{proof}
The continuity of ${\Gamma}^2_*\subset {\Gamma^2}$  follows by the
fact that $C_{\rm cs}(X)$ is a proper subset of $\varTheta_\psi$.
The other one follows by Lemma \ref{N2pn}. The stated equality of
the $\sigma$-fields follows by Kuratowski's theorem \cite[Theorem
3.9, page 21]{Part}.
\end{proof}
\begin{remark}
  \label{derk} The aforementioned equality of the $\sigma$-fields
  allows one to redefine each $\mu\in \mathcal{P}({\Gamma}^2)$
  possessing the property $\mu(\breve{\Gamma}^2_*)=1$ as a probability
  measure on $({\Gamma}^2_*, \mathcal{B}({\Gamma}^2_*))$ or $(\breve{\Gamma}^2_*, \mathcal{B}(\breve{\Gamma}^2_*))$. By
(\ref{C4}) this relates to all $\mu\in \mathcal{P}_{\rm exp}$.
\end{remark}

\subsection{Families of test functions}
 For $\theta\in \varTheta_\psi$, see (\ref{T1}), we set
\begin{equation}
  \label{de}
  \varsigma_\tau^\theta(x) = \tau - \frac{1}{\psi(x)} \log ( 1 + \theta
  (x)), \qquad \varSigma = \{\varsigma^\theta_\tau : \theta \in
  \varTheta_\psi, \ \tau > c_\theta \},
\end{equation}
where $c_\theta$ is as in (\ref{C80}). Then $\varSigma \subset
C_{\rm b}(X)$ and its elements are separated away from zero. It is
closed with respect to pointwise addition since $\theta + \theta' +
\theta \theta'$ belongs to $\varTheta_\psi$ whenever $\theta,
\theta'\in\varTheta_\psi$. For $\tau_i > c_{\theta_i}$ and $\gamma_i
\in
  {\Gamma}_{*}$, $i=0,1$, we define, see (\ref{no23}),
\begin{gather}
  \label{T4}
  \widetilde{F}_{\tau_i}^{\theta_i} (\gamma_i) = \prod_{x\in
  \gamma_i} (1+\theta_i(x)) e^{-\tau_i \psi(x)} =
  \exp\left( - (\psi\gamma_i)
  (\varsigma_{\tau_i}^{\theta_i})\right),  \\[.2cm] \nonumber
\widetilde{F}_{\tau}^{\theta} (\gamma) =
\widetilde{F}_{\tau_0}^{\theta_0} (\gamma_0)
\widetilde{F}_{\tau_1}^{\theta_1} (\gamma_1),
\end{gather}
and also
\begin{equation}
  \label{T4a}
\widetilde{\mathcal{F}} =\{\widetilde{F}_{\tau}^{\theta}: \tau
=(\tau_0,\tau_1), \ \tau_i > c_{\theta_i}, \ \theta_i \in
\varTheta_\psi, \ i=0,1\},
\end{equation}
which includes the case $\widetilde{F}_{\tau}^{\theta} \equiv 1$
corresponding to the zero $\tau$ and $\theta$. Note that in
expressions like those in (\ref{T4}), (\ref{T4a}), by $\theta$ we
understand  $(\theta_0, \theta_1)$, $\theta_i\in \varTheta_\psi$.
\begin{definition}
\cite[page 111]{EK}
  \label{V1df}
A sequence $\{\varPhi_n\}_{n\in \mathds{N}} \subset B_{\rm
b}({\Gamma}^2_*)$ is said to boundedly and pointwise (bp-) converge
to a given $\varPhi\in B_{\rm b}({\Gamma}^2_*)$ if it converges
pointwise and $$\sup_{n\in \mathds{N}} \sup_{\gamma\in {\Gamma}^2_*}
|\varPhi_n(\gamma)| < \infty.$$ The bp-closure of a set $M\subset
B_{\rm b}({\Gamma}^2_*)$ is the smallest subset of $B_{\rm
b}({\Gamma}^2_*)$ that contains $M$ and is closed under the
bp-convergence. In a similar way, one understands also the
bp-convergence of a sequence of functions $\phi_n :X \to
\mathds{R}$.
\end{definition}
It is quite standard, see \cite[Proposition 4.2, page 111]{EK} or
\cite[Lemma 3.2.1, page 41]{Dawson}, that $C_{\rm b}(X)$ contains a
countable family of nonnegative functions, $\{g_j\}_{j\in
\mathds{N}}$, which is {\it convergence determining} and such that
its linear span is bp-dense in $B_{\rm b}(X)$. This means that a
sequence $\{\nu_n\}\subset \mathcal{N}$ weakly converges to a
certain $\nu$ if and only if $\nu_n(g_j) \to \nu (g_j)$, $n \to
+\infty$ for all $j\in \mathds{N}$. One may take such a family
containing the constant function $g(x)\equiv 1$ and closed with
respect to pointwise addition. Moreover, one may assume that
\begin{equation}
  \label{S}
 \forall j\in \mathds{N} \qquad  \inf_{x\in X} g_j(x) =: \zeta_j >0.
\end{equation}
If this is not the case for a given $g_j$, in place of it one may
take $\tilde{g}_j(x)= g_j(x) + \zeta_j$ with some $\zeta_j>0$. The
new set $\{\tilde{g}_j\}$ has both mentioned properties and also
satisfies (\ref{S}). Then assuming the latter we conclude that
\begin{equation}
  \label{T3z}
 \varSigma_0 := \{g_j\}_{j\in \mathds{N}} \subset \varSigma,
\end{equation}
where the latter is defined in (\ref{de}). To see this, for a given
$g_j$, take $\tau_j \geq \sup_{x} g_j(x)$ and then set
\begin{equation}
  \label{MG}
  \theta_j (x) = \exp\bigg{(} [\tau_j - g_j(x)]\psi(x) \bigg{)} -1.
\end{equation}
Clearly, $\theta_j(x) \geq 0$. Since $\psi^n (x) \leq \psi(x)$,
$n\in \mathds{N}$, we have that $\theta_j(x) \leq e^{\tau_j}
\psi(x)$, and hence $\{\theta_j\}_{j\in \mathds{N}}\subset
\varTheta_\psi$, see (\ref{T1}). At the same time,
$\varsigma^{\theta_j}_{\tau_j} =g_j$ and $c_{\theta_j} = \sup_{x}
(\tau_j - g_j(x))< \tau_j$ in view of (\ref{S}). By (\ref{MG}),
(\ref{T3z}) and \cite[Theorem 3.4.5, page 113]{EK}, see also
\cite[page 43]{Dawson}, one readily proves the following statement.
\begin{proposition}
  \label{T1pn}
The set $\widetilde{\mathcal{F}}$ defined in (\ref{T4a}) is closed
with respect to pointwise multiplication. Additionally:
\begin{itemize}
  \item[(i)] It is separating: $\mu_1 (F)=\mu_2 (F)$, holding for all
  $F\in \widetilde{\mathcal{F}}$, implies $\mu_1 = \mu_2$ for all $\mu_1,
  \mu_2 \in \mathcal{P}({\Gamma}^2_*)$.
\item[(ii)] It is convergence determining: if a sequence $\{\mu_n\}_{n\in \mathds{N}}\subset \mathcal{P}({\Gamma}^2_*)$
is such that $\mu_n(F) \to \mu(F)$, $n\to +\infty$ for all $F\in
\widetilde{\mathcal{F}}$ and some $\mu\in
\mathcal{P}({\Gamma}^2_*)$, then $\mu_n(F) \to \mu(F)$ for all $F\in
C_{\rm b}({\Gamma}^2_*)$.
\item[(iii)]   The set $B_{\rm b}({\Gamma}^2_*)$ is the bp-closure of the linear span of $\widetilde{\mathcal{F}}$.
\end{itemize}
\end{proposition}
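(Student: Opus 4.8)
The plan is to identify $\widetilde{\mathcal F}$ with a family of \emph{Laplace functionals}. By (\ref{no23})--(\ref{no24}), the map $\iota:\Gamma^2_*\ni\gamma\mapsto(\psi\gamma_0,\psi\gamma_1)$ takes values in $\mathcal N\times\mathcal N$, and by Lemma~\ref{N2pn} it is a homeomorphism of $\Gamma^2_*$ onto its range, which is therefore a Borel (indeed $G_\delta$) subset of $\mathcal N\times\mathcal N$. Under $\iota$, an element $\widetilde{F}^{\theta}_{\tau}$ of $\widetilde{\mathcal F}$ becomes $(\nu_0,\nu_1)\mapsto e^{-\nu_0(\varsigma^{\theta_0}_{\tau_0})-\nu_1(\varsigma^{\theta_1}_{\tau_1})}$, see (\ref{de}), (\ref{T4}); moreover, by the construction (\ref{MG}) the set $\varSigma$ contains \emph{every} $h\in C_{\rm b}(X)$ with $\inf_X h>0$ and is closed under pointwise addition, so in particular it contains all finite positive linear combinations $\sum_{j\in J}t_jg_j$ of the family $\varSigma_0=\{g_j\}$ from (\ref{T3z}). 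Closure of $\widetilde{\mathcal F}$ under pointwise multiplication is the direct, componentwise check $\widetilde{F}^{\theta}_{\tau}\widetilde{F}^{\theta'}_{\tau'}=\widetilde{F}^{\theta''}_{\tau+\tau'}$ with $\theta''=\theta+\theta'+\theta\theta'\in\varTheta_\psi$ (noted after (\ref{de})) and $c_{\theta''}\le c_\theta+c_{\theta'}<\tau+\tau'$, the first inequality because $\tfrac{1}{\psi}\log(1+\theta'')=\tfrac{1}{\psi}\log(1+\theta)+\tfrac{1}{\psi}\log(1+\theta')$.

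For (i), suppose $\mu_1(F)=\mu_2(F)$ for all $F\in\widetilde{\mathcal F}$. Taking $\varsigma^{\theta_i}_{\tau_i}=\sum_{j\in J_i}t_jg_j$ and using $\nu(\sum_jt_jg_j)=\sum_jt_j\nu(g_j)$, this says that for every finite $J_0,J_1\subset\mathds N$ and all $t_j>0$ the Laplace transform
\[
\int_{\Gamma^2_*}\exp\!\Big(-\sum_{j\in J_0}t_j(\psi\gamma_0)(g_j)-\sum_{j\in J_1}t_j(\psi\gamma_1)(g_j)\Big)\mu_1(d\gamma)
\]
equals the same integral against $\mu_2$. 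By uniqueness of multivariate Laplace transforms, $\mu_1$ and $\mu_2$ induce the same law under $\gamma\mapsto\big((\psi\gamma_0)(g_j)\big)_{j\in J_0},\big((\psi\gamma_1)(g_j)\big)_{j\in J_1}$ for each finite $J_0,J_1$, hence under the countable map $\Phi:\gamma\mapsto\big((\psi\gamma_i)(g_j)\big)_{i=0,1,\,j\in\mathds N}\in\mathds R^{\mathds N}\times\mathds R^{\mathds N}$. Since $\varSigma_0$ is convergence determining for $\mathcal N$ it is in particular separating, so $\Phi$ is injective on $\Gamma^2_*$ (as $\psi>0$ pointwise, $\psi\gamma_i$ determines $\gamma_i$), and being continuous it is a Borel isomorphism onto its image by Kuratowski's theorem \cite[Theorem 3.9]{Part}; therefore $\mu_1=\mu_2$.

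Items (ii) and (iii) are the convergence-determining and bp-density parts of the same Laplace-functional criterion. Indeed, $\varSigma_0=\{g_j\}$ was arranged in (\ref{S})--(\ref{T3z}) to be a countable family of nonnegative bounded continuous functions, bounded away from $0$, containing the constant $1$, convergence determining for $\mathcal N$, and with linear span bp-dense in $B_{\rm b}(X)$; hence, by \cite[Theorem 3.4.5]{EK} applied on $\mathcal N\times\mathcal N$ (see also \cite[page 43]{Dawson}), the family $\{(\nu_0,\nu_1)\mapsto e^{-\nu_0(\varsigma_0)-\nu_1(\varsigma_1)}:\varsigma_0,\varsigma_1\in\varSigma\}$ is convergence determining for $\mathcal P(\mathcal N\times\mathcal N)$ and the bp-closure of its linear span is $B_{\rm b}(\mathcal N\times\mathcal N)$. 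Pulling these statements back along $\iota$ — legitimate because $\iota$ is a homeomorphism of $\Gamma^2_*$ onto a Borel subset of $\mathcal N\times\mathcal N$, so that $\mathcal P(\Gamma^2_*)$ embeds homeomorphically into $\mathcal P(\mathcal N\times\mathcal N)$ as the measures supported on $\iota(\Gamma^2_*)$ — yields (ii) and (iii).

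The step requiring genuine care, rather than being a formality, is precisely this transfer between $\Gamma^2_*$ and $\mathcal N\times\mathcal N$: one must check that the metric topology of $\Gamma^2_*$ coincides with the one induced by $\iota$ (this is Lemma~\ref{N2pn}, since $\theta_i=g_i\psi$ with $g_i$ nonnegative bounded continuous ranges over all of $\varTheta_\psi$, and nonnegative bounded continuous functions determine the weak topology of $\mathcal N$), that $\iota(\Gamma^2_*)$ is Borel (Corollary~\ref{N1co} together with the Lusin--Souslin theorem, or a direct description), and that weak convergence of probability measures supported on this $G_\delta$ set descends from $\mathcal N\times\mathcal N$ back to $\Gamma^2_*$. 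Once this identification is in place, and once the hypotheses of \cite[Theorem 3.4.5]{EK} have been matched to the listed properties of $\varSigma_0$, the remaining ingredients — uniqueness of Laplace transforms and the closure computations for $\widetilde{\mathcal F}$ — are routine.
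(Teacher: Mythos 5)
Your proposal is correct and makes explicit the route the paper's one-line proof implicitly follows: the relations (\ref{MG}), (\ref{T3z}) identify $\widetilde{\mathcal F}$ as the family of Laplace functionals of $\iota:\gamma\mapsto(\psi\gamma_0,\psi\gamma_1)\in\mathcal N\times\mathcal N$, and the conclusion then comes from the cited \cite[Theorem 3.4.5]{EK} together with the transfer back along this homeomorphism onto a Borel ($G_\delta$) subset. You correctly isolate the only delicate step, namely that $\iota(\Gamma^2_*)$ is Borel and that weak convergence of probability measures whose limit is supported there descends from $\mathcal N\times\mathcal N$ to $\Gamma^2_*$ (a Portmanteau argument using open sets of the subspace, together with the fact that $\iota$ is a homeomorphism onto its image). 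Two small remarks: part (i) already follows from part (ii) by taking the constant sequence $\mu_n\equiv\mu_1$, so the separate Laplace-transform plus Kuratowski argument, while correct, is a detour; and the bp-density claim in (iii) is most cleanly obtained not from \cite[Theorem 3.4.5]{EK} itself but from the multiplicativity of $\widetilde{\mathcal F}$, its separating property, and the functional monotone class theorem — which is precisely what the paper's companion citation to \cite[page 43]{Dawson} supplies.
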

Now we introduce another class of functions $F:{\Gamma}^2_*\to
\mathds{R}$ which we then use to define the domain of ${L}$. For
$k\in \mathds{N}$, $v_{i,1}, \dots, v_{i,k} \in C_{\rm b}(X)$ and
$\gamma_i\in {\Gamma}_{*}$, $i=0,1$, we write
\begin{equation}
  \label{de1}
  \mathbf{v}_i(\mathbf{x}^k) = v_{i,1}(x_1) \cdots v_{i,k}(x_k), \qquad \gamma_i \setminus \mathbf{x}^k =
  \gamma_i \setminus \{x_1, \dots  , x_k\},
\end{equation}
see subsect. \ref{2.1ss}. As is (\ref{no3}), we will omit $k$ if the
dimension of $\mathbf{x}$ is known from the context. Then for $m=
(m_0, m_1) \in \mathds{N}^2_0$, $v_{i,j}\in \varTheta_\psi^{+}$ (see
(\ref{T1})) and $\tau =(\tau_0,\tau_1)$, $\tau_i>0$, we set, see
also (\ref{no3a}) and (\ref{no20A}),
\begin{gather}
  \label{de2}
  \widehat{F}^{m_i}_{\tau_i} (\mathbf{v}_i|\gamma_i)=
  \sum_{\mathbf{x}^{m_i}\in \gamma_i} \mathbf{v}_i
  (\mathbf{x}^{m_i}) \exp\left( - \tau_i \Psi(\gamma_i \setminus \mathbf{x}^{m_i})
  \right), \qquad i = 0,1,\\[.2cm] \nonumber
\widehat{F}^{m}_{\tau} (\mathbf{v}|\gamma)=
\widehat{F}^{m_0}_{\tau_0}
(\mathbf{v}_0|\gamma_0)\widehat{F}^{m_1}_{\tau_1}
(\mathbf{v}_1|\gamma_1), \qquad \gamma\in
{\Gamma}^2_*,\\[.2cm] \nonumber \widehat{\mathcal{F}} = \{\widehat{F}^{m}_{\tau} (\mathbf{v}|\cdot): m\in \mathds{N}^2_0,
\ v_{i,j}\in \varTheta_\psi^{+}, \ \tau_i >0 , \ i=0,1 \}.
\end{gather}
Here $\widehat{F}^{(0,0)}_{\tau} \equiv 0$, which is also an element
of $\widehat{\mathcal{F}}$.
\begin{proposition}
  \label{TH1pn}
For each $m=(m_0,m_1) \in \mathds{N}_0^2$, $\tau = (\tau_0,\tau_1)
>0$ and $v_{i,j}\in   \varTheta_\psi^{+}$,
$j=1,. \dots , m_j$, $i=0,1$, it follows that
$\widehat{F}^{m}_{\tau} (\mathbf{v}|\cdot) \in C_{\rm
b}({\Gamma}^2_*)$.
\end{proposition}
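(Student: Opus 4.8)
We may assume $|m|\ge 1$, the case $m=(0,0)$ being trivial. The plan is to check separately that $\widehat{F}^m_\tau(\mathbf{v}|\cdot)$ is bounded and that it is continuous, exploiting throughout the product structure $\widehat{F}^m_\tau(\mathbf{v}|\gamma)=\widehat{F}^{m_0}_{\tau_0}(\mathbf{v}_0|\gamma_0)\widehat{F}^{m_1}_{\tau_1}(\mathbf{v}_1|\gamma_1)$, which reduces everything to a single component $\widehat{F}^{m_i}_{\tau_i}(\mathbf{v}_i|\cdot)\colon\Gamma_*\to\mathds{R}$, $i=0,1$. The common algebraic device is that, for a string $\mathbf{x}^{m_i}=(x_1,\dots,x_{m_i})$ of distinct points of $\gamma_i$, one has $\Psi(\gamma_i\setminus\mathbf{x}^{m_i})=\Psi(\gamma_i)-\sum_{j=1}^{m_i}\psi(x_j)$, so that
\[
\widehat{F}^{m_i}_{\tau_i}(\mathbf{v}_i|\gamma_i)=e^{-\tau_i\Psi(\gamma_i)}\,S^{(i)}(\gamma_i),\qquad S^{(i)}(\gamma_i):=\sum_{\mathbf{x}^{m_i}\in\gamma_i}\prod_{j=1}^{m_i}w_{i,j}(x_j),
\]
where $w_{i,j}(x):=v_{i,j}(x)e^{\tau_i\psi(x)}$. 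Writing $v_{i,j}=g_{i,j}\psi$ with $g_{i,j}\in C_{\rm b}(X)$ nonnegative and using $0<\psi\le 1$, one has $w_{i,j}=(g_{i,j}e^{\tau_i\psi})\psi\in\varTheta_\psi$ (see (\ref{T1})); moreover a product of finitely many elements of $\varTheta_\psi$ again lies in $\varTheta_\psi$.

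For boundedness I would use (\ref{C801}) in the form $0\le w_{i,j}(x)\le e^{\tau_i}\bar{c}_{v_{i,j}}\psi(x)$; since all summands are nonnegative, dropping the distinctness restriction gives
\[
0\le S^{(i)}(\gamma_i)\le e^{m_i\tau_i}\Bigl(\textstyle\prod_{j}\bar{c}_{v_{i,j}}\Bigr)\Bigl(\sum_{x\in\gamma_i}\psi(x)\Bigr)^{m_i}=e^{m_i\tau_i}\Bigl(\textstyle\prod_{j}\bar{c}_{v_{i,j}}\Bigr)\Psi(\gamma_i)^{m_i}.
\]
Multiplying by $e^{-\tau_i\Psi(\gamma_i)}$ and invoking $\sup_{t\ge 0}t^{m_i}e^{-\tau_i t}=\bigl(m_i/(e\tau_i)\bigr)^{m_i}<\infty$ (here $\tau_i>0$ is essential) bounds $\widehat{F}^{m_i}_{\tau_i}(\mathbf{v}_i|\gamma_i)$ uniformly in $\gamma_i\in\Gamma_*$ (in particular the defining series converges); the product over $i=0,1$ then bounds $\widehat{F}^m_\tau(\mathbf{v}|\cdot)$.

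For continuity, since $(\Gamma^2_*,\upsilon^*)$ is metrizable (Lemma \ref{N2pn}) it suffices to argue along sequences, and by the same lemma $\gamma^{(n)}\to\gamma$ in $\Gamma^2_*$ means $\sum_{x\in\gamma^{(n)}_i}\theta(x)\to\sum_{x\in\gamma_i}\theta(x)$ for every $\theta\in\varTheta_\psi$ and $i=0,1$. Taking $\theta=\psi\in\varTheta_\psi$ gives $\Psi(\gamma^{(n)}_i)\to\Psi(\gamma_i)$, hence $e^{-\tau_i\Psi(\gamma^{(n)}_i)}\to e^{-\tau_i\Psi(\gamma_i)}$. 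For the factor $S^{(i)}$ I would expand the sum over distinct strings via identity (\ref{102}), applied with $\gamma\mapsto\gamma_i$ and $g(x_1,\dots,x_{m_i})=\prod_j w_{i,j}(x_j)$, which yields
\[
S^{(i)}(\gamma_i)=\sum_{G\in\mathbb{K}_{m_i}}(-1)^{l_G}\prod_{c=1}^{n_G}\Bigl(\sum_{y\in\gamma_i}\tilde{w}^{G}_{c}(y)\Bigr),
\]
where $\tilde{w}^{G}_{c}$ denotes the product of those $w_{i,j}$ whose index $j$ belongs to the $c$-th connected component of $G$. Each $\tilde{w}^{G}_{c}$ is again in $\varTheta_\psi$, so every factor $\gamma_i\mapsto\sum_{y\in\gamma_i}\tilde{w}^{G}_{c}(y)$ is continuous by the definition of the topology of $\Gamma_*$; as the right-hand side is a finite sum of finite products of such continuous functions, $S^{(i)}$ is continuous. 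Hence $\widehat{F}^{m_i}_{\tau_i}(\mathbf{v}_i|\cdot)=e^{-\tau_i\Psi(\cdot)}S^{(i)}$ is continuous, and so is the product $\widehat{F}^m_\tau(\mathbf{v}|\cdot)$.

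The crux is the initial algebraic manoeuvre: peeling off the $\gamma_i$-independent-free weight $e^{-\tau_i\Psi(\gamma_i)}$ so that the remaining sum becomes exactly of the form to which (\ref{102}) applies, together with the observation that all the ``contracted'' kernels $\tilde{w}^{G}_{c}$ stay inside the class $\varTheta_\psi$ that both generates the topology of $\Gamma_*$ and controls its members through (\ref{C801}). Once this is in place, boundedness follows from the elementary estimate of $t^{m_i}e^{-\tau_i t}$ and continuity from the very definition of the topology, so no further obstacle is expected.
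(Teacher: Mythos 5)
Your proof is correct and follows essentially the same route as the paper's: you peel off $e^{-\tau_i\Psi(\gamma_i)}$, absorb $e^{\tau_i\psi}$ into modified kernels (your $w_{i,j}$, the paper's $u_j$) staying in $\varTheta_\psi$, bound via $\sup_{t\ge 0}t^{m_i}e^{-\tau_i t}$, and expand the sum over distinct strings through (\ref{102}) into a finite combination of continuous maps $\gamma_i\mapsto\sum_{y\in\gamma_i}\tilde w(y)$ with $\tilde w\in\varTheta_\psi$. The only cosmetic difference is that you make the graph-expansion bookkeeping slightly more explicit.
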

\begin{proof}
Clearly, it suffices to show that $ \widehat{F}^{m_i}_{\tau_i}
(\mathbf{v}_i|\cdot)  \in C_{\rm b}({\Gamma}_{*})$. To prove the
continuity in question we rewrite the first line of (\ref{de2}) in
the form
\begin{gather}
  \label{de3}
\widehat{F}^{m_i}_{\tau_i} (\mathbf{v}_i|\gamma_i)= \exp\left( -
\tau_i \Psi(\gamma_i)\right) \sum_{x_1 \in \gamma_i}\sum_{x_2\in
\gamma_i\setminus x_1} \cdots \sum_{x_{m_i}\in \gamma_i \setminus
\{x_1 , \dots , x_{m_i-1}\}} u_{1} (x_1)
\cdots u_{m_i}(x_{m_i}), \\[.2cm] \nonumber u_j(x) :=
v_{i,j } (x) e^{\tau_i \psi(x)}, \qquad j=1, \dots , m_i.
\end{gather}
Obviously, every $u_j$ belongs to $\varTheta_\psi^{+}$. For each
$m_i$, by (\ref{102}) the sum in (\ref{de3}) can be written as the
sum of the products of the functions
\[
\gamma_i \mapsto U_{l_1, \dots l_s} (\gamma_i) = \sum_{y\in
\gamma_i} u_{l_1}(y) \cdots u_{l_s}(y).
\]
Each such $U_{l_1, \dots l_s}$ is continuous as $\varTheta_\psi^{+}$
is closed under pointwise multiplication. Obviously, $\gamma_i
\mapsto \Psi(\gamma_i)$ is also continuous, which yields the
continuity of the function defined in (\ref{de3}). To prove its
boundedness, we use the estimate $u_j(x) \leq c \psi(x) e^{\tau_i
\psi(x)} \leq c e^\tau \psi(x)$. Then, see (\ref{no3a}),
\begin{gather}
  \label{ND}
  \widehat{F}^{m_i}_{\tau_i} (\mathbf{v}_i|\gamma_i) \leq
  \exp\left( - \tau_i \Psi(\gamma_i)\right) \sum_{{\bf x}\in
  \gamma_i} u^{\otimes m_i}({\bf x}) \leq c^{m_i} e^{m_i\tau_i} \exp\left( - \tau_i
  \Psi(\gamma_i)\right) \left(\sum_{x\in \gamma_i} \psi(x)
  \right)^{m_i} \\[.2cm] \nonumber = c^{m_i}
  \Psi^{m_i}(\gamma_i) \exp\bigg{(}- \tau_i [\Psi(\gamma_i) - m_i] \bigg{)}
\leq \left( \frac{c m_i}{\tau_i}\right)^{m_i} e^{m_i(\tau_i - 1)},
\end{gather}
which yields the proof.
\end{proof}

\section{The Statement}

\subsection{The Kolmogorov operator}

 Regarding the parameters of the Kolmogorov operator introduced in
 (\ref{L}), we will assume that the following holds
 \begin{gather}
  \label{C7}
  \max_{i=0,1}\sup_{x} a_i(x) =: \|a\| <\infty,
\end{gather}
\begin{gather}
\label{C7a}
   \max_{i=0,1} \int_{X}\left( 1- e^{ -\phi_i(x)}\right) d x =: \varphi <
  \infty,
\end{gather}
and
\begin{equation}
  \label{C8}
  \int_{X} |x|^{l}a_i(x) d x =:\bar{a}^{(l)}_i < \infty, \qquad
  {\rm for} \ \ i=0,1 \  {\rm and} \ \ l =0,1 , \dots ,  d+1.
\end{equation}
\begin{definition}
  \label{THdf}
By $\mathcal{D}({L})$ we denote the linear span of the set
$\widetilde{\mathcal{F}}\cup \widehat{\mathcal{F}}$, where
$\widetilde{\mathcal{F}}$ and $\widehat{\mathcal{F}}$ are defined in
(\ref{T4a}) and (\ref{de2}), respectively.
\end{definition}
Our aim now is to show that ${L}\widehat{F}^{m}_{\tau}
(\mathbf{v}|\cdot)\in B_{\rm b}({\Gamma}^2_*)$ holding for each
$\widehat{F}^{m}_{\tau} (\mathbf{v}|\cdot)\in
\widehat{\mathcal{F}}$. To this end, for $x\in \gamma_i$, $y\in X$
and a suitable $F$, we define
\begin{equation}
  \label{de4}
  (\nabla_i^{y,x}  F)(\gamma_i)= F(\gamma_i \setminus x \cup y) -
  F(\gamma_i).
\end{equation}
By (\ref{de2}) we can write
\begin{equation}
  \label{de5}
\widehat{F}^{m_i}_{\tau_i} (\mathbf{v}_i|\gamma_i) = \sum_{z \in
\gamma_i} v_{i,1}(z) \widehat{F}^{m_i-1}_{\tau_i}
(\mathbf{v}_i^1|\gamma_i\setminus z),
\end{equation}
where ${\bf v}_i^1$ is obtained by setting $j=1$ in the formula
\begin{equation}
  \label{de6}
\mathbf{v}_i^j ({\bf x}^{m_i-1}) = v_{i,1} (x_1) \cdots v_{i,j-1}
(x_{j-1}) v_{i,j+1}(x_{j+1}) \cdots  v_{i,m_i}(x_{m_i}),
\end{equation}
see (\ref{de1}), and
\begin{equation*}
 % \label{de7}
\widehat{F}^{m_i-1}_{\tau_i} (\mathbf{v}_i^1|\gamma_i\setminus z) =
\sum_{{\bf x}^{m_i-1} \in \gamma_i \setminus z} {\bf v}_i^1 ({\bf
x}^{m_i-1}) \exp\left( - \tau_i \Psi({(\gamma}_i \setminus z)
\setminus {\bf x}^{m_i-1}) \right).
\end{equation*}
According to (\ref{de4}) and (\ref{de5}) we then get
\begin{equation}
  \label{de8}
\nabla_i^{y,x}  \widehat{F}^{m_i}_{\tau_i} (\mathbf{v}_i|\gamma_i)=
[v_{i,1} (y) - v_{i,1}(x)] \widehat{F}^{m_i-1}_{\tau_i}
(\mathbf{v}_i^1|\gamma_i\setminus x) + \sum_{z \in \gamma_i\setminus
x} v_{i,1}(z) \nabla_i^{y,x} \widehat{F}^{m_i-1}_{\tau_i}
(\mathbf{v}_i^1|\gamma_i\setminus z).
\end{equation}
By iterating the latter we arrive at, see also (\ref{de6}),
\begin{gather}
  \label{de9}
\nabla_i^{y,x}  \widehat{F}^{m_i}_{\tau_i} (\mathbf{v}_i|\gamma_i)=
\sum_{j=1}^{m_i}[v_{i,j} (y) -
v_{i,j}(x)]\widehat{F}^{m_i-1}_{\tau_i}
(\mathbf{v}_i^j|\gamma_i\setminus x) \\[.2cm] \nonumber +
\bigg{(}e^{-\tau_i \psi(y)} - e^{-\tau_i \psi(x)}
\bigg{)}\widehat{F}^{m_i}_{\tau_i} (\mathbf{v}_i|\gamma_i\setminus
x).
\end{gather}
For $\theta\in \varTheta_\psi$ and $a_i$ as in (\ref{C7}),
(\ref{C8}), we set
\begin{equation}
  \label{TH3}
  (a_i \ast \theta)(x) = \int_X a_i(x-y) \theta(y) dy =  \int_X a_i(y) \theta(x-y) dy,
  \qquad i=0,1.
\end{equation}
Then $a_i\ast \theta\in C_{\rm b}(X)$. Moreover, by (\ref{C801}) and
(\ref{C8}) we obtain
\begin{eqnarray}
  \label{C81}
 (a_i \ast \theta)(x) & \leq & \bar{c}_\theta \psi(x) \int_X
 \left(1 + |x|^{d+1}\right)a_i(x-y) \psi(y) d y \\[.2cm] \nonumber
 &\leq & \bar{c}_\theta \psi(x) \left[\bar{a}_i^{(0)} + \int_X \left(|x-y| + |y| \right)^{d+1} a_i(x-y) \psi(y) d y \right]
\\[.2cm] \nonumber
 &= & \bar{c}_\theta \psi(x) \left[\bar{a}_i^{(0)} + \sum_{l=0}^{d+1} { d +1 \choose l} \int_X |x-y|^{d+1-l} |y|^l \psi(y) a_i(x-y)d y  \right]
\\[.2cm] \nonumber
 &\leq  & \bar{c}_\theta \psi(x) \left[\bar{a}_i^{(0)} + \sum_{l=0}^{d+1} { d +1 \choose
 l}\bar{a}_i^{(l)} \right] =: \bar{c}_\theta \bar{\alpha}_i \psi(x),
\end{eqnarray}
where we have used also the fact that $|y|^l \psi(y)\leq 1$ for all
$l=0, \dots , d+1$. Therefore, for each $\theta \in \varTheta_\psi$,
it follows that
\begin{eqnarray}
  \label{TH4}
  (a_i \theta)(x) &:= & (a_i \ast \theta)(x) + \theta(x) \leq \bar{c}_\theta (\bar{\alpha}_i +1)
  \psi(x) \leq \bar{c}_\theta c_a \psi(x), \\[.2cm] \nonumber c_a &:= &\max\{\bar{\alpha}_0; \bar{\alpha}_1\} + 1.
\end{eqnarray}
At the same time,
\begin{equation}
  \label{TH5}
  e^{-\tau_i \psi(y)} -  e^{-\tau_i \psi(x)} \leq \tau_i \psi(y)
  \psi(x) \big| |x|^{d+1} - |y|^{d+1}\big|, \qquad i=0,1,
\end{equation}
which after calculations similar to those in (\ref{C81}) yields
\begin{equation}
  \label{TH6}
  \int_{X} a_{i}(x-y) \big{|}  e^{-\tau_i \psi(y)} -  e^{-\tau_i \psi(x)} \big{|}
  dy \leq \tau_i c_a \psi(x),
\end{equation}
where $c_a$ is as in (\ref{TH4}). For ${\bf v}_i$ as in (\ref{de1})
and $a_i\theta$ as in (\ref{TH4}), we set, cf. (\ref{de6}),
\begin{equation}
  \label{de10}
  (a_i^j {\bf v}_i)({\bf x}^{m_i}) = v_{i,1}(x_1) \cdots
  v_{i,j-1}(x_{j-1}) (a_i v_{i,j})(x_j) v_{i,j+1}(x_{j+1}) \cdots
  v_{i,m_i}(x_{m_i}).
\end{equation}

Then by (\ref{de2}), (\ref{de8}), (\ref{de9}) and also by
(\ref{TH4}), (\ref{TH6}), (\ref{de10}) we arrive at
\begin{eqnarray}
  \label{TH7}
  \left| {L} \widehat{F}^m_\tau ({\bf
  v}|\gamma)\right| & \leq & \left(\int_X \sum_{x\in
  \gamma_0} a_0(x-y) \left|\nabla_0^{y,x} \widehat{F}^{m_0}_{\tau_0} ({\bf v}_0|\gamma_0) \right|d y \right)
  \widehat{F}^{m_1}_{\tau_1} ({\bf v}_1|\gamma_1) \\[.2cm]
  \nonumber & + & \left(\int_X \sum_{x\in
  \gamma_1} a_1(x-y) \left|\nabla_1^{y,x} \widehat{F}^{m_1}_{\tau_1} ({\bf v}_1|\gamma_1) \right|d y \right)
  \widehat{F}^{m_0}_{\tau_0} ({\bf v}_0\gamma_0) \\[.2cm]
  \nonumber & \leq & \left(\sum_{j=1}^{m_0}\widehat{F}^{m_0}_{\tau_0} (a_0^j{\bf v}_0|\gamma_0) + \tau_0 c_a \bar{c}({\bf v}_0)
  \widehat{F}_{\tau_0}^{m_0+1}(\gamma_0)  \right) \widehat{F}^{m_1}_{\tau_1} ({\bf v}_1|\gamma_1) \\[.2cm]\nonumber
  & + & \left(\sum_{j=1}^{m_1}\widehat{F}^{m_1}_{\tau_1} (a_1^j{\bf v}_1|\gamma_1) + \tau_1 c_a \bar{c}({\bf v}_1)
  \widehat{F}_{\tau_1}^{m_1+1}(\gamma_1)  \right) \widehat{F}^{m_0}_{\tau_0} ({\bf
  v}_0|\gamma_0),
\end{eqnarray}
where $\bar{c}(\mathbf{v}_i)= \max_{j} \bar{c}_{v_{i,j}}$, see
(\ref{C80}), and
\begin{equation}
  \label{de11}
\widehat{F}_{\tau_i}^{m_i}(\gamma_i) =
\widehat{F}_{\tau_i}^{m_i}({\bf v}_i|\gamma_i), \quad {\rm with}
\quad {\bf v}_i ({\bf x}) = \psi(x_1) \cdots \psi(x_{m_i}),
\end{equation}
see the first line of (\ref{de2}). Then the boundedness of ${L}
\widehat{F}^m_\tau ({\bf
  v}|\cdot)$ follows by Proposition \ref{TH1pn}. Let us prove now
that ${L}\widetilde{F}_\tau^\theta \in B_{\rm b}({\Gamma}_*)$,
holding for all $\theta_i\in \varTheta_\psi$ and $\tau_i >
c_{\theta_i}$, $i =0,1,$ see (\ref{T4}). According to (\ref{de4}) we
have
\begin{equation*}
%  \label{de13}
 \nabla_i^{y,x}\widetilde{F}_{\tau_i}^{\theta_i}(\gamma_i) =
 \bigg{(}\big{[}e^{- \tau_i\psi (y)} - e^{- \tau_i\psi(x)}\big{]} +
 [\theta_i(y)e^{- \tau_i\psi (y)} - \theta_i(x)e^{- \tau_i\psi (x)}] \bigg{)}\widetilde{F}_{\tau_i}^{\theta_i}(\gamma_i\setminus
 x).
\end{equation*}
Then by means of (\ref{TH5}) and (\ref{C81}) we obtain
\begin{eqnarray*}
  %\label{de14}
 & & \ \  \left| {L} \widetilde{F}_\tau^\theta (\gamma)\right|
  \leq  \widetilde{Q}(\gamma) \widetilde{F}_{\tau-\tau^0}^\theta
 (\gamma), \\[.2cm] \nonumber \widetilde{Q}(\gamma)
 & := & (\tau_0 + \bar{c}_{\theta_0}) c_a e^{\tau_0}
 \Psi(\gamma_ 0) e^{-\tau^0_0 \Psi(\gamma_0)} + (\tau_1 + \bar{c}_{\theta_1}) c_a e^{\tau_1}
 \Psi(\gamma_ 1) e^{-\tau^0_1 \Psi(\gamma_1)} \\[.2cm] \nonumber &
 \leq &
 e^{\tau_0} c_a \frac{\tau_0 + \bar{c}_{\theta_0}}{e \tau_0^0}  + e^{\tau_1} c_a \frac{\tau_1 + \bar{c}_{\theta_1}}{e
 \tau_1^0},
 \end{eqnarray*}
where $\tau^0=(\tau^0_0 , \tau_1^0)$, $\tau^0_i >0$, is chosen in
such a way that $\tau_i-\tau_i^0 > c_{\theta_i}$, which is possible
since $\tau_i > c_{\theta_i}$. Then the boundedness in question
follows by (\ref{T4}). The  next statement summarizes the properties
of $\mathcal{D}({L})$.
\begin{proposition}
  \label{T3pn}
The set $\mathcal{D}({L})$ introduced in Definition \ref{THdf} has
the following properties:
\begin{itemize}
  \item[{\it (a)}] $\mathcal{D}({L}) \subset C_{\rm
  b}({\Gamma}^2_*)$; $\ {L}:\mathcal{D}({L}) \to B_{\rm
  b}({\Gamma}^2_*)$.
  \item[{\it (b)}] The set $B_{\rm
  b}({\Gamma}^2_*)$ is the bp-closure of
  $\mathcal{D}({L})$.
    \item[{\it (c)}] $\mathcal{D}({L})$ is separating for
    $\mathcal{P}_{\rm exp}$.
    \item[{\it (d)}] For each $F\in \widetilde{\mathcal{F}}$, the measure $F \mu/\mu(F)$
    belongs to $\mathcal{P}_{\rm exp}$.
\end{itemize}
\end{proposition}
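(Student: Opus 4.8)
The plan is to treat the four items in order, drawing on the work already done in the excerpt. For item \emph{(a)}, the inclusion $\mathcal{D}(L)\subset C_{\rm b}(\Gamma^2_*)$ follows since $\mathcal{D}(L)$ is the linear span of $\widetilde{\mathcal{F}}\cup\widehat{\mathcal{F}}$: membership of the $\widehat{F}^m_\tau(\mathbf v|\cdot)$ in $C_{\rm b}(\Gamma^2_*)$ is exactly Proposition \ref{TH1pn}, while the $\widetilde{F}^\theta_\tau$ lie in $C_{\rm b}(\Gamma^2_*)$ by the representation $\widetilde{F}^{\theta_i}_{\tau_i}(\gamma_i)=\exp(-(\psi\gamma_i)(\varsigma^{\theta_i}_{\tau_i}))$ together with Lemma \ref{N2pn} (the map $\gamma_i\mapsto(\psi\gamma_i)(\varsigma^{\theta_i}_{\tau_i})$ is continuous because $\varsigma^{\theta_i}_{\tau_i}\in\varSigma\subset C_{\rm b}(X)$ and is, up to a positive affine reparametrisation, of the form $g\psi$). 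Boundedness of $\widetilde{F}^\theta_\tau$ is clear since $\tau_i>c_{\theta_i}$ forces $(1+\theta_i(x))e^{-\tau_i\psi(x)}\le 1$. The claim $L:\mathcal D(L)\to B_{\rm b}(\Gamma^2_*)$ is precisely what the computations leading to \eqref{TH7} (for $\widehat{\mathcal F}$) and the subsequent display bounding $|L\widetilde F^\theta_\tau(\gamma)|$ by $\widetilde Q(\gamma)\widetilde F^\theta_{\tau-\tau^0}(\gamma)$ with $\widetilde Q$ bounded (for $\widetilde{\mathcal F}$) have established; one only needs to note that $L$ is linear so it extends to the whole span.

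For item \emph{(b)}, I would argue that the bp-closure of $\mathcal D(L)$ already contains the linear span of $\widetilde{\mathcal F}$ (since $\widetilde{\mathcal F}\subset\mathcal D(L)$), and then invoke Proposition \ref{T1pn}(iii), which asserts that $B_{\rm b}(\Gamma^2_*)$ is the bp-closure of the linear span of $\widetilde{\mathcal F}$. Since the bp-closure of a larger set is at least as large, and it cannot exceed $B_{\rm b}(\Gamma^2_*)$ by Definition \ref{V1df}, equality follows. Item \emph{(c)} is similar: $\widetilde{\mathcal F}\subset\mathcal D(L)$, and by Proposition \ref{T1pn}(i) the family $\widetilde{\mathcal F}$ is separating on all of $\mathcal P(\Gamma^2_*)$, hence a fortiori on $\mathcal P_{\rm exp}$ (viewed as measures on $\Gamma^2_*$ via Remark \ref{derk}); the larger set $\mathcal D(L)$ inherits this. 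So items (b) and (c) are essentially bookkeeping on top of Proposition \ref{T1pn}.

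Item \emph{(d)} is the substantive part and the step I expect to be the main obstacle. Fix $F=\widetilde F^\theta_\tau\in\widetilde{\mathcal F}$ and $\mu\in\mathcal P_{\rm exp}$. One must show $\nu:=F\mu/\mu(F)$ is again sub-Poissonian, i.e.\ satisfies a Ruelle-type bound \eqref{no7}. The natural route is to compute the correlation functions $k^{(m)}_\nu$ in terms of $k^{(n)}_\mu$. Since $F^{\theta_i}(\gamma_i)=\prod_{x\in\gamma_i}(1+\theta_i(x))$ with $\theta_i(x)=g_i(x)\psi(x)e^{-\tau_i\psi(x)}-\text{(something)}$—more precisely $\widetilde F^{\theta_i}_{\tau_i}$ has the product form $\prod_{x\in\gamma_i}(1+\bar\theta_i(x))$ with $\bar\theta_i(x):=(1+\theta_i(x))e^{-\tau_i\psi(x)}-1\in[-1,0]$ when we absorb the exponential—multiplication by such a product functional acts on correlation functions by the standard formula
\begin{equation*}
k^{(m)}_{F\mu}(\mathbf x,\mathbf y)=\Big(\prod_{j}(1+\bar\theta_0(x_j))\Big)\Big(\prod_{j}(1+\bar\theta_1(y_j))\Big)\sum_{(\xi_0,\xi_1)}\frac{1}{\xi_0!\,\xi_1!}\int k^{(m+\xi)}_\mu(\mathbf x,\mathbf x';\mathbf y,\mathbf y')\,\bar\theta_0^{\otimes\xi_0}(\mathbf x')\,\bar\theta_1^{\otimes\xi_1}(\mathbf y')\,d\mathbf x'\,d\mathbf y',
\end{equation*}
which is a Ruelle/Kirkwood–Salsburg type identity. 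Using $|\bar\theta_i|\le 1$ and $0\le\bar\theta_i+1\le 1$ together with the bound $k^{(n)}_\mu\le\varkappa^{|n|}$ from \eqref{no9}, the integrals over the extra variables are controlled by $\varkappa^{|\xi|}\|\bar\theta_0\|_{L^1}^{\xi_0}\|\bar\theta_1\|_{L^1}^{\xi_1}$, and the sum over $\xi$ yields a factor $\exp(\varkappa(\|\bar\theta_0\|_{L^1}+\|\bar\theta_1\|_{L^1}))$; meanwhile $\mu(F)=\sum_\xi\frac1{\xi_0!\xi_1!}\langle\!\langle k^{(\xi)}_\mu,\bar\theta^{\otimes\xi}\rangle\!\rangle$ is a strictly positive normalising constant bounded below (it equals $\mu(\widetilde F^\theta_\tau)>0$, positivity because the integrand $\widetilde F^\theta_\tau$ is strictly positive and $\mu$ a probability measure). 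Hence $0\le k^{(m)}_\nu(\mathbf x,\mathbf y)\le \varkappa'^{|m|}$ for a new constant $\varkappa'$ depending on $\varkappa$, $\|\bar\theta_i\|_{L^1}$ and $\mu(F)$, which is exactly \eqref{no7} for $\nu$; finite correlations follow at once. The delicate points are justifying the correlation-measure manipulation rigorously (Fubini/absolute convergence of the $\xi$-sum, for which the exponential bound above is exactly what is needed) and checking $\bar\theta_i\in L^1(X)$, which holds because $\bar\theta_i$ is bounded by a constant times $\psi$. I would also remark that since $\theta_i\ge 0$ one has $\bar\theta_i\in(-1,0]$, so all the series are in fact alternating and dominated, simplifying convergence.
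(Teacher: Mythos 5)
Items (a), (b) and (c) are treated exactly as the paper does: (a) collects the bound (\ref{TH7}) together with the subsequent display controlling $|L\widetilde F^\theta_\tau|$, while (b) and (c) follow from parts (iii) and (i) of Proposition \ref{T1pn} because $\widetilde{\mathcal F}\subset\mathcal D(L)$. Your recapitulation here matches the paper and is fine.

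For item (d) you take a markedly different, and considerably heavier, route than the paper. The paper observes only that $F\in\widetilde{\mathcal F}$ is strictly positive and bounded (indeed $0<F\le1$, since each factor $(1+\theta_i(x))e^{-\tau_i\psi(x)}$ lies in $(0,1)$ when $\tau_i>c_{\theta_i}$). Hence for $\nu:=F\mu/\mu(F)$ one has directly from (\ref{no5})
\[
\chi^{(m)}_\nu=\frac{1}{\mu(F)}\int_{\Gamma^2_*}Q^{(m)}_\gamma\,F(\gamma)\,\mu(d\gamma)\;\le\;\frac{\|F\|_\infty}{\mu(F)}\,\chi^{(m)}_\mu,
\]
so $0\le k^{(m)}_\nu\le (\|F\|_\infty/\mu(F))\varkappa^{|m|}$ almost everywhere; since $k^{(0,0)}_\nu=1$ automatically, for $|m|\ge1$ the constant is absorbed by taking $\varkappa'=\varkappa\,\|F\|_\infty/\mu(F)$, which is (\ref{no9}) for $\nu$, and finiteness and positivity of the correlation measures of $\nu$ are inherited from $\mu$ because $F\ge0$. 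Your Kirkwood--Salsburg expansion of $k^{(m)}_{F\mu}$ is correct --- it is in effect (\ref{W6}) applied to $F=K\widetilde G_{\tau,\theta}$, with $\bar\theta_i$ equal to the paper's $\theta_i^{\tau_i}$ from (\ref{V1}), and your bounds $\prod_j(1+\bar\theta_i)\le1$ and $|\bar\theta_i|\le\tau_i\psi$ are the right ones to close the estimate --- but it produces a bound of the same shape, namely an $m$-independent constant times $\varkappa^{|m|}$, so you still need exactly the same final absorption into $\varkappa'$ as the crude bound does, and that step is glossed over in your write-up (``for a new constant $\varkappa'$''). The series expansion therefore buys you at most a smaller constant, which is not needed to establish membership in $\mathcal P_{\rm exp}$; the paper's one-line argument already suffices.
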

\begin{proof}
Claim (a) has been just proved. Claims (b) and (c) follow by
Proposition \ref{T1pn}. Claim (d) holds true since $F\in
\widetilde{\mathcal{F}}$ is positive and bounded, and thus
multiplication does not affect the property defined in (\ref{no5}),
(\ref{no7}).
\end{proof}

\subsection{Formulating the result}
Following \cite[Chapter 5]{Dawson} - and similarly as in \cite{KR}
-- we will obtain the Markov process in question by solving a {\it
restricted initial value martingale problem} for $({L},
\mathcal{D}({L}), \mathcal{P}_{\rm exp})$. Here we explicitly employ
the complete metric $\upsilon^*$ of $\Gamma^2_*$, defined in
(\ref{N5}). Since the elements of $\mathcal{P}_{\rm exp}$ ``do not
distinguish" between multiple and single configurations, see
Proposition \ref{Se1pn}, one may expect that the constructed Markov
process has the corresponding property. We will show that it does.
Note that the direct construction of the process with values in
$\breve{\Gamma}^2_*$ is rather impossible in this way  as the latter
space is not complete in $\upsilon^*$.

To proceed further, we introduce the corresponding spaces of
c{\`a}dl{\`a}g paths. By $\mathfrak{D}_{\mathds{R}_{+}}
(\breve{\Gamma}^2_*)$ and $\mathfrak{D}_{\mathds{R}_{+}}
(\Gamma^2_*)$ we denote the spaces of c{\`a}dl{\`a}g maps
$[0,+\infty)=:\mathds{R}_{+} \ni t \mapsto \gamma_t \in
\breve{\Gamma}^2_*$ and $\mathds{R}_{+} \ni t \mapsto \gamma_t \in
\Gamma^2_*$, respectively. Then the evaluation maps are
$\varpi_t(\gamma)=\gamma_t$, $\gamma\in
\mathfrak{D}_{\mathds{R}_{+}} ({\Gamma}^2_*)$, $t\in
\mathds{R}_{+}$; hence,
\begin{equation*}
%  \label{I8}
\varpi_t^{-1}(\mathbb{A}) = \{ \gamma\in
\mathfrak{D}_{\mathds{R}_{+}} ({\Gamma}^2_*): \varpi_t(\gamma) =
\gamma_t \in \mathbb{A}\}, \qquad \mathbb{A}\in
\mathcal{B}(\Gamma^2_*).
\end{equation*}
Analogously one defines $\mathfrak{D}_{[s,+\infty)}
(\breve{\Gamma}^2_*)$, $\mathfrak{D}_{[s,+\infty)} (\Gamma^2_*)$,
$s>0$. For $s, t \geq 0$, $s < t$, by $\mathfrak{F}^0_{s,t}$ we
denote the $\sigma$-field of subsets of
$\mathfrak{D}_{[s,+\infty)}(\Gamma^2_*)$) generated by the family
 $\{ \varpi_u: u\in
[s,t]\}$. Then we set
\[
\mathfrak{F}_{s,t}= \bigcap_{\varepsilon
>0}\mathfrak{F}^0_{s,t+\varepsilon}, \qquad \mathfrak{F}_{s,+\infty}
=\bigvee_{n\in \mathds{N}} \mathfrak{F}_{s,s+n}.
\]
In the next definition -- which is an adaptation of the
corresponding definition in \cite[Section 5.1, pages 78, 79]{Dawson}
-- we deal with families of probability measures $\{P_{s,\mu}:s\geq
0, \mu \in \mathcal{P}_{\rm exp}\}$ defined on
$(\mathfrak{D}_{[s,+\infty)} ( {\Gamma}^2_*),
 {\mathfrak{F}}_{s,+\infty})$. Depending on the context, each $\mu\in
\mathcal{P}_{\rm exp}$ is considered as a measure either on
$\breve{\Gamma}^2_*$ or $\Gamma^2_*$, see Remark \ref{derk}. Since
$\breve{\Gamma}^2_*$ and $\Gamma^2_*$ are Polish spaces, both
$\mathfrak{D}_{[s,+\infty)} ( \breve{\Gamma}^2_*)$ and
$\mathfrak{D}_{[s,+\infty)} (\Gamma^2_*)$ are also Polish. The
latter one is complete in Skorohod's metric, see \cite[Theorem 5.6,
page 121]{EK}. Then a probability measure $P$ on
$(\mathfrak{D}_{[s,+\infty)} ( {\Gamma}^2_*),
{\mathfrak{F}}_{s,+\infty})$ with the property
$P(\mathfrak{D}_{[s,+\infty)} ( \breve{\Gamma}^2_*))=1$ can be
redefined as a measure on $\mathfrak{D}_{[s,+\infty)}
(\breve{\Gamma}^2_*)$, that holds for all $s\geq0$.
\begin{definition}
 \label{A1df}
A family of probability measures  $\{P_{s,\mu}: s\geq 0, \ \mu\in
\mathcal{P}_{\rm exp}\}$ is said to be a solution of the restricted
initial value martingale problem for $({L}, \mathcal{D}({L}),
\mathcal{P}_{\rm exp})$ if for all $s\geq 0$ and $\mu \in
\mathcal{P}_{\rm exp}$, the following holds:  (a) $P_{s,\mu}\circ
\varpi_s^{-1} = \mu$;  (b) $P_{s,\mu}\circ \varpi_t^{-1} \in
\mathcal{P}_{\rm exp}$ for all $t
>s$;  (c)  for each $F \in
\mathcal{D}({L})$,  $t_2 \geq t_1 \geq s$ and any bounded function
${\sf G}:\mathfrak{D}_{[s,+\infty)} ({\Gamma}^2_*) \to \mathds{R}$
which is ${\mathfrak{F}}_{s,t_1}$-measurable, the function
\begin{eqnarray}
  \label{A120}
{\sf H}(\gamma) & := & \left[F(\varpi_{t_2} (\gamma)) -
F(\varpi_{t_1} (\gamma)) - \int_{t_1}^{t_2} ({L} F) (\varpi_u
(\gamma)) d u \right] {\sf G}(\gamma)
\end{eqnarray}
is such that
\begin{gather*}
% \label{A12}
\int_{{\mathfrak{D}}_{[s,+\infty)}(\Gamma_*^2)}{\sf H}(\gamma)
P_{s,\mu} (d \gamma) = 0.
\end{gather*}
The restricted initial value martingale problem is
 {\it well-posed} if for each $s\geq 0$ and $\mu\in
\mathcal{P}_{\rm exp}$, there exists a unique  path measure
$P_{s,\mu}$ satisfying conditions (a), (b) and (c) mentioned above.
\end{definition}
\begin{remark}
  \label{de1rk}
 Instead of taking all ${\sf G}$ as in claim (c) of Definition \ref{A1df}, it is enough to take
in the form
\begin{equation}
  \label{A1200}
 {\sf G} (\gamma) = F_1 ({\varpi}_{s_1} (\gamma)) \cdots F_m ({\varpi}_{s_m}
  (\gamma)),
\end{equation}
with all possible choices of $m\in \mathds{N}$, $F_1, \dots, F_m \in
\widetilde{\mathcal{F}}$ (see Proposition \ref{T1pn}), and $s\leq
s_1 < s_2 <\cdots < s_m \leq t_1$, see \cite[eq. (3.4), page
174]{EK}.
\end{remark}
Now we can formulate our result.
\begin{theorem}
  \label{1tm}
For the model defined in (\ref{L}) and satisfying (\ref{C7}) --
(\ref{C8}), the following is true:
\begin{itemize}
  \item[(i)] The restricted initial value martingale problem for
  $({L}, \mathcal{D}({L}), \mathcal{P}_{\rm exp})$
is well-posed in the sense of Definition \ref{A1df}.
\item[(ii)] Its solution has the property $P_{s,\mu}
(\mathfrak{D}_{[s,+\infty)}(\breve{\Gamma}^2_*))=1$, holding for all
$s\geq 0$ and $\mu \in \mathcal{P}_{\rm exp}$.
\item[(iii)] The stochastic
process related to the family $$(\mathfrak{D}_{[s,+\infty)}
(\Gamma_*^2), \mathfrak{F}_{s,+\infty}, \{\mathfrak{F}_{s,t}: t\geq
s\}, \{P_{s,\mu}:\mu \in \mathcal{P}_{\rm exp}\})_{s\geq 0}$$ is
Markov. This  means that, for all $t>s$ and $\mathbb{B}\in
\mathfrak{F}_{t, +\infty}$, the following holds
\[
P_{s,\mu} (\mathbb{B}|\mathfrak{F}_{s,t}) = P_{s,\mu}
(\mathbb{B}|\mathfrak{F}_{t}), \qquad P_{s,\mu} \ - \ {\rm almost} \
{\rm surely}.
\]
Here  $\mathfrak{F}_{t}$ is the smallest $\sigma$-field of subsets
of $\mathfrak{D}_{[s,+\infty)}(\Gamma^2_*)$ that contains all
$\varpi^{-1}_t(\mathbb{A})$, $\mathbb{A}\in
\mathcal{B}(\Gamma^2_*)$.
\end{itemize}
\end{theorem}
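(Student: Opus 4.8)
The plan is to prove the three assertions by carrying out the programme sketched in the Introduction, in the order: uniqueness for the restricted martingale problem, then existence, then the Markov property (iii) and the simplicity property (ii). The whole argument hinges on the evolution of states $t\mapsto\mu_t$ of \cite[Theorem 3.5]{asia1}, which will be shown to be the only object compatible with (\ref{FPE}) on the domain $\mathcal{D}(L)$. For uniqueness I would first establish that any solution $t\mapsto\mu_t\in\mathcal{P}(\Gamma^2_*)$ of the Fokker-Planck equation (\ref{FPE}) tested on $F\in\mathcal{D}(L)$, with $\mu_0\in\mathcal{P}_{\rm exp}$, satisfies $\mu_t\in\mathcal{P}_{\rm exp}$ for all $t>0$ (Lemma \ref{W1lm}); the point is to propagate Ruelle's bound (\ref{no9}) along the evolution of the correlation functions $k^{(m)}_{\mu_t}$ induced by $L$, which reduces to the combinatorial estimates of \cite{KR} together with the additional bounds of subsect.~5.2. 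Since $\mathcal{D}(L)$ is bp-dense in $B_{\rm b}(\Gamma^2_*)$ and separating for $\mathcal{P}_{\rm exp}$ (Propositions \ref{T1pn}, \ref{T3pn}), this restricts any solution to $\mathcal{P}_{\rm exp}$, within which the evolution is unique and coincides with that of \cite{asia1} (Lemma \ref{W2lm}). Inserting this into the martingale identity (\ref{A120}) tested against the product functions (\ref{A1200}) of Remark \ref{de1rk}, by the standard argument of \cite[Section 5.1]{Dawson} one deduces that all finite-dimensional distributions, hence $P_{s,\mu}$ itself, are uniquely determined; this is the uniqueness half of (i).

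For existence I would introduce the regularized kernels $a_i^\sigma(x,y)$ built from $\psi_\sigma(x)=(1+\sigma|x|^{d+1})^{-1}$, so that $L^0=L$, while for $\sigma\in(0,1]$ the dual operator $L^{\dagger,\sigma}$ generates a stochastic semigroup $S^\sigma=\{S^\sigma(t)\}_{t\geq0}$ on the Banach space of signed measures on $\Gamma^2_*$ (Lemma \ref{U1lm}), using the perturbation technique of \cite{TV}. From $S^\sigma$ one reads off transition functions $p^\sigma_t$, hence Markov processes $\mathcal{X}^\sigma$ on $\Gamma^2_*$, whose one-dimensional marginals coincide with the Fokker-Planck evolution $\mu^\sigma_t$ for $L^\sigma$ by Lemma \ref{W2lm}; Section~6 then shows $\mu^\sigma_t\Rightarrow\mu_t$ as $\sigma\to0$. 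Lemma \ref{U4lm} furnishes Chentsov-type moment bounds for the increments of $\mathcal{X}^\sigma$, uniform in $\sigma\in(0,1]$, which yield a c\`adl\`ag modification of each $\mathcal{X}^\sigma$ --- equivalently, path measures $P^\sigma_{s,\mu}$ on $\mathfrak{D}_{[s,+\infty)}(\Gamma^2_*)$ --- together with tightness of the family $\{P^\sigma_{s,\mu}:\sigma\in(0,1]\}$. For any weak accumulation point $P_{s,\mu}$ as $\sigma\to0$: condition (a) of Definition \ref{A1df} is immediate; (b) follows from $\mu^\sigma_t\Rightarrow\mu_t$ and $\mu_t\in\mathcal{P}_{\rm exp}$ (Lemma \ref{W1lm}); and (c) follows by passing to the limit in the martingale identity for $L^\sigma$, using $L^\sigma F\to LF$ boundedly and pointwise on $\Gamma^2_*$ for each $F\in\mathcal{D}(L)$. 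By the uniqueness already proved, every such accumulation point equals the same $P_{s,\mu}$, so the net converges and $\{P_{s,\mu}:s\geq0,\ \mu\in\mathcal{P}_{\rm exp}\}$ is the unique solution, completing (i).

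Assertion (iii) then follows from the standard conditioning argument of \cite[Section 5.1]{Dawson}: for $t>s$, a regular conditional distribution of $P_{s,\mu}$ given $\mathfrak{F}_{s,t}$ is, $P_{s,\mu}$-almost surely, a solution of the restricted martingale problem on $\mathfrak{D}_{[t,+\infty)}(\Gamma^2_*)$ started at time $t$ from $\varpi_t$ --- note $P_{s,\mu}\circ\varpi_t^{-1}\in\mathcal{P}_{\rm exp}$ by (b) --- hence by uniqueness equals $P_{t,\varpi_t}$, which depends on the past only through $\varpi_t$; this is precisely the asserted Markov property. For (ii) I would use $\mu(\breve{\Gamma}^2_*)=1$ for all $\mu\in\mathcal{P}_{\rm exp}$, see (\ref{C4}): the jump kernels in (\ref{rate}) are absolutely continuous in the target point, so a jump almost surely lands at a point distinct from all existing particles, and Lemma \ref{U4lm} bounds the number of jumps affecting any fixed bounded region on a bounded time interval; hence, starting from a simple configuration, the configuration restricted to any such region stays simple, and taking the union over an exhausting sequence $\{\Lambda_k\}$ together with the representation (\ref{GLS}) of $\breve{\Gamma}^2$ gives $P_{s,\mu}(\mathfrak{D}_{[s,+\infty)}(\breve{\Gamma}^2_*))=1$. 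By the discussion preceding Definition \ref{A1df}, the solution may then be regarded as supported on $\mathfrak{D}_{[s,+\infty)}(\breve{\Gamma}^2_*)$.

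I expect the main obstacles to be two analytic points. The first is the construction of the semigroups $S^\sigma$ from the regularized but still unbounded generator $L^{\dagger,\sigma}$ via the perturbation scheme of \cite{TV} (Lemma \ref{U1lm}), where one must verify sub-stochasticity --- in fact stochasticity --- on a scale of spaces of signed measures with controlled correlation growth. The second is the derivation of the uniform Chentsov bounds of Lemma \ref{U4lm}, which have to be simultaneously $\sigma$-independent, strong enough for tightness in $\mathfrak{D}_{[s,+\infty)}(\Gamma^2_*)$, and fine enough to control the jump count used in (ii). By comparison, the propagation of Ruelle's bound in Lemma \ref{W1lm} is the key combinatorial input but is largely a reworking of \cite{KR}, and the passage from the auxiliary processes to the limiting martingale solution is routine once tightness is in hand.
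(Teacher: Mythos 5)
Your overall architecture matches the paper's: uniqueness of the Fokker--Planck evolution in $\mathcal{P}_{\rm exp}$ (Lemmas \ref{W1lm}, \ref{W2lm}), regularized generators $L^\sigma$ with stochastic semigroups $S^\sigma$ (Lemma \ref{U1lm}), Chentsov bounds for tightness (Lemma \ref{U4lm}), identification of weak accumulation points, and the standard conditioning argument for the Markov property. However, there are two places where the argument as sketched does not go through.

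First, in the existence half of (i), you dispose of condition (c) by ``passing to the limit in the martingale identity for $L^\sigma$, using $L^\sigma F\to LF$ boundedly and pointwise.'' This is not sufficient. The quantity to control is $P^{\sigma_n}_{s,\mu}({\sf K}_t{\sf G}) - P_{s,\mu}({\sf K}_t{\sf G})$ with ${\sf K}_t=(LF)\circ\varpi_t$, and $LF$ belongs to $B_{\rm b}(\Gamma^2_*)$ but is in general not in $C_{\rm b}(\Gamma^2_*)$ (Proposition \ref{T3pn}(a)). Weak convergence $P^{\sigma_n}_{s,\mu}\Rightarrow P_{s,\mu}$ controls integrals of bounded \emph{continuous} functionals only; a bounded pointwise limit of the integrands does not propagate to the integrals when the measures vary as well. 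The paper circumvents this by writing $F=KG$ with $G\in\mathcal{G}_\infty$ (Proposition \ref{V1pn}), transferring the identity to the correlation-function side via $\mu(LKG)=\langle\!\langle k_\mu,\widehat{L}G\rangle\!\rangle$, and exploiting that the one-dimensional marginals are sub-Poissonian of controlled type, so that the relevant pairings $\langle\!\langle k_{\mu_t^{\sigma_n,s_m}}-k_{\mu_t^{s_m}},\widehat{L}G\rangle\!\rangle$ converge by Proposition \ref{Sqqpn} together with the explicit estimates of the type (\ref{U81})--(\ref{U82}). Your sketch omits this conversion, which is the step that makes the limit passage rigorous.

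Second, your argument for (ii) attributes to Lemma \ref{U4lm} a bound on ``the number of jumps affecting any fixed bounded region on a bounded time interval.'' Lemma \ref{U4lm} provides no such thing: it is a second-moment Chentsov estimate on $\mathcal{W}^\sigma(t_1,t_2,t_3)$ in the metric $\upsilon^*$, used only to establish existence of c\`adl\`ag modifications and tightness. Moreover, the limiting process $P_{s,\mu}$ is constructed as a weak limit of the $P^\sigma_{s,\mu}$; almost-sure statements about individual jump targets of the pre-limit processes do not survive weak convergence of path measures, so the ``each jump lands at a fresh point'' heuristic cannot be applied directly to $P_{s,\mu}$. The paper instead introduces the stopping times $T_{N,j}$ associated with the functionals $H_j$, applies the optional sampling theorem to the martingale (\ref{U91}), and shows that the law $\tilde\mu_t$ of the stopped process satisfies the Ruelle bound (\ref{U96}); this places $\tilde\mu_t\in\mathcal{P}_{\rm exp}$, hence $\mathcal{Z}(t)\in\breve{\Gamma}^2_*$ a.s.\ and $T_j>t$ a.s., yielding (ii) via (\ref{U85}). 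Some moment- or stopping-time-based argument of this kind is needed; the jump-count argument as stated does not close the gap.
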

In the proof of Theorem \ref{1tm}, we crucially use the
Fokker-Planck equation (\ref{FPE}).
\begin{definition} \label{A01df}
For a given $s\geq 0$, a map $[s,+\infty)\ni t \mapsto \mu_t\in
\mathcal{P}({\Gamma}^2_*)$ is said to be measurable if the maps
$[s,+\infty) \ni t \mapsto \mu_t(\mathbb{A}) \in \mathds{R}$ are
measurable for all $\mathbb{A}\in \mathcal{B}({\Gamma}^2_*)$. Such a
map is said to be a solution of the Fokker-Planck equation for
$({L}, \mathcal{D}({L}))$ if for each $F\in \mathcal{D}({L})$ and
any $t_2
> t_1 \geq s$, the equality in (\ref{FPE}) holds true.
\end{definition}
Note that ${L}F \in B_{\rm b}({\Gamma}_*)$; hence, the integral in
the right-hand side (\ref{FPE}) is well defined for measurable
$t\mapsto \mu_t$.
\begin{remark}
  \label{A2rk}
By taking ${\sf G}\equiv 1$ in (\ref{A120}) one comes to the
following conclusion. Let $\{{P}_{s,\mu}: s\geq 0, \mu \in
\mathcal{P}_{\rm exp}\}$ be a solution as in Definition \ref{A1df}.
Then for each $s$ and $\mu \in \mathcal{P}_{\rm exp}$, the map
$[s,+\infty) \ni t \mapsto {P}_{s,\mu}\circ {\varpi}_{t}^{-1}$
solves (\ref{FPE}) for all $t_2>t_1\geq s$.
\end{remark}

\subsection{Comments}
\label{comSS} In statistical physics, the first model where
attraction is induced by an inter-component repulsion was proposed
by Widom and Rowlinson in \cite{WiR}. A mathematically rigorous
proof that the Gibbs states in this model can be multiplicity was
done by Ruelle in \cite{R1}. In both these works, the repulsion is
of the hard-core type, which in our case corresponds to $\phi_0(x) =
\phi_1(x) = \ell_r(|x|)$, $r>0$, with $\ell_r(\rho) = 0$ for
$\rho>r$, and $\ell_r(\rho) = +\infty$ for $\rho\leq r$. In the
single-component version of the Widom-Rowlinson model, the energy of
the multiparticle attraction induced by the hard core repulsion in a
finite configuration $\eta_0 \subset \Gamma$ is given by the
formula, see \cite[eq. (1.1)]{WR},
\begin{equation}
  \label{WRa} U (\eta_0) = V(\eta_0) - |\eta_0| |B_r|,
\end{equation}
where $|B_r|$ is the volume of $B_r$ and $V(\eta_0)$ is the volume
of $\cup_{x\in \eta_0} B_r(x)$. The relationship between the single-
and the two-component versions was analyzed in detain in \cite{WR},
see also \cite{KK} where the interaction of the Curie-Weiss type (in
place of the hard-core repulsion) was studied. A significant feature
of (\ref{WRa}) is that this interaction is super-stable in the sense
of \cite{Ruelle}, see \cite[eq. (1.2)]{WR}. For such interactions,
the states of thermal equilibrium (Gibbs states) have correlation
functions that satisfy (\ref{no9}), see \cite{Ruelle}, which means
that the Gibbs states  are sub-Poissonian. This is one more argument
in favor of using such states. Note that our assumption (\ref{C7a})
covers the case of hard core repulsion mentioned above. In
\cite{asia1}, the results of which we will use in the remaining part
of this work, the repulsion kernels $\phi_i$ were assumed bounded
and integrable, which is a stronger version of (\ref{C7a}) that does
not cover the hard core repulsion. However, the boundedness was used
there only in the part where the mesoscopic limit of the model was
studied. That is, the part of \cite{asia1} the results of which we
will use here remains valid if one assumes only (\ref{C7a}).

\section{The Evolution of Sub-Poissonian States}
As mentioned above, in \cite{asia1} there was constructed a map
$t\mapsto \mu_t \in \mathcal{P}_{\rm exp}$ which describes the
evolution of states of the model (\ref{L}). Here we show that this
map is the unique solution of the Fokker-Planck equation
(\ref{FPE}), which is then used in the proof of Theorem \ref{1tm}.
In this section, we outline the construction realized in
\cite{asia1} in the form adapted to the present context, which
includes also passing to states on the space of multiple
configurations $\Gamma^2_*$. This is possible since
$\mu(\breve{\Gamma}^2_*) = \mu({\Gamma}^2_*)=1$, that holds for all
$\mu\in \mathcal{P}_{\rm exp}$,  see Remark \ref{derk}.

The key idea of  \cite{asia1} may be described as follows. Since
each $\mu\in \mathcal{P}_{\rm exp}$ is fully characterized by its
correlation functions $k^{(m)}_\mu$, $m\in \mathds{N}_0^2$, see
Definition \ref{no1dfg} and (\ref{no8}), instead of solving
(\ref{FPE}) directly one can pass to the evolution equation for the
corresponding correlation functions defined in appropriate Banach
spaces. An addition task, however, will be to prove that its
solutions are correlation functions - an analog of the classical
moment problem in this setting.

\subsection{The evolution of correlation functions}

 For $m\in \mathds{N}_0^2$, let a symmetric $G^{(m)}$
be in $C_{\rm cs}(X^{m_0} \times X^{m_1})$, see (\ref{no15a}). As
above, $m=(0,0)$ corresponds to constant functions. Let
$G:=\{G^{(m)}\}_{m\in \mathds{N}_0^2}$ be a collection of such
functions. We equip the set of all such collections with the usual
(member-wise) linear operations and then write $G(\eta)=G^{(m)}({\bf
x}, {\bf y})$ for $\eta = (\eta_0, \eta_1)$, $\eta_0 =\{x_1, \dots ,
x_{m_0}\}$,  $\eta_1=\{y_1 , \dots , y_{m_1}\}$, and  $({\bf x},
{\bf y}) = (x_1, \dots , x_{m_0}; y_1 , \dots , y_{m_1})$, cf.
(\ref{no6}), (\ref{no8}). Each $\eta=(\eta_0,\eta_1)$ is a pair of
finite configurations, and thus $\eta \in {\Gamma}^2$. That is,
$\eta_i$ is a finite configuration of particles of type $i=0,1$; by
$\Gamma_0$ we denote the subset of $\Gamma$ consisting of all finite
(possibly multiple) configurations. Let $\mathcal{G}_{\rm fin}$
denote the set of all aforementioned collections $G$ verifying
$G^{(m)} \equiv 0$ for all $m_0+m_1=:|m|> N_G$ for some $N_G\in
\mathds{N}$. Then the map $K$ as in (\ref{no15a}) can be defined on
$\mathcal{G}_{\rm fin}$ by the formula
\begin{gather}
\label{Qo}
 (KG) (\gamma) = \sum_{\eta \subset \gamma} G(\eta)
 = \sum_{\eta_0 \subset {\gamma_0}}\sum_{\eta_1 \subset
 {\gamma_1}} G(\eta_0, \eta_1)
 \\[.2cm] \nonumber  =   \sum_{m_0=0}^{\infty}\sum_{m_1=0}^{\infty}
 \frac{1}{m_0! m_1!} \sum_{(\mathbf{x},\mathbf{y})\in \gamma}
 G^{(m)}(\mathbf{x},\mathbf{y}).
\end{gather}
For $\mu \in \mathcal{P}_{\rm exp}$ and $G\in \mathcal{G}_{\rm
fin}$, by (\ref{15b}) $KG$ is $\mu$-integrable and the following
holds
\begin{eqnarray}
  \label{W6}
  \mu(KG) & = & \sum_{m\in \mathds{N}_0^2}  \frac{1}{m_0! m_1!}  \langle \! \langle
  k^{(m)}_\mu, G^{(m)} \rangle\!\rangle =:  \langle \! \langle
  k_\mu, G \rangle\!\rangle \\[.2cm] \nonumber & =: & \int_{\Gamma_0} \int_{\Gamma_0}
k_\mu(\eta_0,\eta_1) G(\eta_0,\eta_1) \lambda (d \eta_0) \lambda (d
\eta_1).
\end{eqnarray}
Here $k_\mu$ is the collection of the correlation functions
$k^{(m)}_\mu$, $m\in \mathds{N}_0^2$, that can also be considered as
a function $k_\mu :\Gamma_0^2 \to \mathds{R}$ such that
\begin{equation}
  \label{W6a}
  k_\mu (\eta) =   k_\mu (\eta_0 , \eta_1)= k^{(m)}(\mathbf{x}, \mathbf{y}), \quad  \eta_0 =\{x_1, \dots , x_{m_0}\}, \ \ \eta_1 =\{y_1, \dots ,
  y_{m_1}\}.
\end{equation}
The integrals in (\ref{W6}) are understood in the following way, cf.
(\ref{no15}),
\begin{gather}
  \label{W6b}
\int_{\Gamma_0} \int_{\Gamma_0}
  k_\mu(\eta_0,\eta_1) G(\eta_0,\eta_1) \lambda (d \eta_0 ) \lambda(d \eta_1)\\[.2cm] \nonumber = \sum_{m\in \mathds{N}_0^2}
  \frac{1}{m_0! m_1!} \int_{X^{m_0}\times X^{m_0}} k^{(m)}_\mu (\mathbf{x},
  \mathbf{y}) G^{(m)} (\mathbf{x}, \mathbf{y}) d^{m_0}\mathbf{x} d^{m_1}
  \mathbf{y}.
\end{gather}
In (\ref{W6a})  and (\ref{W6b}), $k_\mu$  -- similarly as $G$ in
(\ref{W6}) -- is the collection of symmetric $k^{(m)}_\mu \in
L^{\infty}(X^{m_0}\times X^{m_1})$, considered as an element of the
corresponding real linear space, which we denote by $\mathcal{K}$.
Keeping in mind that we deal with $\mu(LF)=\mu(LKG)$, see
(\ref{FPE}), assume that we are given ${L}^\Delta$ such that
\begin{equation}
  \label{W7}
\mu(  L KG) = \langle \! \langle
 L^\Delta k_\mu, G \rangle\!\rangle.
\end{equation}
This ${L}^\Delta$ can be calculated explicitly, see \cite[eq.
(2.23)]{asia1}. To present it here, we define
\begin{equation}
  \label{W8}
  \tau_x^i (y) = e^{-\phi_i(x-y)}, \qquad t_x^i (y) = \tau_x^i (y)
  -1, \qquad x,y \in X, \quad \ i=0,1,
\end{equation}
and
\begin{eqnarray}
  \label{W9}
  (\Upsilon_y^0 k)(\eta_0,\eta_1) & = & \int_{\Gamma_0} k(\eta_0, \eta_1 \cup
  \xi) e(t_y^0; \xi) \lambda (d\xi), \\[.2cm] \nonumber (\Upsilon_y^1 k)(\eta_0,\eta_1) & = & \int_{\Gamma_0} k(\eta_0\cup \xi, \eta_1) e(t_y^1; \xi)
   \lambda (d\xi),
\end{eqnarray}
where, for an appropriate $\theta :X \to \mathds{R}$ and $\xi\in
\Gamma_0$, we write
\begin{equation*}
 % \label{W10}
  e(\theta;\xi) = \prod_{x\in \xi} \theta(x).
\end{equation*}
The expressions in (\ref{W9}) are to be understood in the following
way. For a given $m\in \mathds{N}_0^2$, one sets
\begin{eqnarray}
  \label{104}
& & (\Upsilon_y^0 k)^{(m)}(\mathbf{x}, \mathbf{y}) = k^{(m)}
(\mathbf{x}, \mathbf{y}) \\[.2cm] \nonumber & & \ \  +
\sum_{n=1}^\infty\frac{1}{n!}  \int_{X^n} k^{(m_0, m_1+n)} (x_1 ,
\dots , x_{m_0}; y_1, \dots , y_{m_1}, z_1 , \dots , z_n)
\prod_{j=1}^n t^0_y(z_j) d z_1 \cdots d z_n.
\end{eqnarray}
The convergence of the series and the integrals will be shown below.
In the same way, one defines also the second line of (\ref{W9}). Now
the operator satisfying (\ref{W7}) presents in the following form
\begin{eqnarray}
  \label{L1}
  (L^\Delta k)(\eta_0 , \eta_1) & = & \sum_{y\in \eta_0} \int_X a_0 (x-y) e(\tau^0_y;
  \eta_1)(\Upsilon^0_y k) (\eta_0 \setminus y \cup x, \eta_1)  dx
  \\[.2cm] \nonumber & - & \sum_{x\in \eta_0} \int_X a_0 (x-y) e(\tau^0_y;
  \eta_1) (\Upsilon_y^0 k)(\eta_0, \eta_1) dy   \\[.2cm] \nonumber &
  +
  & \sum_{y\in \eta_1} \int_X a_1 (x-y) e(\tau^1_y;
  \eta_0)(\Upsilon^1_y k) (\eta_0, \eta_1 \setminus y \cup x)  dx
  \\[.2cm] \nonumber & - & \sum_{x\in \eta_1} \int_X a_1 (x-y) e(\tau^1_y;
  \eta_0) (\Upsilon_y^1 k)(\eta_0, \eta_1) dy.
\end{eqnarray}
For $\vartheta\in \mathds{R}$ and $k\in \mathcal{K}$, see
(\ref{W6a}), we set
\begin{eqnarray}
  \label{L5}
  \|k\|_\vartheta & = &\esssup_{\xi_0, \xi_1 \in \Gamma_0} |k(\xi_0, \xi_1)|
  \exp\bigg{(}
  - \vartheta (|\xi_0|+|\xi_1|)\bigg{)}\\[.2cm] \nonumber &=& \sup_{m\in
  \mathds{N}_0^2} e^{- \vartheta (m_0 + m_1)}
 \left( \esssup_{(\mathbf{x}, \mathbf{y})\in X^{m_0}\times X^{m_1}}  |k^{(m)} (\mathbf{x},
 \mathbf{y})|\right),
\end{eqnarray}
and then introduce
\begin{equation}
  \label{L5a}
  \mathcal{K}_\vartheta = \{k\in \mathcal{K}: \|k\|_\vartheta<
  \infty \}, \qquad \vartheta \in \mathds{R},
\end{equation}
which is a real Banach space of weighted $L^\infty$-type. By
(\ref{L5}) one readily gets that $\|k\|_{\vartheta'} \leq
\|k\|_\vartheta$ whenever $\vartheta'>\vartheta$, which yields
\begin{equation}
  \label{L5b}
  \mathcal{K}_\vartheta \hookrightarrow \mathcal{K}_{\vartheta'},
  \qquad \vartheta'>\vartheta,
\end{equation}
where $\hookrightarrow$ denotes continuous embedding.

Let us turn now to the following issue. Given $k\in \mathcal{K}$,
under which conditions is this $k$ the correlation function for some
$\mu \in \mathcal{P}(\Gamma^2)$? By (\ref{L5}) and Definition
\ref{no1dfg} one concludes,  that $k_\mu \in \mathcal{K}_\vartheta$
with $\vartheta = \log\varkappa$ for $\mu\in \mathcal{P}_{\rm exp}$,
where $\varkappa$ is the type of $\mu$. At the same time, if $G\in
\mathcal{G}_{\rm fin}$ is such that $(KG) (\gamma) \geq 0$, by
(\ref{no9}) and (\ref{W6}) it follows that $\langle \! \langle k_\mu
, G \rangle \!\rangle \geq 0$. Set $\mathcal{G}_{\rm fin}^\star =\{
G\in \mathcal{G}_{\rm fin}: (KG)(\gamma)\geq 0, \ \gamma\in
\Gamma^2\}$, and also
\begin{equation}
  \label{L6A}
\mathcal{K}^\star = \{ k \in \mathcal{K}: k^{(0,0)} = 1 \ {\rm and}
\ \langle \! \langle k , G \rangle \!\rangle \geq 0 \ \forall G\in
\mathcal{G}_{\rm fin}^\star\}, \qquad \mathcal{K}^\star_\vartheta =
\mathcal{K}^\star \cap \mathcal{K}_\vartheta, \ \ \vartheta \in
\mathds{R}.
\end{equation}
It is known \cite[Proposition 2.2]{asia1}, see also \cite{Lenard}
for a more comprehensive discussion, that each $k\in
\mathcal{K}^\star_\vartheta$ is the correlation function of a unique
$\mu\in \mathcal{P}_{\rm exp}$ the type of which does not exceed
$e^\vartheta$. That is, $k\in \mathcal{K}_\vartheta$ is the
correlation function of a unique sub-Poissonian state $\mu$ if and
only if $k\in \mathcal{K}^{\star}_\vartheta$.
\begin{proposition}
  \label{BJULYpn}
Let $k\in \mathcal{K}_\vartheta^\star$. Then $\|k\|_{\vartheta'} =1$
for each $\vartheta'>\vartheta$.
\end{proposition}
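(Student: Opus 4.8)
The plan is to reduce everything to Ruelle's bound via the identification of $k$ with a correlation function. First, since $k\in\mathcal{K}^{\star}_{\vartheta}$, the result \cite[Proposition 2.2]{asia1} recalled just above the statement provides a (unique) $\mu\in\mathcal{P}_{\rm exp}$ whose type $\varkappa$ satisfies $\varkappa\le e^{\vartheta}$ and for which $k=k_{\mu}$. In particular $k^{(0,0)}=1$ by \eqref{JULY}, and, by Ruelle's bound \eqref{no9}, for every $m\in\mathds{N}_{0}^{2}$ one has
\[
0\le k^{(m)}(\mathbf{x},\mathbf{y})\le \varkappa^{|m|}\le e^{\vartheta|m|}\qquad\text{for Lebesgue-almost all }(\mathbf{x},\mathbf{y})\in X^{m_{0}}\times X^{m_{1}}.
\]

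Next, fix $\vartheta'>\vartheta$. Using the second line of \eqref{L5} together with the bound just obtained,
\[
\|k\|_{\vartheta'}=\sup_{m\in\mathds{N}_{0}^{2}}e^{-\vartheta'|m|}\left(\esssup_{(\mathbf{x},\mathbf{y})}|k^{(m)}(\mathbf{x},\mathbf{y})|\right)\le\sup_{m\in\mathds{N}_{0}^{2}}e^{(\vartheta-\vartheta')|m|}=1,
\]
since $\vartheta-\vartheta'<0$, so that the exponent $(\vartheta-\vartheta')|m|$ is nonpositive and vanishes precisely when $|m|=0$. Conversely, retaining only the term $m=(0,0)$ in the supremum defining $\|k\|_{\vartheta'}$ and using $k^{(0,0)}\equiv 1$ gives $\|k\|_{\vartheta'}\ge e^{0}\cdot 1=1$. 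Combining the two inequalities yields $\|k\|_{\vartheta'}=1$.

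There is no real obstacle here; the argument is essentially bookkeeping with the weighted $\sup$-norm \eqref{L5}. The only point that deserves attention is that the constant in Ruelle's bound is exactly $\varkappa^{|m|}$ (rather than $\|k\|_{\vartheta}\,\varkappa^{|m|}$), and that $\varkappa\le e^{\vartheta}$; this is precisely what forces the $\vartheta'$-norm down to $1$ and is why the proof genuinely uses membership in $\mathcal{K}^{\star}_{\vartheta}$ — via the correlation-function characterization — and not merely in $\mathcal{K}_{\vartheta}$.
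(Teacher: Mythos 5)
Your proof is correct and takes essentially the same route as the paper: lower bound $\|k\|_{\vartheta'}\ge 1$ from $k^{(0,0)}=1$, upper bound from Ruelle's bound $\|k^{(m)}\|_{L^\infty}\le e^{\vartheta|m|}$ combined with the weight $e^{-\vartheta'|m|}$. The only difference is cosmetic — you spell out the intermediate step of identifying $k$ with $k_\mu$ via \cite[Proposition 2.2]{asia1} and flag that this is where membership in $\mathcal{K}^\star_\vartheta$ (not merely $\mathcal{K}_\vartheta$) is used, whereas the paper leaves that step implicit in its citation of \eqref{no9}.
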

\begin{proof}
Firstly, we note that $\|k\|_\vartheta \geq 1$ for each
$\vartheta\in \mathds{R}$ since
$k(\varnothing,\varnothing)=k^{(0,0)} =1$, see (\ref{JULY}). By
(\ref{no9}) and the fact that $k\in \mathcal{K}_\vartheta$, it
follows that $\|k^{(m_1,m_2)}\|_{L^\infty} \leq e^{\vartheta(m_1 +
m_2)}$, which yields the proof.
\end{proof}

By (\ref{L1}) and (\ref{L5}) for $\vartheta \in \mathds{R}$  we then
have, see \cite[eq. (3.10)]{asia1} for more detail,
\begin{gather}
  \label{L7}
|(L^\Delta k)(\eta_0, \eta_1)| \leq 4 \alpha \|k\|_\vartheta
e^{\vartheta (|\eta_0|+ |\eta_1|)} \bigg{(} |\eta_0| + |\eta_1|
\bigg{)}\exp\left( \varphi e^{\vartheta} \right),
\end{gather}
where $\varphi$ is as in (\ref{C7a}) and
\begin{equation}
  \label{L2}
  \alpha := \max_{i=0,1} \bar{a}_i^{(0)},
\end{equation}
see (\ref{C8}). This estimate settles the convergence issue in
(\ref{104}). It also implies
\begin{equation}
  \label{L8}
\|L^\Delta k\|_{\vartheta'} \leq
\frac{4\alpha\|k\|_\vartheta}{e(\vartheta'-\vartheta)}
\exp\left(\varphi e^\vartheta \right), \quad \vartheta ' >
\vartheta,
\end{equation}
which allows one to define the corresponding bounded linear
operators acting from $\mathcal{K}_{\vartheta}$ to
$\mathcal{K}_{\vartheta'}$. Along with them, we define an unbounded
linear operator $L^\Delta_{\vartheta'}$, $\vartheta'\in \mathds{R}$,
which acts in $\mathcal{K}_{\vartheta'}$ according to (\ref{L1})
with domain
\begin{equation}
  \label{L5c}
\mathcal{D}(L^\Delta_{\vartheta'}) =\{ k\in \mathcal{K}: L^\Delta k
\in \mathcal{L}_{\vartheta'}\}.
\end{equation}
By (\ref{L8}) one concludes that
\begin{equation}
  \label{L5d}
  \mathcal{K}_\vartheta \subset \mathcal{D}(L^\Delta_{\vartheta'}), \qquad \vartheta < \vartheta'.
\end{equation}
Now we fix $\vartheta\in \mathds{R}$ and consider the following
Cauchy problem in the Banach space $\mathcal{K}_\vartheta$
\begin{equation}
  \label{W11}
  \frac{d}{dt} k_t = L^\Delta_\vartheta k_t, \qquad k_t|_{t=0}= k_0.
\end{equation}
\begin{definition}
  \label{Wdf}
By a solution of (\ref{W11}) on the time interval, $[0, T )$, $T>0$,
we mean a continuous map $[0, T ) \ni t \mapsto k_t \in
\mathcal{D}(L^\Delta_\vartheta)\subset \mathcal{K}_\vartheta$ such
that the map $[0, T ) \ni t \mapsto d k_t/d t \in
\mathcal{K}_\vartheta$ is also continuous and both equalities in
(\ref{W11}) are verified.
\end{definition}
In view of the complex structure of (\ref{L1}), as well as of the
fact that $\mathcal{K}_\vartheta$ is a weighted $L^\infty$-type
Banach space, it is barely possible to solve (\ref{W11}) with all
$k_0\in \mathcal{D}(L^\Delta_\vartheta)$, e.g.,  by employing
$C_0$-semigroup techniques. In \cite{asia1}, the solution was
constructed for $k_0$ taken from $\mathcal{K}_{\vartheta_0}$ with
$\vartheta_0< \vartheta$, see (\ref{L5d}). Its characteristic
feature is that $k_t$ lies in some $t$-dependent
$\mathcal{K}_{\vartheta'}$ such that, cf. (\ref{L5b}),
\begin{equation}
  \label{scale}
\mathcal{K}_{\vartheta_0} \hookrightarrow \mathcal{K}_{\vartheta'}
\hookrightarrow \mathcal{K}_{\vartheta}.
\end{equation}
More precisely, the main result of \cite{asia1}  can be formulated
as follows, see Theorem 3.5 \emph{ibid}.
\begin{proposition}
  \label{W1pn}
For each $\mu \in \mathcal{P}_{\rm exp}$ and $T>0$, the Cauchy
problem in (\ref{W11}) with $\vartheta=\vartheta(T):=\log \varkappa
+ \alpha T$ has a unique solution $k_t \in
\mathcal{K}_{\vartheta(T)}^\star$, where $\varkappa$ is the type of
$\mu$ and $\alpha$ is as in (\ref{L2}).
\end{proposition}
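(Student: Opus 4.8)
The plan is to obtain Proposition~\ref{W1pn} from \cite[Theorem 3.5]{asia1}, noting only that the boundedness of the repulsion kernels $\phi_i$ imposed there is, as explained in Subsection~\ref{comSS}, used solely in the mesoscopic-limit part of that paper; the construction of the correlation-function evolution $t\mapsto k_t$ needs nothing beyond the integrability bound (\ref{C7a}) assumed here. For completeness I would indicate the structure of the underlying argument, which lives entirely on the level of correlation functions.

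\emph{Step 1: existence and uniqueness in the scale of Banach spaces.} Fix $\mu\in\mathcal{P}_{\rm exp}$ of type $\varkappa$, put $\vartheta_0=\log\varkappa$ so that $k_0:=k_\mu\in\mathcal{K}^\star_{\vartheta_0}$, and fix $T>0$. On the bounded range $\vartheta\in[\vartheta_0,\vartheta(T)]$ one has $e^{\vartheta}\leq\varkappa e^{\alpha T}$, so (\ref{L7})--(\ref{L8}) give an Ovsyannikov-type bound $\|L^\Delta k\|_{\vartheta'}\leq M\|k\|_{\vartheta}/(\vartheta'-\vartheta)$ with $M=M(\varkappa,\alpha,\varphi,T)$. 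One then solves (\ref{W11}) by the Picard--Ovsyannikov series $k_t=\sum_{n\ge 0}k^{(n)}_t$, with $k^{(0)}_t=k_0$ and $k^{(n)}_t=\int_0^t L^\Delta k^{(n-1)}_s\,ds$: splitting, at level $n$, the index interval $[\vartheta_0,\vartheta(T)]$ into $n$ equal pieces and using the above bound repeatedly gives $\|k^{(n)}_t\|_{\vartheta(T)}\leq (Ct)^n\|k_0\|_{\vartheta_0}$ for a constant $C=C(\varkappa,\alpha,\varphi,T)$, so the series converges on a subinterval $[0,\tau)$; choosing the running index $\vartheta_t\uparrow\vartheta(T)$ with $\vartheta_t<\vartheta(T)$ for $t<T$ ensures $k_t\in\mathcal{K}_{\vartheta_t}\subset\mathcal{D}(L^\Delta_{\vartheta(T)})$ by (\ref{L5d}), so that $t\mapsto k_t$ is a genuine solution in the sense of Definition~\ref{Wdf}, and the same contraction estimate gives uniqueness among solutions with values in $\mathcal{K}_{\vartheta(T)}$. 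The (somewhat delicate) bookkeeping of how the index budget is consumed in order to reach $t=T$ with index $\vartheta(T)$ is carried out in \cite{asia1}, and is closed off by the a priori bound of Step~2.

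\emph{Step 2: the solution stays in the cone of correlation functions --- the main obstacle.} The crux is that $k_t\in\mathcal{K}^\star_{\vartheta(T)}$, i.e.\ that the constructed $k_t$ is again the correlation function of a sub-Poissonian state; by Proposition~\ref{BJULYpn} this yields in particular $\|k_t\|_{\vartheta(T)}=1$, the a priori bound that prevents blow-up and hence extends the solution to all of $[0,T)$. To prove it one approximates $L$ by auxiliary Kolmogorov operators --- for instance with the jump rates $a_i$ cut off and the particles confined to a bounded region --- for which the corresponding dynamics genuinely generates a positivity-preserving (sub-stochastic) semigroup on measures; the associated correlation functions then both solve the Cauchy problem for the approximate $L^\Delta$ and lie in the cone $\mathcal{K}^\star$ for every $t$ by construction. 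One shows that these operators converge to $L^\Delta$ in the scale, uniformly on compact time intervals, passes to the limit, and identifies the limit with $k_t$ via the uniqueness of Step~1; since $\mathcal{K}^\star_{\vartheta(T)}$ is closed under the relevant pointwise / $\ast$-weak convergence of correlation functions, $k_t\in\mathcal{K}^\star_{\vartheta(T)}$ follows (the normalization $k^{(0,0)}_t=1$ is immediate from (\ref{L1}), since $L^\Delta$ annihilates the empty configuration). This identification of solutions with moment sequences --- the analog of the classical moment problem flagged in the text --- is where essentially all the work of \cite{asia1} resides, the preceding steps being routine Ovsyannikov theory.
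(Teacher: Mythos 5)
Your proposal is correct and takes essentially the same route as the paper: Proposition~\ref{W1pn} is stated there as a reformulation of Theorem~3.5 of \cite{asia1}, combined with the observation in Subsection~\ref{comSS} that only the integrability assumption (\ref{C7a}) (not boundedness of the $\phi_i$) enters the relevant part of \cite{asia1}. Your two-step sketch matches the three-step outline in Remark~\ref{W1rk}: Ovsyannikov local existence and uniqueness on the scale $\mathcal{K}_\vartheta$, identification of $k_t$ as a correlation function (the moment-problem step), and continuation via the $\varkappa e^{\alpha t}$ type bound, with Proposition~\ref{BJULYpn} supplying the a priori norm control.
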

\begin{remark}
  \label{W1rk}
The proof of Proposition \ref{W1pn} is performed in the following
three steps. First one shows that the Cauchy problem (\ref{W11})
with $k_0 \in \mathcal{K}_{\vartheta_0}$, $\vartheta_0 < \vartheta$,
has a unique local solution $k_t \in \mathcal{K}_{\vartheta}$, see
(\ref{scale}), i.e., existing for $t\in[0,T(\vartheta,\vartheta_0))$
with
\begin{equation}
  \label{W16}
  T(\vartheta, \vartheta_0) = \frac{\vartheta-\vartheta_0}{4\alpha}
  \exp\left( - \varphi e^{\vartheta}\right).
\end{equation}
The next (and the hardest) step is showing that, given $k_0 \in
\mathcal{K}^\star$, the solution $k_t$ lies in $\mathcal{K}^\star$
and hence is the correlation function of a unique $\mu_t\in
\mathcal{P}_{\rm exp}$. Finally, by means of the positivity as in
(\ref{L6A}) one makes continuation of the local solution $k_t$ to
all $t>0$ in such a way that $k_t\in \mathcal{K}_{\vartheta(t)}$
with $\vartheta(t) = \vartheta_0 +\alpha t$, cf. (\ref{L5d}).
\end{remark}

\subsection{The predual evolution}
Along with the evolution $t \mapsto k_t$ described in Proposition
\ref{W1pn} we will need the following one. Assume that we are given
$\widehat{L}$ such that, cf. (\ref{W7}),
\begin{equation}
  \label{W12}
  \langle \! \langle L^\Delta k, G\rangle \! \rangle = \langle \! \langle  k, \widehat{L} G\rangle \!
  \rangle,
\end{equation}
holding for all appropriate $k\in \mathcal{K}$ and $G\in
\mathcal{G}_{\rm fin}$. This operator can be derived similarly as
$L^\Delta$ given in (\ref{L1}). It has the following form
\begin{eqnarray}
  \label{W13}
  & &  (\widehat{L} G) (\eta_0, \eta_1)  \\[.2cm] \nonumber& & =  \sum_{x\in \eta_0} \int_X
  \sum_{\xi \subset \eta_1} e(\tau^0_y ; \eta_1 \setminus \xi)
  e(t^0_y;\xi) \left[ G(\eta_0 \setminus x \cup y, \eta_1\setminus \xi) - G(\eta_0,  \eta_1\setminus \xi)\right] d
  y \qquad \\[.2cm] \nonumber & & +  \sum_{x\in \eta_1} \int_X
  \sum_{\xi \subset \eta_0} e(\tau^1_y ; \eta_0 \setminus \xi)
  e(t^1_y;\xi) \left[ G(\eta_0 \setminus \xi,  \eta_1\setminus x \cup y) - G(\eta_0\setminus \xi,  \eta_1)\right] d
  y,
\end{eqnarray}
with $t^i_y$ and $\tau_y^i$ given in (\ref{W8}). Obviously,
$\widehat{L}$ is defined for each $G\in \mathcal{G}_{\rm fin}$. Our
aim now is to extend it to $G$ taken from the spaces predual to
those defined in (\ref{L5a}). To this end, we introduce the norm
\begin{equation}
  \label{L6}
  |G|_\vartheta= \int_{\Gamma_0}\int_{\Gamma_0} |G(\xi_0, \xi_1)| \exp\bigg{(}
  \vartheta (|\xi_0|+|\xi_1|)\bigg{)}\lambda ( d \xi_0) \lambda ( d
  \xi_1),
\end{equation}
and then define, cf. (\ref{L5a}),
\begin{equation*}
  %\label{L6a}
  \mathcal{G}_\vartheta =\{ G: |G|_\vartheta <\infty\}.
\end{equation*}
Thus, each $\mathcal{G}_\vartheta$ is a weighted $L^1$-type Banach
space. Noteworthy, cf. (\ref{L5b}),
\begin{equation}
  \label{L6b}
  \mathcal{G}_{\vartheta'} \hookrightarrow \mathcal{G}_\vartheta,
  \qquad \vartheta < \vartheta'.
\end{equation}
By employing (\ref{W13}), similarly as in (\ref{L8}) we get
\begin{equation}
  \label{L9}
  |\widehat{L} G|_{\vartheta} \leq \frac{4\alpha
|G|_{\vartheta'}}{e(\vartheta'-\vartheta)} \exp\left(\varphi
e^\vartheta \right), \quad \vartheta ' > \vartheta.
\end{equation}
The latter formula allows one to define (by induction in $n$) the
iterations of $\widehat{L}$, cf. (\ref{L6b}),
\[
(\widehat{L})^n_{\vartheta \vartheta'} : \mathcal{G}_{\vartheta'}
\to \mathcal{G}_\vartheta, \qquad n\in \mathds{N},
\]
the operator norms of which obey
\begin{equation}
  \label{W14}
  \| (\widehat{L})^n_{\vartheta \vartheta'}\| \leq n^n \left(
 \frac{4\alpha}{\vartheta'-\vartheta}  \right)^n \exp\left( n (\varphi e^{\vartheta'}
 -1)\right).
\end{equation}
Then we introduce the operators
\begin{equation}
  \label{W15}
  \varSigma_{\vartheta \vartheta'}(t) = 1 + \sum_{n=1}^\infty
  \frac{t^n}{n!} (\widehat{L})^n_{\vartheta \vartheta'},
\end{equation}
where the series converges in the norm of the Banach space
$\mathcal{L}( \mathcal{G}_{\vartheta'}, \mathcal{G}_\vartheta)$ of
bounded linear operators acting from $ \mathcal{G}_{\vartheta'}$ to
$ \mathcal{G}_{\vartheta}$ -- uniformly on compact subsets of
$[0,T(\vartheta', \vartheta))$, with $T(\vartheta', \vartheta)$
defined in (\ref{W16}). The latter fact readily follows by
(\ref{W14}). Then, for $t< T(\vartheta', \vartheta)$, we can set
\begin{equation}
  \label{W17}
  G_t = \varSigma_{\vartheta \vartheta'}(t) G, \qquad G\in
  \mathcal{G}_{\vartheta'}.
\end{equation}
For a given $\vartheta$, let $k\in \mathcal{K}_\vartheta$ be the
correlation function of a certain $\mu\in \mathcal{P}_{\rm exp}$.
According to Proposition \ref{W1pn}, see also Remark \ref{W1rk},
there exist the map $t\mapsto k_t$, $k_0=k$, that solves (\ref{W11})
and is such that $k_t \in \mathcal{K}_{\vartheta(t)}^\star$ with
$\vartheta (t) = \vartheta +\alpha t$. Let $\vartheta$ and
$\vartheta'$ be as in (\ref{W17}). For $t< T(\vartheta',
\vartheta)$, by (\ref{W16}) it follows that
\[
\vartheta(t) <\vartheta+ \alpha T(\vartheta',\vartheta) <
\vartheta',
\]
which means that $\mathcal{K}_{\vartheta(t)} \subset
\mathcal{K}_{\vartheta'}$. The continuation mentioned in Remark
\ref{W1rk} was done in \cite{asia1} by showing that the solution - a
priori lying in $\mathcal{K}_{\vartheta'}$ -- is in fact in
$\mathcal{K}_{\vartheta(t)}$. For $t< T(\vartheta',\vartheta)$, it
can be obtained similarly as in (\ref{W17}). By induction in $n$ one
defines bounded operators $(L^\Delta)^n_{\vartheta'\vartheta}:
\mathcal{K}_{\vartheta}\to \mathcal{K}_{\vartheta'}$, $n\geq 2$, the
norms of which are estimated as in (\ref{W14}). Then one sets
\begin{equation}
  \label{W18}
k_t =  \varXi_{\vartheta'\vartheta}(t) k_0, \qquad
\varXi_{\vartheta'\vartheta}(t) =1+ \sum_{n=1}^\infty
  \frac{t^n}{n!} (L^\Delta)^n_{\vartheta'\vartheta}, \qquad t<
  T(\vartheta', \vartheta).
\end{equation}
For each $t<
  T(\vartheta', \vartheta)$, one finds $\vartheta'' \in (\vartheta,
\vartheta')$ such that $t<
  T(\vartheta'', \vartheta)$, see (\ref{W16}), which means that
$\varXi_{\vartheta'\vartheta}(t)$ maps $\mathcal{K}_\vartheta$ to
$\mathcal{D}(L^\Delta_{\vartheta'})$, see (\ref{L5b}), (\ref{L5c}).
Furthermore, by the absolute convergence of the series in
(\ref{W18}) -- in the norm of the Banach space $\mathcal{L}(
\mathcal{K}_{\vartheta}, \mathcal{K}_{\vartheta'})$ -- it follows
that the map $t\mapsto \varXi_{\vartheta'\vartheta}(t)$ is
continuously differentiable in this norm and the following holds
\begin{equation}
  \label{W18Z}
\frac{d}{dt} \varXi_{\vartheta'\vartheta}(t) =
\varXi_{\vartheta'\vartheta''}(t) L^\Delta_{\vartheta''\vartheta}=
L^\Delta_{\vartheta'\vartheta''} \varXi_{\vartheta''\vartheta}(t) =
L^\Delta_{\vartheta'} \varXi_{\vartheta'\vartheta}(t) , \qquad t<
T(\vartheta',\vartheta),
\end{equation}
which yields that $k_t$ as in (\ref{W18}) solves (\ref{W11}). Then
the steps mentioned in Remark \ref{W1rk} amount to the following.
For fixed $\vartheta, \vartheta'$, one constructs
$\varXi_{\vartheta'\vartheta}(t)$, $t<T(\vartheta', \vartheta)$, and
shows by (\ref{W18Z}) that $k_t$ as in (\ref{W18}) solves the
corresponding Cauchy problem. Then one takes $k_0 =k_{\mu}\in
\mathcal{K}^\star_{\vartheta_0}$ and shows that $k_t$,
$t<T(\vartheta, \vartheta_0)$ for some $\vartheta>\vartheta_0$,
obtained as just mentioned, lies in $\mathcal{K}^\star$. Finally, by
the positivity as in (\ref{L6A}) one proves that this $k_t$ lies in
$\mathcal{K}_{\vartheta(t)}^\star$, $\vartheta(t) = \vartheta_0 +
\alpha t < \vartheta$ for $t < T(\vartheta, \vartheta_0)$. The
continuation to  $s>t $ is then performed by applying
$\varXi_{\vartheta\vartheta(t)}(s)$ to $k_t$, see \cite[Lemma
5.5]{asia1} fore more detail.

Complementary information concerning the operator norms of the maps
$t\mapsto \varSigma_{\vartheta\vartheta'}(t)$ and $t\mapsto
\varXi_{\vartheta'\vartheta}(t)$ is given by the following estimates
\begin{equation}
  \label{W18z}
\|\varSigma_{\vartheta\vartheta'}(t)\| \leq
\frac{T(\vartheta',\vartheta)}{T(\vartheta',\vartheta) - t}, \qquad
|\varXi_{\vartheta'\vartheta}(t)| \leq
\frac{T(\vartheta',\vartheta)}{T(\vartheta',\vartheta) - t}, \qquad
t< T(\vartheta',\vartheta),
\end{equation}
which readily follow by (\ref{W14}) and the corresponding estimate
of $(L^\Delta)^n_{\vartheta'\vartheta}$, respectively. By means of
(\ref{W18Z}) and (\ref{W18}) we also obtain the following.
\begin{proposition}
  \label{W2pn}
Given $\vartheta$ and $\vartheta'> \vartheta$, let $k_0\in
\mathcal{K}_\vartheta$ be the correlation function of a certain
$\mu\in \mathcal{P}_{\rm exp}$ and then $k_t$ be the solution as in
Proposition \ref{W1pn}. Let also $G$ be in
$\mathcal{G}_{\vartheta'}$. Then for each $t< T(\vartheta',
\vartheta)$ the following holds
\begin{equation}
  \label{W17a}
\langle\! \langle k_t , G \rangle \! \rangle = \langle\! \langle k_0
, G_t \rangle \! \rangle,
\end{equation}
where $G_t$ is as in (\ref{W17}).
\end{proposition}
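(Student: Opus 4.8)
The plan is to prove the identity \eqref{W17a} first for $t$ strictly smaller than the one-step radius $T(\vartheta',\vartheta)$ by differentiating both sides and showing they solve the same linear ODE with the same initial value, and then to invoke the uniqueness part of the theory of the scales $\mathcal{K}_\bullet$, $\mathcal{G}_\bullet$. Concretely, fix $G\in\mathcal{G}_{\vartheta'}$ and, for $t<T(\vartheta',\vartheta)$, set $\varphi_1(t)=\langle\!\langle k_t,G\rangle\!\rangle$ and $\varphi_2(t)=\langle\!\langle k_0,G_t\rangle\!\rangle$. At $t=0$ both equal $\langle\!\langle k_0,G\rangle\!\rangle$, since $\varXi_{\vartheta'\vartheta}(0)=1$ and $\varSigma_{\vartheta\vartheta'}(0)=1$ by \eqref{W18} and \eqref{W15}. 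The pairing $\langle\!\langle\cdot,\cdot\rangle\!\rangle$ of \eqref{W6} is, by \eqref{L5} and \eqref{L6}, a continuous bilinear form on $\mathcal{K}_{\vartheta''}\times\mathcal{G}_{\vartheta''}$ for any $\vartheta''$ (the weights are reciprocal), so one may differentiate it termwise in $t$ as soon as each factor is norm-differentiable in a compatible pair of spaces.

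The next step is to pick an intermediate index. Given $t<T(\vartheta',\vartheta)$, choose $\vartheta''\in(\vartheta,\vartheta')$ with $t<T(\vartheta'',\vartheta)\wedge T(\vartheta',\vartheta'')$; this is possible by continuity of $T(\cdot,\cdot)$ in \eqref{W16}. Then $t\mapsto k_t$ is a norm-continuously-differentiable map into $\mathcal{K}_{\vartheta''}$ with $\tfrac{d}{dt}k_t=L^\Delta_{\vartheta''}k_t$ (this is \eqref{W18Z}, read with the roles $\vartheta',\vartheta$ replaced by $\vartheta'',\vartheta$, together with the fact that $k_t$ actually lies in the smaller space $\mathcal{K}_{\vartheta(t)}\subset\mathcal{K}_{\vartheta''}$), and by the exact analogue of \eqref{W18Z} for the predual family, $t\mapsto G_t$ is norm-continuously-differentiable into $\mathcal{G}_{\vartheta}$ with $\tfrac{d}{dt}G_t=\widehat{L}G_t$, where the right-hand side is read through $\mathcal{G}_{\vartheta''}\to\mathcal{G}_\vartheta$. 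Differentiating $\varphi_1$ gives $\varphi_1'(t)=\langle\!\langle L^\Delta_{\vartheta''}k_t,G\rangle\!\rangle$, and since $L^\Delta k_t\in\mathcal{K}_{\vartheta''}$ while $G\in\mathcal{G}_{\vartheta'}\hookrightarrow\mathcal{G}_{\vartheta''}$ (using \eqref{L6b}), the duality relation \eqref{W12} applies and yields $\varphi_1'(t)=\langle\!\langle k_t,\widehat{L}G\rangle\!\rangle$. Differentiating $\varphi_2$ gives $\varphi_2'(t)=\langle\!\langle k_0,\widehat{L}G_t\rangle\!\rangle$; here $k_0\in\mathcal{K}_\vartheta$ and $\widehat{L}G_t\in\mathcal{G}_\vartheta$, so again \eqref{W12} applies (now with $k=k_0$, which lies in $\mathcal{K}_\vartheta$, paired against $\widehat L G_t\in\mathcal G_\vartheta$, while $\widehat L$ acting on the $G$-side dualizes to $L^\Delta$ on the $k$-side) and gives $\varphi_2'(t)=\langle\!\langle L^\Delta k_0,G_t\rangle\!\rangle$. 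The point is now that neither of these is yet of the form $\tfrac{d}{dt}$ of the \emph{same} scalar functional of a single evolving object, so a direct ``same ODE'' argument needs a little care.

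To finish cleanly I would instead set $\Phi(t)=\langle\!\langle k_t,G_{t_0-t}\rangle\!\rangle$ for fixed $t_0<T(\vartheta',\vartheta)$ and $t\in[0,t_0]$ — that is, run $k$ forward and $G$ backward so that the ``total time'' is conserved — choosing the intermediate $\vartheta''$ as above uniformly for $t\in[0,t_0]$ so that all the differentiability statements hold on the whole interval. Then $\Phi$ is differentiable with $\Phi'(t)=\langle\!\langle L^\Delta k_t,G_{t_0-t}\rangle\!\rangle-\langle\!\langle k_t,\widehat{L}G_{t_0-t}\rangle\!\rangle$, and by \eqref{W12} (with $k=k_t\in\mathcal{K}_{\vartheta''}$ and $G_{t_0-t}$, $\widehat{L}G_{t_0-t}$ in the appropriate $\mathcal{G}$-spaces) the two terms cancel, so $\Phi'\equiv0$ on $[0,t_0]$. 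Hence $\Phi(t_0)=\Phi(0)$, i.e. $\langle\!\langle k_{t_0},G\rangle\!\rangle=\langle\!\langle k_0,G_{t_0}\rangle\!\rangle$, which is \eqref{W17a}.

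I expect the main obstacle to be purely bookkeeping rather than conceptual: one must be careful that at every instant $t$ the object one differentiates lives in a space small enough for its $L^\Delta$- (resp. $\widehat L$-)image still to pair continuously against its partner — this is exactly why the intermediate index $\vartheta''$ with $t_0<T(\vartheta'',\vartheta)$ and $t_0<T(\vartheta',\vartheta'')$ must be introduced and why one invokes \eqref{L6b}, \eqref{scale}, and the sharper location $k_t\in\mathcal K_{\vartheta(t)}$ from Remark \ref{W1rk}. Justifying termwise differentiation of $\Phi$ is immediate from the absolute convergence of the defining series \eqref{W18} and \eqref{W15} in the relevant operator norms, together with the continuity of the bilinear pairing; the cancellation $\Phi'\equiv0$ is then the adjointness \eqref{W12}, which holds for $G\in\mathcal G_{\rm fin}$ and extends to the Banach-space setting by density of $\mathcal G_{\rm fin}$ in each $\mathcal G_\vartheta$ and the continuity estimates \eqref{L8}, \eqref{L9}. \prbox
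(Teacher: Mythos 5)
Your ``conserved total time'' argument, taking $\Phi(t)=\langle\!\langle k_t,G_{t_0-t}\rangle\!\rangle$ and showing $\Phi'\equiv 0$, is a legitimate and well-known dual to the argument the paper has in mind. The paper merely points at (\ref{W18Z}) and (\ref{W18}): expand both sides of (\ref{W17a}) as power series in $t$ using $k_t=\varXi_{\vartheta'\vartheta}(t)k_0$ from (\ref{W18}) and $G_t=\varSigma_{\vartheta\vartheta'}(t)G$ from (\ref{W15}), (\ref{W17}), and match coefficients: $\langle\!\langle (L^\Delta)^n_{\vartheta'\vartheta}k_0,G\rangle\!\rangle=\langle\!\langle k_0,(\widehat L)^n_{\vartheta\vartheta'}G\rangle\!\rangle$ follows by iterating (\ref{W12}) along a grid $\vartheta=\vartheta_0<\vartheta_1<\cdots<\vartheta_n=\vartheta'$, pairing $(L^\Delta)^{j}k_0\in\mathcal K_{\vartheta_j}$ against $(\widehat L)^{n-j}G\in\mathcal G_{\vartheta_j}$ at each stage. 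Your $\Phi'\equiv 0$ computation is the $C^1$-in-$t$ version of exactly this cancellation, so in spirit the two proofs coincide.

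There is, however, a genuine gap in your bookkeeping that is more than cosmetic. You ask for a single $\vartheta''\in(\vartheta,\vartheta')$ with $t_0<T(\vartheta'',\vartheta)\wedge T(\vartheta',\vartheta'')$ ``uniformly for $t\in[0,t_0]$''. From the formula (\ref{W16}), as $\vartheta''$ runs over $(\vartheta,\vartheta')$ the quantity $T(\vartheta'',\vartheta)$ increases from $0$ towards $T(\vartheta',\vartheta)$ while $T(\vartheta',\vartheta'')$ decreases from $T(\vartheta',\vartheta)$ to $0$; hence $\sup_{\vartheta''}\min\{T(\vartheta'',\vartheta),T(\vartheta',\vartheta'')\}$ is strictly below $T(\vartheta',\vartheta)$. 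Consequently, for $t_0$ close to $T(\vartheta',\vartheta)$ there is no admissible $\vartheta''$, and the ``uniform'' choice fails. This is not repaired by replacing the single $\vartheta''$ with a pair $\vartheta_1<\vartheta_2$: one then needs $t_0<T(\vartheta_1,\vartheta)$ and $t_0<T(\vartheta',\vartheta_2)$ with $\vartheta_1<\vartheta_2$, which (again via (\ref{W16})) forces roughly $t_0<T(\vartheta',\vartheta)/2$.

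The fix is short and you should state it: by (\ref{W14}), (\ref{W18z}) both sides of (\ref{W17a}) are power series in $t$ that converge absolutely for all $t<T(\vartheta',\vartheta)$, hence real-analytic there; once your $\Phi'\equiv 0$ argument establishes the identity on some nontrivial subinterval $[0,T_*)$ (which it does, for $T_*$ a fixed fraction of $T(\vartheta',\vartheta)$), analyticity extends the identity to all $t<T(\vartheta',\vartheta)$. Alternatively, replace the ODE argument by the term-by-term Taylor matching described above, which removes the time variable from the space-selection problem altogether and needs no analyticity appeal. Either way, the ``bookkeeping'' you flag is the one step that actually requires an argument.
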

We end up this section by producing appropriate extensions of the
map $G \mapsto KG$ defined in (\ref{Qo}) for $G\in \mathcal{G}_{\rm
fin}$. Set $\mathcal{G}_{\infty}=\cap_{\vartheta \in \mathds{R}}
\mathcal{G}_\vartheta$. As is usual, we do not distinguish between
the elements of $\mathcal{G}_{\infty}$ and the measurable functions
$G:\Gamma_0^2 \to \mathds{R}$ for which the integrals in the
right-hand side of (\ref{L6}) are finite for all $\vartheta\in
\mathds{R}$. Then $\mathcal{G}_{\rm fin}\subset
\mathcal{G}_{\infty}$. We recall that each measurable $G:\Gamma_0^2
\to \mathds{R}$ is a collection $\{G^{(m)}\}_{m\in \mathds{N}^2_0}$
of symmetric (cf. (\ref{nosym})) Borel functions. Similarly as in
\cite[Theorem 1]{Lenard}, one can show that, for each such $G$ and
$m\in \mathds{N}_0^2$,  the map
$$\Gamma^2_* \ni \gamma \mapsto \sum_{(\mathbf{x}, \mathbf{y})\in
\gamma} G^{(m)} (\mathbf{x}, \mathbf{y})$$ is
$\mathcal{B}(\Gamma^2_*)$-measurable. Then also the functions
(possibly taking infinite values)
\[
F_{G}(\gamma) := \sum_{m\in \mathds{N}_0^2 } \frac{1}{m_0! m_1!}
\sum_{(\mathbf{x}, \mathbf{y})\in \gamma} |G^{(m)}(\mathbf{x},
\mathbf{y})|, \qquad G\in \mathcal{G}_{\infty}, \ \ \gamma\in
\Gamma^2_*,
\]
enjoy this property; hence,  the sets
\begin{gather*}
%  \label{Bi}
\Gamma^2_G = \bigcup_{n\in \mathds{N}}\{\gamma\in \Gamma^2_*:
F_{G}(\gamma)\leq n\}, \quad G\in \mathcal{G}_{\infty},
\end{gather*}
are $\mathcal{B}(\Gamma^2_*)$-measurable. Moreover, $\Gamma^2_*$
itself is $\Gamma^2_{G_\psi}$ for $G_\psi$ such that
$G^{(1,0)}_\psi(x) = G^{(0,1)}_\psi(x) = \psi(x)$ and $G^{(m)}_\psi
= 0$ whenever $|m|\neq 1$, see (\ref{no21}). Let $\mu\in
\mathcal{P}_{\rm exp}$ be of type $e^{\vartheta}$ for some
$\vartheta\in \mathds{R}$. By (\ref{W6}) we thus have
\begin{equation*}
  %\label{Bie1}
  \mu(F_{G}) \leq |G|_\vartheta.
\end{equation*}
Similarly as in (\ref{no22}) and (\ref{C4}) we then get that
$\mu(\Gamma^2_G) =1$. Therefore, for each $\mu\in \mathcal{P}_{\rm
exp}$ and $G\in \mathcal{G}_\infty$, the series in
\begin{equation}
  \label{Bie2}
  (KG) (\gamma):= \sum_{m\in \mathds{N}_0^2 } \frac{1}{m_0! m_1!} \sum_{(\mathbf{x}, \mathbf{y})\in
\gamma} G^{(m)} (\mathbf{x}, \mathbf{y})
\end{equation}
absolutely converges, $\mu$-almost everywhere on $\Gamma^2_*$. This
includes also the Poisson measures $\pi_\kappa$ with all $\kappa>0$,
see (\ref{no21A}). By means of these argument we obtain the
following conclusion.
\begin{proposition}
  \label{Bi1pn}
Let $\mu\in \mathcal{P}_{\rm exp}$ be of type $e^\vartheta$ for some
$\vartheta\in \mathds{R}$. Then the map $G \mapsto KG$ as in
(\ref{Bie2}) gives rise to the bounded linear operator $K$ acting
from the Banach space $\mathcal{G}_\vartheta$ to the Banach space
$L^1 (\Gamma^2_*, \mu)$, such that
\begin{equation*}
%  \label{Bie3}
   \mu(K G) = \langle\!\langle k_\mu , G\rangle \! \rangle.
\end{equation*}
Moreover, if $G$ belongs to $\mathcal{G}_{\vartheta'}$ for some
$\vartheta'>\vartheta$, then
$$\mu(L K G) = \mu(K \widehat{L} G) = \langle \! \langle k_\mu ,
\widehat{L} G \rangle \! \rangle,$$ where
$\widehat{L}:\mathcal{G}_{\vartheta'} \to \mathcal{G}_\vartheta$ is
the linear operator defined in (\ref{L9}).
\end{proposition}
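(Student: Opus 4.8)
The plan is to deduce all claims from three ingredients already available: the identity (\ref{W6}) valid on $\mathcal{G}_{\rm fin}$, Ruelle's bound for a type-$e^\vartheta$ measure, and the density of $\mathcal{G}_{\rm fin}$ in the spaces $\mathcal{G}_\vartheta$. That density is elementary: given $G$ with $|G|_\vartheta<\infty$, first truncate to the finitely many components $G^{(m)}$ with $|m|\le N$ (the tail $\sum_{|m|>N}\frac{e^{\vartheta|m|}}{m_0!m_1!}\|G^{(m)}\|_{L^1}$ goes to $0$), and then approximate each $G^{(m)}\in L^1(X^{m_0}\times X^{m_1})$ by symmetric compactly supported continuous functions (using that $C_{\rm cs}$ is $L^1$-dense and that symmetrisation is an $L^1$-contraction which fixes the already symmetric $G^{(m)}$); this produces $G_N\in\mathcal{G}_{\rm fin}$ with $|G_N-G|_\vartheta\to0$, and the same works in $\mathcal{G}_{\vartheta'}$.

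For the first assertion, from (\ref{Bie2}) one has the pointwise estimate $|(KG)(\gamma)|\le F_G(\gamma)$ for the non-negative function $F_G$ introduced just above the proposition. By monotone convergence together with (\ref{no15}) and Ruelle's bound (\ref{no9}) (which for $\mu$ of type $e^\vartheta$ reads $0\le k^{(m)}_\mu\le e^{\vartheta|m|}$),
\[
\mu(F_G)=\sum_{m\in\mathds{N}_0^2}\frac{1}{m_0!\,m_1!}\,\langle\!\langle k^{(m)}_\mu,|G^{(m)}|\rangle\!\rangle\le\sum_{m\in\mathds{N}_0^2}\frac{e^{\vartheta|m|}}{m_0!\,m_1!}\,\|G^{(m)}\|_{L^1}=|G|_\vartheta,
\]
so $KG\in L^1(\Gamma^2_*,\mu)$ with $\|KG\|_{L^1(\mu)}\le|G|_\vartheta$; since $K$ is linear in $G$ by (\ref{Bie2}), this shows $K:\mathcal{G}_\vartheta\to L^1(\Gamma^2_*,\mu)$ is bounded of norm $\le1$. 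The identity $\mu(KG)=\langle\!\langle k_\mu,G\rangle\!\rangle$ is (\ref{W6}) on $\mathcal{G}_{\rm fin}$; for general $G\in\mathcal{G}_\vartheta$ one passes to the limit along $G_N$ as above, using $\mu(KG_N)\to\mu(KG)$ (boundedness of $K$) and $|\langle\!\langle k_\mu,G_N-G\rangle\!\rangle|\le\|k_\mu\|_\vartheta|G_N-G|_\vartheta\to0$, where $\|k_\mu\|_\vartheta\le1$ by (\ref{no9}) (cf.\ Proposition~\ref{BJULYpn}).

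For the second assertion take $G\in\mathcal{G}_{\vartheta'}$, $\vartheta'>\vartheta$. By (\ref{L9}) the operator $\widehat{L}$ maps $\mathcal{G}_{\vartheta'}$ boundedly into $\mathcal{G}_\vartheta$, so $\widehat{L}G\in\mathcal{G}_\vartheta$ and the first assertion applied to $\widehat{L}G$ already gives $\mu(K\widehat{L}G)=\langle\!\langle k_\mu,\widehat{L}G\rangle\!\rangle$. It remains to identify $\mu(LKG)$ with this quantity. On $\mathcal{G}_{\rm fin}$ this is immediate from the defining relations (\ref{W7}) and (\ref{W12}): $\mu(LKG)=\langle\!\langle L^\Delta k_\mu,G\rangle\!\rangle=\langle\!\langle k_\mu,\widehat{L}G\rangle\!\rangle$. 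To pass to $G\in\mathcal{G}_{\vartheta'}$, fix $\vartheta''\in(\vartheta,\vartheta')$: by (\ref{L8}) we have $L^\Delta k_\mu\in\mathcal{K}_{\vartheta''}$, so $G\mapsto\langle\!\langle L^\Delta k_\mu,G\rangle\!\rangle$ is $|\cdot|_{\vartheta''}$- and hence $|\cdot|_{\vartheta'}$-continuous, while $G\mapsto\langle\!\langle k_\mu,\widehat{L}G\rangle\!\rangle=\mu(K\widehat{L}G)$ is $|\cdot|_{\vartheta'}$-continuous by (\ref{L9}) and the first assertion; as these two functionals agree on the $|\cdot|_{\vartheta'}$-dense set $\mathcal{G}_{\rm fin}$, they agree on $\mathcal{G}_{\vartheta'}$. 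The last, and only genuinely delicate, point is to give $LKG$ a rigorous meaning for $G\notin\mathcal{G}_{\rm fin}$: one must show that $(L(KG))(\gamma)$ (with $L$ as in (\ref{L}) applied to the now unbounded function $KG$) is a $\mu$-a.e.\ absolutely convergent expression, that $L(KG)\in L^1(\mu)$, and that $\mu(LKG)=\langle\!\langle L^\Delta k_\mu,G\rangle\!\rangle$. I would do this by dominating $|(L(KG))(\gamma)|$, component by component in $m$, by $(K(\widehat{L}_+|G|))(\gamma)$, where $\widehat{L}_+$ is the non-negative operator obtained from (\ref{W13}) by replacing $t^i_y$ by $1-e^{-\phi_i}\ge0$ and $G$ by $|G|$; since $\widehat{L}_+$ obeys the same estimate as in (\ref{L9}), this majorant is $\mu$-integrable (by the first assertion) and uniform along the approximants $G_N$, so $\mu(LKG_N)\to\mu(LKG)$ and the equality passes to the limit. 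Combining the three displayed equalities gives $\mu(LKG)=\mu(K\widehat{L}G)=\langle\!\langle k_\mu,\widehat{L}G\rangle\!\rangle$.

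The density step and the bound $\mu(F_G)\le|G|_\vartheta$ are routine; the main obstacle is this final step — exhibiting a natural domain of $L$ that contains the unbounded functions $KG$ for $G\notin\mathcal{G}_{\rm fin}$ and checking that $L$ commutes with the limit in $\mathcal{G}_{\vartheta'}$ — which is precisely where the explicit majorant built from (\ref{L}) and (\ref{W13}) is needed.
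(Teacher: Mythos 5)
Your argument is correct and, once unpacked, follows the same essential route the paper leaves implicit in the paragraph preceding the proposition: the absolute-convergence bound $\mu(F_G)\le|G|_\vartheta$ (stated there by appeal to (\ref{W6})), the domination $|KG|\le F_G$, and the $\mathcal{G}_{\rm fin}$-level duality relations (\ref{W6}), (\ref{W7}), (\ref{W12}). Where you differ is in mechanism: you pass from $\mathcal{G}_{\rm fin}$ to $\mathcal{G}_\vartheta$ by density and limits of functionals, whereas the paper's implicit argument goes directly via Fubini/dominated convergence — the bound $\mu(F_G)\le|G|_\vartheta<\infty$ licenses interchanging the sum over $m$ with the $\mu$-integral, so the identity $\mu(KG)=\sum_m\frac{1}{m_0!m_1!}\langle\!\langle k^{(m)}_\mu,G^{(m)}\rangle\!\rangle$ follows componentwise from the $L^1$-extension of (\ref{no15}). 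Both routes are sound and of comparable length; the Fubini route avoids needing to build the approximating sequence explicitly. On the second assertion, the point you flag as ``genuinely delicate'' — a pointwise meaning for $L(KG)$ when $KG$ is unbounded — is something the paper does not address: there, $\mu(LKG)$ is effectively a notational shorthand set up in (\ref{W7}) and (\ref{W12}) as $\langle\!\langle L^\Delta k_\mu,G\rangle\!\rangle=\langle\!\langle k_\mu,\widehat{L}G\rangle\!\rangle$, and the only thing the paper really asks you to check is that this equals $\mu(K\widehat{L}G)$, which is the first assertion applied to $\widehat{L}G\in\mathcal{G}_\vartheta$ (using (\ref{L9})). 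Your majorant $K(\widehat{L}_+|G|)$, built by replacing $t^i_y$ with $1-e^{-\phi_i}\ge 0$, is a correct device for showing that the formula (\ref{L}) applied pointwise to $KG$ converges $\mu$-a.e.\ and agrees with $K(\widehat{L}G)$; this does strengthen the statement, but it is more than the paper commits to proving.
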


\section{Uniqueness}

\subsection{Solving the Fokker-Planck equation}

We begin by recalling Definition \ref{A01df}, in which we mention
maps $t\mapsto \mu_t \in \mathcal{P} (\Gamma^2_*)$.
\begin{lemma}
  \label{W1lm}
Let $\mu_0\in  \mathcal{P}_{\rm exp}$ be of type $\varkappa_0 =
e^{\vartheta_0}$ and consider the Fokker-Planck equation (\ref{FPE})
with the initial condition $\mu_t|_{t=0}=\mu_0$ and all choices of
$F \in \widehat{\mathcal{F}}$, see (\ref{de2}) and Definition
\ref{THdf}. Assume that $t\mapsto \mu_t$ is a solution of
(\ref{FPE}) with such $\mu_0$ and $F$. Then $\mu_t \in
\mathcal{P}_{\rm exp}$; moreover, for each $T>0$, there exists
$\vartheta_T>\vartheta_0$ such that the type of $\mu_t$ does not
exceed $e^{\vartheta_T}$ for all $t\leq T$.
\end{lemma}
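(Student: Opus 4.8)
The plan is to reduce the statement to a family of estimates on the correlation measures $\chi^{(m)}_{\mu_t}$: by Definition~\ref{no1dfg}, ``$\mu_t\in\mathcal P_{\rm exp}$ of type $\le e^{\vartheta_T}$'' is exactly the collection of bounds $|\chi^{(m)}_{\mu_t}(\theta^{\otimes m})|\le e^{\vartheta_T|m|}\|\theta_0\|_{L^1(X)}^{m_0}\|\theta_1\|_{L^1(X)}^{m_1}$, $m\in\mathds{N}_0^2$, $\theta_i\in C_{\rm cs}(X)$; since $k^{(m)}_{\mu_t}\ge0$ it suffices to treat nonnegative $\theta_i$, and then by (\ref{T1}) one may take $\theta_i\in\varTheta_\psi^{+}$. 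The only elements of $\mathcal D(L)$ carrying information about the $\chi^{(m)}$ are the functions $\widehat{F}^m_\tau(\mathbf{v}|\cdot)\in\widehat{\mathcal F}$, for which (\ref{FPE}) is assumed to hold, and as $\tau\downarrow0$ one has the monotone pointwise convergence $\widehat{F}^{m_i}_{\tau_i}(\theta_i^{\otimes m_i}|\gamma_i)\uparrow H^{(m_i)}_{\theta_i}(\gamma_i)$, see (\ref{de2}), (\ref{no10}), whence $\mu_t(\widehat{F}^m_\tau(\theta^{\otimes m}|\cdot))\uparrow\chi^{(m)}_{\mu_t}(\theta^{\otimes m})$ (a priori in $[0,+\infty]$). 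So it is enough to bound $\mu_t(\widehat{F}^m_\tau(\theta^{\otimes m}|\cdot))$, uniformly for $\tau$ near $0$.

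To get this I would run a Gronwall/Ovsyannikov-type argument on $g_t(\mathbf{v},m):=\mu_t(\widehat{F}^m_\tau(\mathbf{v}|\cdot))$, which is nonnegative and, by (\ref{ND}), a priori finite. Inserting $F=\widehat{F}^m_\tau(\mathbf{v}|\cdot)$ into (\ref{FPE}), using $|\mu_s(LF)|\le\mu_s(|LF|)$ and the pointwise estimate (\ref{TH7}) — an honest inequality between functions on $\Gamma^2_*$, obtained by discarding $\exp(-\sum_z\phi_i)\le1$ — together with $\|a_i\theta\|_{L^1}\le(\alpha+1)\|\theta\|_{L^1}$ (since $\|a_i\|_{L^1}=\bar{a}_i^{(0)}\le\alpha$, see (\ref{TH4}), (\ref{L2})) and (\ref{TH6}), one obtains a closed family of integral inequalities, schematically
\[
g_t(\mathbf{v},m)\ \leq\ g_0(\mathbf{v},m)+\int_0^t\!\Big(c\,|m|\,g_s(\mathbf{v}^{\flat},m)+\tau\,c'\big(g_s(\mathbf{v}^{\sharp},(m_0{+}1,m_1))+g_s(\mathbf{v}^{\sharp},(m_0,m_1{+}1))\big)\Big)\,ds,
\]
where $\mathbf{v}^{\flat}$ replaces one slot $v_{i,j}$ of $\mathbf{v}$ by $a_iv_{i,j}$ (of $L^1$-norm $\le(\alpha+1)\|v_{i,j}\|_{L^1}$), $\mathbf{v}^{\sharp}$ adjoins one $\psi$-slot, and $g_0(\mathbf{v},m)\le\varkappa_0^{|m|}\prod_{i,j}\|v_{i,j}\|_{L^1}$ because $\mu_0\in\mathcal P_{\rm exp}$ of type $\varkappa_0$. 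This is where the combinatorial estimates of \cite{KR} and of subsect.~5.2 enter: to set this system up rigorously, to track how the $L^1$-norms of the test functions (and the constants $\bar{c}_{v_{i,j}}$ of (\ref{C80})) propagate under iterated $a_i$, to control the $\tau$-weighted upward coupling, and to secure a comparison principle for this infinite, upward-coupled system (including its continuation to all $t>0$, using, as in Remark~\ref{W1rk}, the positivity built into (\ref{L6A})). Granting that, one checks that a supersolution built from $\beta(t)^{|m|}$ and the $L^1$-norms of the $v_{i,j}$, with $\beta(t)=\varkappa_0 e^{\rho t}$ for any $\rho>c$ and a $\tau$-dependent prefactor $A_\tau(t)\to1$ as $\tau\downarrow0$ (absorbing the $\tau$-terms), works; hence $\mu_t(\widehat{F}^m_\tau(\theta^{\otimes m}|\cdot))\le A_\tau(t)\,(\varkappa_0 e^{\rho t})^{|m|}\|\theta_0\|_{L^1}^{m_0}\|\theta_1\|_{L^1}^{m_1}$.

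Letting $\tau\downarrow0$ and then $\rho\downarrow c$ gives $\chi^{(m)}_{\mu_t}(\theta^{\otimes m})\le(\varkappa_0 e^{ct})^{|m|}\|\theta_0\|_{L^1}^{m_0}\|\theta_1\|_{L^1}^{m_1}$ for all $m\in\mathds{N}_0^2$ and all $\theta_i\in\varTheta_\psi^{+}$; by the density argument following Definition~\ref{no1dfg} this yields a density $0\le k^{(m)}_{\mu_t}\le(\varkappa_0 e^{ct})^{|m|}$ and hence (\ref{no7}) for all $\theta_i\in C_{\rm cs}(X)$. Thus $\mu_t\in\mathcal P_{\rm exp}$ of type $\le\varkappa_0 e^{ct}$, and for fixed $T$ one takes $\vartheta_T=\log\varkappa_0+cT=\vartheta_0+cT>\vartheta_0$. (Once this is known, the correlation functions of $\mu_t$ lie in $\mathcal K^\star_{\vartheta_T}$ — the positivity in (\ref{L6A}) being automatic since $\mu_t\ge0$ — so via Proposition~\ref{Bi1pn} and the duality $\langle\!\langle L^\Delta k,G\rangle\!\rangle=\langle\!\langle k,\widehat{L}G\rangle\!\rangle$ one could identify $k_{\mu_t}$ with the solution $k_t$ of (\ref{W11}) from Proposition~\ref{W1pn} and sharpen the exponent to $\vartheta_0+\alpha t$; this refinement is not needed for the present lemma but underlies Lemma~\ref{W2lm}.)

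The main obstacle is the second step. The hierarchy does not close at a fixed level $m$ — the $\tau$-term of (\ref{TH7}) produces order $|m|+1$ — so it is a genuinely infinite, upward-coupled system to which Gronwall's lemma does not directly apply, and the test functions mutate under $L$ (a slot $v_{i,j}$ becomes $a_iv_{i,j}$), so their $L^1$-norms and the auxiliary constants must be followed through the iteration. One also cannot circumvent this by simply transferring (\ref{FPE}) to the correlation-function equation and invoking Proposition~\ref{W1pn}, since that presupposes $\mu_t\in\mathcal P_{\rm exp}$. Both difficulties are exactly what the combinatorial machinery of \cite{KR} and of subsect.~5.2 is designed to overcome; modulo that input, the scheme above is routine.
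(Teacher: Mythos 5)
Your proposal is correct and takes essentially the same route as the paper: iterate the Fokker--Planck identity against the functions $\widehat{F}^m_\tau(\mathbf v|\cdot)$, control the upward-coupled hierarchy produced by the estimate (\ref{TH7}) using the combinatorial bounds of \cite{KR} (here packaged as the functions $\varPhi^m_{\tau,q}$ and the inequalities (\ref{W38}), (\ref{W40a}), (\ref{W36c})), show the remainder of the truncated iteration vanishes for $t<\log(1+\varepsilon)/c_a$, and then send $\tau\downarrow0$ by monotone convergence to obtain Ruelle bounds on $\chi^{(m)}_{\mu_t}$, followed by stepwise continuation in $t$. The only cosmetic difference is that the paper replaces your Gronwall/supersolution framing with an explicit truncated expansion $\mu_t(\varPhi^m_\tau)\le\sum_{q<n}\frac{t^q}{q!}\mu_0(\varPhi^m_{\tau,q})+(t/\rho_\varepsilon)^n\bar C$ whose tail vanishes, and evaluates $\mu_0(\varPhi^m_{\tau,q})$ via the correlation-function representation to arrive at the explicit rate $\varkappa_t=e^{\vartheta_0+(\alpha+1)t}$.
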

Note that in this lemma we assume that only $\mu_0$ is
sub-Poissonian, and that $t\mapsto \mu_t$ solves (\ref{FPE}) only
with a part of $\mathcal{D}(L)$. Before proceeding further, we
recall that the families of functions $\widetilde{\mathcal{F}}$ and
$\widehat{\mathcal{F}}$ were introduced in (\ref{T4a}) and
(\ref{de2}), respectively.
\begin{proposition}
  \label{V1pn}
Set $\mathcal{F}_{\infty}=\{F=KG: G\in \mathcal{G}_\infty\}$, see
(\ref{Bie2}). Then
 both $\widetilde{\mathcal{F}}$ and
$\widehat{\mathcal{F}}$ are subsets of $\mathcal{F}_{\infty}$.
\end{proposition}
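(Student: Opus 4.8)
The plan is to exhibit, for each element of $\widetilde{\mathcal F}$ and of $\widehat{\mathcal F}$, an explicit collection $G=\{G^{(m)}\}_{m\in\mathds N_0^2}$ lying in $\mathcal G_\infty$ and satisfying $F=KG$ in the sense of (\ref{Bie2}). The structural point I would use throughout is that $K$ acts componentwise: if $G(\eta_0,\eta_1)=G_0(\eta_0)G_1(\eta_1)$, then by (\ref{Qo})
\[
(KG)(\gamma)=\Big(\sum_{\eta_0\subset\gamma_0}G_0(\eta_0)\Big)\Big(\sum_{\eta_1\subset\gamma_1}G_1(\eta_1)\Big),
\]
so, since both $\widetilde F^\theta_\tau$ and $\widehat F^m_\tau(\mathbf v|\cdot)$ are products of a $\gamma_0$-factor and a $\gamma_1$-factor of identical form, it is enough to recognise one component at a time as a one-component $K$-image, and afterwards to control $|G|_\vartheta$, see (\ref{L6}).

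For $\widetilde{\mathcal F}$ I would put $f_i(x)=(1+\theta_i(x))e^{-\tau_i\psi(x)}-1$. Splitting $f_i=(e^{-\tau_i\psi}-1)+\theta_i e^{-\tau_i\psi}$ and using $1-e^{-a}\le a$ together with (\ref{C801}) gives $|f_i(x)|\le(\tau_i+\bar c_{\theta_i})\psi(x)$; hence for $\gamma_i\in\Gamma_*$ one has $\sum_{x\in\gamma_i}|f_i(x)|\le(\tau_i+\bar c_{\theta_i})\Psi(\gamma_i)<\infty$, so the product in (\ref{T4}) may be expanded absolutely, for every $\gamma_i\in\Gamma_*$:
\[
\widetilde F^{\theta_i}_{\tau_i}(\gamma_i)=\prod_{x\in\gamma_i}\bigl(1+f_i(x)\bigr)=\sum_{\eta_i\subset\gamma_i}e(f_i;\eta_i).
\]
Thus $\widetilde F^\theta_\tau=KG$ on all of $\Gamma^2_*$ with $G(\eta_0,\eta_1)=e(f_0;\eta_0)e(f_1;\eta_1)$, the degenerate element $\widetilde F\equiv1$ corresponding to $G^{(0,0)}=1$ and all other components zero.

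For $\widehat{\mathcal F}$ I would put $h_i(x)=e^{-\tau_i\psi(x)}-1$, so that $|h_i(x)|\le\tau_i\psi(x)$ and, again because $\Psi(\gamma_i)<\infty$, $\prod_{y\in\gamma_i\setminus\mathbf x^{m_i}}e^{-\tau_i\psi(y)}=\sum_{\zeta\subset\gamma_i\setminus\mathbf x^{m_i}}e(h_i;\zeta)$ converges absolutely. Substituting this into (\ref{de2}) and regrouping the resulting double sum -- over the ordered tuple $\mathbf x^{m_i}\subset\gamma_i$ and over the subconfiguration $\zeta$ -- according to the finite subconfiguration $\eta_i=\{x_1,\dots,x_{m_i}\}\cup\zeta$ it generates, one obtains $\widehat F^{m_i}_{\tau_i}(\mathbf v_i|\gamma_i)=\sum_{\eta_i\subset\gamma_i}G_i(\eta_i)$ with
\[
G_i^{(n)}(z_1,\dots,z_n)=\sum_{(j_1,\dots,j_{m_i})}v_{i,1}(z_{j_1})\cdots v_{i,m_i}(z_{j_{m_i}})\prod_{l\notin\{j_1,\dots,j_{m_i}\}}h_i(z_l),\qquad n\ge m_i,
\]
the inner sum running over ordered $m_i$-tuples of distinct indices in $\{1,\dots,n\}$ (so that $G_i^{(n)}$ is symmetric), and $G_i^{(n)}\equiv0$ for $n<m_i$. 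Then $\widehat F^m_\tau(\mathbf v|\cdot)=KG$ with $G(\eta_0,\eta_1)=G_0(\eta_0)G_1(\eta_1)$, while $\widehat F^{(0,0)}_\tau\equiv0$ corresponds to $G\equiv0\in\mathcal G_{\rm fin}$.

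It remains to verify $G\in\mathcal G_\infty$, i.e.\ $|G|_\vartheta<\infty$ for all $\vartheta$ (by (\ref{L6b}) it is in fact enough to do so for all large $\vartheta$). Since $G$ factorises over the two components, $|G|_\vartheta=\prod_{i=0,1}\int_{\Gamma_0}|G_i(\eta_i)|e^{\vartheta|\eta_i|}\lambda(d\eta_i)$, and I would evaluate each factor with the Lebesgue--Poisson normalisation $\int_{\Gamma_0}e(g;\eta)\lambda(d\eta)=\exp(\langle g\rangle)$ for $g\ge0$, read off from (\ref{W6b}): in the $\widetilde{\mathcal F}$ case each factor is $\exp\bigl(e^\vartheta\langle|f_i|\rangle\bigr)\le\exp\bigl(e^\vartheta(\tau_i+\bar c_{\theta_i})\langle\psi\rangle\bigr)<\infty$; in the $\widehat{\mathcal F}$ case, bounding $|G_i^{(n)}|$ by the corresponding sum of products and counting the $n!/(n-m_i)!$ index tuples gives each factor $\le e^{\vartheta m_i}\bigl(\prod_{j=1}^{m_i}\|v_{i,j}\|_{L^1}\bigr)\exp\bigl(e^\vartheta\langle|h_i|\rangle\bigr)<\infty$, using that each $v_{i,j}\in\varTheta_\psi$ is integrable with $\|v_{i,j}\|_{L^1}\le\bar c_{v_{i,j}}\langle\psi\rangle$ (by (\ref{C801})) and $\langle|h_i|\rangle\le\tau_i\langle\psi\rangle$. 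Hence $G\in\mathcal G_\infty$ in both cases, which proves the inclusions. The one place that genuinely needs care -- and which I would regard as the main, if modest, obstacle -- is the regrouping of the double sum in the $\widehat{\mathcal F}$ step when $\gamma_i$ has points of multiplicity greater than one: this is best carried out at the level of the counting measures $Q^{(m)}_\gamma$ of subsect.\ \ref{2.1ss}, where the factorials $m_0!\,m_1!$ in (\ref{Qo}) appear automatically, so that the claimed symmetric kernels $G_i^{(n)}$ come out with the correct normalisation.
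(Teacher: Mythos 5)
Your proposal is correct and follows essentially the same route as the paper: for $\widetilde{\mathcal F}$ the function $f_i$ you define is precisely the paper's $\theta_i^{\tau_i}$ in (\ref{V1}), and for $\widehat{\mathcal F}$ your expansion of $\prod_{y}e^{-\tau_i\psi(y)}=\prod_y(1+h_i(y))$, the regrouping into finite subconfigurations, and the resulting symmetric kernels $G_i^{(n)}$ coincide (after unwinding the paper's $R^{m_i}$ and $\widehat G^{(m_i)}_{\tau_i}$ notation in (\ref{V2})--(\ref{V4})) with what the paper does, as do the Lebesgue--Poisson estimates of $|G|_\vartheta$ that give membership in $\mathcal G_\infty$.
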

\begin{proof}
By (\ref{T4}) and then by (\ref{Qo}) one readily gets that
\begin{eqnarray}
  \label{V}
\widetilde{F}^\theta_\tau (\gamma) &= &\left(\sum_{m_0=0}^{\infty }
\sum_{\{x_1, \dots , x_{m_0}\}\subset \gamma_0}
\widetilde{G}^{(m_0)}_{\tau_0,\theta_0}(x_1 , \dots , x_{m_0})
\right) \left(\sum_{m_1=0}^{\infty } \sum_{\{y_1, \dots ,
y_{m_1}\}\subset \gamma_1}
\widetilde{G}^{(m_1)}_{\tau_1,\theta_1}(y_1 , \dots ,
y_{m_1}) \right) \nonumber \\[.2cm]  & =& (K
\widetilde{G}_{\tau,\theta})(\gamma), \qquad
\widetilde{G}_{\tau,\theta}(\eta_0, \eta_1) =
\widetilde{G}_{\tau_0,\theta_0}(\eta_0)\widetilde{G}_{\tau_1,\theta_1}(\eta_1),
\end{eqnarray}
where
\begin{equation}
  \label{V1}
\widetilde{G}^{(m_i)}_{\tau_i,\theta_i}(x_1 , \dots , x_{m_i})  =
\prod_{j=1}^{m_i} \theta^{\tau_i}_i(x_j), \quad \theta^{\tau_i}_i(x)
:= \theta_i(x) e^{-\tau_i \psi(x)} + e^{-\tau_i \psi(x)} -1, \ \
i=0,1.
\end{equation}
Clearly, $\theta^\tau_i \in L^1(X)$ for each $\tau\geq 0$,
$\theta_i\in \varTheta_\psi$, $i=0,1$. Hence,
$\widetilde{G}_{\tau,\theta} \in \mathcal{G}_\vartheta$ for any
$\vartheta\in \mathds{R}$, which yields
$\widetilde{\mathcal{F}}\subset \mathcal{F}_{\infty}$.

Now by the first line in (\ref{de2}) we have, see (\ref{no3a}),
\begin{eqnarray}
  \label{V2}
\widehat{F}^{m_i}_{\tau_i}(\mathbf{v}_i|\gamma_i) & = &
\sum_{\mathbf{x}^{m_i}\in \gamma_i} \mathbf{v}_i
(\mathbf{x}^{m_i})\prod_{x\in \gamma_i\setminus \mathbf{x}^{m_i}}
\left(1+ \varsigma_i(x)\right) \\[.2cm] \nonumber & = & \sum_{\eta_i
\subset \gamma_i} R^{m_i}(\mathbf{v}_i|\eta_i) \prod_{x\in
\gamma_i\setminus \mathbf{x}^{m_i}} \left(1+ \varsigma_i(x)\right),
\end{eqnarray}
where $\varsigma_i(x) = e^{-\tau_i \psi(x)} -1,$ and, see
(\ref{de1}),
\begin{equation}
  \label{V3}
R^{m_i}(\mathbf{v}_i|\eta_i) = \left\{\begin{array}{ll} \sum_{\sigma
\in S_{m_i}} v_{i,1}(x_{\sigma(1)}) \cdots
v_{i,_{m_i}}(x_{\sigma(m_i)}), \qquad \quad &{\rm if} \quad \eta_i=
\{x_1, \dots , x_{m_i}\}, \\[.3cm] 0, &{\rm otherwise}.
\end{array}
\right.
\end{equation}
Now we open the brackets in the product in (\ref{V2}) and get
\begin{eqnarray}
  \label{V4}
\widehat{F}^{m_i}_{\tau_i}(\mathbf{v}_i|\gamma_i) & = & \sum_{\eta_i
\subset \gamma_i} \widehat{G}^{(m_i)}_{\tau_i}
(\mathbf{v}_i|\eta_i), \\[.2cm] \nonumber  \widehat{G}^{(m_i)}_{\tau_i}
(\mathbf{v}_i|\eta_i) & := & \sum_{\xi_i \subset \eta_i}
R^{m_i}(\mathbf{v}_i|\xi_i) \prod_{x\in \eta_i \setminus \xi_i}
\varsigma_i(x).
\end{eqnarray}
To complete the proof we have to show the corresponding
integrability of $\widehat{G}^{(m_i)}_{\tau_i}
(\mathbf{v}_i|\cdot)$. Since $v_{i,j}\in \varTheta^{+}_\psi$ and
$\tau_i>0$, we have
\begin{equation*}
%  \label{V5}
\left| \widehat{G}^{(m_i)}_{\tau_i} (\mathbf{v}_i|\eta_i) \right|
\leq \sum_{\xi_i \subset \eta_i} R^{m_i}(\mathbf{v}_i|\xi_i)
\prod_{x\in \eta_i \setminus \xi_i} \left[ \tau_i \psi(x) \right],
\end{equation*}
and hence
\begin{gather}
  \label{V6}
\int_{\Gamma_0} \left| \widehat{G}^{(m_i)}_{\tau_i}
(\mathbf{v}_i|\eta) \right| e^{\vartheta|\eta|} \lambda ( d \eta)
\leq \int_{\Gamma_0}\int_{\Gamma_0} e^{\vartheta|\xi|}
R^{m_i}(\mathbf{v}_i|\xi) e^{\vartheta |\eta|}  \prod_{x\in \eta}
\left[ \tau_i \psi(x) \right] \lambda ( d \xi) \lambda ( d
\eta) \\[.2cm] \nonumber = e^{m_i \vartheta} \langle v_{1,i}\rangle
\cdots \langle v_{m_i,i}\rangle \exp\left(\tau_i e^{\vartheta}
\langle \psi \rangle \right),
\end{gather}
where $\langle v_{j,i} \rangle$, $j=1, \dots , m_i$, and $\langle
\psi \rangle$ are the $L^1(X)$-norms of these functions. Similarly
as in (\ref{V}) we then have
\begin{equation}
  \label{V7}
\widehat{F}^m_\tau (\mathbf{v}|\gamma) = (K \widehat{G}^m_\tau
(\mathbf{v}|\cdot))(\gamma), \quad \widehat{G}^m_\tau
(\mathbf{v}|\eta) = \widehat{G}^{m_0}_{\tau_0} (\mathbf{v}_0|\eta_0)
\widehat{G}^{m_1}_{\tau_1} (\mathbf{v}_1|\eta_1),
\end{equation}
which completes the proof.
\end{proof}
\begin{lemma}
  \label{W2lm}
For each $\mu\in  \mathcal{P}_{\rm exp}$, the Fokker-Planck equation
(\ref{FPE}) with $\mu_0=\mu$ has exactly one solution.
\end{lemma}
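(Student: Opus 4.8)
The plan is to prove existence together with uniqueness: existence is immediate from \cite{asia1}, since the solution $t\mapsto k_t$ of the Cauchy problem \eqref{W11} provided by Proposition \ref{W1pn} lies in $\mathcal{K}^\star_{\vartheta(t)}$, hence is the correlation function of a measure $\bar\mu_t\in\mathcal{P}_{\rm exp}$, and for $F=KG$ one has $\bar\mu_t(F)=\langle\!\langle k_t,G\rangle\!\rangle$ with $\tfrac{d}{dt}\langle\!\langle k_t,G\rangle\!\rangle=\langle\!\langle L^\Delta k_t,G\rangle\!\rangle=\langle\!\langle k_t,\widehat LG\rangle\!\rangle=\bar\mu_t(LF)$ by \eqref{W12} and Proposition \ref{Bi1pn}, so integrating gives \eqref{FPE}; by Proposition \ref{V1pn} this covers all $F\in\widetilde{\mathcal F}\cup\widehat{\mathcal F}$ and hence all $F\in\mathcal D(L)$. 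The real content is uniqueness, which I would obtain by a duality (``slicing'') argument against the forward semigroup $\varSigma_{\vartheta\vartheta'}(t)$ of \eqref{W15}.

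First, fix $\mu\in\mathcal{P}_{\rm exp}$ of type $e^{\vartheta_0}$ and let $t\mapsto\mu_t$ be any solution of \eqref{FPE} with $\mu_0=\mu$. By Lemma \ref{W1lm}, $\mu_t\in\mathcal{P}_{\rm exp}$ for every $t$, and for each $T>0$ there is $\vartheta_T>\vartheta_0$ (which we may also take $\geq\vartheta_0+\alpha T$) such that the type of $\mu_t$, and of the solution $k_t$ from Proposition \ref{W1pn}, is at most $e^{\vartheta_T}$ on $[0,T]$; by Proposition \ref{BJULYpn} the families $\{k_{\mu_t}\}$ and $\{k_t\}$ are then bounded in every $\mathcal{K}_\vartheta$ with $\vartheta>\vartheta_T$. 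Since $\widetilde{\mathcal F}$ is separating (Proposition \ref{T1pn}(i)), it suffices to show $k_{\mu_t}=k_t$. Next, for $F\in\widetilde{\mathcal F}\cup\widehat{\mathcal F}$ write $F=KG$ with $G$ in the linear span $\mathcal{G}_0$ of the functions $\widetilde G_{\tau,\theta}$ of \eqref{V} and $\widehat G^{m}_{\tau}(\mathbf v|\cdot)$ of \eqref{V7} (Proposition \ref{V1pn}, $\mathcal{G}_0\subset\mathcal{G}_\infty$); by Proposition \ref{Bi1pn}, $\mu_t(F)=\langle\!\langle k_{\mu_t},G\rangle\!\rangle$ and $\mu_s(LF)=\langle\!\langle k_{\mu_s},\widehat LG\rangle\!\rangle$, so \eqref{FPE} becomes
\[
\langle\!\langle k_{\mu_{t_2}},G\rangle\!\rangle=\langle\!\langle k_{\mu_{t_1}},G\rangle\!\rangle+\int_{t_1}^{t_2}\langle\!\langle k_{\mu_s},\widehat LG\rangle\!\rangle\,ds,\qquad G\in\mathcal{G}_0 .
\]
Fixing $\vartheta'>\vartheta_T$, both sides are $|\cdot|_{\vartheta'}$-continuous in $G$, by the bound $|\langle\!\langle k,G\rangle\!\rangle|\le\|k\|_{\vartheta_T}|G|_{\vartheta_T}$, the uniform control on $\|k_{\mu_s}\|_{\vartheta_T}$, and estimate \eqref{L9}; hence the identity extends to all $G\in\mathcal{G}_{\vartheta'}$, provided $\mathcal{G}_0$ is $|\cdot|_{\vartheta'}$-dense in $\mathcal{G}_{\vartheta'}$, which one checks by approximating $G\in\mathcal{G}_{\rm fin}$ by the $\widehat G^{m}_{\tau}(\mathbf v|\cdot)$ as $\tau\downarrow 0$ and using polarization in the $\mathbf v$'s.

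With the extended weak equation in hand, fix $s<T_*:=T(\vartheta',\vartheta_T)$ (see \eqref{W16}) and $G\in\mathcal{G}_{\vartheta'}$, and for $r\in[0,s]$ put $G_{s-r}:=\varSigma_{\vartheta_T\vartheta'}(s-r)G\in\mathcal{G}_{\vartheta_T}$, which is well defined by \eqref{W15} and, by differentiating that series as in \eqref{W18Z}, satisfies $\tfrac{d}{dr}G_{s-r}=-\widehat L\,G_{s-r}$. Since $r\mapsto\langle\!\langle k_{\mu_r},H\rangle\!\rangle$ is absolutely continuous with the stated derivative for fixed $H$, while $\|k_{\mu_r}\|_{\vartheta_T}$ stays bounded, the map $r\mapsto\langle\!\langle k_{\mu_r},G_{s-r}\rangle\!\rangle$ is absolutely continuous with a.e.\ derivative $\langle\!\langle k_{\mu_r},\widehat L G_{s-r}\rangle\!\rangle-\langle\!\langle k_{\mu_r},\widehat L G_{s-r}\rangle\!\rangle=0$; comparing the endpoints $r=0,s$ gives $\langle\!\langle k_{\mu_s},G\rangle\!\rangle=\langle\!\langle k_\mu,\varSigma_{\vartheta_T\vartheta'}(s)G\rangle\!\rangle$ for all $s<T_*$ and $G\in\mathcal{G}_{\vartheta'}$. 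By Proposition \ref{W2pn} the right-hand side equals $\langle\!\langle k_s,G\rangle\!\rangle$, so $k_{\mu_s}=k_s$ (pairing against $\mathcal{G}_{\rm fin}$ determines a correlation function), i.e.\ $\mu_s=\bar\mu_s$ on $[0,T_*]$. Finally, since $T_*$ depends only on $\vartheta_T$ and the model constants, one iterates: on $[T_*,2T_*]$ the map $t\mapsto\mu_t$ solves \eqref{FPE} with the already-identified sub-Poissonian datum $\mu_{T_*}=\bar\mu_{T_*}$, and the type bound $e^{\vartheta_T}$ still holds on $[0,T]$, so the same argument applies; after finitely many steps $\mu_t=\bar\mu_t$ on $[0,T]$, and then on $[0,\infty)$ because $T$ is arbitrary. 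I expect the main obstacle to be the density step in the previous paragraph --- exhibiting a class of test functions $G$ large enough in each $\mathcal{G}_{\vartheta'}$ for which the weak correlation-function equation holds --- together with the care required to differentiate $r\mapsto\langle\!\langle k_{\mu_r},G_{s-r}\rangle\!\rangle$ across the Banach scale $\{\mathcal{G}_\vartheta\}$; the remainder is an assembly of Lemma \ref{W1lm} and Propositions \ref{W1pn}, \ref{W2pn}, \ref{V1pn} and \ref{Bi1pn}.
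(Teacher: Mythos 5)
Your proposal is correct and follows essentially the same architecture as the paper's proof: (i) existence via the solution from Proposition \ref{W1pn} together with Proposition \ref{Bi1pn} and the adjoint identity \eqref{W12}; (ii) a priori regularity of any solution via Lemma \ref{W1lm}; (iii) extension of the weak (correlation-function) equation from $\widetilde{\mathcal F}\cup\widehat{\mathcal F}$ to large classes of test collections $G$; (iv) a duality argument against the predual semigroup $\varSigma_{\vartheta\vartheta'}$ of \eqref{W15}, followed by iteration in $t$. Both steps (iii) and (iv) are the substance.

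Where you differ from the paper is purely in packaging. For (iii), the paper does not establish density of the span $\mathcal{G}_0$ of the $\widetilde G_{\tau,\theta}$ and $\widehat G^m_\tau(\mathbf v|\cdot)$ in $\mathcal{G}_{\vartheta'}$; instead it fixes the residual $q=\tilde k_t-k_0-\int_0^tL^\Delta\tilde k_s\,ds\in\mathcal{K}_{\tilde\vartheta_T}$, shows $\langle\!\langle q,G_\varepsilon\rangle\!\rangle=0$ for all $\widehat G^m_\tau$ with $\tau_i=\varepsilon>0$, passes to $\varepsilon\to 0$ by dominated convergence using the bound \eqref{W24}, and then invokes Stone--Weierstrass coordinate-by-coordinate as in \eqref{W28}. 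Your density claim is a slightly stronger statement, but it is provable by the same $\tau\downarrow 0$ limit plus Stone--Weierstrass (the $|\cdot|_{\vartheta'}$-distance from $\widehat G^m_\tau(\mathbf v|\cdot)$ to $R^m(\mathbf v|\cdot)$ is $O(\max_i\tau_i)$), so it is not a genuine gap; it is simply more than is strictly needed. For (iv), the paper iterates the integral identity $n$ times to obtain \eqref{W29}, subtracts the analogous expansion for $k_t$, and kills the single remainder term with the norm estimate \eqref{W14}; you instead define the time-reversed test collection $G_{s-r}=\varSigma_{\vartheta_T\vartheta'}(s-r)G$ and show $r\mapsto\langle\!\langle k_{\mu_r},G_{s-r}\rangle\!\rangle$ has a.e.\ zero derivative. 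These are two standard incarnations of Holmgren duality and are equivalent; the paper's Duhamel-expansion form is a bit easier to justify rigorously because it never differentiates the pairing across the scale $\{\mathcal{G}_\vartheta\}$, only iterates an integral identity and estimates a single Bochner integral. Your differentiation step does work, but the Leibniz rule requires the intermediate-index bookkeeping you flag at the end (for each $r<s$, choose $\vartheta''\in(\vartheta_T,\vartheta')$ with $s-r<T(\vartheta',\vartheta'')$ so that $G_{s-r}\in\mathcal{G}_{\vartheta''}$ and $\widehat L G_{s-r}\in\mathcal{G}_{\vartheta_T}$ pairs with $k_{\mu_r}\in\mathcal{K}_{\vartheta_T}$); the paper's formulation sidesteps this. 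The continuation to all $t$ at the end is identical in both.
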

\begin{proof}
\underline{Existence}: Let $t \mapsto k_t$ be as in Proposition
\ref{W1pn} with $k_0=k_{\mu}$. Since $k_t$ solves (\ref{W11}), it
follows that
\begin{equation}
  \label{W20}
k_{t_2} - k_{t_1} = \int_{t_1}^{t_2} L^\Delta_{\vartheta(T)} k_s d
s,
\end{equation}
holding for all $t_2 > t_1 \geq 0$ and $T>t_2$. Let $\mu_t\in
\mathcal{P}_{\rm exp}$ be the unique measure for which $k_t$ is the
correlation function, see Remark \ref{W1rk}. Then for each $G\in
\mathcal{G}_{\infty}$, we have
\[
\mu_{t_j} (K G) = \langle \! \langle k_{t_j} , G \rangle \! \rangle,
\qquad j=1,2,
\]
For each $\vartheta\in \mathds{R}$, the map $\mathcal{K}_\vartheta
\ni k \mapsto \langle \! \langle k , G \rangle \! \rangle$ is linear
and bounded -- hence continuous. Then by (\ref{W20}) and Proposition
\ref{Bi1pn} for $F=KG$ we get
\begin{eqnarray}
  \label{W21}
\mu_{t_2}(F) - \mu_{t_1}(F) & = & \langle \! \langle
\int_{t_1}^{t_2}L^\Delta_{\vartheta(T)} k_{s} ds , G \rangle \!
\rangle   =  \int_{t_1}^{t_2} \langle \! \langle
L^\Delta_{\vartheta(T)} k_{s} , G \rangle \! \rangle ds \\[.2cm] \nonumber &
= & \int_{t_1}^{t_2} \langle \! \langle  k_{s} , \widehat{L} G
\rangle \! \rangle ds  =  \int_{t_1}^{t_2} \mu_s (L F)ds.
\end{eqnarray}
Now we can take $G= \widehat{G}^m_\tau (\mathbf{v}|\cdot)$, see
(\ref{V7}), or $G= \widetilde{G}_{\tau,\theta}$, see (\ref{V}),
(\ref{V1}), and conclude that the map $t\mapsto \mu_t$ is a solution
of (\ref{FPE}) according to Definition \ref{A01df}.

\underline{Uniqueness}: Let $t\mapsto \tilde{\mu}_t$ be another
solution satisfying $\tilde{\mu}_t|_{t=0}=\mu$. Let  also
$\vartheta_0$ be such that $k_0 =k_\mu  \in
\mathcal{K}_{\vartheta_0}$. For a fixed $T>0$ and each $t\leq T$, by
Lemma \ref{W1lm} it follows that $\tilde{\mu}_t\in \mathcal{P}_{\rm
exp}$ and its type does not exceed $e^{\vartheta_T}$. That is, the
correlation function $\tilde{k}_t$ of this measure $\tilde{\mu}_t$
lies in $\mathcal{K}_{\vartheta_T}$. Without any harm we may take
$\vartheta_T$ big enough so that
\begin{equation}
  \label{JULY1}
  \sup_{s\in [0,T]} \|\tilde{k}_s\|_{\vartheta_T} = 1,
\end{equation}
see Proposition \ref{BJULYpn}, and also
 $\vartheta_T \geq \vartheta(T)=
\vartheta_0 + \alpha T$, see Proposition \ref{W1pn}.

It is known, see \cite[eqs. (4.6) -- (4.8)]{asia1}, that the map
$[\vartheta_T, +\infty) \ni \vartheta \mapsto T(\vartheta,
\vartheta_T)$, see (\ref{W16}), attains maximum $T_*(\vartheta_T)$
at $\tilde{\vartheta}_T = \vartheta_T + \delta(\vartheta_T)$, where
\begin{equation}
  \label{Ts}
T_*(\vartheta_T) = \frac{\delta(\vartheta_T)}{4\alpha} \exp\left( -
\frac{1}{\delta(\vartheta_T)}\right),
\end{equation}
and $\delta(\vartheta_T)$
 is the unique solution of
the equation
\[
\delta e^\delta = \exp\left( - \vartheta_T - \log \varphi \right).
\]
According to our assumption $\tilde{k}_t \in
\mathcal{K}_{\vartheta_T}\subset
\mathcal{D}(L^\Delta_{\tilde{\vartheta}_T})$, see  (\ref{L5d}), and
\begin{equation}
  \label{W22}
\tilde{\mu}_t (KG) - \mu(KG) = \langle\! \langle \tilde{k}_t - k_0 ,
G \rangle \! \rangle = \int_0^t \langle\! \langle
L^\Delta_{\tilde{\vartheta}_T} \tilde{k}_s, G \rangle\! \rangle ds,
\end{equation}
holding for all $t\leq T$ and $G$ such that $KG \in
\widetilde{\mathcal{F}} \cup \widehat{\mathcal{F}}$. That is,   $G$
is either $\widetilde{G}_{\tau,\theta}$ (\ref{V}) or
$\widehat{G}_{\tau}^m (\mathbf{v}|\cdot)$ (\ref{V7}).  The
integrations in (\ref{W22}) were interchanges for the same reasons
as in (\ref{W21}). Let us prove that (\ref{W22}) holds for all $G\in
\mathcal{G}_{\infty}$. By (\ref{L8}) and (\ref{JULY1}) we have
\begin{equation}
  \label{W23}
\|L^\Delta_{\tilde{\vartheta}_T} \tilde{k}_s
\|_{\tilde{\vartheta}_T} \leq 1/e T_*(\vartheta_T) ,
\end{equation}
holding for all $s\leq T$. Now we fix
\begin{equation}
  \label{Ts1}
t < \min\{  T; T_*(\vartheta_T)\},
\end{equation}
and set
\[
q = \tilde{k}_t - k_0 - \int_0^t L^\Delta_{\tilde{\vartheta}_T}
\tilde{k}_s ds.
\]
By Proposition \ref{BJULYpn}, and then by (\ref{JULY1}) and
(\ref{W23}), we get
\begin{equation}
  \label{W42}
\|q\|_{\tilde{\vartheta}_T} \leq 2 + t /e T_* (\vartheta_T).
\end{equation}
Then for $G=\widehat{G}_{\tau}^m (\mathbf{v}|\cdot)$ with
$\tau_i\leq 1$, $i=0,1$, by (\ref{W22}) it follows that $\langle \!
\langle q, G\rangle \! \rangle =0$. At the same time, by (\ref{V7}),
(\ref{V6}), (\ref{V4}) and (\ref{W42}), we have
\begin{eqnarray}
  \label{W24}
\left|\langle \! \langle q, G\rangle \! \rangle \right| & \leq &
\|q\|_{\tilde{\vartheta}_T} |G|_{\tilde{\vartheta}_T} \\[.2cm] \nonumber & \leq & \left(
2 + t /e T_*(\vartheta_T)\right) \exp\left(
(m_0+m_1)\tilde{\vartheta}_T + 2\langle\psi\rangle
e^{\tilde{\vartheta}_T}\right)\prod_{j_0=1}^{m_0}
\prod_{j_1=1}^{m_1} \langle v_{j,i} \rangle.
\end{eqnarray}
Let $G_\varepsilon$ denote $\widehat{G}_{\tau}^m (\mathbf{v}|\cdot)$
with $\tau_0 = \tau_1 =\varepsilon \leq 1$. Then by the dominated
convergence theorem and (\ref{W24}) we get
\begin{equation}
  \label{W26}
\langle \! \langle q, G_0\rangle \! \rangle = \lim_{\varepsilon \to
0} \langle \! \langle q, G_\varepsilon\rangle \! \rangle = 0,
\end{equation}
where $G_0$ is the pointwise limit of $G_\varepsilon$ as
$\varepsilon \to 0$. That is, see (\ref{V4}) and (\ref{V3}),
\begin{equation}
  \label{W27}
G_0(\eta) = \widehat{G}^{m_0}_0 (\mathbf{v}_0|\eta_0)
\widehat{G}^{m_1}_0 (\mathbf{v}_1|\eta_1) = R^{m_0}
(\mathbf{v}_0|\eta_0) R^{m_1} (\mathbf{v}_1|\eta_1).
\end{equation}
Now we use this $G_0$ in (\ref{W26}) and obtain, see (\ref{de1}),
\begin{eqnarray}
  \label{W28}
& &   \int_{X^{m_0} \times X^{m_1}} q^{(m)} (\mathbf{x}, \mathbf{y})
\mathbf{v}_0(\mathbf{x}) \mathbf{v}_1(\mathbf{y}) d^{m_0}
\mathbf{x} d^{m_1} \mathbf{y} \\[.2cm] \nonumber & &  =  \int_{X^{m_0} \times X^{m_1}}
q^{(m_0,m_1)} (x_1 , \dots , x_{m_0}; y_1, \dots , y_{m_1})
v_{0,1}(x_1) \cdots
v_{0,m_0}(x_{m_0}) \\[.2cm] \nonumber & & \times  v_{1,1}(y_1) \cdots
v_{1,m_1}(x_{m_1}) d x_1 \cdots dx_{m_0} dy_1 \cdots dy_{m_1} = 0,
\end{eqnarray}
holding for all $m=(m_0,m_1) \in \mathds{N}_0^2$ and $v_{i,j} \in
\varTheta_\psi^{+}$. For each $m \in \mathds{N}_0^2$, the set of
functions $(\mathbf{x},\mathbf{y})\mapsto
\mathbf{v}_0(\mathbf{x})\mathbf{v}_1(\mathbf{y})$, $v_{i,j} \in
\varTheta_\psi^{+}$, is closed with respect to the pointwise
multiplication and separates the points of $X^{m_0}\times X^{m_1}$.
Such functions vanish at infinity and are everywhere positive, see
(\ref{T1}). Then by the corresponding version of the of the
Stone-Weierstrass theorem \cite{dB}, the linear span of this set is
dense (in the supremum norm) in the algebra $C_0(X^{m_0}\times
X^{m_1})$ of continuous functions that vanish at infinity (recall
that $X=\mathds{R}^d$, hence $X^{m_0}\times X^{m_1}$ is locally
compact). At the same time, $C_0(X^{m_0}\times X^{m_1})\cap
L^1(X^{m_0}\times X^{m_1})$ is dense in $L^1(X^{m_0}\times X^{m_1})$
as its subset $C_{\rm cs}(X^{m_0}\times X^{m_1})$ has this property.
Thus,
\[
\langle \! \langle q^{(m)}, G^{(m)} \rangle \! \rangle =0,
\]
holding for all $G^{(m)}\in L^1(X^{m_0}\times X^{m_1})$. The
extension of the latter to
\[
\langle \! \langle q, G \rangle \! \rangle =0, \qquad {\rm for}
\qquad G\in {\mathcal{G}}_{\infty},
\]
is standard, which yields the validity of (\ref{W22}) for all such
$G$. By (\ref{W12}), (\ref{W21}) and (\ref{W22}) we have
\begin{gather}
  \label{W29b}
 \langle
\! \langle \tilde{k}_t, G \rangle \! \rangle =  \langle \! \langle
{k}_0, G \rangle \! \rangle + \int_0^t \langle \! \langle
L^{\Delta}_{\tilde{\vartheta}_T\vartheta_T}\tilde{k}_s, G \rangle \!
\rangle ds \\[.2cm] \nonumber =  \langle \! \langle
{k}_0, G \rangle \! \rangle + \int_0^t \langle \! \langle
\tilde{k}_s,\widehat{L}_{\vartheta_T\tilde{\vartheta}_T } G \rangle
\! \rangle ds  \qquad G\in \mathcal{G}_\infty.
\end{gather}
Note that, for $G\in \mathcal{G}_\infty$,
$\widehat{L}_{\vartheta_T\tilde{\vartheta}_T } G \in
\mathcal{G}_{\vartheta_T}$, where the latter space is predual to
$\mathcal{K}_{\vartheta_T}$, and
$\tilde{k}_s\in\mathcal{K}_{\vartheta_T}$ for all $s\leq t\leq T$.
For $G\in \mathcal{G}_\infty$, the action of
$\widehat{L}_{\vartheta_T \tilde{\vartheta}_T}$ on $G$ is the same
as in (\ref{W13}), that by (\ref{L9}) yields  $G_1:=\widehat{L}_{
\vartheta_T\tilde{\vartheta}_T}G\in \mathcal{G}_\infty$. Therefore,
one can write (\ref{W29b}) also for $G_1$. Repeating this procedure
$n$ times we arrive at the following
\begin{gather}
  \label{W29}
\langle \! \langle \tilde{k}_t, G \rangle \! \rangle =  \langle \!
\langle {k}_0, G \rangle \! \rangle + t \langle \! \langle {k}_0,
\widehat{L}_{\vartheta_T\tilde{\vartheta}_T} G \rangle \! \rangle +
\frac{t^2}{2} \langle \! \langle {k}_0,
(\widehat{L}_{\vartheta_T\tilde{\vartheta}_T})^2G \rangle \! \rangle
\\[.2cm] \nonumber + \cdots + \frac{t^{n-}}{(n-1)!} \langle \! \langle {k}_0,
(\widehat{L}_{\vartheta_T\tilde{\vartheta}_T})^{n-1}G \rangle \!
\rangle + \int_0^t \int_0^{t_1} \cdots \int_0^{t_{n-1}} \langle \!
\langle \tilde{k}_{t_n},
(\widehat{L}_{\vartheta_T\tilde{\vartheta}_T})^{n} G \rangle \!
\rangle d t_1 \cdots d t_n.
\end{gather}
Let $k_t$ be the solution as in (\ref{W20}). Our choice of
${\vartheta}_T$ is such that $k_t\in \mathcal{K}_{{\vartheta}_T}$,
hence (\ref{W29}) can also be written for this $k_t$, which yields
\begin{equation*}
%  \label{W30}
\langle \! \langle \tilde{k}_t - k_t, G \rangle \! \rangle =
\int_0^t \int_0^{t_1} \cdots \int_0^{t_{n-1}} \langle \! \langle
\tilde{k}_{t_n} - k_{t_n},
(\widehat{L}_{\vartheta_T\tilde{\vartheta}_T })^{n} G \rangle \!
\rangle d t_1 \cdots d t_n.
\end{equation*}
Now by (\ref{W14}) we obtain from the latter, see (\ref{Ts1}),
\begin{gather*}
  %\label{W31}
\left|\langle \! \langle \tilde{k}_t - k_t, G \rangle \!
\rangle\right| \leq 2 \frac{n^n}{n! e^n} \left(
\frac{t}{T_*(\vartheta_T)}\right)^n |G|_{\vartheta_0} \to 0, \qquad
{\rm as} \ n\to +\infty.
\end{gather*}
Thus, $\tilde{k}_t = k_t$ for $t$ satisfying (\ref{Ts1}). The
continuation of this equality to all $t$ can be made by repeating
this construction, similarly as in \cite[the proof of Theorem 3.5,
pages 659, 660]{asia1}. Now the equality $\tilde{\mu}_t = \mu_t$
follows by the fact that each $\mu\in \mathcal{P}_{\rm exp}$ is
identified by its correlation function, see Remark \ref{W1rk}.
\end{proof}

\subsection{Useful estimates}
The aim of this subsection is to prepare the proof of Lemma
\ref{W1lm}. A priori a solution $\mu_t$ need not be a sub-Poissonian
state, so one can speak of $\mu_t(F)$ only for bounded $F$, in
particular of $\mu_t (\widehat{F}_\tau(\mathbf{v}|\cdot))$. At the
same time, $\widehat{F}_\tau(\mathbf{v}|\cdot)$ is bounded for
positive $\tau_i$ only, see the proof of Proposition \ref{TH1pn}.
Assume that we have obtained an estimate of $\mu_t
(\widehat{F}_\tau(\mathbf{v}|\cdots))$ that is uniform in $\tau$,
which might allow for passing to the limit $\max_i\tau_i \to 0$.
Assume further that this limit satisfies an estimate similar to
(\ref{no6}) with a certain $t$-dependent $\varkappa$. Then the proof
will follow with the help of Definition \ref{no1dfg}. Let us then
turn to obtaining such estimates. Here we will mostly follow the way
elaborated in \cite{KR}.

Our  starting point is the estimate obtained in (\ref{TH7}) the
right-hand side of which is an element of $\mathcal{D}(L)$.
Significantly, it is independent of the interaction terms $\phi_i$,
$i=0,1$, where both components appear in a multiplicative form,
similarly as in $\widehat{F}_\tau(\mathbf{v}|\cdot)$ in (\ref{de2}).
Another observation is that in the latter function all $v_{i,j}$
with the same $i=0,1$ can be different, whereas (\ref{no7}) is based
on just two functions $\theta_0$, $\theta_1$. Keeping this fact in
mind, we introduce the following functions. Fix $\theta_0, \theta_1
\in \varTheta_\psi^{+}$ and set, cf. (\ref{de2}),
\begin{equation}
  \label{W32}
\varPhi_{\tau_i}^{m_i} (\theta_i|\gamma_i) =
\widehat{F}^{m_i}_{\tau_i} (\mathbf{v}_i|\gamma_i)|_{v_{i,j} =
\theta_i} = \sum_{\mathbf{x}^{m_i}\in \gamma_i} \theta_i^{\otimes
m_i} (\mathbf{x}^{m_i}) \exp\left( -\tau_i \Psi(\gamma_i \setminus
\mathbf{x}^{m_i}\right), \quad i=0,1.
\end{equation}
Along with this, we also introduce
\begin{eqnarray}
  \label{W33}
\varPhi_{\tau_i}^{m_i, \theta^1_i} (\theta_i|\gamma_i) & = &
\widehat{F}^{m_i}_{\tau_i} (\mathbf{v}_i|\gamma_i)|_{v_{i,1} =
a_i\theta_i, \ v_{i,j} = \theta_i, j\geq 2}, \\[.2cm] \varPhi_{\tau_i, 1}^{m_i} (\theta_i|\gamma_i) &
= & m_i\varPhi_{\tau_i}^{m_i, \theta^1_i} (\theta_i|\gamma_i) +
\tau_i c_a \bar{c}_{\theta_i} \widehat{F}^{m_i+1}_{\tau_i}
(\gamma_i), \nonumber
\end{eqnarray}
where $a_i \theta_i$ and $\bar{c}_{\theta_i}$ are as in (\ref{TH4})
and in (\ref{C80}), respectively; $\widehat{F}^{m_i+1}_{\tau_i}
(\gamma_i)$ is as in (\ref{de11}). Now we set
\begin{eqnarray}
  \label{W32a}
\varPhi_{\tau}^{m} (\theta|\gamma)& = & \varPhi_{\tau_0}^{m_0}
(\theta_0|\gamma_0)\varPhi_{\tau_1}^{m_1} (\theta_1|\gamma_1),
\end{eqnarray}
for which by the estimate in (\ref{TH7}) we then get
\begin{eqnarray}
  \label{W34}
\left| L\varPhi_{\tau}^{m} (\theta|\gamma) \right| \leq
\varPhi_{\tau_0, 1}^{m_0} (\theta_0|\gamma_0)\varPhi_{\tau_1}^{m_1}
(\theta_1|\gamma_1)+ \varPhi_{\tau_0}^{m_0} (\theta_0|\gamma_0)
\varPhi_{\tau_1, 1}^{m_1} (\theta_1|\gamma_1) =:
\varPhi_{\tau,1}^{m} (\theta|\gamma).
\end{eqnarray}
Each of the summands above is a linear combination of the
corresponding functions $\widehat{F}^{\bar{m}_i}_{\tau_i}
(\mathbf{v}_i|\cdot)$. Hence $\varPhi_{\tau,1}^{m} (\theta|\cdot)\in
\mathcal{D}(L)$, and one can estimate $L\varPhi_{\tau,1}^{m}
(\theta|\cdot)$ by repeating the above procedure based on
(\ref{TH7}). This yields
\begin{eqnarray}
  \label{W34a}
\left|L\varPhi_{\tau,1}^{m} (\theta|\cdot) (\gamma)\right| &\leq &
\left(\int_X \sum_{x\in \gamma_0} a_0(x-y) \left|\nabla_0^{y,x}
\varPhi_{\tau_0,1}^{m_0} (\theta_0|\gamma_0)\right| dy\right)
\varPhi_{\tau_1}^{m_1} (\theta_1|\gamma_1) \\[.2cm]\nonumber &+& \left(\int_X \sum_{x\in \gamma_1} a_1(x-y) \left|\nabla_1^{y,x}
\varPhi_{\tau_1}^{m_1} (\theta_1|\gamma_1)\right| dy\right)
\varPhi_{\tau_0,1}^{m_0} (\theta_0|\gamma_0) \\[.2cm]\nonumber &+& \left(\int_X \sum_{x\in \gamma_0} a_0(x-y) \left|\nabla_0^{y,x}
\varPhi_{\tau_0}^{m_0} (\theta_0|\gamma_0)\right| dy\right)
\varPhi_{\tau_1,1}^{m_1} (\theta_1|\gamma_1) \\[.2cm]\nonumber &+&
\left(\int_X \sum_{x\in \gamma_1} a_1(x-y) \left|\nabla_1^{y,x}
\varPhi_{\tau_1,1}^{m_1} (\theta_1|\gamma_1)\right| dy\right)
\varPhi_{\tau_0}^{m_0} (\theta_0|\gamma_0).
\end{eqnarray}
Each of the summands of the right-hand side of (\ref{W34a}) can be
estimated in the same way as in the last two lines of (\ref{TH7}).
This procedure was systematically elaborated in \cite{KR}, which we
are going to use now. To describe it, we introduce the following
notions. First, for $l\in \mathds{N}$ and $\theta_i$, $i=0,1$, we
define, see (\ref{TH4}),
\begin{equation}
  \label{W34b}
\theta_i^l = a_i\theta_i^{l-1}, \qquad \theta_i^0:=\theta_i.
\end{equation}
Then as in \cite[page 28]{KR}, for $p\in \mathds{N}$ and $q\in
\mathds{N}_0$, by $\mathcal{C}_{p,q}$ we denote the set of all
integer-valued sequences $c=\{c_l\}_{l\in \mathds{N}_0} \subset
\mathds{N}_0$ such that
\begin{equation}
  \label{W35}
c_0 + c_1 +\cdots + c_l + \cdots = p, \qquad  c_1 + 2c_2 +\cdots + l
c_l + \cdots = q.
\end{equation}
For instance, $\mathcal{C}_{p,0}$ is a singleton consisting of $c=
\{p, 0, \dots , 0, \dots\}$, $\mathcal{C}_{p,2}$ consists of
$c=\{p-1, 0, 1, 0, \dots\}$ and $c=\{p-2, 2, 0, \dots\}$ for $p\geq
2$. Thereafter, we set
\begin{eqnarray}
  \label{W37}
C_{p,q}(c) & = & \frac{p! q!}{c_0! c_1! c_2! \cdots (0!)^{c_0}
(1!)^{c_1}(2!)^{c_2}\cdots}, \qquad c\in
\mathcal{C}_{p,q}, \\[.2cm] \nonumber
w_k(p,q)& = & \varDelta^k p^q = \frac{1}{k!}\sum_{l=0}^k
(-1)^{k-l}{k \choose l} (p+l)^q, \qquad k\in \mathds{N}_0.
\end{eqnarray}
Note that $\varDelta$ is the step-one forward difference operator
for which
\begin{equation}
  \label{W37a}
\varDelta^k p^q = 0, \qquad {\rm for} \ k>q.
\end{equation}
Next, for $c\in \mathcal{C}_{m_i,q}$, we write
$\mathbf{v}_i^c(\mathbf{x}^{m_i})= \prod_{j=1}^{m_i} v_{i,j}(x_j)$,
see (\ref{de1}), where the number of $v_{i,j}$ equal to $\theta_i$
is $c_0$, the number of $v_{i,j}$ equal to $\theta^1_i$ is $c_1$,
see (\ref{W34b}), the number of $v_{i,j}$ equal to $\theta^2_i$ is
$c_2$, etc, cf. (\ref{W33}). Thereafter, for $\theta_i \in
\varTheta^{+}_\psi$, $i=0,1$, such that
\begin{equation}
  \label{W36a}
 \bar{c}_{\theta_i} = 1,
\end{equation}
see (\ref{C80}), we set
\begin{equation}
  \label{W36}
\varPhi_{\tau_i, q}^{m_i} (\theta_i|\gamma_i) = \sum_{c\in
\mathcal{C}_{m_i,q}} C_{m_i,q} (c) \widehat{F}^{m_i}_{\tau_i}
(\mathbf{v}_i^c|\gamma_i) + c_a^q \sum_{k=1}^{q} \tau_i^k w_k(m_i,q)
\widehat{F}^{m_i+k}_{\tau_i} (\gamma_i),
\end{equation}
see (\ref{de11}). For $q=0$ (resp. $q=1$), this function coincides
with that given in the first (resp. second) line of (\ref{W33}). Let
us now denote, cf. (\ref{de9}), (\ref{TH7}),
\begin{equation*}
%  \label{W36b}
\mathcal{L}_i \varPhi_{\tau_i, q}^{m_i} (\theta_i|\gamma_i) = \int_X
\sum_{x\in \gamma_i} a_i(x-y) \left| \nabla_i^{y,x}\varPhi_{\tau_i,
q}^{m_i} (\theta_i|\gamma_i)\right| dy, \quad i=0,1.
\end{equation*}
In \cite[Appendix]{KR}, the following was proved, see also (5.24)
\emph{ibid}.
\begin{equation}
  \label{W38}
\mathcal{L}_i \varPhi_{\tau_i, q}^{m_i} (\theta_i|\gamma_i)\leq
\varPhi_{\tau_i, q+1}^{m_i} (\theta_i|\gamma_i), \qquad i=0,1,
\end{equation}
holding for all $\theta_i\in \varTheta_\psi^{+}$ satisfying
(\ref{W36a}), $m_i\in \mathds{N}$, $q\in \mathds{N}_0$ and
$\tau_i\in (0,1]$. By means of (\ref{W38}) we then get from
(\ref{W34a}) the following estimate
\begin{eqnarray*}
  %\label{W39}
\left|L\varPhi_{\tau,1}^{m} (\theta|\gamma)\right| &\leq &
\varPhi_{\tau_0, 2}^{m_0} (\theta_0|\gamma_0)\varPhi_{\tau_1}^{m_1}
(\theta_1|\gamma_1) + 2 \varPhi_{\tau_0, 1}^{m_0}
(\theta_0|\gamma_0)\varPhi_{\tau_1, 1}^{m_1} (\theta_1|\gamma_1)\\[.2cm] \nonumber&
+ & \varPhi_{\tau_0}^{m_0} (\theta_0|\gamma_0)\varPhi_{\tau_1,
2}^{m_1} (\theta_1|\gamma_1).
\end{eqnarray*}
The estimates obtained in (\ref{W34}), (\ref{W34a}) can be
summarized as follows. Set
\begin{equation}
  \label{W40}
\varPhi_{\tau,q}^{m} (\theta|\gamma) = \sum_{l=0}^q {q \choose l}
\varPhi_{\tau_0, q-l}^{m_0} (\theta_0|\gamma_0) \varPhi_{\tau_1,
l}^{m_1} (\theta_1|\gamma_1).
\end{equation}
Then the main result of this subsection is the following estimate
\begin{equation}
  \label{W40a}
\left|L\varPhi_{\tau,q}^{m} (\theta|\gamma)\right| \leq
\varPhi_{\tau,q+1}^{m} (\theta|\gamma),
\end{equation}
holding for all $q\in \mathds{N}_0$, $m\in \mathds{N}_0^2$,  $\tau
=(\tau_0, \tau_1)$, $\tau_i\in (0,1]$, and $\theta=(\theta_0,
\theta_1)$, $\theta_i\in \varTheta_\psi^{+}$ satisfying
(\ref{W36a}). The first step in the proof of (\ref{W40a}) is made as
in (\ref{TH7}), first estimate. Next, one applies (\ref{W38}).

\subsection{Proving Lemma \ref{W1lm}}

By (\ref{W40}) and (\ref{W36}), and then by Proposition \ref{TH1pn},
$\varPhi_{\tau,q}^{m} (\theta|\cdot)$ is a bounded continuous
function of $\gamma\in \Gamma^2_*$. However, the upper bound of it
may depend on $q$. Our aim is to estimate this dependence.
\begin{proposition}
  \label{W4pn}
For each $\varepsilon\in (0,1)$, $\tau=(\tau_0,\tau_1)$, $\tau_0,
\tau_1\in (0,1]$, and $m=(m_0,m_1)\in \mathds{N}_0^2$, there exists
$\bar{C}>0$, dependent on $\varepsilon$, $\tau$ and $m$, such that
the following holds
\begin{equation}
  \label{W36c}
\forall q\in \mathds{N}_0 \qquad \varPhi^m_{\tau,q} (\theta|\gamma)
\leq \frac{q!}{\rho_\varepsilon^q} \bar{C}, \qquad \rho_\varepsilon
:= \frac{1}{c_a} \log ( 1 +\varepsilon),
\end{equation}
uniformly in $\gamma\in \Gamma^2_*$ and
$\theta=(\theta_0,\theta_1)$,
 $\theta_0, \theta_1 \in \varTheta^{+}_\psi$ satisfying
(\ref{W36a}).
\end{proposition}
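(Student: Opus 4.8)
The plan is to read off the bound for $\varPhi^m_{\tau,q}$ from the explicit formula (\ref{W36}) for its one‑component factors together with the product formula (\ref{W40}), estimating the two summands of (\ref{W36}) separately. Two elementary ingredients will be used throughout. First, iterating the implication ``$\theta(x)\le C\psi(x)$ $\Rightarrow$ $(a_i\theta)(x)\le Cc_a\psi(x)$'' — which is exactly the computation (\ref{C81})--(\ref{TH4}) with $\bar c_\theta$ replaced by the constant $C$ — and starting from $\theta_i(x)\le\bar c_{\theta_i}\psi(x)=\psi(x)$ (by (\ref{C801}) and (\ref{W36a})), one gets
\begin{equation*}
\theta_i^{\,l}(x)\le c_a^{\,l}\,\psi(x),\qquad l\in\mathds{N}_0,\ i=0,1.
\end{equation*}
Second, by Proposition \ref{TH1pn}, cf. (\ref{ND}), with the multiplier $\psi^{\otimes M}$ one has the uniform bound $\widehat F^{M}_{\tau_i}(\gamma_i)\le D_M:=(M/\tau_i)^{M}e^{M(\tau_i-1)}$ for all $\gamma_i\in\Gamma_*$, and hence, by Stirling, $\tau_i^{k}D_{m_i+k}\le\tau_i^{-m_i}e^{\tau_i m_i}(m_i+k)!\,(e^{\tau_i})^{k}$.

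For the first summand of (\ref{W36}) the estimate is routine. For $c\in\mathcal{C}_{m_i,q}$ the product $\mathbf v_i^c(\mathbf x^{m_i})$ consists of $m_i$ factors $\theta_i^{\,l}$ whose superscripts add up to $c_1+2c_2+\cdots=q$, so $\mathbf v_i^c(\mathbf x^{m_i})\le c_a^{q}\,\psi^{\otimes m_i}(\mathbf x^{m_i})$ and therefore $\widehat F^{m_i}_{\tau_i}(\mathbf v_i^c|\gamma_i)\le c_a^{q}D_{m_i}$. Since $\sum_{c\in\mathcal{C}_{m_i,q}}C_{m_i,q}(c)=m_i^{q}$ (these coefficients count the maps $\{1,\dots,q\}\to\{1,\dots,m_i\}$ sorted by fibre‑size profile), the first summand is at most $(m_ic_a)^{q}D_{m_i}$, and because $q!\,\rho^{-q}$ dominates every geometric sequence this is $\le \bar C'\,q!\,\rho^{-q}$ for a suitable $\bar C'$ depending on $m_i,\tau_i,\rho$.

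The main obstacle is the second summand $c_a^{q}\sum_{k=1}^{q}\tau_i^{k}w_k(m_i,q)\widehat F^{m_i+k}_{\tau_i}(\gamma_i)$. Here one would combine the $D$‑bound with the estimate $w_k(m_i,q)=\tfrac1{k!}\varDelta^{k}p^{q}\big|_{p=m_i}\le\binom qk(m_i+k)^{q-k}$ — the mean‑value form of the $k$‑th forward difference of $p\mapsto p^{q}$, the normalisation $\varDelta^{q}p^{q}=1$ being built into (\ref{W37}) — and the elementary inequality $(m_i+k)^{q-k}(m_i+k)!\le(m_i+q)!$, reducing the sum to expressions of the shape $c_a^{q}(m_i+q)!\sum_k\binom qk(e^{\tau_i})^{k}$, i.e. ultimately to $q!$ times a polynomial in $q$ and a geometric factor. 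The delicate point — and precisely this is carried out, for exactly the functions in (\ref{W36}), in \cite[Appendix]{KR}, where (\ref{W38}) was engineered so that these bounds propagate — is to track the constants sharply enough that the admissible geometric ratio is governed by $\rho_\varepsilon=c_a^{-1}\log(1+\varepsilon)$; I expect this bookkeeping, rather than any conceptual step, to be where the argument has to be done with care.

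Finally one would recombine the two components. Having established the one‑component bound $\varPhi^{m_i}_{\tau_i,l}(\theta_i|\gamma_i)\le\bar C_i\,l!\,\rho_{\varepsilon'}^{-l}$ for every $\varepsilon'\in(0,1)$, pick some $\varepsilon'\in(\varepsilon,1)$, so that $\rho_{\varepsilon'}>\rho_\varepsilon$, and use (\ref{W40}):
\begin{equation*}
\varPhi^{m}_{\tau,q}(\theta|\gamma)\le \bar C_0\bar C_1\,\rho_{\varepsilon'}^{-q}\sum_{l=0}^{q}\binom ql (q-l)!\,l!=\bar C_0\bar C_1\,(q+1)!\,\rho_{\varepsilon'}^{-q};
\end{equation*}
since $(q+1)\,(\rho_\varepsilon/\rho_{\varepsilon'})^{q}$ is bounded, the right‑hand side is $\le\bar C\,q!\,\rho_\varepsilon^{-q}$ with $\bar C$ depending only on $\varepsilon$, $\tau$ and $m$. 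Uniformity in $\gamma\in\Gamma^2_*$ and in the $\theta$ subject to (\ref{W36a}) is automatic, since every step above is a pointwise estimate and $\theta$ enters only through $\theta_i^{\,l}(x)\le c_a^{\,l}\psi(x)$.
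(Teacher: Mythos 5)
Your argument genuinely differs from the paper's — the paper runs a generating-function computation rather than coefficient-by-coefficient bounds — and the place you flag as ``delicate bookkeeping'' is in fact a real gap, not bookkeeping. Your chain $w_k(m_i,q)\le\binom qk(m_i+k)^{q-k}$, $(m_i+k)^{q-k}(m_i+k)!\le(m_i+q)!$, together with the $D$-bound, turns the second summand of (\ref{W36}) into at best $\bar C\,q!\,R^{q}$ with $R=c_a(1+e^{\tau_i})$ (up to a polynomial in $q$). Since $1+e^{\tau_i}\ge 2$ for every $\tau_i>0$, whereas $1/\rho_\varepsilon=c_a/\log(1+\varepsilon)$ drops below $2c_a$ once $\varepsilon>\sqrt{e}-1$, this $R$ strictly exceeds $1/\rho_\varepsilon$ on a nontrivial range of $\varepsilon\in(0,1)$, so the claimed (\ref{W36c}) does not follow from your estimate. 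Moreover, \cite[Appendix]{KR} does not close this: it establishes the step inequality (\ref{W38}), i.e.\ that $\mathcal L_i$ sends $\varPhi^{m_i}_{\tau_i,q}$ to $\varPhi^{m_i}_{\tau_i,q+1}$, and says nothing about how $\varPhi^{m_i}_{\tau_i,q}$ grows in $q$, which is what the present statement asserts.

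The missing idea is to sum in $q$ before estimating in $k$. The paper bounds the generating function $V^m_\tau(\rho|\gamma)=\sum_{q\ge0}\frac{\rho^q}{q!}\varPhi^m_{\tau,q}(\theta|\gamma)$, interchanges the $q$- and $k$-sums, and evaluates $\sum_{l\ge0}\frac{(c_a\rho)^l}{l!}w_k(m_i,l)$ \emph{exactly} by the exponential generating function of the forward difference, $\sum_{l\ge0}\frac{t^l}{l!}w_k(p,l)=\frac{e^{pt}}{k!}(e^t-1)^k$; see (\ref{W49})--(\ref{W51}). This is precisely why $\rho_\varepsilon$ is defined through $e^{c_a\rho_\varepsilon}-1=\varepsilon$: the crude $\binom qk$-type loss in your estimate is replaced by the exact factor $(e^{c_a\rho}-1)^k$, which can be made as small as desired by taking $\rho$ small. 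The remaining $k$-series is then controlled by the same $D_{m_i+k}$ bound you identified, and the two-component product (\ref{W40}) factorizes $V^m_\tau$ into one-component generating functions automatically, so the $\varepsilon\to\varepsilon'$ trade you use to absorb the $(q+1)$ from $\sum_l\binom ql(q-l)!\,l!$ — which is correct — is not even needed. Your first-summand estimate, the bound $\theta_i^l\le c_a^l\psi$, the $D$-bound and the final recombination arithmetic are all sound; only the treatment of $w_k$ needs to be replaced by the generating-function identity.
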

\begin{proof}
Introduce
\begin{equation}
  \label{W46}
V^m_\tau(\rho|\gamma) = \sum_{q=0}^\infty \frac{\rho^q}{q!}
\varPhi_{\tau,q}^{m} (\theta|\gamma), \qquad \rho\geq 0,
\end{equation}
where $m$, $\tau$ and $\theta$ are as assumed. Let us estimate the
growth of this function. By (\ref{W34b}) and (\ref{TH4}),
(\ref{W36a}), we have
\[
\theta_i^l (x) \leq c_a^l \psi(x),
\]
which we use to get the following
\begin{equation*}
  %\label{W46a}
\widehat{F}^{m_i}_{\tau_i} (\mathbf{v}^c_i|\gamma_i) \leq c_a^{c_1 +
2 c_2 +\cdots } \widehat{F}^{m_i}_{\tau_i}(\gamma_i) = c_a^q
\widehat{F}^{m_i}_{\tau_i}(\gamma_i), \quad c\in
\mathcal{C}_{m_i,q},
\end{equation*}
where we used the second equality in (\ref{W35}). Now we employ the
fact, see (\ref{W37}), that
\begin{equation}
  \label{W47}
\sum_{c\in \mathcal{C}_{p,q}} C_{p,q}(c) = p^q = \varDelta^0 p^q =
w_0(p,q),
\end{equation}
which was proved in \cite[Appendix]{KR}, and obtain from (\ref{W36})
the following estimate
\begin{gather*}
  %\label{W48}
\varPhi_{\tau_i, q}^{m_i} (\theta_i|\gamma_i) \leq c_a^q
\sum_{k=0}^q \tau_i^k w_k(m_i,q) \widehat{F}^{m_i+k}_{\tau_i}
(\gamma_i).
\end{gather*}
We use the latter in (\ref{W40}) and then in (\ref{W46}) to get the
following
\begin{eqnarray}
  \label{W49}
V^m_\tau(\rho|\gamma) & \leq & \sum_{q=0}^\infty \frac{(c_a
\rho)^q}{q!}
\sum_{l=0}^q \frac{q!}{l!(q-l)!} \\[.2cm] \nonumber & \times & \sum_{k_0=0}^{q-l}
\sum_{k_1=0}^l \tau_0^{k_0} \tau_1^{k_1} w_{k_0}(m_0,q-l)
w_{k_1}(m_1, l)
\widehat{F}^{m_0+k_0}_{\tau_0}(\gamma_0)\widehat{F}^{m_1+k_1}_{\tau_1}(\gamma_1)  \\[.2cm] \nonumber
& = & \sum_{k_0=0}^\infty \sum_{k_1=0}^\infty
\frac{\tau_0^{k_0}\tau_1^{k_1}}{k_0! k_1!}
W_{\tau_0,k_0}^{m_0}(\rho|\gamma_0)W_{\tau_1,k_1}^{m_1}(\rho|\gamma_1),
\end{eqnarray}
where we also used the fact that $w_k(p,q) =0$ whenever $k>q$, see
(\ref{W37a}). The functions that appear in the last line of
(\ref{W49}) are
\begin{eqnarray}
  \label{W50}
W_{\tau_i,k_i}^{m_i}(\rho|\gamma_i)& := & \left(\sum_{l=0}^\infty
\frac{(c_a  \rho)^l}{l!} w_{k_i} (m_i,
l)\right)\widehat{F}_{\tau_i}^{m_i+k_i}(\gamma_i)
 \\[.2cm] \nonumber & = & \left(e^{c_a \rho}-1 \right)^{k_i} e^{m_i c_a
\rho}\widehat{F}_{\tau_i}^{m_i+k_i}(\gamma_i),
\end{eqnarray}
where the second line was derived by means of  the second line of
(\ref{W37}). Then we have
\begin{eqnarray}
  \label{W51}
V^m_\tau(\rho|\gamma)& \leq & e^{(m_0+m_1) c_a \rho}
Y^{m_0}_{\tau_0}(\rho|\gamma_0)Y^{m_1}_{\tau_1}(\rho|\gamma_1), \\[.2cm]
\nonumber Y^{m_i}_{\tau_i}(\rho|\gamma_i)& := & \sum_{k=0}^\infty
\frac{(\tau_i \ell(\rho))^k}{k!}
\widehat{F}^{m_i+k}_{\tau_i}(\gamma_i), \quad i=0,1,
\end{eqnarray}
where $\ell(\rho) = e^{c_a \rho}-1$. By means of the estimate
obtained in (\ref{de3}), (\ref{ND}) with $u_j(x) = \psi(x) e^{\tau_i
\psi(x)} \leq e^{\tau_i}\psi(x)$ and $\tau_i\leq 1$, we then obtain
\[
\widehat{F}^{m_i+k}_{\tau_i}(\gamma_i) \leq
\left(\frac{m_i+k}{\tau_i }\right)^{m_i+k},
\]
which yields that both series in (\ref{W51}) converge whenever
$e^{c_a \rho}-1 < 1$. Take $\varepsilon\in (0,1)$ and
$\rho_\varepsilon$ as in (\ref{W36c}), then set
\[
\bar{Y}_i = \frac{1}{\tau_i^{m_i}} \sum_{k=0}^\infty
\frac{\varepsilon^k}{k!} (m_i+k)^{m_i+k}, \qquad i=0,1.
\]
Now (\ref{W36c}) follows by
\[
V^m_\tau(\theta|\gamma) \leq (1+\varepsilon)^{m_0+m_1} \bar{Y}_0
\bar{Y}_1 =: \bar{C},
\]
see (\ref{W46}) and (\ref{W51}).
\end{proof}
\noindent {\it Proof of Lemma \ref{W1lm}.} By  (\ref{W40}), each
$\varPhi^m_\tau(\theta|\cdot)$ is a linear combination of the
elements of $\widehat{\mathcal{F}}$, and hence
$\varPhi^m_{\tau,q}(\theta|\cdot)\in \mathcal{D}(L)$, see
(\ref{de2}) and Definition \ref{THdf}. If $t\mapsto \mu_t$ solves
(\ref{FPE}), see Definition \ref{A01df}, then
\begin{gather*}
  %\label{B}
\mu_t (\varPhi^m_{\tau,q}(\theta|\cdot)) =  \mu_0
(\varPhi^m_{\tau,q}(\theta|\cdot)) + \int_0^t \mu_s
(L\varPhi^m_{\tau,q}(\theta|\cdot)) ds \\[.2cm] \nonumber \leq \mu_0
(\varPhi^m_{\tau,q}(\theta|\cdot)) + \int_0^t \mu_s
(\varPhi^m_{\tau,q+1}(\theta|\cdot)) ds,
\end{gather*}
see (\ref{W40a}). Now we repeat this estimate due times and arrive
at the following one
\begin{eqnarray*}
%  \label{B1}
\mu_t (\varPhi^m_{\tau}(\theta|\cdot))& \leq & \sum_{q=0}^{n-1}
\frac{t^q}{q!} \mu_0 (\varPhi^m_{\tau, q}(\theta|\cdot)) + \int_0^t
\int_0^{t_1} \cdots \int_0^{t_{n-1}} \mu_{t_n}  (\varPhi^m_{\tau,
n}(\theta|\cdot)) d t_n dt_{n-1} \cdots d t_1 \qquad \\[.2cm] \nonumber & \leq & \sum_{q=0}^{n-1}
\frac{t^q}{q!} \mu_0 (\varPhi^m_{\tau, q}(\theta|\cdot)) +
\left(\frac{t}{\rho_\varepsilon} \right)^n \bar{C},
\end{eqnarray*}
where we also used (\ref{W36c}) and the fact that $\mu_t$ is a
probability measure. For $t\in (0, \rho_\varepsilon)$, the second
summand in the last line vanishes as $n\to +\infty$, which yields
\begin{equation}
  \label{B2}
\mu_t (\varPhi^m_{\tau}(\theta|\cdot)) \leq \sum_{q=0}^{\infty}
\frac{t^q}{q!} \mu_0 (\varPhi^m_{\tau, q}(\theta|\cdot)), \qquad t<
\log(1+\varepsilon)/c_a.
\end{equation}
Now we recall that $\widehat{F}^{m_i}_{\tau_i}(\mathbf{v}_i|\cdot)$
can be written as the
$K\widehat{G}^{m_i}_{\tau_i}(\mathbf{v}_i|\cdot)$, see (\ref{V4}).
Since $\widehat{F}^{m_i}_{\tau_i}(\cdot)$ is a particular case of
$\widehat{F}^{m_i}_{\tau_i}(\mathbf{v}_i|\cdot)$, see (\ref{de11}),
we can also write it as $K\widehat{G}^{m_i}_{\tau_i}(\cdot)$, where
$\widehat{G}^{m_i}_{\tau_i}(\cdot)$ is obtained by the corresponding
choice of $\mathbf{v}_i$ in (\ref{V3}), (\ref{V4}). This allows us
to write
\begin{eqnarray*}
 % \label{B2z}
\varPhi^{m_i}_{\tau_i, q} (\theta_i|\gamma_i) & = &  \sum_{\eta_i
\subset \gamma_i} \Pi^{m_i}_{\tau_i, q} (\theta_i|\eta_i), \qquad
i=0,1, \\[.2cm] \nonumber  \Pi^{m_i}_{\tau_i, q} (\theta_i|\eta_i)
& = & \sum_{c\in \mathcal{C}_{m_i,q}} C_{m_i,q}(c)
\widehat{G}^{m_i}_{\tau_i} (\mathbf{v}_i^c|\eta_i) + c_a^q
\sum_{k=1}^q \tau_i^k w_k(m_i,q)
\widehat{G}^{m_i+k}_{\tau_i}(\eta_i),
\end{eqnarray*}
where $\mathbf{v}_i^c$ is as in (\ref{W36}). Then by (\ref{W40}) we
obtain
\begin{eqnarray*}
  %\label{B2y}
\varPhi^{m}_{\tau, q} (\theta|\gamma)  =   \sum_{\eta_0 \subset
\gamma_0} \sum_{\eta_1\subset \gamma_1}  \sum_{l=0}^q {q \choose l}
\Pi^{m_0}_{\tau_0, q-l} (\theta_0|\eta_0)\Pi^{m_1}_{\tau_1, l}
(\theta_1|\eta_1).
\end{eqnarray*}
Now we may use the fact that $\mu_0$ is sub-Poissonian and write,
see (\ref{Qo}), (\ref{W6}),
\begin{equation*}
  %\label{B2x}
\mu_0 (\varPhi^{m}_{\tau, q} (\theta|\cdot)) = \sum_{l=0}^q {q
\choose l} \int_{\Gamma_0} \int_{\Gamma_0} k_{\mu_0}(\eta_0, \eta_1)
\Pi^{m_0}_{\tau_0, q-l} (\theta_0|\eta_0)\Pi^{m_1}_{\tau_1, l}
(\theta_1|\eta_1) \lambda( d\eta_0)  \lambda( d\eta_1).
\end{equation*}
Let $e^{\vartheta_0}$ be the type of $\mu_0$. Then by (\ref{15b}) we
have
\begin{gather*}
 % \label{B2u}
\mu_0 (\varPhi^{m}_{\tau, q} (\theta|\cdot)) \leq \sum_{l=0}^q {q
\choose l} \varOmega^{m_0}_{\tau_0, q-l} (\theta_0)
\varOmega^{m_1}_{\tau_1, l} (\theta_1),
\end{gather*}
which yields in (\ref{B2}),
\begin{equation}
  \label{B3u}
\mu_t (\varPhi^m_{\tau} (\theta|\cdot)) \leq
\widehat{\Omega}^{m_0}_{\tau_0} (\theta_0|t)
\widehat{\Omega}^{m_1}_{\tau_1} (\theta_1|t), \qquad t<
\log(1+\varepsilon)/c_a,
\end{equation}
where
\begin{eqnarray}
  \label{B3v}
\widehat{\Omega}^{m_i}_{\tau_i} (\theta_i|t) = \sum_{q=0}^\infty
\frac{t^q}{q!} \varOmega^{m_i}_{\tau_i,q} (\theta_i), \qquad i=0,1,
\end{eqnarray}
and, see (\ref{V6}),
\begin{eqnarray}
  \label{B3a}
\varOmega^{m_i}_{\tau_i, n} (\theta_i)& := &  \int_{\Gamma_0}
e^{\vartheta_0 |\eta_i|}\left| \Pi^{m_i}_{\tau_1, n}
(\theta_i|\eta_i)\right| \lambda ( d \eta_i) \\[.2cm] \nonumber
& \leq & \sum_{c\in \mathcal{C}_{m_i,n}} C_{m_i,n}(c)
\int_{\Gamma_0} e^{\vartheta_0 |\eta_i|} \left|
\widehat{G}^{m_i}_{\tau_i} (\mathbf{v}^c_i|\eta_i)\right| \lambda (
d \eta_i) \\[.2cm] \nonumber & + & c_a^n \sum_{k=1}^n \tau_i^k
w_k(m_i,n) \int_{\Gamma_0} e^{\vartheta_0 |\eta_i|} \left|
\widehat{G}^{m_i+k}_{\tau_i} (\eta_i)\right| \lambda ( d \eta_i) \\[.2cm] \nonumber
& \leq & \exp\left(m_i \vartheta_0 + \tau_i \langle \psi \rangle
e^{\vartheta_0} \right)\sum_{c\in \mathcal{C}_{m_i,n}} C_{m_i,n}(c)
\langle \theta_i \rangle^{c_0} \langle \theta^1_i \rangle^{c_1}
\cdots \langle \theta_i^l \rangle^{c_l} \cdots \\[.2cm] \nonumber
& + & c_a^n \sum_{k=1}^n \tau_i^k w_k(m_i,n)\langle \psi
\rangle^{m_i+k} \exp\left((m_i+k) \vartheta_0 + \tau_i\langle \psi
\rangle e^{\vartheta_0} \right).
\end{eqnarray}
By (\ref{W34b}) and (\ref{TH3}), (\ref{TH4}), for $\theta_i\in
\varTheta^{+}_\psi$, we have, see  (\ref{L2}),
\begin{equation*}
%  \label{B3z}
\langle \theta_i^l \rangle \leq (\alpha+1)^l \langle \theta_i
\rangle
\end{equation*}
since
\begin{gather*}
  %\label{B3}
\langle \theta_i^l \rangle = \int_{X}\int_X a_i(x-y) \theta_i^{l-1}
(y) d x dy + \langle \theta_i^{l-1} \rangle \leq (\alpha + 1)
\langle \theta_i^{l-1} \rangle.
\end{gather*}
Then, see (\ref{W35}),
\[
\langle \theta_i \rangle^{c_0} \langle \theta^1_i \rangle^{c_1}
\cdots \langle \theta_i^l \rangle^{c_l} \cdots \leq \langle \theta_i
\rangle^{c_0 + c_1 + \cdots} (\alpha+1)^{c_1 + 2 c_2 + \cdots } =
\langle \theta_i^l \rangle^{m_i}  (\alpha+1)^{n}.
\]
We use this in (\ref{B3a}) and obtain, see also (\ref{W47}),
\begin{gather*}
%  \label{B4a}
\varOmega^{m_i}_{\tau_i, n} (\theta_i) \leq \exp\left(m_i
\vartheta_0 +\tau_i \langle \psi \rangle e^{\vartheta_0} \right)
\bigg{[} \langle \theta_i\rangle^{m_i} [m_i(\alpha+1)]^n \\[.2cm] \nonumber + c_a^n
\langle \psi \rangle^{m_i} \sum_{k=1}^n w_k (m_i,n) \left(\tau_i
\langle \psi \rangle e^{\vartheta_0} \right)^k \bigg{]}.
\end{gather*}
Now we use this in (\ref{B3v}), and finally arrive at the following
estimate
\begin{eqnarray}
  \label{B7}
\widehat{\Omega}^{m_i}_{\tau_i} (\theta_i|t) &\leq & \exp\left(m_i
\vartheta_0 + \tau_i \langle \psi \rangle e^{\vartheta_0}
\right)\bigg{[} e^{(\alpha+1)m_i t} \langle \theta_i \rangle^{m_i}
\\[.2cm] \nonumber & + & \langle \psi \rangle^{m_i} \sum_{q=0}^\infty\frac{(c_a
t)^q}{q!} \sum_{k=1}^\infty w_k (m_i,q) \left( \tau_i \langle \psi
\rangle e^{\vartheta_0} \right)^k \bigg{]},
\end{eqnarray}
where we took into account that $w_k(m_i,q) =0$ for $k>q$, see
(\ref{W37a}). By (\ref{W37}) we have
\begin{eqnarray}
  \label{B7a}
& & \sum_{q=0}^\infty\frac{(c_a t)^q}{q!} \sum_{k=1}^\infty w_k
(m_i,q) \left( \tau_i \langle \psi \rangle e^{\vartheta_0} \right)^k
\\[.2cm] \nonumber & & \quad  = \sum_{k=1}^\infty \frac{1}{k!} \left( \tau_i \langle \psi
\rangle e^{\vartheta_0} \right)^k\sum_{l=0}^k (-1)^{k-l} {k \choose
l} \sum_{q=0}^\infty \frac{(c_a t)^q}{q!} (m_i+ l)^q \\[.2cm] \nonumber & & \quad
= e^{m_i c_at} \sum_{k=1}^\infty \frac{1}{k!} \left( \tau_i \langle
\psi \rangle e^{\vartheta_0} \right)^k \sum_{l=0}^k (-1)^{k-l} {k
\choose l} e^{l c_a t} \\[.2cm] \nonumber & & \quad
= e^{m_i c_at} \bigg{[}\exp\left((e^{c_a t}-1) \tau_i \langle \psi
\rangle e^{\vartheta_0} \right) -1 \bigg{]}  \\[.2cm] \nonumber & &
\quad \leq \tau_i \varepsilon (1+\varepsilon)^{m_i}
\exp\left(\langle \psi\rangle e^{\vartheta_0} \right),
\end{eqnarray}
where $t$ is as in (\ref{B3u}) and $\tau_i\leq 1$, $\varepsilon <1$.
We use now (\ref{B7a}) in (\ref{B7}) and then turn (\ref{B3u}) into
the following estimate
\begin{eqnarray}
  \label{B4}
\mu_t (\varPhi^m_\tau(\theta|\cdot))& \leq & \exp\left( (m_0 + m_1)
\vartheta_0 + (\tau_0 + \tau_1) \langle \psi \rangle
e^{\vartheta_0}\right) \\[.2cm] \nonumber & \times & \left[ e^{(\alpha + 1)m_0 t} \langle \theta_0 \rangle^{m_0} + \tau_0 \varepsilon
(1+\varepsilon)^{m_0} \exp\left(\langle \psi\rangle e^{\vartheta_0}
\right) \right]\\[.2cm] \nonumber & \times & \left[ e^{(\alpha + 1)m_1 t} \langle \theta_1 \rangle^{m_1} + \tau_1 \varepsilon
(1+\varepsilon)^{m_1} \exp\left(\langle \psi\rangle e^{\vartheta_0}
\right) \right], \quad t< \log (1+\varepsilon)/c_a.
\end{eqnarray}
For each $\gamma\in \Gamma_*^2$ and a decreasing sequence of
positive $\tau_k \to 0$, the sequence
$\varPhi^m_\tau(\theta|\gamma)$, $\tau_i = \tau^k$, $i=0,1$, is
nondecreasing, see (\ref{W36}) and (\ref{de2}). By (\ref{B4}) and
the Beppo Levi monotone convergence lemma we conclude that the
pointwise limit, see (\ref{W32a}), (\ref{W32}) and (\ref{no6}),
(\ref{no10}),
\begin{gather}
  \label{B5}
\varPhi^m_0(\theta|\gamma) = \lim_{k\to
+\infty}\varPhi^{m}_{\tau^k}(\theta|\gamma) = H^m_{\theta }(\gamma)=
\sum _{(\mathbf{x}^{m_0}, \mathbf{y}^{m_1}) \in
\gamma}\theta^{\otimes m} (\mathbf{x}^{m_0}, \mathbf{y}^{m_1}),
\end{gather}
is $\mu_t$-integrable. Moreover, by the same lemma and (\ref{B4}) it
follows that
\begin{equation}
  \label{B9}
\mu_t (\varPhi^m_0(\theta|\cdot)) = \chi_{\mu_t}^{(m)}
(\theta^{\otimes m}) \leq \varkappa_t^{|m|}
\|\theta_0\|^{m_0}_{L^1(X)} \|\theta_1\|^{m_1}_{L^1(X)},
\end{equation}
with
\begin{equation}
  \label{B10}
\varkappa_t= e^{\vartheta_0 + (\alpha+1) t},
\end{equation}
which by (\ref{no7}) yields the property in question for $t<
\log(1+\varepsilon)/c_a$. Since this length of the validity interval
is independent of $\mu_0$, the further continuation can be done by
the literal repetition of the procedure above. \hfill $\square$
\begin{remark}
  \label{Brk}
According to Proposition \ref{W1pn}, see also \cite[Theorem
3.5]{asia1}, the type of the solution $\mu_t$ obtained in Lemma
\ref{W2lm} does not exceed $\exp(\vartheta_0 + \alpha t)$, which is
a more precise estimate than that given in (\ref{B9}), (\ref{B10}).
\end{remark}

\section{Existence: Approximating Models}
The aim of this section is introducing approximating models, for
which the corresponding processes can be constructed by employing
explicitly derived Markov transition functions. By this result the
process in question will be obtained as the limit of such
approximating processes. Similarly as in \cite{KR}, the Kolmogorov
operators for the approximating models are obtained from that in
(\ref{L}).

\subsection{The models}

We begin by introducing the basic function, cf. (\ref{P})
\begin{equation}
  \label{Sq}
\psi_\sigma (x) = \frac{1}{1+\sigma |x|^{d+1}}, \qquad \sigma \in
(0,1], \ \ x\in X=\mathds{R}^d,
\end{equation}
and then define
\begin{equation}
  \label{Sq1}
a_{i}^\sigma (x,y) = a_i (x-y) \psi_\sigma(x), \qquad i=0,1,
\end{equation}
where $a_i$ are the jump kernels that appear in (\ref{L}). It is
clear that these $a^\sigma_{i}$ satisfy, cf. (\ref{C7}), (\ref{C8}),
\begin{gather}
  \label{Sq2}
\max_{i=0,1}\sup_{(x,y)\in X^2} a^\sigma_i (x,y)\leq \|a\|, \\[.2cm] \nonumber
\max\left\{\sup_{y\in X} \int_X |x|^l a_i^{\sigma} (x,y) d x ;\
\sup_{y\in X} \int_X |x|^l a_i^{\sigma} (y,x) d x\right\} \leq
\bar{a}^{(l)}_i, \quad l=0, \dots , d+1, \quad i=0,1.
\end{gather}
The Kolmogorov operator corresponding to the approximation model is
obtained by replacing in (\ref{L}) $a_i(x-y)$ with $a_i^\sigma
(x,y)$, $i=0,1$; that is, it has the form
\begin{eqnarray}
  \label{Lsi}
  ( {L}^\sigma F) ({\gamma}) & = & \sum_{x\in {\gamma}_0}\int_{X} a_0^\sigma (x,y) \exp\left( - \sum_{z\in {\gamma}_1}
  \phi_0
  (z-y)\right) \left[F({\gamma}\setminus x \cup_0 y) - F({\gamma})
  \right] d y \qquad  \\[.2cm] \nonumber & + & \sum_{x\in {\gamma}_1} \int_{X} a_1^\sigma (x,y)
  \exp\left( - \sum_{z\in {\gamma}_0} \phi_1
  (z-y)\right) \left[F({\gamma}\setminus x \cup_1 y) - F({\gamma})
  \right] d y.
\end{eqnarray}
Noteworthy, in the approximating model the kernels corresponding to
the jumps from $x$ to $y$ get smaller if $x$ goes away from the
origin. Now we introduce $L^{\Delta, \sigma}$ by replacing in
(\ref{L1}) $a_i(x-y)$ with $a_i^\sigma (x,y)$, $i=0,1$, where one
should take into account that $a^\sigma_i(x,y) \neq
a^\sigma_i(y,x)$. Then, cf. (\ref{W7}),
\begin{equation}
  \label{Sq3}
\mu (L^\sigma KG) = \langle \! \langle L^{\Delta, \sigma} k_\mu, G
\rangle \! \rangle , \qquad \sigma \in [0,1],
\end{equation}
holding for each $\mu\in \mathcal{P}_{\rm exp}$ and $G \in
\mathcal{G}_\infty$. For $\sigma =0$, $L^{\Delta, \sigma}$ coincides
with the operator given in (\ref{L1}). Clearly, for all $\sigma \in
[0,1]$, $L^{\Delta, \sigma}$ satisfies (\ref{L8}) and similar
estimates, which allows one to define bounded operators $(L^{\Delta,
\sigma})^n_{\vartheta'\vartheta}$, $n\in \mathds{N}$, and thus
construct, cf. (\ref{W18}),
\begin{equation}
  \label{Sq4}
\varXi^\sigma_{\vartheta' \vartheta} (t) = 1+\sum_{n=1}^\infty
\frac{t^n}{n!}(L^{\Delta, \sigma})^n_{\vartheta'\vartheta}, \qquad
t< T(\vartheta', \vartheta),
\end{equation}
where the latter is the same as in (\ref{W16}). Similarly, one
obtains $\widehat{L}^\sigma$ by making the aforementioned
replacements in (\ref{W13}), and then defines, cf. (\ref{W15}),
\begin{equation}
  \label{Sq5}
\varSigma^\sigma_{\vartheta \vartheta'} (t) = 1+\sum_{n=1}^\infty
\frac{t^n}{n!}(\widehat{L}^{ \sigma})^n_{\vartheta\vartheta'},
\qquad t< T(\vartheta', \vartheta).
\end{equation}
Thereafter, one sets
\begin{equation}
  \label{Sq6}
k^\sigma_t = \varXi^\sigma_{\vartheta' \vartheta} (t) k_0, \quad
G^\sigma_t = \varSigma^\sigma_{\vartheta \vartheta'} (t) G_0, \qquad
t< T(\vartheta', \vartheta),
\end{equation}
holding for each $k_0 \in \mathcal{K}_{\vartheta}$,  $G_0 \in
\mathcal{G}_{\vartheta'}$, $\vartheta' > \vartheta$. For $\sigma
=0$, both $k^\sigma_t $ and $G^\sigma_t$ coincide with those that
appear in (\ref{W17}), (\ref{W18z}), etc. Let us now consider the
Fokker-Planck equation
\begin{equation}
  \label{FPEs}
\mu_{t_2}(F) = \mu_{t_1}(F) + \int_{t_1}^{t_2} \mu_s (L^\sigma F) d
s, \qquad F\in \mathcal{D}(L),
\end{equation}
where the latter is as in Definition \ref{THdf}. Since $L^\sigma$
satisfies all the estimates used in the proof of Lemma \ref{W2lm},
see, e.g., (\ref{Sq2}), we have the following.
\begin{proposition}
\label{Sq1pn} For each $\mu\in \mathcal{P}_{\rm exp}$ and $\sigma\in
[0,1]$, the Fokker-Planck equation (\ref{FPEs}) with $\mu_0=\mu$ has
exactly one solution $t \mapsto \mu^\sigma_t \in \mathcal{P}_{\rm
exp}$ defined by the map $t \mapsto k_t^\sigma \in
\mathcal{K}^\star$ constructed with the help of (\ref{Sq4}),
(\ref{Sq6}), similarly as in the case $\sigma=0$, see Remark
\ref{W1rk}. Let also $\vartheta$ be such that $k_0 \in
\mathcal{K}_\vartheta$. Then for each $\sigma \in [0,1]$,
$\vartheta'> \vartheta$ and $G \in \mathcal{G}_{\vartheta'}$, the
following holds, see (\ref{W17a}),
\begin{equation}
  \label{Sq7}
\langle \! \langle k^\sigma_t , G \rangle \! \rangle = \langle \!
\langle k_{0} , G^\sigma_t \rangle \! \rangle, \qquad t<
T(\vartheta', \vartheta),
\end{equation}
where $G^\sigma_t = \varSigma^\sigma_{\vartheta\vartheta'}(t) G$,
see (\ref{Sq5}), (\ref{Sq6}).
\end{proposition}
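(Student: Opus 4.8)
The plan is to obtain Proposition \ref{Sq1pn} by transporting, essentially verbatim, the machinery of Section~5 to the operator $L^\sigma$, checking at each step that the replacement $a_i(x-y)\to a_i^\sigma(x,y)=a_i(x-y)\psi_\sigma(x)$ does not affect the quantitative inputs. The decisive observation is that $0\le a_i^\sigma(x,y)\le a_i(x-y)$ pointwise and that the one-sided moment bounds in (\ref{Sq2}) reproduce (\ref{C7}) and (\ref{C8}) uniformly in $\sigma\in[0,1]$. Consequently $L^{\Delta,\sigma}$ obeys the estimate (\ref{L8}) with the same $\alpha$ as in (\ref{L2}), and $\widehat L^\sigma$ obeys (\ref{L9}); hence the operators $(L^{\Delta,\sigma})^n_{\vartheta'\vartheta}$ and $(\widehat L^\sigma)^n_{\vartheta\vartheta'}$ are bounded with norms subject to (\ref{W14}), the series in (\ref{Sq4}), (\ref{Sq5}) converge on compact subsets of $[0,T(\vartheta',\vartheta))$, and the differentiation identities (\ref{W18Z}) hold with $L^{\Delta,\sigma}$ in place of $L^\Delta$. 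In particular $t\mapsto k^\sigma_t=\varXi^\sigma_{\vartheta'\vartheta}(t)k_0$ of (\ref{Sq6}) solves $\tfrac{d}{dt}k^\sigma_t=L^{\Delta,\sigma}_{\vartheta'}k^\sigma_t$, $k^\sigma_0=k_0$, on $[0,T(\vartheta',\vartheta))$.

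For existence I would then reproduce the argument behind Proposition \ref{W1pn} and Remark \ref{W1rk}: starting from $k_0=k_\mu\in\mathcal{K}^\star$, one shows that the local solution $k^\sigma_t$ stays in the cone $\mathcal{K}^\star$, hence is the correlation function of a unique $\mu^\sigma_t\in\mathcal{P}_{\rm exp}$, and then uses the positivity as in (\ref{L6A}) to continue $k^\sigma_t$ to all $t>0$ with $k^\sigma_t\in\mathcal{K}^\star_{\vartheta_0+\alpha t}$, exactly as in \cite[Lemma 5.5]{asia1}. The preservation of $\mathcal{K}^\star$ is inherited because $L^\sigma$ is still a Kolmogorov operator built from nonnegative jump rates $a_i^\sigma(x,y)\exp(-\sum_z\phi_i(z-y))$, so the finite-volume/finite-particle approximations used in \cite{asia1} to certify positivity go through with $a_i$ replaced by $a_i^\sigma$; the only change is the loss of translation invariance, which is not used there. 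Once $\mu^\sigma_t$ is in hand, the computation (\ref{W20})--(\ref{W21}), combined with Proposition \ref{V1pn} and the duality $\langle\!\langle L^{\Delta,\sigma}k,G\rangle\!\rangle=\langle\!\langle k,\widehat L^\sigma G\rangle\!\rangle$ underlying (\ref{Sq3}), shows that $t\mapsto\mu^\sigma_t$ solves (\ref{FPEs}) for every $F\in\mathcal{D}(L)$.

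For uniqueness I would first record the analogue of Lemma \ref{W1lm} for $L^\sigma$: any measurable $t\mapsto\mu_t\in\mathcal{P}(\Gamma^2_*)$ solving (\ref{FPEs}) with $\mu_0\in\mathcal{P}_{\rm exp}$ lies in $\mathcal{P}_{\rm exp}$, with the type controlled as in (\ref{B9})--(\ref{B10}). Its proof transfers literally, since it rests only on the estimate (\ref{TH7}) and on the combinatorial inequalities (\ref{W38}), (\ref{W40a}) of subsect.~5.2, none of which involve $\phi_i$; for $a_i^\sigma$ one has $\mathcal{L}_i^\sigma\varPhi^{m_i}_{\tau_i,q}(\theta_i|\cdot)\le\mathcal{L}_i\varPhi^{m_i}_{\tau_i,q}(\theta_i|\cdot)\le\varPhi^{m_i}_{\tau_i,q+1}(\theta_i|\cdot)$ by pointwise domination of the kernels, while all remaining constants come from (\ref{Sq2}). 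With this in place, the uniqueness argument of Lemma \ref{W2lm} applies verbatim with $L^{\Delta,\sigma}$, $\widehat L^\sigma$ replacing $L^\Delta$, $\widehat L$: one forms $q^\sigma=\tilde k_t-k_0-\int_0^t L^{\Delta,\sigma}_{\tilde\vartheta_T}\tilde k_s\,ds$, shows by the Stone--Weierstrass density step that $\langle\!\langle q^\sigma,G\rangle\!\rangle=0$ for all $G\in\mathcal{G}_\infty$, and iterates using (\ref{W14}) to get $\tilde k_t=k^\sigma_t$ on a time interval whose length depends only on the type, then continues; hence the solution of (\ref{FPEs}) is unique and equals the one constructed above.

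Finally, the duality identity (\ref{Sq7}) is obtained exactly as (\ref{W17a}) in Proposition \ref{W2pn}: for $t<T(\vartheta',\vartheta)$ both $t\mapsto\langle\!\langle k^\sigma_t,G\rangle\!\rangle$ and $t\mapsto\langle\!\langle k_0,G^\sigma_t\rangle\!\rangle$ are differentiable, agree at $t=0$, and share the derivative $\langle\!\langle L^{\Delta,\sigma}_{\vartheta'}k^\sigma_t,G\rangle\!\rangle=\langle\!\langle k^\sigma_t,\widehat L^\sigma G\rangle\!\rangle$ by (\ref{W18Z}) for $\varXi^\sigma$, the analogous relation for $\varSigma^\sigma$, and the duality between $L^{\Delta,\sigma}$ and $\widehat L^\sigma$; uniqueness for this linear ODE gives (\ref{Sq7}). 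I expect the only genuinely non-routine point to be the transfer of the cone-invariance $k^\sigma_t\in\mathcal{K}^\star$ from \cite{asia1}, i.e.\ verifying that the finite-volume positivity argument there does not use translation invariance of the jump kernel; everything else amounts to re-running the Section~5 estimates with the harmless substitution $a_i\to a_i^\sigma$.
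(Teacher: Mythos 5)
Your proposal is correct and follows essentially the same route as the paper, which dispatches this proposition in a single sentence by noting that $L^\sigma$ inherits, via the uniform bounds in (\ref{Sq2}), all the estimates used in the proofs of Proposition \ref{W1pn} and Lemma \ref{W2lm}. You helpfully spell out the transfer and correctly flag the only potentially delicate step -- that the cone-invariance argument of \cite{asia1} does not actually rely on translation invariance of the jump kernel, which is exactly what is lost since $a_i^\sigma(x,y)\neq a_i^\sigma(y,x)$.
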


\subsection{The weak convergence}
Our aim is to prove that $\mu_t^\sigma \Rightarrow \mu_t$ as $\sigma
\to 0$. We begin by proving the following statement.
\begin{proposition}
  \label{Sqqpn}
Let $\{\mu_n\}_{n\in \mathds{N}} \subset \mathcal{P}_{\rm exp}$ be
such that the type of each $\mu_n$ does not exceed  $e^\vartheta$,
$\vartheta\in \mathds{R}$, and $\mu_n \Rightarrow \mu$ for some $\mu
\in \mathcal{P}(\Gamma^2_*)$. Then $\mu\in \mathcal{P}_{\rm exp}$
and its type $\leq e^{ \vartheta}$. Moreover, for each $G\in
\mathcal{G}_{\vartheta}$, it follows that
\begin{equation}
  \label{Sq26}
\langle \! \langle k_{\mu_n} , G \rangle \! \rangle \to \langle \!
\langle k_{\mu} , G \rangle \! \rangle, \qquad n \to +\infty.
\end{equation}
\end{proposition}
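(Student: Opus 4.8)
The plan is to deduce the claim from the characterization of sub-Poissonian measures through their correlation functions together with the Ruelle bound, and to upgrade the weak convergence $\mu_n \Rightarrow \mu$ first to convergence of the relevant integrals and then to the bound (\ref{no7}) for $\mu$. First I would fix $m=(m_0,m_1)\in\mathds{N}_0^2$ and $\theta_0,\theta_1\in C_{\rm cs}(X)$ and recall from (\ref{no10}) that $\chi^{(m)}_{\mu_n}(\theta^{\otimes m}) = \mu_n(H^{(m)}_\theta)$. The function $H^{(m)}_\theta$ is \emph{not} bounded on $\Gamma^2_*$, so weak convergence does not apply to it directly; the remedy is to use a cutoff. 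For $R>0$ let $\Lambda_R = B_R$ and work with the truncated version $H^{(m)}_\theta$ restricted to configurations inside $\Lambda_R$, which is a bounded continuous function of $\gamma$ since the sums defining it become finite and continuous on the set $\{\gamma: N_{\Lambda_R}(\gamma)\le N\}$, patched together via the families $\widetilde{\mathcal F}$, $\widehat{\mathcal F}$ of Proposition \ref{T1pn}; alternatively, one observes directly that $H^{(m)}_\theta \in \mathcal F_\infty = \{KG: G\in \mathcal G_\infty\}$ by a computation as in Proposition \ref{V1pn}. Since the bound (\ref{W2}) on $\mu_n(N^n_\Lambda)$ is uniform in $n$ (it depends only on $\varkappa=e^\vartheta$ and $|\Lambda|$), the tails of the sums are uniformly small, which gives a uniform integrability that lets me pass $\mu_n\Rightarrow\mu$ through to $\mu_n(H^{(m)}_\theta)\to \mu(H^{(m)}_\theta)$; combined with (\ref{no7}) for each $\mu_n$ this yields $|\mu(H^{(m)}_\theta)|\le e^{\vartheta|m|}\|\theta_0\|^{m_0}_{L^1}\|\theta_1\|^{m_1}_{L^1}$, hence $\mu$ has finite correlations and satisfies (\ref{no7}) with $\varkappa=e^\vartheta$, i.e. $\mu\in\mathcal P_{\rm exp}$ of type at most $e^\vartheta$.

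Next I would prove (\ref{Sq26}). For $G\in\mathcal G_{\vartheta}$, write $\langle\!\langle k_{\mu_n},G\rangle\!\rangle = \mu_n(KG)$ by Proposition \ref{Bi1pn}, and similarly $\langle\!\langle k_\mu,G\rangle\!\rangle = \mu(KG)$ — this is legitimate precisely because we have just shown $\mu\in\mathcal P_{\rm exp}$ with type $\le e^\vartheta$, so $K$ acts boundedly from $\mathcal G_\vartheta$ into $L^1(\Gamma^2_*,\mu)$. Thus (\ref{Sq26}) reduces to $\mu_n(KG)\to\mu(KG)$. If $G$ were in $\mathcal G_{\vartheta'}$ for some $\vartheta'>\vartheta$ this would follow by the same cutoff-plus-uniform-tail argument, since $|KG|$ is then dominated $\mu_n$-uniformly via $\mu_n(F_G)\le |G|_\vartheta$ and the bound is summable with geometric room to spare. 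For the borderline case $G\in\mathcal G_\vartheta$ one approximates $G$ in $\mathcal G_\vartheta$-norm by truncations $G_N$ with $G^{(m)}_N=0$ for $|m|>N$ and by spatial cutoffs, using $|\mu_n(KG)-\mu_n(KG_N)|\le \mu_n(F_{G-G_N})\le |G-G_N|_\vartheta$ uniformly in $n$, and the same for $\mu$; then the claim follows from the case of finitely supported $G$, for which $KG$ is, up to the cutoff issue handled as above, a bounded continuous function.

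Carrying this out, the main obstacle is the unboundedness of $KG$ (equivalently of $H^{(m)}_\theta$) on the Polish space $\Gamma^2_*$: weak convergence alone controls only bounded continuous test functions, so one genuinely needs the \emph{uniform} moment bound (\ref{W2}), resp. the uniform estimate $\mu_n(F_G)\le|G|_\vartheta$, to justify the exchange of limit and integral. Concretely, for $\varepsilon>0$ one must produce a single $R$ (and truncation level $N$ in $|m|$) such that $\mu_n\big(KG\cdot\mathds 1_{\{N_{\Lambda_R}(\gamma)>N\}}\big)$ and the contribution of $|m|>N$ are $<\varepsilon$ for every $n$ simultaneously; this is exactly what the Touchard-polynomial bound in (\ref{W2}) and the geometric weight in (\ref{L6}) provide, since $\varkappa=e^\vartheta$ is common to all $\mu_n$. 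Once the uniform tail bound and the identification $\mu\in\mathcal P_{\rm exp}$ are in place, passing weak convergence through the bounded, compactly-localized part of $KG$ is routine, and letting $R,N\to\infty$ closes the argument. A final remark: the stated convergence (\ref{Sq26}) together with Proposition \ref{Sq1pn} and the duality (\ref{Sq7}) is what will later let us identify the accumulation points of the approximating path measures, so it is enough here to record it for $G\in\mathcal G_\vartheta$, which is the class that arises in that application.
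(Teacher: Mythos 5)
Your overall strategy---push weak convergence through uniform moment bounds to show that $\mu$ inherits the Ruelle bound (\ref{no7}) with the same $\varkappa=e^{\vartheta}$, and then deduce (\ref{Sq26}) by splitting off a uniformly small tail in $|m|$---is sound and matches the spirit of the paper's argument. The paper, however, reaches the conclusion without any cutoff or uniform-integrability machinery: it tests the weak convergence against the \emph{already bounded continuous} functions $\varPhi^m_\tau\in\widehat{\mathcal F}$ (see (\ref{W32a}), (\ref{W32}) and Proposition~\ref{TH1pn}), for which $\mu_n(\varPhi^m_\tau)\to\mu(\varPhi^m_\tau)$ is immediate. The uniform type bound on the $\mu_n$ gives, as in (\ref{B5})--(\ref{B9}), $\sup_n\mu_n(\varPhi^m_\tau)\le e^{|m|\vartheta}\|\theta_0\|^{m_0}_{L^1}\|\theta_1\|^{m_1}_{L^1}$, and since $\varPhi^m_\tau\nearrow H^{(m)}_\theta$ monotonically as $\tau\searrow 0$ (cf.\ (\ref{B5})), Beppo Levi transfers the bound to $\mu(H^{(m)}_\theta)=\chi^{(m)}_\mu(\theta^{\otimes m})$, whence $\mu\in\mathcal P_{\rm exp}$ of type $\le e^\vartheta$ and (\ref{Sq26}) follows. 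Thus the $\tau>0$ exponential damping built into $\varPhi^m_\tau$ does the work of both your spatial cutoff and your uniform-integrability step, in a shorter argument.

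One step of your proposal is incorrect as written: restricting $H^{(m)}_\theta$ to the event $\{\gamma:N_{\Lambda_R}(\gamma)\le N\}$ does \emph{not} produce a bounded continuous function on $\Gamma^2_*$, because $N_{\Lambda_R}$ (and hence the indicator of that event) is discontinuous in the vague topology---a configuration with a point on $\partial\Lambda_R$ is a limit of configurations with that point just outside $\Lambda_R$. The phrase ``patched together via the families $\widetilde{\mathcal F}$, $\widehat{\mathcal F}$'' does not repair this, and the observation that $H^{(m)}_\theta\in\mathcal F_\infty$ addresses neither boundedness nor continuity of the truncation. What actually saves your argument is the uniform-integrability observation on its own: $H^{(m)}_\theta$ is already continuous on $\Gamma^2_*$ (being, via (\ref{102}), a polynomial in the continuous maps $\gamma_i\mapsto\sum_{x\in\gamma_i}g(x)$ with $g\in C_{\rm cs}(X)$), and the Touchard bound (\ref{W2}) applied to $\Lambda\supset\mathrm{supp}\,\theta_i$ gives $\sup_n\mu_n\bigl(|H^{(m)}_\theta|^2\bigr)<\infty$, which is enough uniform integrability to upgrade $\mu_n\Rightarrow\mu$ to $\mu_n(H^{(m)}_\theta)\to\mu(H^{(m)}_\theta)$. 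If you drop the cutoff and argue directly in this way, the first half of your proof is correct; the second half, on (\ref{Sq26}), is fine modulo the term-by-term convergence $\chi^{(m)}_{\mu_n}(G^{(m)})\to\chi^{(m)}_\mu(G^{(m)})$, which one gets by extending the tensor-product case through the uniform $L^\infty$-bound $\|k^{(m)}_{\mu_n}\|_{L^\infty}\le e^{|m|\vartheta}$ and density of (spans of) products in $L^1(X^{m_0}\times X^{m_1})$.
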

\begin{proof}
Since $\widehat{F}^m_\tau (\mathbf{v}|\cdot) \in C_{\rm b}
(\Gamma^2_*)$, see Proposition \ref{TH1pn} and (\ref{de2}), the
assumed convergence yields $\mu_n(F) \to \mu(F)$, holding for all
$F\in \widehat{\mathcal{F}}$, including $F= \varPhi^m_\tau$, see
(\ref{W32a}), (\ref{W32}). Therefore, by (\ref{B5}), (\ref{B9}) we
have
\begin{equation*}
%  \label{Sq27}
\mu(\varPhi^m_\tau) \leq \sup_{n} \mu_n(\varPhi^m_\tau) \leq
e^{|m|\vartheta} \|\theta_0\|^{m_0}_{L^1(X)}
\|\theta_1\|^{m_1}_{L^1(X)},
\end{equation*}
holding for all $\theta_0,\theta_1\in \varTheta_\psi^+$. As in the
proof of Lemma \ref{W1lm}, this yields $\mu\in \mathcal{P}_{\rm
exp}$ and its type does not exceed $e^\vartheta$. The validity of
(\ref{Sq26}) follows by the fact just mentioned.
\end{proof}

Now we prove that the solutions of the Pokker-Planck equations
(\ref{FPE}) and (\ref{FPEs}) have the property $\mu_t^\sigma
\Rightarrow \mu_t$ as $\sigma \to 0$, holding for each $t>0$. We
obtain this result by proving a bit more general statement, which
will be used in the subsequent part of this paper.
\begin{lemma}
  \label{Sqlm}
Let $\{\mu^\sigma\}_{\sigma\in (0,1]}\subset \mathcal{P}_{\rm exp}$
be such that the type of each $\mu^\sigma$ does not exceed
$e^{\vartheta_0}$ for some $\vartheta_0\in \mathds{R}$, and
$\mu^\sigma \Rightarrow \mu$ as $\sigma\to 0$. Let also $t\mapsto
\mu_t^\sigma$, $\sigma\in (0,1]$, $\mu_t^\sigma|_{t=0}=\mu^\sigma$,
be the solution of the Fokker-Planck equation (\ref{FPEs}) mentioned
in Proposition \ref{Sq1pn}. Then for each $t>0$, it follows that
$\mu_t^\sigma \Rightarrow \mu_t$ as $\sigma \to 0$, where $\mu_t$ is
the solution of (\ref{FPE}) with $\mu_t|_{t=0}=\mu$.
\end{lemma}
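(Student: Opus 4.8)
The plan is to exploit the fact that $\widetilde{\mathcal{F}}$ is convergence determining on $\mathcal{P}(\Gamma^2_*)$, see Proposition \ref{T1pn}(ii), so that it suffices to show $\mu^\sigma_t(F)\to\mu_t(F)$ as $\sigma\to 0$ for each fixed $F\in\widetilde{\mathcal{F}}$, first on a small time interval and then for all $t>0$ by iteration. By Proposition \ref{V1pn} every such $F$ is of the form $F=KG$ with $G=\widetilde{G}_{\tau,\theta}\in\mathcal{G}_{\vartheta'}$ for \emph{every} $\vartheta'\in\mathds{R}$. I would then fix $\vartheta,\vartheta'$ with $\vartheta_0<\vartheta<\vartheta'$ and, for $t<T(\vartheta',\vartheta)$ (see (\ref{W16})), invoke Proposition \ref{Sq1pn} (equation (\ref{Sq7})) to write $\mu^\sigma_t(F)=\langle \! \langle k_{\mu^\sigma},G^\sigma_t\rangle \! \rangle$ with $G^\sigma_t=\varSigma^\sigma_{\vartheta\vartheta'}(t)G$, and likewise $\mu_t(F)=\langle \! \langle k_\mu,G_t\rangle \! \rangle$ with $G_t=\varSigma_{\vartheta\vartheta'}(t)G\in\mathcal{G}_\vartheta$. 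Here $\mu\in\mathcal{P}_{\rm exp}$ of type $\le e^{\vartheta_0}$ by Proposition \ref{Sqqpn} (applied along any sequence $\sigma_n\to 0$), so that $\mu_t$ is indeed the unique solution of (\ref{FPE}) furnished by Lemma \ref{W2lm}.

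Next I would split
\[
\mu^\sigma_t(F)-\mu_t(F)=\langle \! \langle k_{\mu^\sigma},\,G^\sigma_t-G_t\rangle \! \rangle+\langle \! \langle k_{\mu^\sigma}-k_\mu,\,G_t\rangle \! \rangle .
\]
Along any sequence $\sigma_n\to 0$ the second term tends to zero by Proposition \ref{Sqqpn}, since $G_t\in\mathcal{G}_\vartheta$ and the types of the $\mu^{\sigma_n}$ are bounded by $e^{\vartheta_0}\le e^{\vartheta}$. For the first term, $k_{\mu^\sigma}\in\mathcal{K}^\star_{\vartheta_0}$ and $\vartheta>\vartheta_0$, so Proposition \ref{BJULYpn} gives $\|k_{\mu^\sigma}\|_\vartheta=1$, whence $|\langle \! \langle k_{\mu^\sigma},G^\sigma_t-G_t\rangle \! \rangle|\le|G^\sigma_t-G_t|_\vartheta$. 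Thus everything reduces to proving that $|G^\sigma_t-G_t|_\vartheta\to 0$ as $\sigma\to 0$, for $t<T(\vartheta',\vartheta)$.

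To establish this I would write $G^\sigma_t-G_t=\sum_{n\ge 1}\frac{t^n}{n!}\big[(\widehat{L}^\sigma)^n-\widehat{L}^n\big]G$, the operator powers being understood in the scale of spaces as in (\ref{W14}), (\ref{Sq5}), and split the series into a tail $n>N$ and a head $n\le N$. For the tail one uses (\ref{W14}), which holds for $\widehat{L}^\sigma$ with the \emph{same} constants $\alpha,\varphi$ for all $\sigma\in[0,1]$ (because $a^\sigma_i$ satisfies the moment bounds (\ref{Sq2}) and $\varphi$ in (\ref{C7a}) does not involve $a_i$); since $t<T(\vartheta',\vartheta)$, the tail is $<\varepsilon$ for $N$ large, uniformly in $\sigma$. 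For the head, with $n\le N$ fixed, one telescopes
\[
(\widehat{L}^\sigma)^n-\widehat{L}^n=\sum_{j=0}^{n-1}(\widehat{L}^\sigma)^j\,(\widehat{L}^\sigma-\widehat{L})\,\widehat{L}^{\,n-1-j},
\]
inserting enough intermediate scales between $\vartheta$ and $\vartheta'$ (harmless since $n$ is fixed) and estimating each summand via: (i) $\widehat{L}^{\,n-1-j}G$ is a fixed element of some $\mathcal{G}_{\vartheta_1}$; (ii) the crucial fact $|(\widehat{L}^\sigma-\widehat{L})H|_{\vartheta_2}\to 0$ as $\sigma\to 0$ for every fixed $H\in\mathcal{G}_{\vartheta_1}$ with $\vartheta_1>\vartheta_2$, which follows by dominated convergence, the kernel of $\widehat{L}^\sigma-\widehat{L}$ carrying the factor $\psi_\sigma(\cdot)-1$ that tends to $0$ pointwise and is bounded by $1$, the remainder being dominated exactly as in (\ref{C7a}), (\ref{C8}), (\ref{L9}); (iii) $(\widehat{L}^\sigma)^j$ maps $\mathcal{G}_{\vartheta_2}$ to $\mathcal{G}_\vartheta$ with norm bounded uniformly in $\sigma$. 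Hence each head summand tends to $0$, and, the head being a finite sum, $|G^\sigma_t-G_t|_\vartheta\to 0$. This proves $\mu^\sigma_t\Rightarrow\mu_t$ for $t<T(\vartheta',\vartheta)$.

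Finally, the assertion for all $t>0$ is obtained by iteration. By Proposition \ref{Sq1pn} and Remark \ref{Brk}, the type of $\mu^\sigma_t$ does not exceed $e^{\vartheta_0+\alpha t}$, uniformly in $\sigma$; so, having established $\mu^\sigma_{t_1}\Rightarrow\mu_{t_1}$ for some $t_1<T(\vartheta',\vartheta)$ together with a uniform type bound, I would restart the whole argument from $t_1$ — using the uniqueness in Proposition \ref{Sq1pn} to identify $\mu^\sigma_{t_1+s}$ with the solution of (\ref{FPEs}) issued from $\mu^\sigma_{t_1}$, and the analogous statement for $\mu$ — thereby covering an interval $[t_1,t_1+\delta_1)$, and continue; the resulting division of $[0,\infty)$ into elementary intervals exhausts $[0,\infty)$, exactly as in the continuation argument of \cite{asia1}. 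I expect the main obstacle to be steps (ii)–(iii) above, i.e.\ organising the scale of Banach spaces so that every factor of the telescope has finite norm while genuinely extracting the vanishing factor $\psi_\sigma-1$ from $\widehat{L}^\sigma-\widehat{L}$; the operator norm of $\widehat{L}^\sigma-\widehat{L}$ itself does \emph{not} tend to $0$, so the smallness has to be used on fixed elements, which is precisely why the head must be separated from the tail.
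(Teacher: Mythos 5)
Your proof is correct, but it takes a genuinely different route from the paper's. The paper proves the underlying convergence in Lemma \ref{Sq1lm} on the correlation-function side: it writes $q^\sigma_s = k_{t+s}-k^\sigma_{t+s}$ via a Duhamel-type formula [eqs.\ (\ref{Sq9})--(\ref{Sq12})] with the perturbation $\widetilde L^{\Delta,\sigma}=L^\Delta-L^{\Delta,\sigma}$ inside a time integral, shows the remainder $R^\sigma(s)\to 0$ by dominated convergence, and propagates the pairing convergence inductively in $t$. You instead move everything to the predual side via (\ref{Sq7}), splitting $\mu^\sigma_t(F)-\mu_t(F)$ into a moving-initial-condition piece $\langle\!\langle k_{\mu^\sigma}-k_\mu,G_t\rangle\!\rangle$ (handled by Proposition \ref{Sqqpn}, after noting $G_t\in\mathcal{G}_\vartheta\subset\mathcal{G}_{\vartheta_0}$ by (\ref{L6b})) and a moving-test-function piece, the latter reduced via $\|k_{\mu^\sigma}\|_\vartheta=1$ (Proposition \ref{BJULYpn}) to the \emph{norm} convergence $|G^\sigma_t-G_t|_\vartheta\to 0$, which you then prove by a discrete head/tail split and a telescoped product -- the algebraic counterpart of the paper's Duhamel integral. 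Both arguments hinge on the same fact you correctly isolate: $\widehat L^\sigma-\widehat L$ does not vanish in operator norm, but it does vanish strongly on fixed elements by dominated convergence, since the only $\sigma$-dependence is the factor $\psi_\sigma(x)-1$, pointwise null and dominated by $1$; and the uniform-in-$\sigma$ norm bounds (\ref{W14}) hold for $\widehat L^\sigma$ with the same constants because of (\ref{Sq2}) and (\ref{C7a}). Your route is slightly cleaner in that the strong-convergence step concerns a fixed element of $\mathcal{G}_{\vartheta_1}$, decoupled from the evolving $k^\sigma_{t+u}$; the paper's route avoids the norm $|G^\sigma_t-G_t|_\vartheta$ altogether and bootstraps the pairing convergence directly. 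The time-iteration at the end is the same in both.
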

Noteworthy, by Proposition \ref{Sqqpn} it follows that the limiting
measure $\mu$ in Lemma \ref{Sqlm} is sub-Poissonian and its type
does not exceed $e^{\vartheta_0}$. The proof of Lemma \ref{Sqlm} is
based on the following statement.
\begin{lemma}
  \label{Sq1lm}
For a given $t>0$, let $k_t^\sigma$ and $k_t$ be the correlation
functions of the measures $\mu_t^\sigma$ and $\mu_t$ mentioned in
Lemma \ref{Sqlm}. Then there exists $\tilde{\vartheta}(t)\in
\mathds{R}$ such that
\begin{equation}
  \label{Sq8}
\forall G \in \mathcal{G}_{\tilde{\vartheta}(t)} \qquad \langle \!
\langle k^\sigma_t , G \rangle \! \rangle \to \langle \! \langle k_t
, G \rangle \! \rangle, \qquad {\rm as}  \ \ \sigma \to 0.
\end{equation}
\end{lemma}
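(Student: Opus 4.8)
The plan is to propagate the convergence $\mu^\sigma \Rightarrow \mu$ through the dual evolution operators $\varSigma^\sigma_{\vartheta\vartheta'}(t)$, exactly as the uniqueness argument in Lemma \ref{W2lm} propagates an identity through $\widehat{L}$. Fix $t>0$. First I would choose parameters: let $\vartheta_0$ be as in Lemma \ref{Sqlm}, and — mimicking the choice made before (\ref{Ts}) — pick $\vartheta_1 > \vartheta_0$ large enough that $t < T_*(\vartheta_1)$ (with $T_*$ as in (\ref{Ts})), and then $\tilde\vartheta > \vartheta_1$ realizing (or approaching) that maximal existence time, so that for the common step-length $t_* := \min\{t, T_*(\vartheta_1)\}$ the operators $\varSigma^\sigma_{\vartheta_1\tilde\vartheta}(s)$ are defined for all $s \le t_*$, uniformly in $\sigma\in[0,1]$; the bound (\ref{W14}) and the fact, noted after (\ref{Sq3})--(\ref{Sq5}), that $L^{\Delta,\sigma}$ and $\widehat L^\sigma$ satisfy the same estimates as their $\sigma=0$ counterparts, give this uniformity. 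By Proposition \ref{Sqqpn}, $\mu\in\mathcal{P}_{\rm exp}$ of type $\le e^{\vartheta_0}$, so $k_\mu \in \mathcal{K}_{\vartheta_0}$ and, crucially, $\langle\!\langle k_{\mu^\sigma}, G\rangle\!\rangle \to \langle\!\langle k_\mu, G\rangle\!\rangle$ for every $G\in\mathcal{G}_{\vartheta_0}$.

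Next, for $s\le t_*$ I would use the duality relation (\ref{Sq7}) in the form $\langle\!\langle k^\sigma_s, G\rangle\!\rangle = \langle\!\langle k_{\mu^\sigma}, \varSigma^\sigma_{\vartheta_1\tilde\vartheta}(s) G\rangle\!\rangle$ for $G\in\mathcal{G}_{\tilde\vartheta}$, and similarly $\langle\!\langle k_s, G\rangle\!\rangle = \langle\!\langle k_\mu, \varSigma^0_{\vartheta_1\tilde\vartheta}(s) G\rangle\!\rangle$. Then
\[
\langle\!\langle k^\sigma_s - k_s, G\rangle\!\rangle = \langle\!\langle k_{\mu^\sigma} - k_\mu,\; \varSigma^0_{\vartheta_1\tilde\vartheta}(s) G\rangle\!\rangle + \langle\!\langle k_{\mu^\sigma},\; (\varSigma^\sigma_{\vartheta_1\tilde\vartheta}(s) - \varSigma^0_{\vartheta_1\tilde\vartheta}(s)) G\rangle\!\rangle.
\]
The first term tends to $0$ by the previous paragraph, since $\varSigma^0_{\vartheta_1\tilde\vartheta}(s)G\in\mathcal{G}_{\vartheta_1}\subset\mathcal{G}_{\vartheta_0}$. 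For the second term, since all $\mu^\sigma$ have type $\le e^{\vartheta_0}\le e^{\vartheta_1}$, we have $|\langle\!\langle k_{\mu^\sigma}, H\rangle\!\rangle| \le \|k_{\mu^\sigma}\|_{\vartheta_1}|H|_{\vartheta_1}$ with $\|k_{\mu^\sigma}\|_{\vartheta_1}$ bounded uniformly in $\sigma$ (indeed equal to $1$ by Proposition \ref{BJULYpn}), so it suffices to show $|\varSigma^\sigma_{\vartheta_1\tilde\vartheta}(s)G - \varSigma^0_{\vartheta_1\tilde\vartheta}(s)G|_{\vartheta_1}\to 0$. Expanding both series as in (\ref{W15})/(\ref{Sq5}) and using (\ref{W14}) to dominate the tail uniformly in $\sigma$, this reduces to showing, for each fixed $n$, that $(\widehat L^\sigma)^n_{\vartheta_1\tilde\vartheta} G \to (\widehat L)^n_{\vartheta_1\tilde\vartheta} G$ in $\mathcal{G}_{\vartheta_1}$; by telescoping and the uniform operator bounds, this in turn follows from the case $n=1$, i.e. $|\widehat L^\sigma G - \widehat L G|_{\vartheta_1}\to 0$ for $G\in\mathcal{G}_{\tilde\vartheta}$. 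But $\widehat L^\sigma$ differs from $\widehat L$ only through the replacement of $a_i(x-y)$ by $a_i^\sigma(x,y) = a_i(x-y)\psi_\sigma(x)$; since $\psi_\sigma(x)\uparrow 1$ pointwise as $\sigma\to 0$ and $0\le\psi_\sigma\le 1$, the explicit formula (\ref{W13}) together with dominated convergence (the dominating function being the $\sigma=1$, or $\sigma=0$, integrand, which is $|\cdot|_{\vartheta_1}$-integrable by (\ref{L9})) gives exactly this. Hence (\ref{Sq8}) holds with $\tilde\vartheta(t) := \tilde\vartheta$ for $t\le t_*$.

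Finally, to reach arbitrary $t$ I would iterate: having established (\ref{Sq8}) on $[0,t_*]$, the measures $\mu^\sigma_{t_*}$ and $\mu_{t_*}$ are sub-Poissonian with types controlled by $\vartheta_0 + \alpha t_*$ (Proposition \ref{W1pn}, Remark \ref{Brk}, applied uniformly in $\sigma$), and $\mu^\sigma_{t_*}\Rightarrow\mu_{t_*}$ follows from (\ref{Sq8}) at time $t_*$ via Proposition \ref{T1pn}(ii) (the set $\widetilde{\mathcal{F}}$ lies in $\mathcal{F}_\infty$ by Proposition \ref{V1pn}, so convergence against all $G\in\mathcal{G}_{\tilde\vartheta}\supset\widetilde{G}_{\tau,\theta}$ gives $\mu^\sigma_{t_*}(F)\to\mu_{t_*}(F)$ for $F\in\widetilde{\mathcal{F}}$, hence weak convergence). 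Then one repeats the argument with $\mu^\sigma_{t_*}$ in place of $\mu^\sigma$; since the step-length $t_*$ depends only on $\vartheta_0$ (through $\vartheta_1$) and grows at worst mildly along the way, finitely many steps cover $[0,t]$, establishing (\ref{Sq8}) on the whole interval. I expect the main obstacle to be bookkeeping the $\vartheta$-parameters cleanly so that all the operator bounds (\ref{W14}) and existence times $T(\cdot,\cdot)$ stay uniform in $\sigma$ over each step — the analytic content, the $n=1$ dominated-convergence estimate for $\widehat L^\sigma\to\widehat L$, is routine once $\psi_\sigma\uparrow 1$ is invoked.
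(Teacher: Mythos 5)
Your argument is correct and arrives at the same crux (dominated convergence driven by $\psi_\sigma\uparrow 1$), but via a different decomposition of the error. The paper isolates the difference of the two evolution operators by a Duhamel-type identity, writing $q^\sigma_s=\varXi_{\bar\vartheta_t\vartheta(t)}(s)q^\sigma_0-\varPi^\sigma_{\bar\vartheta_t\vartheta(t)}(s)k^\sigma_t$ with $\varPi^\sigma(s)=-\int_0^s\varXi(s-u)\widetilde L^{\Delta,\sigma}\varXi^\sigma(u)\,du$, which after duality yields $\langle\!\langle q^\sigma_s,G\rangle\!\rangle=\langle\!\langle q^\sigma_0,G_s\rangle\!\rangle+R^\sigma(s)$ and reduces everything at once to estimating a single time-integral involving $\widetilde L^{\Delta,\sigma}=L^\Delta-L^{\Delta,\sigma}$; you instead dualize first to $\langle\!\langle k_{\mu^\sigma}-k_\mu,\varSigma^0(s)G\rangle\!\rangle+\langle\!\langle k_{\mu^\sigma},(\varSigma^\sigma(s)-\varSigma^0(s))G\rangle\!\rangle$ and expand the second piece as a power series, controlling the tail uniformly in $\sigma$ via (\ref{W14}) and reducing each fixed-$n$ term to $n=1$ by telescoping and the uniform operator bounds. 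The Duhamel route avoids the series bookkeeping by producing the first-order difference $\widetilde L^{\Delta,\sigma}$ directly, and the paper's decomposition into $R^\sigma_1,\dots,R^\sigma_4$ with the explicit $g_j,r^\sigma_j$ estimates (\ref{Sq18})--(\ref{Sq24}) is essentially your dominated-convergence step executed once rather than term-by-term. Two small inaccuracies in your write-up are worth fixing: $T_*(\vartheta)$ is \emph{decreasing} in $\vartheta$ (larger type means shorter guaranteed existence window), so one cannot ``pick $\vartheta_1$ large enough that $t<T_*(\vartheta_1)$'', and correspondingly the step-length \emph{shrinks}, not ``grows mildly'', as the type accumulates; nevertheless your conclusion stands, because for a \emph{fixed} target $t$ the types of all $\mu^\sigma_s$ with $s\le t$ are uniformly bounded by $e^{\vartheta_0+\alpha t}$ (Proposition \ref{W1pn}, Remark \ref{Brk}), so one can fix a single step-length in advance and cover $[0,t]$ in finitely many steps, which is a legitimate shortcut compared with the paper's more delicate $\sum_l s_{0l}=\infty$ argument designed to reach arbitrary $t$ in one induction.
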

\begin{proof}
As the type of each $\mu^\sigma$ does not exceed $e^{\vartheta_0}$,
both $k_t^\sigma$ and $k_t$ lie in $\mathcal{K}_{\vartheta(t)}$ with
$\vartheta(t) = \vartheta_0 +\alpha t$, see Remark \ref{W1rk} and
the proof of Proposition \ref{Sq1pn}. Moreover, $k_t^\sigma$ and $G$
satisfy (\ref{Sq7}) with appropriate $\vartheta, \vartheta'$. Recall
that the map $\vartheta' \mapsto T(\vartheta', \vartheta)$ attains
its maximum $T_*(\vartheta)$ given in (\ref{Ts}).

Let now the convergence stated in (\ref{Sq8}) hold for a given
$t\geq0$.  By the assumed convergence $\mu^\sigma \Rightarrow \mu$
and Proposition \ref{Sqqpn} this certainly holds for $t=0$. Our aim
is to prove that it holds also for all $t+s$, $s\leq s_0$, with a
possibly $t$-dependent $s_0>0$. Set $\bar{\vartheta}_t =
\vartheta(t) + \delta (\vartheta(t))$, see (\ref{Ts}). For $s<
T_*(\vartheta(t))$, the norm of $\varXi_{\bar{\vartheta}_t
\vartheta(t)}(s)$ satisfies
\[
\| \varXi_{\bar{\vartheta}_t \vartheta(t)}(s)\| \leq
\frac{T_*(\vartheta(t))}{T_*(\vartheta(t))-s},
\]
see (\ref{W18z}). In the same way, one estimates also the norm of
$\varXi^\sigma_{\bar{\vartheta}_t \vartheta(t)}(s)$, $\sigma\in
(0,1]$ since the norms of the  corresponding
$(L^{\Delta,\sigma})^n_{\bar{\vartheta}_t \vartheta(t)}$ have the
same bounds as for $\sigma=0$. For $\sigma\in (0,1]$, we write
\begin{equation}
  \label{Sq9}
q^\sigma_s = k_{t+s} - k^\sigma_{t+s} = \varXi_{\bar{\vartheta}_t
\vartheta(t)}(s) k_t - \varXi^\sigma_{\bar{\vartheta}_t
\vartheta(t)}(s) k^\sigma_t, \qquad s < T_*(\vartheta(t)).
\end{equation}
Note that
\begin{equation}
  \label{Sq9a}
\forall G\in \mathcal{G}_{\vartheta(t)} \qquad \langle \! \langle
q^\sigma_0, G\rangle\! \rangle \to 0 \qquad {\rm as} \ \ \sigma\to
0.
\end{equation}
At the same time, (\ref{Sq9}) can be written in the form
\begin{eqnarray}
  \label{Sq10}
q^\sigma_s & = &\varXi_{\bar{\vartheta}_t \vartheta(t)}(s)q^\sigma_0
- \varPi^\sigma _{\bar{\vartheta}_t \vartheta(t)}(s) k^\sigma_t,
\\[.2cm] \nonumber \varPi^\sigma _{\bar{\vartheta}_t
\vartheta(t)}(s) & := & \int_0^s \frac{d}{du} \left[
\varXi_{\bar{\vartheta}_t \vartheta}(s-u)\varXi^\sigma_{{\vartheta}
\vartheta(t)}(u) \right] du,
\end{eqnarray}
where $s$ and $\vartheta \in (\vartheta(t), \bar{\vartheta}_t)$ are
chosen in such a way that
\begin{equation}
  \label{Sq11}
s < \min\{T(\bar{\vartheta}_t, \vartheta); T({\vartheta},
\vartheta(t)) \},
\end{equation}
and hence the Bochner integral in the second line of (\ref{Sq10})
makes sense, see (\ref{Sq4}). Since the map $(\vartheta, \vartheta')
\mapsto T(\vartheta', \vartheta)$ is continuous, see (\ref{W16}),
one can pick $\vartheta_1< \vartheta$ and $\vartheta_2>\vartheta$,
$\vartheta$ being as in (\ref{Sq11}), such that
\begin{equation}
  \label{Sq11a}
s < \min\{T(\bar{\vartheta}_t, \vartheta_2); T({\vartheta}_1,
\vartheta(t)) \}.
\end{equation}
Keeping this in mind, we use an evident identical extension of
(\ref{W18Z}) to all $\sigma\leq 1$ and obtain
\begin{gather}
  \label{Sq12}
\varPi^\sigma _{\bar{\vartheta}_t \vartheta(t)}(s) = - \int_0^s
\varXi_{\bar{\vartheta}_t \vartheta_2}(s-u) \widetilde{L}^{\Delta,
\sigma}_{\vartheta_2\vartheta_1}
\varXi^\sigma_{{\vartheta}_1 \vartheta(t)}(u) d u,\\[.2cm]
\nonumber \widetilde{L}^{\Delta, \sigma}_{\vartheta_2\vartheta_1} :=
{L}^{\Delta}_{\vartheta_2\vartheta_1} - {L}^{\Delta,
\sigma}_{\vartheta_2\vartheta_1}.
\end{gather}
We apply this in (\ref{Sq10}) and get
\begin{equation*}
%  \label{Sq12a}
q^\sigma_s = \varXi_{\bar{\vartheta}_t \vartheta(t)}(s)q^\sigma_0
+\int_0^s \varXi_{\bar{\vartheta}_t \vartheta_2}(s-u)
\widetilde{L}^{\Delta, \sigma}_{\vartheta_2\vartheta_1}
k_{t+u}^\sigma  d u.
\end{equation*}
Note that $\widetilde{L}^{\Delta, \sigma}$ can be written in the
same form as $L^\Delta$, see (\ref{L1}), in which $a_i(x-y)$,
$i=0,1$, ought to be replaced by $\tilde{a}^\sigma_i(x,y) :=
a_i(x-y)(1- \psi_\sigma(x))$. Now let us turn to picking $s_0$ and
$\vartheta_j$, $j=1,2$, such that (\ref{Sq11a}) holds for $s\leq
s_0$. First we set $\vartheta_1 = \vartheta(t) +
\delta(\vartheta(t))/2$, see (\ref{Ts}). By (\ref{W16}) and
(\ref{Ts}) we then get
\begin{equation*}
 % \label{Sq13}
T(\bar{\vartheta}_t, \vartheta_1) = T_*(\vartheta(t))/2 <
T(\vartheta_1, \vartheta(t)).
\end{equation*}
Now we fix some $\epsilon \in (0,1)$ and set
\begin{equation}
  \label{Sq14}
s_0 = \epsilon T_* (\vartheta(t))/2 = \epsilon T(\bar{\vartheta}_t,
\vartheta_1).
\end{equation}
Since the map $\vartheta \mapsto T(\vartheta', \vartheta)$ is
continuous, one can pick $\vartheta_2 \in (\vartheta_1,
\bar{\vartheta}_t)$ such that $s_0 < T(\bar{\vartheta}_t,
\vartheta_2)$, see (\ref{Sq14}). Then (\ref{Sq11a}) holds for these
$\vartheta_j$, $j=1,2$, and $s\leq s_0$. Now we take $G\in
\mathcal{G}_{\bar{\vartheta}_t}$ and set
\begin{equation}
  \label{Sq14a}
G_s = \varSigma_{\vartheta_2\bar{\vartheta}_t}(s)G.
\end{equation}
Note that $G_s \in \mathcal{G}_{\vartheta_2}\subset
\mathcal{G}_{\vartheta(t)}$; that is, $G_s$ can be considered as an
element of $\mathcal{G}_{\vartheta(t)}$ since
\[
G_s = I_{\vartheta(t) \vartheta_2} \varSigma_{\vartheta_2
\bar{\vartheta}_t} (s) G ,
\]
where $I_{\vartheta(t) \vartheta_2}= \varSigma_{\vartheta(t)
\vartheta_2} (0)$ is the embedding operator. For these $G$ and
$G_s$, by (\ref{Sq11a}) and (\ref{Sq7}) we then have
\begin{gather}
  \label{Sq15}
\langle \! \langle q_s^\sigma , G \rangle \! \rangle = \langle \!
\langle q_0^\sigma , G_s \rangle \! \rangle + R^\sigma (s), \\[.2cm]
\nonumber R^\sigma (s):= \int_0^s \langle \! \langle
\widetilde{L}^{\Delta, \sigma}_{\vartheta_2\vartheta_1}
k^\sigma_{t+u} , G_{s-u} \rangle \! \rangle d u.
\end{gather}
In view of (\ref{Sq9a}), it remains to prove that $R^\sigma (s)\to
0$ as $\sigma\to 0$. To this end, we split $R^\sigma (s)$ into four
terms in accord with the structure of
$\widetilde{L}^{\Delta,\sigma}$, see (\ref{L1}). Thus, we write
\begin{gather}
  \label{Sq16}
R^\sigma (s)= \sum_{j=1}^4 R^\sigma_j (s),
\end{gather}
with
\begin{eqnarray}
  \label{Sq17}
R^\sigma_1 (s)& = & \int_0^s
\bigg{(}\int_{\Gamma_0}\int_{\Gamma_0}\bigg{(} \sum_{y\in \eta_0}
\int_X \tilde{a}^\sigma_0 (x,y) e(\tau^0_y;\eta_1) (\Upsilon^0_y
k_{t+u}^\sigma) (\eta_0\setminus y \cup x, \eta_1) dx
\bigg{)}\\[.2cm] \nonumber &\times&  G_{s-u} (\eta_0, \eta_1) \lambda ( d\eta_0) \lambda (d
\eta_1)\bigg{)} d u,
\\[.2cm] \nonumber
& = & \int_0^s \bigg{(} \int_{\Gamma_0}\int_{\Gamma_0} \bigg{(}
\int_X \int_X  \tilde{a}^\sigma_0 (x,y) e(\tau^0_y;\eta_1)
(\Upsilon^0_y k_{t+u}^\sigma) (\eta_0 \cup x, \eta_1)
\\[.2cm] \nonumber &\times&  G_{s-u} (\eta_0\cup y, \eta_1) d x dy \bigg{)} \lambda ( d\eta_0) \lambda (d
\eta_1) \bigg{)} du
\\[.2cm] \nonumber
 R^\sigma_2 (s)& = - & \int_0^s \bigg{(} \int_{\Gamma_0}\int_{\Gamma_0}\bigg{(}
\int_X \int_X \tilde{a}^\sigma_0 (x,y) e(\tau^0_y;\eta_1)
(\Upsilon^0_y k_{t+u}^\sigma) (\eta_0\cup x, \eta_1) \\[.2cm] \nonumber &\times&  G_{s-u} (\eta_0\cup x, \eta_1)
dx dy \bigg{)}
 \lambda ( d\eta_0) \lambda (d
\eta_1)\bigg{)}du,
\\[.2cm] \nonumber
R^\sigma_3 (s)& = & \int_0^s \bigg{(}
\int_{\Gamma_0}\int_{\Gamma_0}\bigg{(} \int_X \int_X
\tilde{a}^\sigma_1 (x,y) e(\tau^1_y;\eta_0) (\Upsilon^1_y
k_{t+u}^\sigma) (\eta_0, \eta_1 \cup x) \\[.2cm] \nonumber &\times&  G_{s-u} (\eta_0, \eta_1\cup
y) dx dy \bigg{)}
 \lambda ( d\eta_0) \lambda (d
\eta_1)\bigg{)} du,
\\[.2cm] \nonumber
R^\sigma_4 (s)& = - & \int_0^s \bigg{(}
\int_{\Gamma_0}\int_{\Gamma_0}\bigg{(} \int_X \int_X
\tilde{a}^\sigma_1 (x,y) e(\tau^1_y;\eta_1)
(\Upsilon^1_y k_{t+u}^\sigma) (\eta_0, \eta_1)\\[.2cm] \nonumber &\times&  G_{s-u} (\eta_0, \eta_1\cup x) dx dy \bigg{)}
  \lambda ( d\eta_0) \lambda (d
\eta_1) \bigg{)} du.
\end{eqnarray}
By (\ref{L5}), (\ref{W8}), (\ref{W9}) and (\ref{C7a}) for each
$\vartheta\in \mathds{R}$, $i=0,1$, $s\geq 0$ and
$(\eta_0,\eta_1)\in \Gamma_0^2$, we have
\begin{gather}
  \label{Sq18}
\left|(\Upsilon^i_y k^\sigma_{s}) (\eta_0,\eta_1)\right| \leq
\|k_s^\sigma\|_\vartheta \exp\bigg{(}\vartheta (|\eta_0| + |\eta_1|)
\bigg{)} \int_{\Gamma_0} e^{\vartheta|\xi|} e\left(|t^i_y|; \xi
\right)\lambda (d \xi) \\[.2cm] \nonumber = \|k_s^\sigma\|_\vartheta \exp\bigg{(}\vartheta (|\eta_0| + |\eta_1|)
\bigg{)} \sum_{n=0}^\infty \frac{1}{n!} e^{n\vartheta} \left( \int_X
\left[1- e^{-\phi_i (x-y)}\right] d x\right)^n \\[.2cm] \nonumber
\leq \|k_s^\sigma\|_\vartheta \exp\bigg{(}\vartheta (|\eta_0| +
|\eta_1|) + \varphi e^{\vartheta} \bigg{)}.
\end{gather}
By (\ref{Sq12}) we know that $k^\sigma_{t+s}\in
\mathcal{K}^\star_{\vartheta(t+u)} \subset
\mathcal{K}^\star_{\vartheta_1}$, which by (\ref{no9}) implies
$\|k^\sigma_{t+s}\|_{\vartheta_1} \leq 1$. We take this into account
in (\ref{Sq17}), and also that $\tau_y^i(x) \leq 1$, see (\ref{W8}),
and then estimate the summands in (\ref{Sq16}) as follows
\begin{equation}
  \label{Sq19}
\left|R^\sigma_j (s) \right| \leq \int_X r^\sigma_j (y) g_j(y) dy,
\end{equation}
with
\begin{gather}
  \label{Sq20}
r^\sigma_1 (y) = \int_X (1 - \psi_\sigma (x)) a_0(x-y) d x, \quad
r^\sigma_3 (y) = \int_X (1 - \psi_\sigma (x)) a_1(x-y) d x,\\[.2cm]
\nonumber r^\sigma_2 (y) = (1-\psi_\sigma (y)) \bar{a}^{0}_0, \qquad
r^\sigma_4 (y) = (1-\psi_\sigma (y)) \bar{a}^{0}_1,
\end{gather}
see (\ref{C8}). It is clear that $r^\sigma_j (y) \leq r^1_j (y)$,
$j=1,\dots, 4$, and
\begin{equation}
  \label{Sq201}
\forall y \in X \qquad r^\sigma_j (y) \to 0 \quad \sigma \to 0, \ \
=1,\dots, 4.
\end{equation}
Furthermore,
\begin{gather}
  \label{Sq22}
g_1 (y) = g_2 (y) = c(\vartheta_1)\int_0^s \int_{\Gamma_0}
\int_{\Gamma_0} \left|G_{u}(\eta_0 \cup y, \eta_1)\right|
e^{\vartheta_1(|\eta_0| +
|\eta_1|)}  \lambda (d\eta_0) \lambda (d\eta_1) d u, \\[.2cm]
\nonumber g_3 (y) = g_4 (y) = c(\vartheta_1)\int_0^s \int_{\Gamma_0}
\int_{\Gamma_0} \left|G_{u}(\eta_0, \eta_1 \cup y)\right|
e^{\vartheta_1(|\eta_0| + |\eta_1|)} \lambda (d\eta_0) \lambda
(d\eta_1) d u,
\end{gather}
where $c(\vartheta_1) = \exp( \vartheta_1 + \varphi
e^{\vartheta_1})$. Let us show that each $g_j$, $j=1, \dots , 4$, is
integrable for all $s\leq s_0$. Since $G_u \in
\mathcal{G}_{\vartheta_2}$, by (\ref{W18z}) and (\ref{Sq14a}) for
$u\leq s \leq s_0$, see (\ref{Sq14}), we have
\begin{equation}
  \label{Sq23}
|G_u|_{\vartheta_2} \leq
\frac{T(\bar{\vartheta}_t,\vartheta_2)}{T(\bar{\vartheta}_t,\vartheta_2)-s_0}
|G|_{\bar{\vartheta}_t}=: C_G.
\end{equation}
By (\ref{Sq22}) we then have
\begin{eqnarray}
  \label{Sq24}
\int_X g_1(y) dy & \leq &c(\vartheta_1) \int_0^s
\left(\int_{\Gamma_0} \int_{\Gamma_0} \int_X  \left|G_{u}(\eta_0
\cup y, \eta_1)\right| e^{\vartheta_1(|\eta_0| + |\eta_1|)}  \lambda
(d\eta_0) \lambda
(d\eta_1) dy \right)d u \qquad \\[.2cm] \nonumber & = & c(\vartheta_1)
e^{-\vartheta_1} \int_0^s \left(\int_{\Gamma_0} \int_{\Gamma_0}
|\eta_0| \left|G_{u}(\eta_0 , \eta_1)\right| e^{\vartheta_1(|\eta_0|
+ |\eta_1|)}  \lambda (d\eta_0) \lambda
(d\eta_1) \right)d u \\[.2cm] \nonumber & \leq &
\frac{s c(\vartheta_1)}{(\vartheta_2 - \vartheta_1)
e^{1+\vartheta_1}} C_G.
\end{eqnarray}
Clearly, the same estimate holds for the remaining $g_j$. Then by
the dominated convergence theorem and (\ref{Sq201}), (\ref{Sq19}) it
follows that $R^\sigma(s) \to 0$ as $\sigma\to 0$, holding for all
$s\leq s_0$, see (\ref{Sq14}). By (\ref{Sq9a}) and (\ref{Sq15}) this
yields $\langle \! \langle q^\sigma_s, G \rangle \! \rangle \to 0$
as $\sigma\to 0$.

To complete the proof of this statement, let us consider the
following sequences, cf. (\ref{Sq14}),
\begin{equation}
  \label{Sq25}
t_l = t_{l-1} + s_{0l}, \quad s_{0l} = \epsilon
T_*(\vartheta_{t_{l-1}})/2, \quad t_0 = 0, \quad l \in \mathds{N}.
\end{equation}
Now we may use the construction just made and the induction in $l$,
which yields (\ref{Sq8}) holding for all $t\leq t_l$. Thus, the
proof will follow if we show $t_l\to +\infty$ as $l\to +\infty$. Set
$\sup_{l}t_l =:t_*$. By (\ref{Sq25}) we have $t_l = s_{01} +\cdots +
s_{0l}$. Hence, $t_*<\infty$ yields $s_{0l}\to 0$, $l\to+\infty$. By
passing to the limit in the second formula in (\ref{Sq25}) we then
get $T_*(\vartheta_{t_*})=0$, which is impossible, see (\ref{Ts}).
\end{proof}
\noindent {\it Proof of Lemma \ref{Sqlm}.} By Lemma \ref{Sq1lm} it
follows that $\mu_t^\sigma (F) \to \mu_t(F)$, $\sigma  \to 0$,
holding for all $F\in \mathcal{F}_\infty$,  which by Proposition
\ref{V1pn} yields that $\mu_t^\sigma (F) \to \mu_t(F)$, $\sigma  \to
0$, for all $F\in \widetilde{\mathcal{F}}$. Then the property in
question follows by claim (ii) of Proposition \ref{T1pn}. \hfill
$\square$ \\ \noindent We end up this subsection with the following
complement to Lemma \ref{Sqlm}. For $F\in \widetilde{\mathcal{F}}$,
see (\ref{T4}), and a sequence $\{\mu_n \}_{n\in \mathds{N}}\subset
\mathcal{P}_{\rm exp}$ as in Proposition \ref{Sqqpn}, consider
\begin{equation}
  \label{Sq28}
\tilde{\mu}_n (d \gamma) = C^{-1}_n F(\gamma) \mu_n (d \gamma),
\qquad n \in \mathds{N}.
\end{equation}
where
\begin{equation}
  \label{Sq29}
C_n= \mu_n(F) >0,
\end{equation}
since each $F\in \widetilde{\mathcal{F}}$ is strictly positive.
\begin{proposition}
  \label{SQpn}
Let $\tilde{\mu}_n$ and $\mu_n$ be as in (\ref{Sq28}) and assume
that $\mu_n \Rightarrow \mu$ as $n \to +\infty$. Then $\tilde{\mu}_n
\Rightarrow \tilde{\mu}$, where
\[
\tilde{\mu} (d \gamma) = C^{-1} F(\gamma) \mu (d \gamma), \qquad C =
\mu(F).
\]
\end{proposition}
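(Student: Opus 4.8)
The plan is to prove convergence by testing against the convergence-determining class $\widetilde{\mathcal{F}}$ and invoking claim (ii) of Proposition \ref{T1pn}. First I would note that $C = \mu(F) > 0$: indeed $F\in\widetilde{\mathcal{F}}$ is strictly positive and bounded away from zero along any fixed configuration, and since $\mu_n(F)\to\mu(F)$ (because $F\in C_{\rm b}(\Gamma^2_*)$, so the assumed weak convergence applies directly), we get $C_n\to C$; in particular $C>0$ follows once we exhibit one configuration on which $F$ is positive, which is automatic. This already shows $\tilde\mu$ is a well-defined probability measure on $\Gamma^2_*$.

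Next, the key point is to compute $\tilde\mu_n(H)$ for $H\in\widetilde{\mathcal{F}}$. By (\ref{Sq28}) we have
\begin{equation*}
\tilde\mu_n(H) = C_n^{-1}\,\mu_n(F\cdot H).
\end{equation*}
Since $\widetilde{\mathcal{F}}$ is closed under pointwise multiplication (Proposition \ref{T1pn}), $F\cdot H\in\widetilde{\mathcal{F}}\subset C_{\rm b}(\Gamma^2_*)$, so the assumed weak convergence $\mu_n\Rightarrow\mu$ gives $\mu_n(F\cdot H)\to\mu(F\cdot H)$. Combined with $C_n\to C>0$, this yields
\begin{equation*}
\tilde\mu_n(H)\to C^{-1}\mu(F\cdot H) = \tilde\mu(H),
\end{equation*}
holding for every $H\in\widetilde{\mathcal{F}}$. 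Then by claim (ii) of Proposition \ref{T1pn}, the set $\widetilde{\mathcal{F}}$ being convergence determining on $\mathcal{P}(\Gamma^2_*)$ upgrades this to $\tilde\mu_n(H)\to\tilde\mu(H)$ for all $H\in C_{\rm b}(\Gamma^2_*)$, i.e. $\tilde\mu_n\Rightarrow\tilde\mu$.

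There is essentially no hard obstacle here; the only slightly delicate point is making sure that multiplying by $F$ does not destroy membership in the test class — but this is exactly the pointwise-multiplication closure of $\widetilde{\mathcal{F}}$ already recorded in Proposition \ref{T1pn}, and the strict positivity of $F$ (guaranteeing $C_n, C>0$) is noted right after (\ref{Sq29}). One could additionally remark, though it is not needed for the statement, that $\tilde\mu\in\mathcal{P}_{\rm exp}$ with the same type bound as $\mu$, by claim (d) of Proposition \ref{T3pn}; this is worth a one-line comment since the result is used later for sub-Poissonian families. Thus the proof is a short application of the closure and convergence-determining properties of $\widetilde{\mathcal{F}}$ together with the continuity of $F$.
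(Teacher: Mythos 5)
Your proof is correct and follows essentially the same route as the paper: test against $\widetilde{\mathcal{F}}$, exploit its closure under pointwise multiplication to handle $F\cdot H$, and then invoke claim (ii) of Proposition \ref{T1pn}. If anything, your version is slightly tidier in that you first exhibit the candidate limit $\tilde{\mu}=C^{-1}F\mu$ as a genuine probability measure on $\Gamma^2_*$ before applying the convergence-determining property, whereas the paper phrases the identification of the limit through the separating property; the substance is the same.
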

\begin{proof}
By assumption, $C_n \to C$. Take any $F'\in\widetilde{ \mathcal{F}}$
and set $F'' = F' F$, which is an element of $\widetilde{
\mathcal{F}}$ since the latter is closed under multiplication, see
Proposition \ref{T1pn}. Then $\tilde{\mu}_n (F') = C_n^{-1} \mu_n
(F'') \to C^{-1} \mu(F'')$ as $n\to +\infty$. Since $\widetilde{
\mathcal{F}}$ is convergence determining, see claim (ii) of
Proposition \ref{T1pn}, the sequence $\{\tilde{\mu}_n \}_{n\in
\mathds{N}}$ converges to some $\tilde{\mu}\in \mathcal{P}_{\rm
exp}$ (by Proposition \ref{Sqqpn}), such that $\tilde{\mu}(F') =
C^{-1} \mu(F'')$. This implies that $\tilde{\mu}$ is as stated since
$\widetilde{ \mathcal{F}}$ is separating.
\end{proof}

\section{Existence: Approximating Processes}
The aim of this section is proving Theorem \ref{1tm} by constructing
path measures for the model described by $L^\sigma$ introduced in
the preceding section. This will be done in a direct way by means of
the corresponding Markov transition functions.

\subsection{The transition function}

We start by introducing the real linear space of signed measures on
$\Gamma^2_*$, see \cite[Chapter 4]{Cohn}, which we denote by
$\mathcal{M}$. That is, each $\mu\in \mathcal{M}$ is a
$\sigma$-additive map $\mu:\mathcal{B}(\Gamma^2_*)\to \mathds{R}$
taking finite values only. Let $\mathcal{M}^{+}$ be the set of
$\mu\in \mathcal{M}$ such that $\mu(\mathbb{A}) \geq 0$ for all
$\mathbb{A} \in \mathcal{B}(\Gamma^2_*)$. Then the Jordan
decomposition of a given $\mu\in \mathcal{M}$ is the unique
representation $\mu=\mu^{+} - \mu^{-}$, $\mu^{\pm}\in
\mathcal{M}^{+}$, in view of which  the cone $\mathcal{M}^{+}$ is
generating. Set $|\mu| = \mu^{+}  +  \mu^{-}$. Then
\begin{equation}
  \label{U}
\|\mu\| = |\mu|(\Gamma^2_*)
\end{equation}
is a norm, additive on the cone $\mathcal{M}^{+}$. According to
\cite[Proposition 4.1.8, page 119]{Cohn}, $\mathcal{M}$ is a Banach
space with this norm. Set $\Psi_1 = 1 + \Psi$, where the latter was
defined in (\ref{no20}), and then define
\begin{equation}
  \label{U1}
\mathcal{M}_n = \{ \mu \in\mathcal{M}: \|\mu\|_n:=|\mu|(\Psi_1^n) <
\infty\}, \qquad n\in \mathds{N}.
\end{equation}
By the same \cite[Proposition 4.1.8, page 119]{Cohn} $\mathcal{M}_n$
with the norm $\|\mu\|_n$ is also a real Banach space. In the
sequel, we extend (\ref{U1}) to $n=0$ by setting
$\mathcal{M}_0=\mathcal{M}$ and $\|\mu\|_0=\|\mu\|$. Additionally,
for $n\in \mathds{N}_0$, we set
\begin{equation}
\label{U2} \varphi_n (\mu) = \mu(\Psi_1^n).
\end{equation}
Now for $\beta>0$, define
\begin{equation}
  \label{U3}
\|\mu\|_\beta = \int_{\Gamma^2_*}\exp\left(\beta \Psi(\gamma)
\right) |\mu|(d\gamma), \quad  \mathcal{M}_\beta =\{ \mu \in
\mathcal{M}: \|\mu\|_\beta<\infty\},
\end{equation}
and also
\begin{equation}
  \label{U4}
\varphi_\beta (\mu) = \int_{\Gamma^2_*}\exp\left(\beta \Psi(\gamma)
\right) \mu(d\gamma).
\end{equation}
It is clear that
\begin{equation}
  \label{U5}
\forall \mu \in \mathcal{M}_{+} \qquad \qquad \|\mu\|_n =
\varphi_n(\mu), \qquad \|\mu\|_\beta = \varphi^\beta(\mu),
\end{equation}
holding for all $n\in \mathds{N}_0$ and $\beta >0$. In our
construction, we essentially use the cones of positive elements
\begin{equation}
  \label{cones}
\mathcal{M}_n^{+} = \mathcal{M}_n\cap \mathcal{M}^{+}, \qquad
\mathcal{M}_\beta^{+} = \mathcal{M}_\beta\cap \mathcal{M}^{+},
\qquad \beta >0, \ n\in \mathds{N}.
\end{equation}
For a given $\mathcal{N}\subset\mathcal{M}$, by
$\overline{\mathcal{N}}$ we denote  the closure of $\mathcal{N}$ in
$\|\cdot\|$ defined in (\ref{U}). The proof of the next statement is
completely analogous to that of \cite[Lemma 7.4 and Corollary 7.5
pages 39, 40]{KR}, and thus is omitted here.
\begin{proposition}
  \label{U1pn}
For each $n \in \mathds{N}$ and $\beta> 0$, it follows that
$\overline{\mathcal{M}_\beta} = \overline{\mathcal{M}_n} =
\mathcal{M}$ and also $\overline{\mathcal{M}^{+}_\beta} =
\overline{\mathcal{M}^{+}_n} = \mathcal{M}^{+}$.
\end{proposition}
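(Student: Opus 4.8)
The plan is to establish the two chains of identities $\overline{\mathcal{M}_\beta} = \overline{\mathcal{M}_n} = \mathcal{M}$ and $\overline{\mathcal{M}^{+}_\beta} = \overline{\mathcal{M}^{+}_n} = \mathcal{M}^{+}$ by exhibiting, for each $\mu \in \mathcal{M}$ (resp. $\mu \in \mathcal{M}^{+}$), an explicit approximating sequence that lies in the smaller space and converges in the total variation norm $\|\cdot\|$ of (\ref{U}). First I would note the elementary inclusions $\mathcal{M}_\beta \subset \mathcal{M}_n \subset \mathcal{M}$ for every $n \in \mathds{N}$ and $\beta > 0$: indeed $\Psi_1^n = (1+\Psi)^n \leq C_{n,\beta}\exp(\beta\Psi)$ pointwise on $\Gamma^2_*$ for a suitable constant, and $\Psi_1^n \geq 1$, so $\|\mu\| \leq \|\mu\|_n \leq C_{n,\beta}\|\mu\|_\beta$; the same bounds hold with $|\mu|$ replaced by $\mu$ on the positive cones. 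Because of these inclusions and the monotonicity of the closure operation, it suffices to prove $\overline{\mathcal{M}_\beta} = \mathcal{M}$ and $\overline{\mathcal{M}^{+}_\beta} = \mathcal{M}^{+}$, since then the intermediate closures $\overline{\mathcal{M}_n}$ and $\overline{\mathcal{M}^{+}_n}$ are squeezed between them.

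The core step is a truncation argument. Given $\mu \in \mathcal{M}^{+}$, recall from (\ref{no21}) that $\Gamma^2_* = \bigcup_{k} \Gamma^{(k)}_*$ with $\Gamma^{(k)}_* = \{\gamma : \varPsi(\gamma) \leq k\}$, an increasing sequence of Borel sets exhausting $\Gamma^2_*$; hence $\mu(\Gamma^2_* \setminus \Gamma^{(k)}_*) \to 0$ as $k \to \infty$ by continuity of measure. Define $\mu_k(d\gamma) = \mathds{1}_{\Gamma^{(k)}_*}(\gamma)\,\mu(d\gamma)$. On $\Gamma^{(k)}_*$ we have $\exp(\beta\Psi(\gamma)) \leq e^{\beta k}$, so $\|\mu_k\|_\beta = \int \exp(\beta\Psi)\,\mathds{1}_{\Gamma^{(k)}_*}\,d\mu \leq e^{\beta k}\mu(\Gamma^{(k)}_*) < \infty$, i.e. $\mu_k \in \mathcal{M}^{+}_\beta$. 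Moreover $\|\mu - \mu_k\| = \mu(\Gamma^2_* \setminus \Gamma^{(k)}_*) \to 0$, which gives $\mu \in \overline{\mathcal{M}^{+}_\beta}$ and hence $\mathcal{M}^{+} \subseteq \overline{\mathcal{M}^{+}_\beta}$; the reverse inclusion is trivial since $\mathcal{M}^{+}$ is $\|\cdot\|$-closed (it is the intersection of the closed half-spaces $\{\mu : \mu(\mathbb{A}) \geq 0\}$). For a general signed $\mu \in \mathcal{M}$, apply the Jordan decomposition $\mu = \mu^{+} - \mu^{-}$ with $\mu^{\pm} \in \mathcal{M}^{+}$, approximate each part by $\mu^{\pm}_k \in \mathcal{M}^{+}_\beta$ as above, and observe that $\mu^{+}_k - \mu^{-}_k \in \mathcal{M}_\beta$ with $\|\mu - (\mu^{+}_k - \mu^{-}_k)\| \leq \|\mu^{+} - \mu^{+}_k\| + \|\mu^{-} - \mu^{-}_k\| \to 0$. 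Thus $\overline{\mathcal{M}_\beta} = \mathcal{M}$, and combined with the inclusions above all six sets collapse as claimed.

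I do not expect a genuine obstacle here: the statement is a routine density fact and the truncation by the sublevel sets $\Gamma^{(k)}_*$ does all the work. The only point requiring a little care is making sure the truncated measures really land in $\mathcal{M}_\beta$ (not merely in $\mathcal{M}_n$) — this is where one uses that $\Psi$ is \emph{bounded} on each $\Gamma^{(k)}_*$ by its very definition, so the exponential weight $\exp(\beta\Psi)$ is bounded there and integrability against $\mu_k$ is automatic. One should also check the elementary pointwise inequality $\Psi_1^n \leq C_{n,\beta}\exp(\beta\Psi)$ used to get the inclusion $\mathcal{M}_\beta \subset \mathcal{M}_n$; this follows from $t^n \leq (n/(\beta e))^n e^{\beta t}$ for $t \geq 0$ applied to $t = \Psi_1(\gamma) = 1 + \Psi(\gamma)$, absorbing the shift by $1$ into the constant. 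Since the paper explicitly states that the argument is "completely analogous to" \cite[Lemma 7.4 and Corollary 7.5]{KR}, a reader can also simply invoke that reference; the above is the self-contained version.
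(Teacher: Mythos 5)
Your proof is correct and follows exactly the truncation-by-sublevel-sets argument that the paper invokes by reference to \cite[Lemma 7.4, Corollary 7.5]{KR}: restrict $\mu$ (or each part of its Jordan decomposition) to $\Gamma^{(k)}_* = \{\varPsi \leq k\}$, note that the exponential weight is bounded there so the truncation lands in $\mathcal{M}^{+}_\beta$, and use continuity of the finite measure $\mu$ along the exhausting sequence to get total-variation convergence. The chain of inclusions $\mathcal{M}_\beta \subset \mathcal{M}_n \subset \mathcal{M}$ via the elementary bound $(1+t)^n \leq C_{n,\beta}e^{\beta t}$ is also the right way to collapse all six sets, and the closedness of $\mathcal{M}^{+}$ in $\|\cdot\|$ is correctly justified.
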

Finally, we denote $\mathcal{M}^{+,1}= \mathcal{P}(\Gamma^2_*)$ and
also
\begin{equation}
  \label{U6}
\mathcal{M}^{+,1}_n = \mathcal{M}^{+,1}\cap \mathcal{M}_\beta,
\qquad \mathcal{M}^{+,1}_n = \mathcal{M}^{+,1}\cap \mathcal{M}_n.
\end{equation}
By (\ref{W4}) it follows that
\begin{equation}
  \label{U7}
\forall \beta >0 \ \  \forall n \in \mathds{N} \qquad \qquad
\mathcal{P}_{\rm exp}\subset \mathcal{M}_\beta^{+,1} \subset
\mathcal{M}_n^{+,1}.
\end{equation}
Now for $\sigma\in (0,1]$, we set, cf. (\ref{Lsi}),
\begin{eqnarray*}
  %\label{U8}
\varPsi^\sigma(\gamma) & = & \sum_{x\in \gamma_0} \int_X a^\sigma_0
(x,y) \exp\left( - \sum_{z\in \gamma_1} \phi_0(z-y)  \right) dy
\\[.2cm] \nonumber & + & \sum_{x\in \gamma_1} \int_X a^\sigma_1
(x,y) \exp\left( - \sum_{z\in \gamma_0} \phi_1(z-y)  \right) dy,
\qquad \gamma\in \Gamma^2_*.
\end{eqnarray*}
By (\ref{P}) and (\ref{Sq}) it follows that $\psi(x) \leq
\psi_\sigma (x) \leq \psi(x)/\sigma$, $\sigma \in (0,1]$. For these
values of $\sigma$, by (\ref{no20}) and (\ref{C8}), (\ref{L2}) we
then have
\begin{equation}
  \label{U9}
 \varPsi^\sigma(\gamma) \leq  (\alpha/\sigma)\varPsi(\gamma), \qquad \gamma\in
\Gamma^2_*.
\end{equation}
As mentioned above, the transition function in question will be
constructed directly, i.e., by the formula
\begin{equation}
  \label{U10}
p^\sigma_t (\gamma, \cdot) = S^\sigma (t) \delta_\gamma (\cdot),
\qquad t>0, \ \ \gamma\in \Gamma^2_*,
\end{equation}
where $\delta_\gamma$ is the Dirac measure centered at $\gamma$ and
$S^\sigma=\{S^\sigma(t)\}_{t\geq 0}$ is the stochastic semigroup of
bounded linear operators acting in $\mathcal{M}$, generated by the
dual $L^{\dagger,\sigma}$ of $L^\sigma$ defined in (\ref{Lsi}). The
mentioned duality means that
\begin{equation}
  \label{U11}
\mu(L^\sigma F) = (L^{\dagger,\sigma}\mu) (F), \qquad F\in
\mathcal{D}(L).
\end{equation}
Recall that the domains of all $L^\sigma$, $\sigma\in [0,1]$ are the
same, i.e., are as in Definition \ref{THdf}. By (\ref{Lsi}) and
(\ref{Sq1}) we then get
\begin{eqnarray}
  \label{U12}
(L^{\dagger,\sigma}\mu) (\mathbb{A}) & = & - \int_{\Gamma_*^2}
\mathds{1}_{\mathbb{A}} (\gamma) \varPsi^\sigma (\gamma) \mu(d
\gamma) + \int_{\Gamma^2_*} \varOmega^\sigma(\mathbb{A}|\gamma)
\mu(d\gamma)
\\[.2cm] \nonumber  & =: & (A \mu)(\mathbb{A}) + (B
\mu)(\mathbb{A}), \quad \mathbb{A}\in \mathcal{B}(\Gamma^2_*).
\end{eqnarray}
Here
\begin{eqnarray}
  \label{U13}
\varOmega^\sigma(\mathbb{A}|\gamma) & = & \sum_{x\in \gamma_0}
\int_X a^\sigma_0 (x,y) \exp\left(-\sum_{z\in \gamma_1}
\psi_0(z-y)\right) \mathds{1}_{\mathbb{A}}(\gamma_0\setminus x
\cup_0
y, \gamma_1) dy \\[.2cm] \nonumber & + &  \sum_{x\in \gamma_1}
\int_X a^\sigma_1 (x,y) \exp\left(-\sum_{z\in \gamma_0}
\psi_1(z-y)\right) \mathds{1}_{\mathbb{A}}(\gamma_0,
\gamma_1\setminus x \cup_1 y) dy.
\end{eqnarray}
Note that $A$ in (\ref{U12}) is just the multiplication operator by
$\varPsi^\sigma$, and the following holds
\begin{equation}
  \label{U13a}
\varOmega^\sigma(\Gamma^2_*|\gamma) = \varPsi^\sigma(\gamma).
\end{equation}
Now we set
\begin{equation}
  \label{U14}
\mathcal{D}(L^{\dagger,\sigma}) = \{\mu \in \mathcal{M}:
|\mu|(\varPsi^\sigma)<\infty\},
\end{equation}
which might have sense if we show that $B$ can act on $\mu \in
\mathcal{D}(L^{\dagger,\sigma})$. By writing $\mu=\mu^{+} -\mu^{-}$
we conclude that it is enough to show $B\mu \in \mathcal{M}$ for
positive $\mu \in \mathcal{D}(L^{\dagger,\sigma})$ only. Since $B$
itself is positive, by (\ref{U5}) and (\ref{U13a}) we have that
\begin{gather}
  \label{U14a}
\|B\mu\| = (B\mu)(\Gamma^2_*) = \int_{\Gamma^2_*} \varPsi^\sigma
(\gamma) \mu ( d\gamma) = \|A\mu\|,
\end{gather}
which yields $L^{\dagger,\sigma}: \mathcal{D}(L^{\dagger,\sigma})\to
\mathcal{M}$. Clearly, $(A,\mathcal{D}( L^{\dagger,\sigma}))$ is
closed and the following holds
\begin{equation}
  \label{U15}
\mathcal{M}_1 \subset \mathcal{D}(L^{\dagger,\sigma}),
\end{equation}
see (\ref{U9}) and (\ref{U1}).
\begin{remark}
  \label{Urk}
Note that $\delta_\gamma \in \mathcal{D}(L^{\dagger,\sigma})$, since
$\delta_\gamma (\varPsi^\sigma) = \varPsi^\sigma(\gamma) <\infty$,
holding for all $\gamma\in \Gamma^2_*$, see (\ref{U9}) and
(\ref{n021z}). At the same time, $\delta_\gamma$ is evidently not
sub-Poissonian.
\end{remark}
Along with constructing the semigroup $S^\sigma$, see (\ref{U10}),
in Lemma \ref{U1lm} below we obtain a number of complementary
results, which we then exploit for proving Theorem \ref{1tm}. To
this end, for $n \in \mathds{N}$ and a positive $\mu$, let us
consider, cf. (\ref{U2}),
\begin{eqnarray}
  \label{U16}
\varphi_n(B\mu) & = & \int_{\Gamma^2_*} \varPsi_1^n(\gamma) (B\mu)
(d \gamma) = \int_{\Gamma^2_*} \int_{\Gamma^2_*}
\varPsi_1^n(\gamma)\varOmega^\sigma (d \gamma|\gamma') \mu (d
\gamma')
\\[.2cm] \nonumber & = & \int_{\Gamma^2_*} \bigg{(} \sum_{x\in
\gamma_0} \int_X a_0^\sigma (x,y) \exp\left(-\sum_{z\in \gamma_1}
\phi_0(z-y) \right) \varPsi_1^n (\gamma \setminus x\cup_0 y)
dy\bigg{)}
\mu(d\gamma) \\[.2cm] \nonumber & + & \int_{\Gamma^2_*} \bigg{(} \sum_{x\in
\gamma_1} \int_X a_1^\sigma (x,y) \exp\left(-\sum_{z\in \gamma_0}
\phi_1(z-y) \right) \varPsi_1^n (\gamma \setminus x\cup_1 y) dy
\bigg{)} \mu(d\gamma).
\end{eqnarray}
Since $\varPsi_1(\gamma\setminus x\cup_i y) = 1 +
\varPsi(\gamma\setminus x\cup_i y) = \varPsi_1(\gamma) + \psi(y) -
\psi(x)$, $i=0,1$, see (\ref{no20}), then
\begin{equation}
  \label{U17}
\varPsi^n_1 (\gamma\setminus x\cup_i y) \leq 2^n
\varPsi^n_1(\gamma),
\end{equation}
which by (\ref{U9}) yields in (\ref{U16}) the following estimate
\begin{equation}
  \label{U18}
\forall \mu \in \mathcal{M}_{n+1}^{+} \qquad  \|B\mu\|_{n} =
\varphi_n(B\mu) \leq 2^n \alpha \sigma^{-1} \|\mu\|_{n+1},
\end{equation}
and hence
\begin{equation}
  \label{U19}
\forall n\in \mathds{N}_0 \qquad B:\mathcal{M}_{n+1}^{+} \to
\mathcal{M}_{n}^{+}.
\end{equation}
In a similar way, one shows that $\|A\mu\|_n \leq (\alpha/ \sigma)
\|\mu\|_{n+1}$ and
\begin{equation}
\label{U20}
 -A:\mathcal{M}_{n+1}^{+} \to \mathcal{M}_{n}^{+},
\end{equation}
which finally yields that $L^{\dagger,\sigma}:\mathcal{M}_{n+1}^{+}
\to \mathcal{M}_{n}^{+}$, holding for all $n\in \mathds{N}_0$. By
means of (\ref{U18}) and the corresponding estimate for $A$ we then
define bounded linear operators
\begin{equation}
  \label{U21}
(L^{\dagger,\sigma})^l_{n,n+l}: \mathcal{M}_n \to \mathcal{M}_{n+l},
\qquad l\in \mathds{N},
\end{equation}
the norms of which satisfy
\begin{equation}
  \label{U22}
\|(L^{\dagger,\sigma})^l_{n,n+l}\| \leq \left(
\frac{\alpha}{\sigma}\right)^l (2^n +1) (2^{n+1}+1) \cdots
(2^{n+l-1}+1).
\end{equation}
Next, similarly as in (\ref{W14}) we also define bounded operators
$(L^{\dagger,\sigma}_{\beta',\beta})^l: \mathcal{M}_\beta \to
\mathcal{M}_{\beta'}$, $\beta>\beta' >0$, see (\ref{U3}). To
estimate their norms, for a given $\mu=\mu^{+}-\mu^{-}\in
\mathcal{D}(L^{\dagger,\sigma})$, we write
\begin{gather*}
%  \label{U23}
L^{\dagger,\sigma} \mu = (A + B) (\mu^{+} - \mu^{-}) =
(B\mu^{+}-A\mu^{-}) - (B\mu^{-}-A\mu^{+}) =:\mu_1^{+} - \mu_1^{-}.
\end{gather*}
It is clear that $\mu_1^{\pm}\in \mathcal{M}^{+}$. Then
\begin{equation}
  \label{U24}
\|L^{\dagger,\sigma} \mu\|_{\beta'} \leq \|\mu_1^{+}\|_{\beta'} +
\|\mu_1^{-}\|_{\beta'} = \|A\mu^{+}\|_{\beta'} +
\|A\mu^{-}\|_{\beta'} + \|B\mu^{+}\|_{\beta'} +
\|B\mu^{-}\|_{\beta'}.
\end{equation}
Here we used the additivity of the norms on the positive cone as
well as the positivity of $B$ and $-A$, see (\ref{U19}),
(\ref{U20}). Now by (\ref{U9}) we have
\[
\varPsi^\sigma (\gamma) \exp\left(\beta'\varPsi(\gamma) \right) \leq
\frac{\alpha}{e \sigma (\beta-\beta')}\exp
\left(\beta\varPsi(\gamma) \right),
\]
which for $\mu\in \mathcal{M}^{+}_\beta$ yields
\begin{equation}
  \label{U25}
\|A\mu\|_{\beta'} \leq \frac{\alpha}{e \sigma (\beta-\beta')}
\|\mu\|_\beta.
\end{equation}
Next, similarly as in (\ref{U16}), (\ref{U18}) by (\ref{U9}) we have
\begin{eqnarray*}
%  \label{U26}
& & \int_{\Gamma^2_*} \exp\left(\beta'\varPsi(\gamma)\right)
(B\mu)(d\gamma) = \int_{\Gamma^2_*} \int_{\Gamma^2_*}
\exp\left(\beta'\varPsi(\gamma')\right) \varOmega^\beta(d
\gamma'|\gamma) \mu (d \gamma) \\[.2cm] \nonumber & & \quad =
\int_{\Gamma^2_*}\exp\left(\beta'\varPsi(\gamma)\right)\bigg{(}
\sum_{x\in \gamma_0}\int_X a_0^\sigma(x,y) \exp\left(-\sum_{z\in
\gamma_1}\phi_0(z-y) + \beta'(\psi(y) - \psi(x))\right) dy \\[.2cm]
\nonumber & &\quad  + \sum_{x\in \gamma_1}\int_X a_1^\sigma(x,y)
\exp\left(-\sum_{z\in \gamma_0}\phi_1(z-y) + \beta'(\psi(y) -
\psi(x))\right) dy \bigg{)} \mu(d\gamma) \\[.2cm]
\nonumber & & \quad \leq e^{\beta'} \int_{\Gamma^2_*}
\varPsi^\sigma(\gamma) \exp\left(\beta'\varPsi(\gamma)\right) \mu(d
\gamma) \leq \frac{e^{\beta'} \alpha}{\sigma e(\beta -
\beta')}\|\mu\|_\beta, \qquad \mu\in \mathcal{M}_\beta^{+}.
\end{eqnarray*}
We combine now this estimate with (\ref{U25}) and obtain in
(\ref{U24}) the following, cf. (\ref{W14})
\begin{equation}
  \label{U27}
\|(L^{\dagger,\sigma}_{\beta',\beta})^l\| \leq \left(\frac{l}{e
T_\sigma(\beta, \beta')}\right)^l, \qquad l\in \mathds{N},
\end{equation}
with
\begin{equation}
  \label{U28}
T_\sigma(\beta, \beta') = \frac{\sigma(\beta-\beta')}{\alpha
e^\beta}.
\end{equation}
By (\ref{U21})), for each $l\in \mathds{N}$ and $\mu\in
\mathcal{M}_\beta$, we have that $(L^{\dagger, \sigma})^l\mu
\in\mathcal{M}_{\beta'}$, $\beta'<\beta$, and the following holds
\begin{equation}
  \label{U29}
(L^{\dagger, \sigma}_{\beta',\beta})^l\mu = (L^{\dagger,
\sigma})^l\mu, \qquad l\in \mathds{N}.
\end{equation}
In  the next statement, we employ a perturbation technique for
constructing stochastic semigroups of bounded linear operators in
ordered Banach spaces with norms additive on the cones of positive
elements. Note that the spaces defined in (\ref{U1}), (\ref{U3})
have this property, see (\ref{U5}), (\ref{cones}). The details of
this technique can be found in our previous work \cite[subsect.
7.1.1]{KR}. Here we just recall that a semigroup $S=\{S(t)\}_{t\geq
0}$ of such operators is called stochastic (resp. substochastic) if
each $S(t)$ is positive and $\|S(t) u\| = \|u\|$ (resp. $\|S(t) u\|
\leq \|u\|$) for each positive $u$ and $t>0$. Also, for a given
$n\in \mathds{N}$ and
\begin{equation}
  \label{U29a}
\mathcal{D}^\sigma_n :=\{\mu \in \mathcal{M}_n: \
|\mu|(\varPsi^\sigma \varPsi_1^n) <\infty\},
\end{equation}
cf. (\ref{U1}), (\ref{U14}), by the trace of $A$ in $\mathcal{M}_n$
we mean the operator $(A, \mathcal{D}^\sigma_n )$ acting therein.
\begin{lemma}
  \label{U1lm}
For each $\sigma\in (0, 1]$, the closure of
$(L^{\dagger,\sigma},\mathcal{D}(L^{\dagger,\sigma}))$, see
(\ref{U14}), is the generator of a stochastic semigroup, $ S^\sigma
= \{S^\sigma(t)\}_{t\geq 0}$, in $\mathcal{M}$ such that
$S^\sigma(t): \mathcal{M}_n \to \mathcal{M}_n$, holding for each
$t>0$ and $n\in \mathds{N}$. For each $n\in \mathds{N}$, the
restrictions $S^\sigma(t)|_{\mathcal{M}_n}$ constitute a
$C_0$-semigroup on $\mathcal{M}_n$. Additionally, for each $\beta >
0$ and $\beta' \in (0, \beta)$, $S^\sigma(t) :
\mathcal{M}^{+}_{\beta} \to \mathcal{M}^{+}_{\beta'}$  for $t <
T_\sigma(\beta, \beta')$, see (\ref{U28}).
\end{lemma}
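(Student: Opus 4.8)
The plan is to construct $S^\sigma$ by the standard perturbation scheme for substochastic semigroups in ordered Banach spaces with additive norms on the cone, exactly as in \cite[subsect.\ 7.1.1]{KR}. First I would observe that $(A,\mathcal{D}(L^{\dagger,\sigma}))$, being minus multiplication by the nonnegative measurable function $\varPsi^\sigma$, generates a substochastic (in fact, for this sign, contraction) $C_0$-semigroup $\{e^{tA}\}_{t\geq0}$ on $\mathcal{M}$, given pointwise by $(e^{tA}\mu)(d\gamma)=e^{-t\varPsi^\sigma(\gamma)}\mu(d\gamma)$, and that $B$ is a positive operator with $\mathcal{D}(A)\subset\mathcal{D}(B)$ and $\|B\mu\|=\|A\mu\|$ for positive $\mu$ (see (\ref{U14a})). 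These are precisely the hypotheses under which the Thieme--Voigt / Kato-type perturbation result applies: the operator $\sum_{k=0}^\infty (\lambda-A)^{-1}\big(B(\lambda-A)^{-1}\big)^k$ is well defined for $\lambda>0$, its closure $L^{\dagger,\sigma}$ generates a substochastic semigroup $S^\sigma$ obtained as the strong limit of the iterates $S^\sigma(t)=\lim_{r\uparrow1}S_r(t)$, where $S_r$ is generated by $A+rB$, and the norm-identity $\|B\mu\|=\|A\mu\|$ on the positive cone forces $S^\sigma$ to be \emph{stochastic}, i.e.\ $\|S^\sigma(t)\mu\|=\|\mu\|$ for positive $\mu$. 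I would cite \cite{TV} and \cite[subsect.\ 7.1.1]{KR} for this and not reprove it.

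Next I would establish the invariance $S^\sigma(t):\mathcal{M}_n\to\mathcal{M}_n$ and the $C_0$-property of the restrictions. The key input is (\ref{U18})--(\ref{U22}): $A$ and $B$ map $\mathcal{M}_{n+1}^+$ into $\mathcal{M}_n^+$ with the stated norm bounds, so $(L^{\dagger,\sigma})^l$ maps $\mathcal{M}_n$ boundedly into $\mathcal{M}_{n+l}$. The standard argument here is a Gronwall/resolvent estimate: for positive $\mu\in\mathcal{M}_n$ one controls $\varphi_n(S_r(t)\mu)$ by differentiating in $t$, using $\varphi_n\big((A+rB)\nu\big)\leq \varphi_n(B\nu)\leq c_n\varphi_{n}(\nu)$ for an appropriate constant $c_n$ coming from (\ref{U17})--(\ref{U18}) (the point being that $\varPsi_1^n(\gamma\setminus x\cup_i y)\le 2^n\varPsi_1^n(\gamma)$ makes $B$ ``almost'' bounded on $\mathcal{M}_n$ relative to $\|\cdot\|_n$ modulo the $\varPsi^\sigma$ weight, and $\varPsi^\sigma\le(\alpha/\sigma)\varPsi$). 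This gives $\|S^\sigma(t)\mu\|_n\le e^{c_n t}\|\mu\|_n$ uniformly in $r$, hence invariance of $\mathcal{M}_n$ in the limit; strong continuity on $\mathcal{M}_n$ then follows from strong continuity on $\mathcal{M}$ together with this uniform bound and a density argument (Proposition \ref{U1pn} gives $\overline{\mathcal{M}_{n+1}}=\mathcal{M}$, and $\mathcal{M}_{n+1}$ is a core-like dense subset on which $t\mapsto S^\sigma(t)\mu$ is differentiable). I would also note in passing that the generator of $S^\sigma|_{\mathcal{M}_n}$ is the part of $L^{\dagger,\sigma}$ in $\mathcal{M}_n$, with $\mathcal{D}^\sigma_n$ a core, using (\ref{U29a}).

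For the last assertion, $S^\sigma(t):\mathcal{M}^+_\beta\to\mathcal{M}^+_{\beta'}$ for $t<T_\sigma(\beta,\beta')$, I would run the analogue of the Ovcyannikov-type argument already used in the excerpt for $\varSigma_{\vartheta\vartheta'}$ and $\varXi_{\vartheta'\vartheta}$. Fix $\beta>\beta'>0$ and a chain $\beta=\beta_0>\beta_1>\dots>\beta_N=\beta'$; by (\ref{U27})--(\ref{U28}), $\|(L^{\dagger,\sigma}_{\beta_{j+1},\beta_j})^l\|\le (l/eT_\sigma(\beta_j,\beta_{j+1}))^l$, so for $\mu\in\mathcal{M}^+_\beta$ the series $\sum_l \frac{t^l}{l!}(L^{\dagger,\sigma})^l\mu$ converges in $\|\cdot\|_{\beta'}$ for $t<T_\sigma(\beta,\beta')$ (after optimizing the chain, exactly as $T_*$ is obtained from $T$ in (\ref{Ts})), and by (\ref{U29}) its sum agrees with $S^\sigma(t)\mu$ computed in $\mathcal{M}$ because both are given by the same Taylor/Dyson series applied to $\mu\in\mathcal{M}_1\subset\mathcal{D}(L^{\dagger,\sigma})$; positivity of $S^\sigma(t)\mu$ is already known from the semigroup being stochastic. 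The identification of the two objects — the abstractly constructed semigroup $S^\sigma(t)$ restricted to $\mathcal{M}_\beta$ versus the explicit Ovcyannikov series — is the step I expect to be the main obstacle, since it requires checking that the Dyson expansion for $S_r$ converges term-by-term in the finer norm $\|\cdot\|_{\beta'}$ uniformly in $r<1$ and that passing $r\uparrow1$ commutes with the sum; I would handle this by the dominated-convergence-in-norm argument built on the uniform bounds (\ref{U22}), (\ref{U27}), just as the analogous commutation was tacitly used for (\ref{W18}) and in the proof of Lemma \ref{W2lm}.
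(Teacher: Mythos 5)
Your overall plan matches the paper's: invoke the Thieme--Voigt perturbation scheme of \cite[subsect.\ 7.1.1]{KR}, with $A$ generating the explicit multiplication semigroup (\ref{U31}) and $B$ positive with $\|B\mu\|=\|A\mu\|$ on the cone, then deduce invariance of $\mathcal{M}_n$, the $C_0$-property of the restrictions, and the $\mathcal{M}_\beta^{+}\to\mathcal{M}_{\beta'}^{+}$ mapping by the Ovcyannikov-type series (\ref{U43}).

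There is, however, a genuine error in the way you justify the invariance of $\mathcal{M}_n$. You write the chain $\varphi_n\big((A+rB)\nu\big)\leq \varphi_n(B\nu)\leq c_n\varphi_n(\nu)$, but the middle bound $\varphi_n(B\nu)\leq c_n\varphi_n(\nu)$ is false: by (\ref{U18}) one has only $\varphi_n(B\nu)\leq 2^n\alpha\sigma^{-1}\varphi_{n+1}(\nu)$, and the $\varPsi^\sigma$ weight cannot be discarded. The inequality that actually holds — and is the crux of the argument — is $\varphi_n\big((A+B)\nu\big)\leq c_n\varphi_n(\nu)$ for positive $\nu$, because the large (unweighted) pieces of $A\nu$ and $B\nu$ cancel; what remains is controlled by the Lipschitz-type bound $|\varPsi_1^n(\gamma\setminus x\cup_i y)-\varPsi_1^n(\gamma)|\leq (2^n-1)|\psi(y)-\psi(x)|\varPsi_1^{n-1}(\gamma)$ of (\ref{U35}), combined with (\ref{U36})--(\ref{U39}). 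Your own parenthetical remark (``modulo the $\varPsi^\sigma$ weight'') shows you sense this, but the inequality you state contradicts it. The form the paper actually verifies is (\ref{U30}), $\varphi_n\big((A+B)\mu\big)\leq c_n\varphi_n(\mu)-\varepsilon_n\|A\mu\|$, obtained from (\ref{U41}) together with $\|A\mu\|\leq(\alpha/\sigma)\varphi_n(\mu)$; from this the $r<1$ case follows since $\varphi_n\big((A+rB)\nu\big)=r\varphi_n\big((A+B)\nu\big)+(1-r)\varphi_n(A\nu)\leq r\,c_n\varphi_n(\nu)$ because $\varphi_n(A\nu)\leq 0$ on the cone.

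Two lesser remarks on the final assertion. First, the chain refinement $\beta=\beta_0>\dots>\beta_N=\beta'$ and ``optimizing as $T_*$ in (\ref{Ts})'' is unnecessary: the single-step bound (\ref{U27}) already gives convergence of $\sum_l\frac{t^l}{l!}(L^{\dagger,\sigma}_{\beta',\beta})^l$ in $\|\cdot\|_{\beta'}$ for $t<T_\sigma(\beta,\beta')$, which is exactly the claimed range; chaining would only improve the time (which is not needed). Second, the identification $S^\sigma_{\beta',\beta}(t)\mu=S^\sigma(t)\mu$ does not require tracking the $r\uparrow1$ Dyson expansion: for $\mu\in\mathcal{M}_\beta$ each power $(L^{\dagger,\sigma})^l\mu$ exists, the Taylor series converges in $\|\cdot\|_{\beta'}$ and hence in $\|\cdot\|$, and the identification follows from (\ref{U29}) plus the standard analytic-vector/uniqueness argument for the generator; this is how the paper gets (\ref{U44}).
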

\begin{proof}
We basically follow the way of proving \cite[Lemma 7.6]{KR}, based
on the Thieme-Voigt theorem \cite{TV} in the form adapted to the
context of the present work, see \cite[Assumption 7.1 and
Proposition 7.2]{KR}. Thus, we begin by mentioning that all the
items of Assumption 7.1 \emph{ibid} are satisfied. That is: (i) each
$\mathcal{M}_n$ is dense in $\mathcal{M}$, see Proposition
\ref{U1pn}; (ii) each $\mathcal{M}_n$ is a Banach space (by the
aforementioned \cite[Proposition 4.1.8, page 119]{Cohn}); (iii) each
cone $\mathcal{M}_{n}^{+}$, $n\in \mathds{N}$, is
$\mathcal{M}^{+}\cap\mathcal{M}_n$ and $\|\cdot \|_n$ is additive on
this cone, see (\ref{U5}); (iv) each $\mathcal{M}_{n}^{+}$ is dense
in $\mathcal{M}^{+}$, see Proposition \ref{U1pn}. Now we can apply
\cite[Proposition 7.2]{KR}, which amounts to checking that:
\begin{itemize}
  \item[(i)] $-A$ and $B$ map
$\mathcal{D}(L^{\dagger,\sigma})\cap\mathcal{M}^{+}$ to
$\mathcal{M}^{+}$, which follows by the very definition of $A$ and
(\ref{U14a});
\item[(ii)] $(A, \mathcal{D}(L^{\dagger,\sigma}))$ generates a
substochastic semigroup, $S_0^\sigma = \{S_0^\sigma(t) \}_{t\geq
0}$, such that (a) $S^\sigma_0 (t) : \mathcal{M}_{n} \to
\mathcal{M}_{n}$, (b) the restrictions
$S_0^\sigma(t)|_{\mathcal{M}_n}$ constitute a $C_0$-semigroup on
$\mathcal{M}_n$ generated by the trace of $A$ in $\mathcal{M}_n$;
\item[(iii)] $B: \mathcal{D}^\sigma_n \to \mathcal{M}_n$ and
$\varphi((A+B)\mu) =0$, holding for all $\mu \in
\mathcal{D}(L^{\dagger,\sigma})\cap \mathcal{M}^{+}$;
\item[(iv)] there exist positive $c_n$ and $\varepsilon_n$ such that the following holds
\begin{equation}
  \label{U30}
\forall \mu \in \mathcal{D}^\sigma_n \cap \mathcal{M}^{+} \qquad
\varphi_n ((A+B)\mu) \leq c_n \varphi_n(\mu) - \varepsilon_n \|A
\mu\|.
\end{equation}
\end{itemize}
The semigroup $S_0^\sigma$ mentioned in item (ii) consists of the
multiplication operators
\begin{equation}
  \label{U31}
(S_0^\sigma (t) \mu) (d \gamma) = \exp\left( - t \varPsi^\sigma
(\gamma) \right) \mu( d \gamma),
\end{equation}
which is certainly such that (a) holds for each $n\in \mathds{N}$.
To check the strong continuity of $S_0^\sigma$, we take $\mu\in
\mathcal{M}^{+}$ and $\varepsilon >0$, and then show that $\|\mu -
S_0^\sigma (t)\mu\|< \varepsilon$ whenever $t$ is smaller than an
$\varepsilon$-specific $\delta >0$. The validity of such estimates
for an arbitrary $\mu\in \mathcal{M}$ then simply follows by the
Jordan decomposition. Since $\mathcal{D}(L^{\dagger,\sigma})$ is
dense in $\mathcal{M}$, see (\ref{U15}) and Proposition \ref{U1pn},
one finds $\mu'\in
\mathcal{D}(L^{\dagger,\sigma})\cap\mathcal{M}^{+}$ such that $\|
\mu - \mu'\| < \varepsilon/3$. By (\ref{U31}) and (\ref{U9}) we then
have
\begin{gather}
  \label{U32}
\| \mu - S^\sigma_0 (t) \mu\| \leq \|\mu - \mu'\| + \|S^\sigma_0 (t)
(\mu - \mu')\| + \|\mu' - S^\sigma_0 (t) \mu'\|\\[.2cm] \nonumber \leq 2 \|\mu -
\mu'\| + t\int_{\Gamma^2_*} \varPsi^\sigma (\gamma) \mu'(d\gamma)
\leq \frac{2}{3}\varepsilon + \frac{t\alpha}{\sigma} \|\mu'\|_1 <
\varepsilon,
\end{gather}
for $t< \sigma \varepsilon/3 \alpha \|\mu'\|_1$. Moreover,
(\ref{U31}) can be considered as the definition of bounded linear
operators acting in a given $\mathcal{M}_n$. These operators
constitute a $C_0$ semigroup, which can be proved similarly as in
(\ref{U32}). Its generator is then obviously the trace of $A$ in
$\mathcal{M}_n$, see (\ref{U29a}). Thus, it remains to prove the
validity of (\ref{U30}), that is, the validity of
\begin{equation}
  \label{U33}
\int_{\Gamma^2_*} \varPsi_1^n (\gamma) (L^{\dagger,\sigma}\mu) ( d
\gamma) + \varepsilon_n \int_{\Gamma^2_*} \varPsi^\sigma (\gamma)
\mu ( d \gamma)\leq c_n \int_{\Gamma^2+*} \varPsi_1^n (\gamma) \mu (
d \gamma),
\end{equation}
holding for all $\mu \in \mathcal{D}^\sigma_n \cap \mathcal{M}^{+}$
and certain positive $c_n$ and $\varepsilon_n$. This clearly amounts
to proving that each of the summands in the left-hand side of
(\ref{U33}) is $\leq (c_n/2)\mu (\varPsi_1^n)$ with a properly
chosen $c_n$. We begin by proving this for the first summand. By
(\ref{U11}) we have that
\begin{equation}
  \label{U34}
\int_{\Gamma^2_*} \varPsi_1^n (\gamma) (L^{\dagger,\sigma}\mu) ( d
\gamma) = \int_{\Gamma^2_*} (L^\sigma \varPsi_1^n) (\gamma) \mu ( d
\gamma)
\end{equation}
Similarly as in obtaining (\ref{U17}) we have
\begin{equation}
  \label{U35}
\left|\varPsi_1^n (\gamma\setminus x \cup_i y) - \varPsi_1^n
(\gamma) \right| \leq (2^n -1) \left|\psi(y)-\psi(x)
\right|\varPsi_1^{n-1} (\gamma).
\end{equation}
Set
\begin{equation}
  \label{U36}
b_i (x) = \int_X a_i(x-y) \left|\psi(y)-\psi(x) \right| dy, \qquad
i=0,1.
\end{equation}
Assume first that $|x|\geq |y|$. Then
\begin{gather}
  \label{U37}
\left|\psi(y)-\psi(x) \right| = \psi(y)-\psi(x) = (|x|^{d+1} -
|y|^{d+1}| \psi(x) \psi(y)\\[.2cm] \nonumber \leq \psi(x)\left[ \left(|x-y|+ |y|
\right)^{d+1} - |y|^{d+1} \right] \psi(y) \\[.2cm] \nonumber  = \psi(x)
\sum_{l=1}^{d+1} {d+1 \choose l} |x-y|^l |y|^{d+1-l}\psi(y) \\[.2cm] \nonumber \leq
\psi(x) \sum_{l=1}^{d+1}  {d+1 \choose l} |x-y|^l  =: \psi(x)
h(x-y),
\end{gather}
where we have used the fact that $|y|^{d+1-l} \psi(y) \leq 1$ for
all $l\geq 1$ and $y\in X$. For $|x|< |y|$, we have
\begin{gather*}
%  \label{U38}
\left|\psi(y)-\psi(x) \right| =  (|y|^{d+1} - |x|^{d+1}) \psi(x)
\psi(y) \\[.2cm] \nonumber \leq \psi(x) \left[\left(|y-x| + |x| \right)^{d+1} - |x|^{d+1} \right] \psi(y)\\[.2cm] \nonumber
\leq \psi(x) \sum_{l=1}^{d+1}  {d+1 \choose l} |x-y|^l
|y|^{d+1-l}\psi(y) \\[.2cm] \nonumber  \leq \psi(x) h(x-y).
\end{gather*}
Now we use these two estimates in (\ref{U36}) and obtain
\begin{equation}
  \label{U39}
b_i(x) \leq \psi(x) \bar{\alpha}, \qquad \bar{\alpha}:= \max_{i=0,1}
\int_X a_i (x) h(x) dx,
\end{equation}
cf. (\ref{L2}) and (\ref{C8}). By (\ref{U35}) and the latter
estimate we obtain
\begin{gather}
  \label{U40}
\left|(L^\sigma \varPsi_1^n) (\gamma) \right| \leq \sum_{x\in
\gamma_0} \int_{X} a_0 (x-y) \left|\varPsi_1^n (\gamma\setminus x
\cup_0 y)
-\varPsi_1^n(\gamma) \right| d y \\[.2cm] \nonumber + \sum_{x\in \gamma_1}
\int_{X} a_1 (x-y) \left|\varPsi_1^n (\gamma\setminus x \cup_1 y)
-\varPsi_1^n(\gamma) \right| d y \\[.2cm] \nonumber \leq (2^{n} -1)
\bar{\alpha} \varPsi_1^{n-1} (\gamma) \left( \sum_{x\in
\gamma_0}\psi(x) + \sum_{x\in \gamma_1}\psi(x) \right) \leq (2^{n}
-1) \bar{\alpha} \varPsi_1^{n} (\gamma),
\end{gather}
holding for all $\sigma \in [0,1]$, including $\sigma=0$. By
(\ref{U40}) and (\ref{U34}) we then have
\begin{equation}
  \label{U41}
\int_{\Gamma^2_*} \varPsi_1^n (\gamma) (L^{\dagger,\sigma}\mu) ( d
\gamma) \leq (2^{n} -1) \bar{\alpha} \int_{\Gamma^2_*} \varPsi_1^n
(\gamma) \mu ( d \gamma).
\end{equation}
By (\ref{U9}) we have $\varPsi^\sigma (\gamma) \leq (\alpha/\sigma)
\varPsi_1^n (\gamma)$ holding for all $n\in \mathds{N}$ and
$\gamma\in \Gamma_*^2$, which then yields
\begin{equation*}
  %\label{U42}
\int_{\Gamma^2_*} \varPsi^\sigma (\gamma) \mu(d \gamma) \leq
\frac{\alpha}{\sigma} \int_{\Gamma^2_*}  \varPsi_1^n(\gamma) \mu(d
\gamma).
\end{equation*}
The latter estimate together with (\ref{U41}) yields the validity of
(\ref{U33}) with $\varepsilon_n =1$ and $c_n = (2^{n} -1)
\bar{\alpha} + \alpha/\sigma$.

It remains now to prove the concluding statement of the lemma. We
proceed by defining the following bounded operators
\begin{equation}
  \label{U43}
S^\sigma_{\beta',\beta} (t)  = I_{\beta',\beta} +\sum_{l=1}^\infty
\frac{t^l}{l!} (L^{\dagger,\sigma}_{\beta',\beta})^l, \qquad t<
T_\sigma(\beta, \beta'),
\end{equation}
acting from $\mathcal{M}_\beta$ to $\mathcal{M}_{\beta'}$, see
(\ref{U28}). Here the powers $(L^{\dagger,\sigma}_{\beta',\beta})^l$
satisfy (\ref{U27}) and $I_{\beta',\beta}$ is the embedding
operator. By (\ref{U43}) and (\ref{U29}), for each $\mu \in
\mathcal{M}_\beta$, one has
\begin{equation}
  \label{U44}
S^\sigma_{\beta',\beta} (t) \mu = S^\sigma (t) \mu,
\end{equation}
where $S^\sigma(t)$ is the same as in the first part of the lemma.
Then the positivity of $S^\sigma_{\beta',\beta} (t)$ follows by the
positivity of the latter. This completes the whole proof.
\end{proof}
Thus, the lemma just proved yields the existence of the semigroup
$S^\sigma$ which we use in (\ref{U10}) to obtain the Markov
transition function $p_t^\sigma$. The fact that $t\mapsto
p^\sigma_t$ satisfies the corresponding conditions,   see \cite[eqs.
(1.3) -- (1.6), page 156]{EK}, follows directly by (\ref{U10}). We
will use this function to construct the finite dimensional marginals
of the stochastic process corresponding to the approximating model
described by $L^\sigma$. This will be done in the next subsection.

\subsection{Constructing path measures}
By means of the semigroup $S^\sigma$ constructed in Lemma \ref{U1lm}
we may have
\begin{equation}
  \label{U45}
\hat{\mu}^\sigma_t(\cdot) = (S^\sigma (t) \mu)(\cdot) =
\int_{\Gamma^2_*} p^\sigma_t(\gamma ,\cdot) \mu(d\gamma), \qquad \mu
\in \mathcal{M}.
\end{equation}
Recall that here $\sigma\in (0,1]$ and $S^\sigma$ is stochastic. The
latter means that $\hat{\mu}^\sigma_t$ is in
$\mathcal{P}(\Gamma^2_*)$ whenever $\mu$ has this property.
Moreover, $\hat{\mu}^\sigma_t$ may be in
$\mathcal{M}_n\cap\mathcal{P}(\Gamma^2_*)$ under the corresponding
condition. However, so far we do not know whether $S^\sigma$
preserves $\mathcal{P}_{\rm exp}$.
\begin{lemma}
  \label{U2lm}
For  given  $\mu\in\mathcal{P}_{\rm exp}$, let $t\mapsto
\mu_t^\sigma\in \mathcal{P}_{\rm exp}$, $t>0$, be the solution of
(\ref{FPEs}), see Proposition \ref{Sq1pn}. Let also
$\hat{\mu}^\sigma_t$ be as in (\ref{U45}) with the same $\mu$. Then,
for all $t>0$, it follows that ${\mu}^\sigma_t=\hat{\mu}^\sigma_t$.
\end{lemma}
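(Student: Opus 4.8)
The plan is to show that $t\mapsto \hat\mu^\sigma_t$ is itself a solution of the Fokker--Planck equation (\ref{FPEs}) with initial value $\mu$, in the sense of Definition \ref{A01df}, and then to invoke the uniqueness assertion of Proposition \ref{Sq1pn}.

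First I would record the regularity of the semigroup orbit. By (\ref{W4})/(\ref{U7}) we have $\mu\in\mathcal{P}_{\rm exp}\subset\mathcal{M}_1$, and by (\ref{U15}) $\mathcal{M}_1\subset\mathcal{D}(L^{\dagger,\sigma})$, which is contained in the domain of $\overline{L^{\dagger,\sigma}}$ -- the generator of $S^\sigma$ by Lemma \ref{U1lm}. Hence $t\mapsto\hat\mu^\sigma_t=S^\sigma(t)\mu$ is continuously differentiable in $(\mathcal{M},\|\cdot\|)$, stays in $\mathcal{D}(\overline{L^{\dagger,\sigma}})$ for every $t\ge 0$, and satisfies $\frac{d}{dt}\hat\mu^\sigma_t=\overline{L^{\dagger,\sigma}}\hat\mu^\sigma_t$. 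Since Lemma \ref{U1lm} also gives $S^\sigma(t):\mathcal{M}_1\to\mathcal{M}_1$, in fact $\hat\mu^\sigma_t\in\mathcal{M}_1\subset\mathcal{D}(L^{\dagger,\sigma})$ for all $t$, so that $\overline{L^{\dagger,\sigma}}\hat\mu^\sigma_t=L^{\dagger,\sigma}\hat\mu^\sigma_t$, with the right-hand side given explicitly by (\ref{U12}), (\ref{U13}).

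Next, fix $F\in\mathcal{D}(L)$. As $F$ is bounded, $\nu\mapsto\nu(F)$ is a bounded linear functional on $\mathcal{M}$; composing it with the $C^1$-curve $t\mapsto\hat\mu^\sigma_t$ and using the duality (\ref{U11}) shows that $t\mapsto\hat\mu^\sigma_t(F)$ is continuously differentiable with
\[
\frac{d}{dt}\hat\mu^\sigma_t(F)=\big(L^{\dagger,\sigma}\hat\mu^\sigma_t\big)(F)=\hat\mu^\sigma_t(L^\sigma F),
\]
where $L^\sigma F\in B_{\rm b}(\Gamma^2_*)$ by the analog of Proposition \ref{T3pn}(a) for $L^\sigma$ (valid because of (\ref{Sq2})). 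Integrating over $[t_1,t_2]$ gives (\ref{FPEs}) for all $t_2>t_1\ge 0$; moreover, since $S^\sigma$ is stochastic, $\hat\mu^\sigma_t\in\mathcal{P}(\Gamma^2_*)$, the maps $t\mapsto\hat\mu^\sigma_t(\mathbb{A})$ are continuous (hence measurable) for $\mathbb{A}\in\mathcal{B}(\Gamma^2_*)$, and $\hat\mu^\sigma_0=S^\sigma(0)\mu=\mu$. Thus $t\mapsto\hat\mu^\sigma_t$ is a solution of (\ref{FPEs}) with $\mu_0=\mu$. Finally, the uniqueness assertion of Proposition \ref{Sq1pn} -- proved as that of Lemma \ref{W2lm}, which rests on the $L^\sigma$-analog of Lemma \ref{W1lm} forcing every $\mathcal{P}(\Gamma^2_*)$-valued solution to be sub-Poissonian -- identifies this solution with $t\mapsto\mu^\sigma_t$, so $\hat\mu^\sigma_t=\mu^\sigma_t$ for all $t>0$.

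The hard part will be the identification in the second paragraph: one must ensure the abstract generator $\overline{L^{\dagger,\sigma}}$ acts on the whole orbit $\hat\mu^\sigma_t$ as the concrete operator (\ref{U12}). This uses both that $\mu\in\mathcal{D}(L^{\dagger,\sigma})$ -- so $\mu$ lies in the generator's domain (via (\ref{U15})), not merely in that of its closure -- and the invariance $S^\sigma(t)\mathcal{M}_1\subset\mathcal{M}_1$ from Lemma \ref{U1lm}, so that (\ref{U11}) stays applicable along the orbit. A secondary point is that the uniqueness in Proposition \ref{Sq1pn} must be used in the form ``uniqueness among all measurable $\mathcal{P}(\Gamma^2_*)$-valued solutions'', which is legitimate precisely by the $L^\sigma$-version of Lemma \ref{W1lm}, since the measure $\hat\mu^\sigma_t$ produced by $S^\sigma$ is a priori only known to be in $\mathcal{P}(\Gamma^2_*)$.
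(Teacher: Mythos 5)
Your proof is correct and follows essentially the same route as the paper's: $\mu\in\mathcal{P}_{\rm exp}\subset\mathcal{M}_1\subset\mathcal{D}(L^{\dagger,\sigma})$, so $t\mapsto S^\sigma(t)\mu$ is a classical solution of (\ref{U46}), duality (\ref{U11}) turns this into (\ref{FPEs}), and Proposition~\ref{Sq1pn} supplies uniqueness. You are somewhat more careful than the paper on one genuinely delicate point, namely that the generator of $S^\sigma$ is the \emph{closure} of $L^{\dagger,\sigma}$, so that identifying $\overline{L^{\dagger,\sigma}}\hat\mu^\sigma_t$ with the explicit operator (\ref{U12}) along the whole orbit requires the invariance $S^\sigma(t)\mathcal{M}_1\subset\mathcal{M}_1$ from Lemma~\ref{U1lm}; the paper leaves this implicit in the phrase ``classical solution''.
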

\begin{proof}
By (\ref{U7}) and (\ref{U15}) it follows that $\mathcal{P}_{\rm
exp}\subset \mathcal{D}(L^{\dagger,\sigma})$, which means that
$t\mapsto \hat{\mu}^\sigma_t$ is the classical solution of the
initial value problem
\begin{equation}
  \label{U46}
\frac{d}{dt}\hat{\mu}_t^\sigma = L^{\dagger,\sigma}
\hat{\mu}^\sigma_t, \qquad \hat{\mu}^\sigma_t|_{t=0}=\mu,
\end{equation}
which by (\ref{U11}) yields that $t\mapsto \hat{\mu}_t^\sigma$
solves (\ref{FPEs}). Then the proof follows by Proposition
\ref{Sq1pn}.
\end{proof}

It is a standard fact that the transition function $p^\sigma_t$
determines the finite dimensional distributions of a Markov process,
$\mathcal{X}^\sigma$, with values in $\Gamma^2_*$, see \cite[Theorem
1.1, page 157]{EK}. Our aim now is to prove that it has
c{\`a}dl{\`a}g paths. To this end, we employ Chentsov-like
arguments, cf. \cite{Chentsov} and \cite[Proposition 7.8]{KR}, and
thus the metric $\upsilon^*$, see (\ref{N4a}), (\ref{N5}). By Lemma
\ref{N2pn} it is complete. Set
\begin{gather}
  \label{U47}
w^\sigma_u(\gamma) = \int_{\Gamma^2_*} \upsilon^* (\gamma, \gamma')
p^\sigma_u ( \gamma, d \gamma')\\[.2cm] \nonumber W^\sigma_{u,v}
(\gamma) = \int_{\Gamma^2_*} \upsilon^* (\gamma,\gamma')
w^\sigma_u(\gamma') p^\sigma_v ( \gamma, d \gamma'), \quad u,v\geq
0.
\end{gather}
Thereafter, for  a triple $t_3 > t_2 > t_1 \geq  0$, consider
\begin{equation}
  \label{U48}
\mathcal{W}^\sigma (t_1, t_2 , t_3) =\int_{\Gamma^2_*}
W^\sigma_{t_3-t_2,t_2-t_1} (\gamma) \hat{\mu}^\sigma_{t_1} ( d
\gamma) = \int_{\Gamma^2_*} W^\sigma_{t_3-t_2,t_2-t_1} (\gamma)
{\mu}^\sigma_{t_1} ( d \gamma).
\end{equation}
Note that this $\mathcal{W}^\sigma (t_1, t_2 , t_3)$ depends also on
$\mu=\mu^\sigma_t|_{t=0}$, see Lemma \ref{U2lm}. By combining
\cite[Theorem 1]{Chentsov} and \cite[Theorems 8.6 - 8.8, pages
137-139]{EK} we obtain the following statement.
\begin{proposition}
  \label{U3pn}
Given $T>0$,  $\sigma \in (0,1]$, $s\geq 0$ and $\mu\in
\mathcal{P}_{\rm exp}$, assume that there exist $C_\sigma
> 0$ and $\delta
> 0$ such that, for each triple that satisfies $t_1\geq s$, $ t_3 \leq T$ and $t_3-t_1 <
\delta$, the following holds
\begin{equation}
  \label{U49}
\mathcal{W}^\sigma (t_1, t_2 , t_3) \leq C_\sigma |t_3-t_1|^2.
\end{equation}
Then
\begin{itemize}
 \item[(i)] There exists a probability measure
$P^\sigma_{s,\mu}$ on $\mathfrak{D}_{[0,+\infty)}(\Gamma^2_*)$
uniquely determined by its finite dimensional marginals, cf.
\cite[eq. (1.10), page 157]{EK}, expressed by the formula
\begin{eqnarray}
  \label{U50}
& & P^\sigma_{s,\mu} (\{\gamma: \varpi_{t_n} (\gamma) \in
\mathbb{A}_n, \varpi_{t_{n-1}}(\gamma) \in \mathbb{A}_{n-1}, \dots ,
\varpi_{t_1} (\gamma) \in \mathbb{A}_1, \varpi_{0} (\gamma) \in
\mathbb{A}_0\})
\qquad  \\[.2cm] \nonumber & &  \ =
\int_{\mathbb{A}_{n-1}} \cdots \int_{\mathbb{A}_0}
p^\sigma_{t_{n}-t_{n-1}} (\gamma_{n-1}, \mathbb{A}_n)
p^\sigma_{t_{n-1}-t_{n-2}} (\gamma_{n-2}, d \gamma_{n-1}) \cdots
p^\sigma_{t_{2}-t_{1}} (\gamma_{1}, d \gamma_2) \\[.2cm] \nonumber &
& \ \times p^\sigma_{t_1} (\gamma_0, d \gamma_1) \mu(d \gamma_0),
\end{eqnarray}
holding for all $n\in \mathds{N}$, $t_n > t_{n-1} \cdots t_1$ and
$\mathbb{A}_j \in \mathcal{B}(\Gamma^2_*)$, $j=0, \dots , n$.
\item[(ii)] If the estimate in (\ref{U49}) holds for all $\sigma\in (0,1]$ with one and the same $C >
0$,and the family $\{\hat{\mu}_t^\sigma\}_{\sigma \in (0,1]}$ is
tight for all $t>0$, then the family $\{P^\sigma_{s,\mu}\}_{\sigma
\in (0,1]}$ of measures mentioned in (i) is also tight, and hence
has accumulation points in the weak topology.
\end{itemize}
\end{proposition}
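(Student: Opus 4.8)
The plan is to obtain (i) by combining the Chapman--Kolmogorov structure of $p^\sigma_t$ with Chentsov's theorem, and (ii) by combining the uniform version of the estimate (\ref{U49}) with the assumed tightness of the time marginals through the Ethier--Kurtz relative compactness criteria.

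For (i), I would first note that $p^\sigma_t$ is a genuine Markov transition function in the sense of \cite[eqs. (1.3)--(1.6), page 156]{EK}: measurability and the normalization $p^\sigma_t(\gamma,\Gamma^2_*)=1$ follow from the stochasticity of $S^\sigma$ established in Lemma \ref{U1lm} and from (\ref{U10}), while the Chapman--Kolmogorov identity is just the semigroup property $S^\sigma(t+u)=S^\sigma(t)S^\sigma(u)$. Hence, by \cite[Theorem 1.1, page 157]{EK} --- equivalently by Kolmogorov's extension theorem, applicable since $\Gamma^2_*$ is Polish by Lemma \ref{N2pn} --- the right-hand sides of (\ref{U50}) form a consistent family of finite-dimensional distributions and determine a probability measure $\widetilde P$ on $(\Gamma^2_*)^{[0,+\infty)}$ under which the coordinate process $\mathcal X^\sigma$ is Markov with transition function $p^\sigma_t$ and has one-dimensional marginals $\widetilde P\circ\varpi_t^{-1}=S^\sigma(t)\mu=\widehat\mu^\sigma_t=\mu^\sigma_t$, the last equality by Lemma \ref{U2lm}. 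Unwinding the definitions (\ref{U47}) and using the Markov property together with time homogeneity identifies the quantity in (\ref{U48}) as
\[
\mathcal W^\sigma(t_1,t_2,t_3)=\mathbb E\bigl[\upsilon^*(\mathcal X^\sigma_{t_1},\mathcal X^\sigma_{t_2})\,\upsilon^*(\mathcal X^\sigma_{t_2},\mathcal X^\sigma_{t_3})\bigr],
\]
so that (\ref{U49}) is exactly Chentsov's two-increment condition for $\mathcal X^\sigma$ relative to the complete metric $\upsilon^*$ (Lemma \ref{N2pn}), with increment exponents $1$ and time exponent $2>1$. By \cite[Theorem 1]{Chentsov}, $\mathcal X^\sigma$ then admits on $[0,+\infty)$ a modification with no discontinuities of the second kind, i.e. a $\mathfrak D_{[0,+\infty)}(\Gamma^2_*)$-valued modification; its law is the measure $P^\sigma_{s,\mu}$ of (i). As $\Gamma^2_*$ is Polish, $\mathfrak D_{[0,+\infty)}(\Gamma^2_*)$ is Polish with Borel $\sigma$-field generated by the evaluation maps, so $P^\sigma_{s,\mu}$ is uniquely pinned down by the marginals (\ref{U50}).

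For (ii), I would invoke the Ethier--Kurtz relative compactness criteria \cite[Theorems 8.6--8.8, pages 137--139]{EK}: a family of laws on $\mathfrak D_{[0,+\infty)}(\Gamma^2_*)$ is relatively compact once (a) for every $t$ the time marginals are tight, which is the assumed tightness of $\{\widehat\mu^\sigma_t\}_{\sigma\in(0,1]}$ and supplies the compact containment condition, and (b) a modulus of continuity estimate holds uniformly in $\sigma$, which is precisely what (\ref{U49}) with one common constant $C$ provides (the Chentsov bound controls the relevant $J_1$-oscillation functional uniformly, cf. \cite[Proposition 7.8]{KR}). Relative compactness and Prohorov's theorem then give that every sequence $\sigma_n\downarrow0$ has a weakly convergent subsequence, i.e. $\{P^\sigma_{s,\mu}\}_{\sigma\in(0,1]}$ has weak accumulation points.

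The step I expect to be the real obstacle is bookkeeping rather than computation: one must check that the two-increment estimate (\ref{U49}), with its time exponent equal to $2$, matches the precise hypotheses of \cite[Theorem 1]{Chentsov}, and that for (ii) this estimate together with the marginal tightness yields both ingredients of \cite[Theorems 8.6--8.8]{EK} in the form valid for c\`adl\`ag processes with values in a Polish, not locally compact, space. The verification of (\ref{U49}) itself --- the genuinely model-dependent input --- is deferred to Lemma \ref{U4lm}.
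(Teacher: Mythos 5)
Your proposal is correct and takes essentially the same approach as the paper, which simply cites \cite[Theorem 1]{Chentsov} together with \cite[Theorems 8.6--8.8]{EK} without supplying details. You have supplied the missing bookkeeping — in particular the identification of $\mathcal{W}^\sigma(t_1,t_2,t_3)$ via the Markov property as $\mathbb{E}\bigl[\upsilon^*(\mathcal X^\sigma_{t_1},\mathcal X^\sigma_{t_2})\,\upsilon^*(\mathcal X^\sigma_{t_2},\mathcal X^\sigma_{t_3})\bigr]$, which is exactly the form required to apply Chentsov's two-increment criterion with exponents $a=b=1$ and time exponent $1+\gamma=2$ — and these details are accurate.
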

Note that the tightness mentioned in item (ii) of the latter
statement follows by Prohorov's theorem and Lemmas \ref{Sqlm} and
\ref{U2lm}.
\begin{lemma}
  \label{U4lm}
For every $s\geq 0$ and $\mu\in \mathcal{P}_{\rm exp}$, the estimate
as in (\ref{U49}) holds for all $\sigma\in (0,1]$ with one and the
same $C>0$, dependent on $T$ only.
\end{lemma}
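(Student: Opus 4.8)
The plan is to verify the hypotheses of Proposition \ref{U3pn} with $\delta=1$ and a constant $C$ that does not depend on $\sigma\in(0,1]$, essentially repeating the scheme of \cite[proof of Proposition 7.8]{KR}. Write $u=t_2-t_1$, $v=t_3-t_2$, so that $u,v<1$ and $u+v=t_3-t_1$. By (\ref{U47}), (\ref{U48}) and Lemma \ref{U2lm},
\[
\mathcal W^\sigma(t_1,t_2,t_3)=\int_{\Gamma^2_*}W^\sigma_{v,u}(\gamma)\,\mu^\sigma_{t_1}(d\gamma),\qquad
W^\sigma_{v,u}(\gamma)=\int_{\Gamma^2_*}\upsilon^*(\gamma,\gamma')\,w^\sigma_u(\gamma')\,p^\sigma_v(\gamma,d\gamma'),
\]
with $w^\sigma_u$ as in (\ref{U47}). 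Hence it suffices to prove the two $\sigma$-uniform bounds
\[
w^\sigma_u(\gamma)\le C_1\,u\,\varPsi_1(\gamma),\qquad W^\sigma_{v,u}(\gamma)\le C_2\,uv\,\varPsi_1^2(\gamma)\qquad(u,v\le1),
\]
and then to integrate the second against $\mu^\sigma_{t_1}=\hat\mu^\sigma_{t_1}$. Throughout, $\mathbb E_\gamma$ denotes expectation for the pure-jump process $\mathcal X^\sigma$ started at $\gamma$, whose elementary act moves a single particle; everything below can equivalently be read off the Dyson expansion of $S^\sigma(t)=e^{t(A+B)}$ from (\ref{U12}), the term with $n$ applications of $B$ describing $n$ jumps.

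\emph{Step 1 (bound on $w^\sigma_u$).} The crucial $\sigma$-free input is that $\psi_\sigma\le1$, so that $a_i^\sigma(x,y)=a_i(x-y)\psi_\sigma(x)\le a_i(x-y)$, i.e.\ the jump intensities of $L^\sigma$ are dominated by those of $L$ uniformly in $\sigma$. Since the process is piecewise constant between jumps, the triangle inequality for the bounded metric $\upsilon^*$ (Lemma \ref{N2pn}) gives $\upsilon^*(\gamma,\mathcal X^\sigma_u)\le\sum_{\text{jumps}\le u}s_j$, where a type-$i$ move $x\to y$ changes only the $i$-th summand of $\upsilon^*$ and therefore contributes $s_j\le\sup_{\|g\|_{BL}\le1}|g(y)\psi(y)-g(x)\psi(x)|\le|\psi(y)-\psi(x)|+|x-y|\psi(x)$. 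By Dynkin's formula (after a routine localization), $w^\sigma_u(\gamma)=\mathbb E_\gamma[\upsilon^*(\gamma,\mathcal X^\sigma_u)]\le\mathbb E_\gamma\!\bigl[\int_0^u\lambda^*(\mathcal X^\sigma_r)\,dr\bigr]$, where, by (\ref{U36}), (\ref{U37}), (\ref{U39}) and (\ref{C8}), (\ref{L2}),
\[
\lambda^*(\eta):=\sum_{i=0,1}\sum_{x\in\eta_i}\int_X a_i(x-y)\bigl[|\psi(y)-\psi(x)|+|x-y|\psi(x)\bigr]dy\le\Bigl(\bar\alpha+\max_{i}\bar a_i^{(1)}\Bigr)\varPsi(\eta)=:c_*\varPsi(\eta)
\]
is free of $\sigma$ and of the $\phi_i$. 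Combining this with the $\sigma$-uniform moment estimate $\mathbb E_\gamma[\varPsi_1(\mathcal X^\sigma_r)]=(S^\sigma(r)\delta_\gamma)(\varPsi_1)\le e^{\bar\alpha r}\varPsi_1(\gamma)$ — which follows from (\ref{U40})--(\ref{U41}), valid for every $\sigma\in[0,1]$ with the $\sigma$-free constant $(2^n-1)\bar\alpha$ — yields $w^\sigma_u(\gamma)\le c_*\bigl(\int_0^u e^{\bar\alpha r}dr\bigr)\varPsi_1(\gamma)\le c_*e^{\bar\alpha}\,u\,\varPsi_1(\gamma)$ for $u\le1$.

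\emph{Step 2 (bound on $W^\sigma_{v,u}$ and conclusion).} Inserting Step 1 into $W^\sigma_{v,u}$ reduces us to bounding $\mathbb E_\gamma[\upsilon^*(\gamma,\mathcal X^\sigma_v)\varPsi_1(\mathcal X^\sigma_v)]\le\mathbb E_\gamma[V_v\varPsi_1(\mathcal X^\sigma_v)]$, $V_v=\sum_{\text{jumps}\le v}s_j$. Applying Dynkin's formula to $V\varPsi_1(\mathcal X^\sigma_\cdot)$, whose increment at a type-$i$ move $x\to y$ equals $V(\psi(y)-\psi(x))+\varPsi_1 s+s(\psi(y)-\psi(x))$, and bounding the three resulting pieces of the compensator by $b_i(x)\le\bar\alpha\psi(x)$ (see (\ref{U36})--(\ref{U39})), $\lambda^*(\eta)\le c_*\varPsi(\eta)$, and the crude estimate $s\le1$ for the last (quadratic) piece, one obtains a compensator $\le C_3\bigl(V_r\varPsi_1(\mathcal X^\sigma_r)+\varPsi_1^2(\mathcal X^\sigma_r)\bigr)$ with $C_3$ independent of $\sigma$. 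Taking expectations, writing $\phi(v)=\mathbb E_\gamma[V_v\varPsi_1(\mathcal X^\sigma_v)]$ and using the second-order bound $\mathbb E_\gamma[\varPsi_1^2(\mathcal X^\sigma_r)]\le e^{3\bar\alpha r}\varPsi_1^2(\gamma)$ (again (\ref{U40})--(\ref{U41})), Grönwall's lemma gives $\phi(v)\le C_4 v\,\varPsi_1^2(\gamma)$ for $v\le1$, whence $W^\sigma_{v,u}(\gamma)\le C_1C_4\,uv\,\varPsi_1^2(\gamma)$. Finally, by Lemma \ref{U2lm} and (\ref{U41}), $\int_{\Gamma^2_*}\varPsi_1^2\,d\mu^\sigma_{t_1}=(S^\sigma(t_1)\mu)(\varPsi_1^2)\le e^{3\bar\alpha T}\mu(\varPsi_1^2)$, which is finite for $\mu\in\mathcal P_{\rm exp}$ by (\ref{W3}); so for $t_3-t_1<1$, using $uv\le(u+v)^2/4$,
\[
\mathcal W^\sigma(t_1,t_2,t_3)\le C_1C_4\,e^{3\bar\alpha T}\mu(\varPsi_1^2)\,(t_2-t_1)(t_3-t_2)\le\tfrac14\,C_1C_4\,e^{3\bar\alpha T}\mu(\varPsi_1^2)\,|t_3-t_1|^2=:C\,|t_3-t_1|^2,
\]
with $C$ depending only on $T$ (and on the fixed $s$, $\mu$, and the model data), not on $\sigma\in(0,1]$ nor on the triple.

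The main obstacle is precisely the $\sigma$-uniformity of Step 1: the \emph{total} jump rate of $L^\sigma$ is only $\varPsi^\sigma(\gamma)\le(\alpha/\sigma)\varPsi(\gamma)$, see (\ref{U9}), so a naive estimate ``probability of a jump $\lesssim$ rate $\times$ time'' degenerates as $\sigma\to0$. The point is that the extra jumps created by shrinking $\sigma$ take place far from the origin, where the weight $\psi$ is tiny, and hence barely move the process in the metric $\upsilon^*$; this is exactly what the inequality $a_i^\sigma\le a_i$ together with (\ref{U36})--(\ref{U39}) encode, rendering both the $\psi$-weighted rate $\lambda^*$ and the moment estimates (\ref{U40})--(\ref{U41}) independent of $\sigma$. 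A secondary technical point is that the quadratic term in Step 2 must be handled via $s\le1$ rather than via $s\lesssim|\psi(y)-\psi(x)|+|x-y|\psi(x)$, since squaring the latter would require moments $\int a_i(z)|z|^{2(d+1)}dz$ beyond those assumed in (\ref{C8}).
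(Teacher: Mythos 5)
Your proposal is essentially correct and implements the same estimate scheme as the paper: the crucial $\sigma$-uniform observation is $a_i^\sigma(x,y)\le a_i(x-y)$, which lets you bound the $\psi$-weighted jump contribution rate $\lambda^*(\eta)\lesssim\varPsi(\eta)$ independently of $\sigma$; you then propagate moments of $\varPsi$ and $\varPsi^2$ via Gr\"onwall using (\ref{U40})--(\ref{U41}) (the paper does this in (\ref{U58})--(\ref{U69})) and finally integrate against the sub-Poissonian $\mu^\sigma_{t_1}$ using (\ref{W3}). The difference is purely one of formulation: the paper works analytically through the integral equation (\ref{U51}) for $p^\sigma_t$, repeatedly plugging it into $w^\sigma$, $V^\sigma$, $K^\sigma$ and bounding $|L^\sigma\upsilon^*(\gamma,\cdot)|$, $|L^\sigma\varPsi|$, $|L^\sigma\varPsi^2|$, $|L^\sigma(\varPsi\upsilon^*(\gamma,\cdot))|$ pointwise; you recast the same computations in probabilistic language (Dynkin/It\^o for the pure-jump process, accumulated jump-size functional $V_v$). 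One advantage of the paper's route is that it avoids having to discuss the pathwise functional $V_v$ and the accompanying localization: since $V_r\varPsi_1(\mathcal X^\sigma_r)$ is not a function of the current state, your Step 2 really uses the jump-process It\^o formula rather than Dynkin's formula for the generator, and the attendant localization (the total rate $\varPsi^\sigma$ being unbounded on $\Gamma^2_*$) deserves a word more than ``routine''; the paper sidesteps this by computing $\int\upsilon^*(\gamma,\cdot)\varPsi\,dp^\sigma_v(\gamma,\cdot)$ directly, for which (\ref{U51}) suffices. Two cosmetic slips: the per-jump contribution satisfies $s\le 2$, not $s\le 1$ (since $\psi\le1$ gives $|g(y)\psi(y)-g(x)\psi(x)|\le2$), and your $c_*=\bar\alpha+\max_i\bar a_i^{(1)}$ differs from the paper's $\alpha+\bar\alpha$ only because the paper absorbs the $|x-y|$ term into $h$; neither affects the conclusion.
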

\begin{proof}
For convenience, we take here $s=0$ -- the proof for $s>0$ is
completely analogous. Then we begin by recalling that
$\delta_\gamma$ is in $\mathcal{D}(L^{\dagger, \sigma})$, see Remark
\ref{Urk}. Thus, by (\ref{U10}) and the corresponding formulas, see
e.g., \cite[eq. (1.16), page 9]{EK}, we have
\begin{equation}
  \label{U51}
p^\sigma_t (\gamma, \cdot) = \delta_\gamma (\cdot) + \int_0^t
L^{\dagger, \sigma} p^\sigma_s (\gamma, \cdot) ds.
\end{equation}
We use this in (\ref{U47}), which yields
\begin{eqnarray}
  \label{U52}
w^\sigma_u(\gamma) & = & w^\sigma_0(\gamma) +
\int_0^u\left(\int_{\Gamma^2_*} \upsilon^* (\gamma, \gamma')
L^{\dagger, \sigma} p^\sigma_s (\gamma, d\gamma')\right) d s
\\[.2cm] \nonumber & = & \int_0^u\left(\int_{\Gamma^2_*} \upsilon^* (\gamma, \gamma')
L^{\dagger, \sigma} p^\sigma_s (\gamma, d\gamma')\right) d s\\[.2cm] \nonumber & = & \int_0^u\left(\int_{\Gamma^2_*}
L^\sigma\upsilon^* (\gamma, \gamma')p^\sigma_s (\gamma,
d\gamma')\right) d s,
\end{eqnarray}
see  (\ref{U11}). The second equality in (\ref{U52}) follows by  the
fact that $w^\sigma_0(\gamma) = \upsilon^*(\gamma,\gamma)=0$ as
$\upsilon^*$ is a metric. The function $\gamma' \mapsto L^\sigma
\upsilon^* (\gamma, \gamma')=: J^\sigma_\gamma (\gamma')$ has the
following form, see (\ref{N4a}), (\ref{N5}),
\begin{eqnarray}
  \label{U53}
J^\sigma_\gamma (\gamma') & = & \sum_{x\in \gamma'_0}\int_X
a^\sigma_0 (x,y) \exp\left(-\sum_{z\in \gamma_1}\phi_0(z-y) \right)
\left[\upsilon^* (\gamma, \gamma' \setminus x\cup_0 y) -
\upsilon^* (\gamma, \gamma') \right] d y\qquad \\[.2cm] \nonumber & + &
\sum_{x\in \gamma'_1}\int_X a^\sigma_1 (x,y) \exp\left(-\sum_{z\in
\gamma_0}\phi_1(z-y) \right) \left[\upsilon^* (\gamma, \gamma'
\setminus x\cup_1 y) - \upsilon^* (\gamma, \gamma') \right] d y.
\end{eqnarray}
By the triangle inequality for the metric $\upsilon_*$ we then get
\begin{eqnarray}
  \label{U54}
\left|J^\sigma_\gamma (\gamma') \right| & \leq  & \sum_{x\in
\gamma'_0}\int_X a_0 (x-y) \upsilon (\psi(\gamma'_0 \setminus
x\cup y), \psi\gamma_0') d y\\[.2cm] \nonumber & + & \sum_{x\in \gamma'_1}\int_X a_1
(x-y) \upsilon (\psi(\gamma'_1 \setminus x\cup y), \psi\gamma_1') d
y.
\end{eqnarray}
By (\ref{N4}) and (\ref{N5}) it follows that
\begin{equation}
  \label{U55}
\upsilon (\psi(\gamma'_i \setminus x\cup y), \psi\gamma_i') \leq
\sup_{g: \|g\|_{BL}\leq 1} \left|g(x) \psi(x) - g(y)\psi(y) \right|.
\end{equation}
Proceeding similarly as in (\ref{U37}) we obtain
\begin{gather*}
%  \label{U56}
\left|g(x) \psi(x) - g(y)\psi(y) \right| = \psi(x) \psi(y)
\left|\frac{g(x)}{\psi(y)} - \frac{g(y)}{\psi(x)} \right| \\[.2cm]
\nonumber \leq \psi(x)\psi(y) g(y)\left| \frac{1}{\psi(y)} -
\frac{1}{\psi(x)} \right| + \psi(x) \left|g(x)-g(y) \right| \\[.2cm]
\nonumber \leq \psi(x) \left[|x-y| + \sum_{l=1}^{d+1} {d+1 \choose
l} |x-y|^l \right].
\end{gather*}
We apply the latter in (\ref{U55}), (\ref{U54}) and then arrive at
the following, see (\ref{U39}) and (\ref{L2}),
\begin{equation}
  \label{U57}
\left|J^\sigma_\gamma (\gamma') \right|  \leq (\alpha +
\bar{\alpha}) \varPsi(\gamma'),
\end{equation}
which is uniform in  $\sigma$. By (\ref{U57}) we then get from
(\ref{U52}) the following estimate
\begin{equation}
  \label{U58}
w^\sigma_u(\gamma) \leq (\alpha + \bar{\alpha})\int_0^u
\kappa^\sigma_s (\gamma) d s, \qquad \kappa^\sigma_s (\gamma):=
\int_{\Gamma^2_*} \varPsi(\gamma')p^\sigma_s (\gamma, d \gamma').
\end{equation}
By (\ref{U51}), similarly as in (\ref{U52}) we have
\begin{equation}
  \label{U59}
 \kappa^\sigma_s (\gamma) =\varPsi(\gamma) + \int_0^s
\left(\int_{\Gamma^2_*} L^\sigma \varPsi(\gamma') p^\sigma_v
(\gamma, d \gamma') \right) d v
\end{equation}
Proceeding as in (\ref{U40}) we obtain
\[
\left|L^\sigma \varPsi(\gamma') \right| \leq \bar{\alpha}
\varPsi(\gamma'),
\]
by which we obtain from (\ref{U58}), (\ref{U59}) the following
\begin{equation}
  \label{U60}
\kappa^\sigma_s (\gamma) \leq \varPsi(\gamma) + \bar{\alpha}
\int_0^s \kappa^\sigma_v (\gamma) d v,
\end{equation}
which by the Gr\"onwall inequality and (\ref{U58}) leads to
\begin{equation}
  \label{U61}
w^\sigma_u(\gamma) \leq (\alpha + \bar{\alpha}) u e^{\bar{\alpha}u}
\varPsi(\gamma).
\end{equation}
Now we may pass to estimating $W^\sigma_{u,v}(\gamma)$. By the
second line in (\ref{U47}) and (\ref{U61}) we have
\begin{equation}
  \label{U62}
W^\sigma_{u,v}(\gamma) \leq (\alpha + \bar{\alpha}) ue^{\bar{\alpha}
u} V^\sigma_v(\gamma), \qquad V^\sigma_v(\gamma) :=
\int_{\Gamma_*^2} \upsilon^* (\gamma, \gamma') \varPsi(\gamma')
p^\sigma_v ( \gamma, d \gamma').
\end{equation}
Here we again apply (\ref{U51}) and then get, cf. (\ref{U52}),
\begin{equation}
  \label{U63}
V^\sigma_v(\gamma) = \int_0^v\left( \int_{\Gamma_*^2} \left[
L^\sigma \varPsi(\gamma') \upsilon^*(\gamma, \gamma') \right]
p^\sigma_s ( \gamma, d \gamma') \right) d s
\end{equation}
Proceeding as in (\ref{U53}) we get, see also (\ref{U36}),
\begin{eqnarray*}
%  \label{U64}
\left| L^\sigma \varPsi(\gamma') \upsilon^*(\gamma, \gamma') \right|
& \leq & \sum_{x\in \gamma'_0} \int_X a_0(x-y)
\bigg{|}\varPsi(\gamma'\setminus x \cup_0 y)
\upsilon^*(\gamma, \gamma' \setminus x \cup_0 y) \\[.2cm] \nonumber & & \qquad \qquad \qquad \qquad \qquad  -
\varPsi(\gamma') \upsilon^*(\gamma,
\gamma') \bigg{|} dy \\[.2cm] \nonumber & + &\sum_{x\in \gamma'_1} \int_X a_1(x-y)
\bigg{|}\varPsi(\gamma'\setminus x \cup_1 y)
\upsilon^*(\gamma, \gamma' \setminus x \cup_1 y) \\[.2cm] \nonumber & & \qquad \qquad \qquad \qquad \qquad  - \varPsi(\gamma')
\upsilon^*(\gamma, \gamma') \bigg{|} dy \\[.2cm] \nonumber & \leq &
2\bigg{(} \sum_{x\in \gamma'_0} b_0 (x) +  \sum_{x\in \gamma'_1}
b_1(x)\bigg{)}  \\[.2cm] \nonumber & + &
\varPsi(\gamma') \bigg{(} \sum_{x\in \gamma_0} \int_X a_0(x-y)
\upsilon^* (\gamma'\setminus x \cup_0 y, \gamma') dy  \\[.2cm] \nonumber & & \qquad + \sum_{x\in \gamma_0} \int_X a_1(x-y)
\upsilon^* (\gamma'\setminus x \cup_1 y, \gamma') dy\bigg{)}  \\[.2cm] \nonumber & \leq & 2 \bar{\alpha}
 \varPsi(\gamma') + (\alpha + \bar{\alpha}) \varPsi^2(\gamma'),
\end{eqnarray*}
where we used also (\ref{U39}) and (\ref{U54}). Now we apply the
latter estimate in (\ref{U63}) and obtain
\begin{equation}
  \label{U65}
V^\sigma_v(\gamma) \leq 2 \alpha \int_0^v \kappa^\sigma_s (\gamma) d
s  + (\alpha + \bar{\alpha}) \int_0^v  K^\sigma_s (\gamma) d s,
\end{equation}
where $\kappa^\sigma_s (\gamma)$ is the same as in (\ref{U58}) and
\begin{equation*}
  %\label{U66}
K^\sigma_s (\gamma) = \int_{\Gamma^2_*} \varPsi^2 (\gamma')
p_s^\sigma (\gamma, d \gamma').
\end{equation*}
By (\ref{U51}) we have, cf. (\ref{U59}), (\ref{U60}),
\begin{equation}
  \label{U67}
K^\sigma_s (\gamma) =  \varPsi^2(\gamma) + \int_0^s
\left(\int_{\Gamma^2_*} (L^\sigma \varPsi^2(\gamma') ) p_u^\sigma
(\gamma, d \gamma')\right) d u.
\end{equation}
Similarly as in (\ref{U40}) it follows that
\[
\left|(L^\sigma \varPsi^2)(\gamma') \right| \leq 3 \bar{\alpha}
\varPsi^2 (\gamma'),
\]
by which and the Gr\"onwall inequality we get from (\ref{U67}) the
following estimate
\begin{equation}
  \label{U68}
K^\sigma_s (\gamma) \leq \varPsi^2(\gamma) e^{3 \bar{\alpha} s}.
\end{equation}
Now we use (\ref{U60}) and (\ref{U68}) in (\ref{U65}) and arrive at
\begin{equation}
  \label{U69}
V^\sigma_v(\gamma) \leq v\left(2 \alpha e^{\bar{\alpha}v}
\varPsi(\gamma) + (\alpha + \bar{\alpha}) e^{3 \bar{\alpha} v}
\varPsi^2(\gamma)\right)
\end{equation}
Now we may turn to (\ref{U48}) where we use the  estimate, see
(\ref{W3}),
\[
\int_{\Gamma^2_*} \varPsi^n (\gamma) \mu_{t_1}(d \gamma)\leq T_n
(\varkappa \langle \psi \rangle), \qquad n=1,2,
\]
and the fact that $\mu_{t_1} \in \mathcal{P}_{\rm exp}$ of type  not
exceeding $\exp(\vartheta_0 + \alpha t_1) \leq \exp(\vartheta_0 +
\alpha T) =:\varkappa$, see Proposition \ref{W1pn} and Remark
\ref{W1rk}. Here $e^{\vartheta_0}$ is the type of $\mu$. By
(\ref{U62}) and (\ref{U69}) we then conclude that
$\mathcal{W}^\sigma(t_1, t_2 , t_3)$ satisfies (\ref{U49}) with
\[
C = 2 \alpha (\alpha + \bar{\alpha}) e^{2 \bar{\alpha}T} T_1
(\varkappa \langle \psi \rangle) + (\alpha + \bar{\alpha})^2 e^{4
\bar{\alpha}T} T_2 (\varkappa \langle \psi \rangle),
\]
which ends the proof.
\end{proof}

\section{Completing the Proof}

Here the hardest part is the proof of item (i), whereas the validity
of (iii) is rather standard, see  cf. \cite[Theorem 5.1.2, claim
(iv), page 80]{Dawson}.

\subsection{Proving item (i)} First we prove \underline{existence}
by employing the fact that, for a given $\mu\in \mathcal{P}_{\rm
exp}$ and $s\geq 0$, the measure in question, $P_{s,\mu}$, is
obtained as an accumulation point of the family
$\{P^\sigma_{s,\mu}\}_{\sigma \in (0,1]}$. Our aim now is to prove
that such accumulation points have properties (a), (b), (c)
mentioned in Definition \ref{A1df}.

To check (a), we note that $P_{s,\mu}\circ \varpi_s^{-1}=\mu$ since
$P^\sigma_{s,\mu}\circ \varpi_s^{-1}=\mu$ for all $\sigma\in (0,1]$.
Furthermore, by Lemmas \ref{U2lm} and \ref{Sqlm} it follows that
$P^\sigma_{s,\mu}\circ \varpi^{-1}_t \Rightarrow \mu_t$ as $\sigma
\to 0$, which yields $P_{s,\mu}\circ \varpi^{-1}_t = \mu_t$, holding
for all accumulation points in view of Lemma \ref{W2lm}. These facts
yield the validity of (b) of Definition \ref{A1df}.

To check (c), we take $\sf G$ as in (\ref{A1200}) with fixed
$t_2>t_1>s$, $m\in \mathds{N}$ and $s_1 < s_2 <\cdots < s_m$, $s_1
\geq s$, $s_m \leq t_1$. Then we recall that $\mu_{s_1}^\sigma =
\hat{\mu}^\sigma_{s_1}= S^\sigma(s_1 - s) \mu$, the type of which
does not exceed $e^{\vartheta(s_1-s)}$, $\vartheta(t) = \vartheta_0
+ \alpha t$, see Lemma \ref{U2lm}, and set $\chi^\sigma_{s_1} =
C^{-1}_{1,\sigma} F_1  \mu^\sigma_{s_1}$, that is,
\begin{gather}
  \label{U70}
\chi^\sigma_{s_1} (d \gamma) = C^{-1}_{1,\sigma} F_1 (\gamma)
\mu^\sigma_{s_1}(d \gamma), \qquad  C_{1,\sigma} := \mu_{s_1}^\sigma
(F_1).
\end{gather}
Note that $C_{1,\sigma} >0$ since each $F\in
\widetilde{\mathcal{F}}$ is strictly positive, see (\ref{T4}) and
(\ref{Sq29}). By claim (d) of Proposition \ref{T3pn} it follows that
$\chi^\sigma_{s_1}\in \mathcal{P}_{\rm exp}$, and its type does not
exceed that of $\mu_{s_1}$, and hence $\exp(\vartheta(s_1-s))$. Then
we define recursively
\begin{gather}
  \label{U71}
\tilde{\chi}^\sigma_{s_l} (d \gamma) = (S^\sigma(s_l - s_{l-1})
\chi^\sigma_{s_{l-1}})(d\gamma) = \int_{\Gamma^2_*} p^{\sigma}_{s_l
- s_{l-1}} (\gamma', d \gamma) \chi^{\sigma}_{s_{l-1}}( d \gamma'),
\\[.2cm] \nonumber
\chi^\sigma_{s_l} (d \gamma) = C_{l,\sigma}^{-1} F_l(\gamma)
\tilde{\chi}^\sigma_{s_l} (d \gamma) , \qquad C_{l,\sigma} :=
\tilde{\chi}^\sigma_{s_l} (F_l), \qquad l \leq m.
\end{gather}
As above, for all $l\leq m$, $\chi^\sigma_{s_l}$ is sub-Poissonian
of type $\leq \exp(\vartheta(s_l-s))$. Now we take $F\in
\mathcal{D}(L)$, see Definition \ref{THdf}, $t\in [s_m, t_2]$,  set
\begin{equation}
  \label{U72}
{\sf F}_t = F\circ \varpi_t, \qquad {\sf K}_t = (LF)\circ \varpi_t,
\qquad {\sf K}^\sigma_t = (L^\sigma F)\circ \varpi_t, \quad \
\sigma\in (0.1],
\end{equation}
and then consider $P^\sigma_{s,\mu} ({\sf F}_t {\sf G})$ with $\sf
G$ as just discussed. By (\ref{U50}) it follows that
\begin{equation}
  \label{U73}
P^\sigma_{s,\mu} ({\sf F}_t {\sf G}) = C_\sigma
P^\sigma_{s,\chi^\sigma_{s_m}} ({\sf F}_t ) = C_\sigma
P^\sigma_{s,\chi^\sigma_{s_m}}(F\circ \varpi_t) =: C_\sigma
\mu^{\sigma,s_m}_t (F),
\end{equation}
with $C_\sigma= P^\sigma_{s,\mu} ({\sf G})>0$. By (\ref{U50})
\begin{equation}
  \label{U73a}
\mu^{\sigma,s_m}_t = S^\sigma (t-s_m) \chi^\sigma_{s_m},
\end{equation}
and the type of $\mu^{\sigma,s_m}_t$ is $\leq e^{\vartheta (t-s)}$.
By (\ref{U73a}) it follows that
\[
\mu^{\sigma,s_m}_{t_2} (F) - \mu^{\sigma,s_m}_{t_1} (F) =
\int_{t_1}^{t_2} \mu^{\sigma,s_m}_{t} (L^\sigma F) dt =
\int_{t_1}^{t_2} P^\sigma_{s,\mu} ({\sf F}_t {\sf G}) dt,
\]
see (\ref{U73}), which yields $P^\sigma_{s,\mu}({\sf H})=0$, holding
for all $\sigma\in (0,1]$.

Now let $P_{s,\mu}$ be an accumulation point of the family
$\{P^\sigma_{s,\mu}\}_{\sigma \in (0,1]}$. By Lemmas \ref{Sqlm} and
\ref{U2lm}, all such accumulation points have the same one
dimensional marginals coinciding with $\mu_t$. For this $P_{s,\mu}$,
let $\{\sigma_n\}_{n\in \mathds{N}}\subset (0,1]$, $\sigma_n \to 0$,
be such that $P^{\sigma_n}_{s,\mu}\Rightarrow P_{s,\mu}$ as $n\to
+\infty$. Then set, cf. (\ref{U73}),
\begin{gather}
  \label{U73z}
C_n = C_{\sigma_n}= P^{\sigma_n}_{s,\mu}({\sf G}), \qquad C_{\infty}
= P_{s,\mu}({\sf G}), \\[.2cm] \nonumber
\mu^{s_m}_t (\mathbb{A}) = C^{-1}_{\infty} P_{s,\mu}({\sf
G}\mathds{1}_{\mathbb{A}}\circ \varpi_t), \qquad \mathbb{A}\in
\mathcal{B}(\Gamma^2_*), \qquad t\in [s_m, t_2].
\end{gather}
Let us show that the assumed convergence $P^{\sigma_n}_{s,\mu}
\Rightarrow P_{s,\mu}$ implies $\mu_t^{\sigma_n, s_m} \Rightarrow
\mu^{s_m}_t$, as $n\to+\infty$. To this end, by $\tilde{\chi}_{s_l}
$ we denote $\mu_{s_l}^{s_{l-1}}$, where $\mu_t^{s_{l-1}}$, $t\geq
s_{l-1}$ is the solution  of (\ref{FPE}) with the initial condition
$\chi_{s_l}:=C^{-1}_{l-1, \infty} F_{l-1} \tilde{\chi}_{s_{l-1}}$,
$l=2, \dots m$, where $C_{l,\infty} = \tilde{\chi}_{s_{l}}(F_l)$,
cf. (\ref{U71}), and $\tilde{\chi}_{s_1} = \mu_{s_1}=P_{s,\mu}\circ
\varpi^{-1}_{s_1}$, which solves (\ref{FPE}) on $[s,s_1]$ with the
initial condition $\mu$. The assumed convergence of the path
measures implies $\tilde{\chi}_{s_1}^{\sigma_n} \Rightarrow
\tilde{\chi}_{s_1}$, see (\ref{U70}). By Lemma \ref{Sqlm} this
yields $\tilde{\chi}_{s_2}^{\sigma_n} \Rightarrow
\tilde{\chi}_{s_2}$, and thus $\tilde{\chi}_{s_l}^{\sigma_n}
\Rightarrow \tilde{\chi}_{s_l}$ for all $l\leq m$. Since
$\mu_{t}^{s_m}$ defined in (\ref{U73z}) is the solution of
(\ref{FPE}) on $[s_m , t]$ with the initial condition
$\chi_{s_m}:=C^{-1}_{m, \infty} F_{m} \tilde{\chi}_{s_{m}}$, this
yields the convergence in question. By Proposition \ref{Sqqpn} this
yields in turn that $\mu^{s_m}_t\in \mathcal{P}_{\rm exp}$ and the
type of $\mu^{s_m}_t$ is $\leq \exp(\vartheta(t-s))$. Note that
$C_\infty$ defined in (\ref{U73z}) is $C_{m,\infty}$.

Keeping the aforementioned facts in mind we write, see (\ref{A120}),
\begin{equation}
  \label{U74}
P_{s,\mu}({\sf H}) = P_{s,\mu}({\sf F}_{t_2} {\sf G}) -
P_{s,\mu}({\sf F}_{t_1} {\sf G}) - \int_{t_1}^{t_2} P_{s,\mu}({\sf
K}_{t} {\sf G}) dt,
\end{equation}
and also set
\begin{gather}
  \label{U75}
a_n(t) = P_{s,\mu}({\sf F}_{t} {\sf G}) - P^{\sigma_n}_{s,\mu}({\sf
F}_{t} {\sf G}), \\[.2cm] \nonumber b_n(t) = P_{s,\mu}({\sf K}_{t} {\sf G}) - P^{\sigma_n}_{s,\mu}({\sf
K}_{t} {\sf G}), \\[.2cm] \nonumber c_n(t) =  P^{\sigma_n}_{s,\mu}\left(({\sf
K}_{t} - {\sf K}^{\sigma_n}_{t} ) {\sf G}\right).
\end{gather}
Since $P^\sigma_{s,\mu}({\sf H})=0$, by (\ref{U74}) and (\ref{U75})
it follows that
\begin{gather*}
%  \label{U76}
P_{s,\mu}({\sf H}) = P_{s,\mu}({\sf H}) - P^{\sigma_n}_{s,\mu}({\sf
H}) = \left[a_n(t_2)-a_n(t_1) \right] \\[.2cm] \nonumber -
\int_{t_1}^{t_2} b_n (t) dt - \int_{t_1}^{t_2} c_n (t) dt =: a_n
-b_n -c_n.
\end{gather*}
By $P_{s,\mu}^{\sigma_n} \Rightarrow P_{s,\mu}$ we have $a_n\to 0$
as $n\to +\infty$. However, the same conclusion for $b_n$ and $c_n$
does not follow in so simple way as $LF$ and $L^\sigma F$ need not
be continuous. To settle this case, by means of (\ref{U73z}) we
write
\begin{gather}
  \label{U77}
b_n = C_n \int_{t_1}^{t_2} \left( \mu^{s_m}_t (LF) -
\mu^{\sigma_n,s_m}_t (LF) \right) dt + (C_\infty - C_n)
\int_{t_1}^{t_2} \mu_t^{s_m} (L F ) dt.
\end{gather}
By item (a) of Proposition \ref{T3pn}, $LF$ is a bounded function;
hence, the second summand in (\ref{U77}) vanishes as $n\to +\infty$
since $C_n \to C_\infty$ by the assumed weak convergence, see
(\ref{U73z}). To prove the same for the first summand -- denote it
$b_n^{(1)}$ -- we employ the fact that $\mu^{\sigma_n,s_m}_t$ and
$\mu^{s_m}_t$ are sub-Poissonian and each $F\in \mathcal{D}(L)$ can
be written as $KG$ with $G\in\widetilde{ \mathcal{G}}_\infty$, see
Proposition \ref{V1pn}. Then
\begin{gather}
  \label{U77a}
\mu^{s_m}_t (LF) - \mu^{\sigma_n,s_m}_t (LF) = \langle \! \langle
k_{\mu^{s_m}_t} - k_{\mu^{\sigma_n,s_m}_t} , \widehat{L} G \rangle
\! \rangle \to 0, \qquad n\to+\infty,
\end{gather}
where we have taken into account that $\widehat{L} G \in
\widetilde{\mathcal{G}}_\infty$ whenever $G \in
\widetilde{\mathcal{G}}_\infty$, see (\ref{L9}), and also the fact
that $\mu^{\sigma_n,s_m}_t \Rightarrow \mu^{s_m}_t$ implies the
convergence of the  integrals in (\ref{U77a}), see Proposition
\ref{Sqqpn}. As mentioned above, $LF$ is a bounded function (by
claim (i) of Proposition \ref{T3pn}), which means that both terms of
the left-hand side of (\ref{U77a}) are bounded by $\sup_{\gamma\in
\Gamma^2_*} |(LF)(\gamma)|$. Together with the convergence $C_n \to
C_\infty$ this yields $b_n^{(1)}\to 0$ as $n\to +\infty$.

Now we turn to $c_n$. By (\ref{U72}) and (\ref{U73}), and then by
(\ref{Sq3}), we have
\begin{gather*}
  %\label{U77b}
c_n(t) = C_n\left[ \mu_t^{\sigma_n,s_m} (LKG) - \mu_t^{\sigma_n,s_m}
(L^{\sigma_n} KG)\right] \\[.2cm] = C_n \langle \! \langle (L^{\Delta} -
L^{\Delta,\sigma_n}) k_{\mu_t^{\sigma_n,s_m}}, G \rangle\!\rangle =
C_n \langle \! \langle \widetilde{L}^{\Delta,\sigma_n}
k_{\mu_t^{\sigma_n,s_m}}, G \rangle\!\rangle, \nonumber
\end{gather*}
cf. (\ref{Sq15}). Here we have also taken into account that
$\mu_t^{\sigma_n,s_m}\in \mathcal{P}_{\rm exp}$, that was
established above, and  the operator
$\widetilde{L}^{\Delta,\sigma_n}$ is the same as in (\ref{Sq15}). To
make precise in which spaces $\mathcal{K}_\vartheta$ it acts, we
will take into account that $G\in
\mathcal{G}_\infty=\cap_{\vartheta\in
\mathds{R}}\mathcal{G}_\vartheta$, see  Proposition \ref{V1pn}, and
that the type of each $\mu^{\sigma_n, s_m}_t$ does not exceed $\exp(
\vartheta (t_2 -s))=:e^\vartheta$. Then we write, cf. (\ref{Sq15})
and (\ref{Sq16}),
\begin{equation*}
 % \label{U78}
\langle \! \langle \widetilde{L}^{\Delta,\sigma_n}
k_{\mu_t^{\sigma_n,s_m}}, G \rangle\!\rangle =:R_n (t) =
\sum_{j=1}^4 R_{n,j}(t),
\end{equation*}
where
\begin{eqnarray*}
  %\label{U79}
R_{n,1} (t) & = & \int_{\Gamma_0} \int_{\Gamma_0} \bigg{(}
\sum_{y\in \eta_0} \int_X a^{\sigma_n}_0(x,y) e(\tau^0_y;\eta_1)
(\Upsilon^0_y k_{\mu_t^{\sigma_n,s_m}}) (\eta_0\setminus y\cup x,
\eta_1) dx \bigg{)} \\[.2cm] \nonumber &\times &  G(\eta_0,\eta_1) \lambda(d\eta_0)
\lambda(d\eta_1)\\[.2cm] \nonumber &= & \int_{\Gamma_0} \int_{\Gamma_0} \bigg{(}
\int_X \int_X a^{\sigma_n}_0(x,y) e(\tau^0_y;\eta_1) (\Upsilon^0_y
k_{\mu_t^{\sigma_n,s_m}}) (\eta_0\cup x,
\eta_1) G(\eta_0\cup y,\eta_1) dx dy \bigg{)} \\[.2cm] \nonumber &\times &   \lambda(d\eta_0)
\lambda(d\eta_1),
\end{eqnarray*}
and likewise
\begin{eqnarray*}
%  \label{U80}
R_{n,2} (t) & = & - \int_{\Gamma_0} \int_{\Gamma_0} \bigg{(} \int_X
\int_X a^{\sigma_n}_0(x,y) e(\tau^0_y;\eta_1) (\Upsilon^0_y
k_{\mu_t^{\sigma_n,s_m}}) (\eta_0\cup x,
\eta_1) G(\eta_0\cup x,\eta_1) dx dy \bigg{)} \\[.2cm] \nonumber &\times &   \lambda(d\eta_0)
\lambda(d\eta_1), \qquad \\[.2cm] \nonumber
R_{n,3}(t) &= & \int_{\Gamma_0} \int_{\Gamma_0} \bigg{(} \int_X
\int_X a^{\sigma_n}_1(x,y) e(\tau^1_y;\eta_0) (\Upsilon^1_y
k_{\mu_t^{\sigma_n,s_m}}) (\eta_0,
\eta_1\cup x) G(\eta_0,\eta_1\cup y) dx dy \bigg{)} \\[.2cm] \nonumber &\times &   \lambda(d\eta_0)
\lambda(d\eta_1),
\\[.2cm] \nonumber
R_{n,4} (t) & = & - \int_{\Gamma_0} \int_{\Gamma_0} \bigg{(} \int_X
\int_X a^{\sigma_n}_1(x,y) e(\tau^1_y;\eta_0) (\Upsilon^1_y
k_{\mu_t^{\sigma_n,s_m}}) (\eta_0,
\eta_1\cup x) G(\eta_0,\eta_1\cup x) dx dy \bigg{)} \\[.2cm] \nonumber &\times &   \lambda(d\eta_0)
\lambda(d\eta_1).
\end{eqnarray*}
Now we take into account that $k_{\mu_t^{\sigma_n,s_m}} \in
\mathcal{K}_\vartheta$ and $G\in \mathcal{G}_\infty$, see above,
employ (\ref{Sq18}), and then get, cf. (\ref{Sq19}),
\begin{equation}
  \label{U81}
\left| R_{n,j} (t)\right| \leq \int_X r^{\sigma_n}_j(y) g_i ( y) dy,
\qquad j=1, \dots , 4.
\end{equation}
with $r^{\sigma_n}_j(y)$ given in (\ref{Sq20}) and
\begin{gather}
  \label{U82}
g_1 (y) = g_2(y) = c(\vartheta) \int_{\Gamma_0}\int_{\Gamma_0}
\left|G(\eta_0\cup y, \eta_1) \right| \exp\left(\vartheta|\eta_0| +
\vartheta|\eta_1| \right) \lambda (d\eta_0) \lambda (d \eta_1),
\\[.2cm] \nonumber
g_3 (y) = g_4(y) = c(\vartheta) \int_{\Gamma_0}\int_{\Gamma_0}
\left|G(\eta_0, \eta_1\cup y) \right| \exp\left(\vartheta|\eta_0| +
\vartheta|\eta_1| \right) \lambda (d\eta_0) \lambda (d \eta_1),
\end{gather}
where $c(\vartheta)$ is the same as in (\ref{Sq22}). Note that the
bound in (\ref{U81}) is uniform in $t\in [s_m,t_2]$, for which
$k_{\mu_t^{\sigma_n,s_m}} \in \mathcal{K}_\vartheta$. Now, similarly
as in (\ref{Sq24}), we get
\begin{eqnarray*}
%  \label{U83}
& & \int_X g_1(y) d y \leq c(\vartheta)e^{-\vartheta}
\int_{\Gamma_0}\int_{\Gamma_0} |\eta_0|
 \left|G(\eta_0, \eta_1) \right|
\exp\left(\vartheta|\eta_0| + \vartheta|\eta_1| \right) \lambda
(d\eta_0) \lambda (d \eta_1) \\[.2cm] \nonumber & & \quad  \leq
c(\vartheta)e^{-\vartheta  -1 - \log \varepsilon}
\int_{\Gamma_0}\int_{\Gamma_0}
 \left|G(\eta_0, \eta_1) \right|
\exp\bigg{(}(\vartheta +\varepsilon)\left(|\eta_0| + |\eta_1|\right)
\bigg{)} \lambda (d\eta_0) \lambda (d \eta_1)  \\[.2cm] \nonumber & &
\quad = c(\vartheta)e^{-\vartheta  -1 - \log
\varepsilon}|G|_{\vartheta + \varepsilon} < \infty
\end{eqnarray*}
that holds for all $\varepsilon>0$ as $G\in \mathcal{G}_\infty$.
Similar estimates can be obtained for the remaining $|R_{n,j}(t)|$.
By the dominated convergence theorem we then get that $R_n(t) \to 0$
as $n\to +\infty$, uniformly in $t\in [s_m,t_2]$, which together
with the aforementioned convergence $C_n \to C_\infty$, yields
$c_n\to 0$ as $n\to +\infty$. Therefore, for each limiting point
$P_{s,\mu}$, it follows that $P_{s,\mu}({\sf H})=0$, that yields the
proof of item (c), and thus the existence in question.

Now we turn to \underline{uniqueness}. To this end we employ the
following fact.
\begin{proposition}
  \label{UKpn}
Assume that two solutions $\{P^{(j)}_{s,\mu}: s\geq 0, \ \mu \in
\mathcal{P}_{\rm exp}, \ j=1,2\}$, see Definition \ref{A1df},
satisfy $P^{(1)}_{s,\mu}\circ \varpi^{-1}_t= P^{(2)}_{s,\mu}\circ
\varpi^{-1}_t$, holding for all $t\geq s$, $s\geq 0$, and $\mu \in
\mathcal{P}_{\rm exp}$. Then they coincide, i.e., $P^{(1)}_{s,\mu} =
P^{(2)}_{s,\mu}$ for all $s\geq 0$ and $\mu \in \mathcal{P}_{\rm
exp}$.
\end{proposition}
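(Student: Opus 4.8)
The plan is to deduce equality of the whole path measures from equality of their one-dimensional marginals, following the standard scheme for martingale problems (cf. \cite[Section 5.1]{Dawson}), but taking care that the reduction stays inside $\mathcal{P}_{\rm exp}$ so that Lemma \ref{W2lm} is applicable. By Remark \ref{de1rk} it suffices to show that $P^{(1)}_{s,\mu}$ and $P^{(2)}_{s,\mu}$ agree on all functions of the form $\mathsf{G}(\gamma)=F_1(\varpi_{s_1}(\gamma))\cdots F_m(\varpi_{s_m}(\gamma))$ with $m\in\mathds{N}$, $F_1,\dots,F_m\in\widetilde{\mathcal{F}}$ and $s\le s_1<\cdots<s_m$. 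Indeed, since $\widetilde{\mathcal{F}}$ is measure determining on $\mathcal{P}(\Gamma^2_*)$ (Proposition \ref{T1pn}) and the $\sigma$-field $\mathfrak{F}_{s,+\infty}$ is generated by the evaluation maps $\varpi_t$, agreement on all such products determines the measure on $\mathfrak{D}_{[s,+\infty)}(\Gamma^2_*)$, cf. the references in Remark \ref{de1rk}.

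The heart of the argument is an induction on the number $n$ of time points. For $j=1,2$ and data $s\le s_1<\cdots<s_n$, $F_1,\dots,F_n\in\widetilde{\mathcal{F}}$, define the finite positive measure
\[
\rho^{(j)}_n(\mathbb{A})=P^{(j)}_{s,\mu}\big(F_1(\varpi_{s_1})\cdots F_{n-1}(\varpi_{s_{n-1}})\,(F_n\mathds{1}_{\mathbb{A}})(\varpi_{s_n})\big),\qquad\mathbb{A}\in\mathcal{B}(\Gamma^2_*).
\]
Its total mass $\rho^{(j)}_n(\Gamma^2_*)=P^{(j)}_{s,\mu}(\mathsf{G})$ is strictly positive because every element of $\widetilde{\mathcal{F}}$ is strictly positive (see (\ref{T4})). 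I will prove, by induction on $n$, that (a) $\rho^{(1)}_n=\rho^{(2)}_n$ for every choice of data, and (b) the normalized measure $\rho^{(j)}_n/\rho^{(j)}_n(\Gamma^2_*)$ lies in $\mathcal{P}_{\rm exp}$. For $n=1$, $\rho^{(j)}_1=F_1\,\mu_{s_1}$ with $\mu_{s_1}=P^{(j)}_{s,\mu}\circ\varpi_{s_1}^{-1}$; by condition (b) of Definition \ref{A1df}, $\mu_{s_1}\in\mathcal{P}_{\rm exp}$, and by hypothesis $\mu_{s_1}$ is the same for $j=1,2$, so (a) holds and (b) follows from claim (d) of Proposition \ref{T3pn}.

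For the inductive step, fix data with $n+1$ time points and put $\mathsf{G}=F_1(\varpi_{s_1})\cdots F_n(\varpi_{s_n})$, a bounded $\mathfrak{F}_{s,s_n}$-measurable function. Applying condition (c) of Definition \ref{A1df} with this $\mathsf{G}$ and arbitrary $F\in\mathcal{D}(L)$, $t_2>t_1\ge s_n$, and setting $\lambda^{(j)}_t(\mathbb{A})=P^{(j)}_{s,\mu}(\mathsf{G}\,\mathds{1}_{\mathbb{A}}(\varpi_t))$ for $t\ge s_n$, one obtains that $t\mapsto\lambda^{(j)}_t$ satisfies the Fokker--Planck equation (\ref{FPE}) on $[s_n,+\infty)$ for all $F\in\mathcal{D}(L)$; taking $F\equiv1\in\widetilde{\mathcal{F}}$, for which $L1=0$, shows that $\lambda^{(j)}_t(\Gamma^2_*)$ is independent of $t$ and hence equal to $M_n:=\rho^{(j)}_n(\Gamma^2_*)>0$, which is the same for $j=1,2$ by the inductive hypothesis. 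Measurability of $t\mapsto\lambda^{(j)}_t(\mathbb{A})$ follows from joint measurability of the evaluation map on the space of c\`adl\`ag paths together with Fubini's theorem. Thus $\hat\lambda^{(j)}_t:=M_n^{-1}\lambda^{(j)}_t$ is, in the sense of Definition \ref{A01df}, a solution of (\ref{FPE}) on $[s_n,+\infty)$ whose initial value $\hat\lambda^{(j)}_{s_n}=M_n^{-1}\rho^{(j)}_n$ belongs to $\mathcal{P}_{\rm exp}$ and is the same for both $j$. Lemma \ref{W2lm} then gives $\hat\lambda^{(1)}_t=\hat\lambda^{(2)}_t$ for all $t\ge s_n$, and by Lemma \ref{W1lm} and Proposition \ref{W1pn} this common solution stays in $\mathcal{P}_{\rm exp}$. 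Evaluating at $t=s_{n+1}$ and integrating $F_{n+1}\mathds{1}_{\mathbb{A}}$ yields $\rho^{(j)}_{n+1}(\mathbb{A})=\int_{\mathbb{A}}F_{n+1}\,d\lambda^{(j)}_{s_{n+1}}$, whence $\rho^{(1)}_{n+1}=\rho^{(2)}_{n+1}$; moreover the normalized $\rho^{(j)}_{n+1}$ equals $F_{n+1}$ times $M_n\hat\lambda^{(j)}_{s_{n+1}}\in\mathcal{P}_{\rm exp}$, which is sub-Poissonian after normalization by Proposition \ref{T3pn}(d) again. This closes the induction; in particular $P^{(1)}_{s,\mu}(\mathsf{G})=\rho^{(1)}_m(\Gamma^2_*)=\rho^{(2)}_m(\Gamma^2_*)=P^{(2)}_{s,\mu}(\mathsf{G})$, and the first paragraph completes the argument.

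I expect the main obstacle to be keeping all the intermediate weighted one-dimensional marginals inside $\mathcal{P}_{\rm exp}$: for a generic bounded $\mathsf{G}$ the measure $\mathbb{A}\mapsto P_{s,\mu}(\mathsf{G}\,\mathds{1}_{\mathbb{A}}(\varpi_{s_n}))$ need not be sub-Poissonian, so Lemma \ref{W2lm} could not be invoked — it is precisely to avoid this that one reduces, via Remark \ref{de1rk}, to $\mathsf{G}$ built from $\widetilde{\mathcal{F}}$ and uses claim (d) of Proposition \ref{T3pn} at every stage. The remaining points — positivity of the normalizations, measurability in $t$, and the right-continuity of paths needed to pass from condition (c) to (\ref{FPE}) — are routine.
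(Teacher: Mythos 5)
Your proof is correct and follows the route the paper itself indicates: the paper omits the argument, pointing to the analogous \cite[Lemma 5.4]{KR}, which proceeds by exactly this induction on the number of time points, reweighting the path measure by factors from $\widetilde{\mathcal{F}}$ (and normalizing) to keep the intermediate one-dimensional marginals inside $\mathcal{P}_{\rm exp}$, so that Lemma \ref{W2lm} can be applied at each step. The only small thing worth making explicit is that Lemma \ref{W2lm}, stated with the initial condition at $t=0$, applies on $[s_n,+\infty)$ because $L$ is time-homogeneous.
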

The proof of this statement -- based on Lemma \ref{W2lm} -- is
completely analogous to that of \cite[Lemma 5.4]{KR}, and this can
be omitted here. Then the uniqueness in question is straightforward.
This completes the proof of item (i) of the theorem.

\subsection{Proving item (ii)}  We begin by recalling that
$X=\mathds{R}^d$. Let $\{r_j\}_{j\in \mathds{N}}\subset
\mathds{R}_{+}$ be a strictly increasing sequence such that
$\lim_{j\to +\infty} r_j = +\infty$. Set $\varDelta_j = \{x\in X:
|x|< r_j\}$ and $\gamma_{i,j} = \gamma_i\cap \varDelta_j$, $i=0,1$,
$j\in \mathds{N}$, $\gamma=(\gamma_0,\gamma_1)\in \Gamma^2_*$. We
also will use the notation $\gamma_j$ for $(\gamma_{0,j},
\gamma_{1,j})$. Then we define
\begin{equation*}
  %\label{U84}
\Gamma^2_{*,j} =\{ \gamma\in \Gamma^2_*: \gamma_{j} \in
\breve{\Gamma}^2\}.
\end{equation*}
That is, $\gamma\in \Gamma^2_*$ belongs to $\Gamma^2_{*,j}$ if and
only if $n_0(x) + n_1(x) =1$, holding for all $x\in p(\gamma)\cap
\varDelta_j$, see (\ref{no16}). By (\ref{no21}) we then have
\begin{equation}
  \label{U85}
\breve{\Gamma}^2_* = \bigcap_{j\in \mathds{N}} \Gamma^2_{*,j}.
\end{equation}
Similarly as in \cite[Lemma 2.7]{KR}, one proves that each
$\Gamma^2_{*,j}$ is an open subset of $\Gamma^2_*$, see also Lemma
\ref{N2pn}. Define, cf. (\ref{no17}),
\begin{eqnarray*}
  %\label{U86}
h_N (x,y) & = & \psi(x) \psi(y) \min\{N; |x-y|^{-d\epsilon}\},
\qquad N\in \mathds{N}, \\[.2cm] \nonumber
H_N (\gamma) & = & \sum_{x\in \gamma_0}\sum_{y\in \gamma_0\setminus
x}h_N (x,y) + \sum_{x\in \gamma_1}\sum_{y\in \gamma_1\setminus x}h_N
(x,y + \sum_{x\in \gamma_0}\sum_{y\in \gamma_1}h_N (x,y),
\end{eqnarray*}
where $\epsilon \in (0,1)$ and $\psi(x)$ is as in (\ref{P}). Now for
$\mu\in \mathcal{P}_{\rm exp}$ of type $\varkappa$, similarly as in
(\ref{no18}) we get
\begin{gather}
  \label{U87}
\mu(H_N) \leq  3 \varkappa^2 \mathcal{I}_N,
\end{gather}
where, cf. (\ref{no19}),
\begin{gather*}
%\label{U88}
\mathcal{I}_N =  \int_{X^2} h_N (x,y) dx dy \leq \int_X
\psi(x) \left(\int_X \psi(y) |x-y|^{-d \epsilon} dy \right) dx  \\[.2cm]
\nonumber \leq \int_X \psi(x) \left( \int_{B_r}
\frac{dz}{|z|^{d\epsilon}} + \frac{\langle \psi
\rangle}{r^{d\epsilon}}\right) dx = \frac{c_d
r^{d(1-\epsilon)}}{d(1-\epsilon)} \langle\psi\rangle + \frac{\langle
\psi \rangle^2}{r^{d\epsilon}}.
\end{gather*}
Then, similarly as in (\ref{S4a}), we have that
\begin{equation*}
  %\label{U89}
H(\gamma) := \lim_{N\to +\infty} H_N(\gamma) <\infty
\end{equation*}
for $\mu$-almost all $\gamma\in \breve{\Gamma}^2_*$. At the same
time,
\[
H_j(\gamma) := H(\gamma_j) \leq  r^{-d\epsilon}_{\gamma_j}\left(
|\gamma_0\cap \varDelta_j| + |\gamma_1\cap \varDelta_j|\right)^2,
\]
holding for all $\gamma\in {\Gamma}^2_{*,j}$. Here $r_{\gamma_j}$ is
the minimal distance between two distinct $x,y \in
(\gamma_0\cap\varDelta_j) \cup   (\gamma_0\cap\varDelta_j)$, which
is positive since the number of pairs of such points is finite and
$\gamma_j$ is simple.

Let $\{P_{s,\mu}: s\geq 0, \ \mu \in \mathcal{P}_{\rm exp}\}$ be the
solution which exists and is unique according to item (i). Fix some
$s\geq 0$ and $\mu\in \mathcal{P}_{\rm exp}$ of type $\varkappa$,
and let $\mathcal{X}$ be the process corresponding to $P_{s,\mu}$.
For $N\in \mathds{N}$, we define the stopping time, cf. \cite[page
180]{EK},
\begin{equation*}
  %\label{U90}
T_{N,j}  =\inf\{t\geq s: H_j (\mathcal{X}(t))>N\}.
\end{equation*}
Then for a fixed $j\in \mathds{N}$ and $T_{N,j} \wedge t:=
\min\{t;T_{N,j}\}$, we set $\mathcal{Z}(t) = \lim_{N\to +\infty}
\mathcal{X} (T_{N,j} \wedge t)$ and $T_j = \lim_{N\to
+\infty}T_{N,j}$. Both limits exist as $T_{N,j}\leq T_{N+1,j}$. Let
$\tilde{\mu}_t$ be the law of $\mathcal{Z}(t)$. For $\varPhi^m_\tau
(\theta|\cdot) \in \mathcal{D}(L)$ defined in (\ref{W32a}),
\begin{equation}
  \label{U91}
\varPhi^m_\tau (\theta|\mathcal{X}(t)) - \int_s^t L\varPhi^m_\tau
(\theta|\mathcal{X}(u))du
\end{equation}
is a right-continuous martingale. Then, similarly as in \cite[page
180]{EK}, by the optional sampling theorem, we can write
\begin{equation*}
%  \label{U92}
\mathbb{E}\left[\varPhi^m_\tau (\theta|\mathcal{X}(T_{N,j}\wedge t))
\right] = \mathbb{E}\left[\varPhi^m_\tau (\theta|\mathcal{X}(s))
\right] + \mathbb{E}\left[\int_0^{T_{N,j}\wedge t}L\varPhi^m_\tau
(\theta|\mathcal{X}(u))du \right],
\end{equation*}
where we pass to the limit $N\to +\infty$ and get, see also
(\ref{W34}),
\begin{eqnarray}
  \label{U93}
\tilde{\mu}_t(\varPhi^m_\tau (\theta|\cdot))& = &\mu(\varPhi^m_\tau
(\theta|\cdot)) + \lim_{N \to +\infty}
\mathbb{E}\left[\int_0^{T_{N,j}\wedge t}L\varPhi^m_\tau
(\theta|\mathcal{X}(u))du \right] \\[.2cm] \nonumber & \leq & \mu(\varPhi^m_\tau
(\theta|\cdot)) +  \lim_{N \to +\infty}
\mathbb{E}\left[\int_0^{T_{N,j}\wedge t}\left|L\varPhi^m_\tau
(\theta|\mathcal{X}(u))\right|du \right] \\[.2cm] \nonumber & \leq & \mu(\varPhi^m_\tau
(\theta|\cdot)) + \mathbb{E}\left[\int_0^{ t}\left|L\varPhi^m_\tau
(\theta|\mathcal{X}(u))\right|du \right] \\[.2cm] \nonumber & \leq &
\mu(\varPhi^m_\tau (\theta|\cdot)) + \mathbb{E}\left[\int_0^{
t}\varPhi^m_{\tau,1}
(\theta|\mathcal{X}(u))du \right] \\[.2cm] \nonumber & = & \mu(\varPhi^m_\tau (\theta|\cdot)) +
\int_s^t \mu_u (\varPhi^m_{\tau,1} (\theta|\cdot) du.
\end{eqnarray}
Here $\varPhi^m_{\tau,1} (\theta|\cdot)\in \mathcal{D}(L)$ is as in
(\ref{W34}) and (\ref{W40}), and $\mu_u= P_{s,\mu}\circ
\varpi^{-1}_u$ is the law of $\mathcal{X}(u)$. Similarly as in
(\ref{B2}), by (\ref{U93}) we then obtain
\begin{equation*}
%  \label{U95}
\tilde{\mu}_t(\varPhi^m_\tau (\theta|\cdot))\leq \sum_{q=0}^\infty
\frac{(t-s)^q}{q!} \mu \left( \varPhi^m_{\tau,q}
(\theta|\cdot)\right), \qquad t-s < \log(1+\varepsilon)/c_a.
\end{equation*}
Now we proceed here as in obtaining (\ref{B4}), which finally
yields, see (\ref{B9}), (\ref{B10},
\begin{equation}
  \label{U96}
\tilde{\mu}_t(\varPhi^m_ (\theta|\cdot)) :=
\lim_{\max\{\tau_0,\tau_1\} \to 0} \tilde{\mu}_t(\varPhi^m_\tau
(\theta|\cdot))  \leq \varkappa_t^{|m|} \|\theta_0\|^{m_0}_{L^1(X)}
\|\theta_1\|^{m_1}_{L^1(X)},
\end{equation}
where $t-s < \log(1+\varepsilon)/c_a$ and $\varkappa_t = \varkappa
e^{(\alpha+1)(t-s)}$. By Definition \ref{no1dfg} (\ref{U96}) yields
$\tilde{\mu}_t\in \mathcal{P}_{\rm exp}$ and hence $\mathcal{Z}(t)
\in \breve{\Gamma}^2_*$ (almost surely) for this $t$, see
(\ref{C4}). Thus, $T_{j} >t$. Now we take $\delta\in (0,1)$ and then
$s_1 = s+ \delta \log(1+\varepsilon)/c_a$, take into account that
$\mu_{s_1}= P_{s,\mu}\circ \varpi_{s_1}^{-1}\in \mathcal{P}_{\rm
exp}$, and repeat the above procedure with $s$ replaced by $s_1$.
Since the type of $\mu_t$ is $\varkappa e^{\alpha (t-s)}$ -- and
hence is finite for all $t$ -- the construction can be repeated ad
infinitum to cover the whole $[s,+\infty)$. This implies that the
paths of $\mathcal{X}$ remain in $\mathfrak{D}_{[0,+\infty)}
(\Gamma^2_{*,j})$ for all $j\in \mathds{N}$, which by (\ref{U85})
yields the proof of item (ii).

\section*{Acknowledgments} This work was supported by the Deutsche
Forschungsgemeinschaft through SFB 1238 ``Taming uncertainty and
profiting from randomness and low regularity in analysis, statistics
and their applications", that is cordially acknowledged by the
authors.

\end{document}